\newtheorem{thm}{Theorem}[section]
\newtheorem{prop}[thm]{Proposition}
\newtheorem{lem}[thm]{Lemma}
\newtheorem{conj}[thm]{Conjecture}
\newtheorem{cor}[thm]{Corollary}
\newtheorem{exam}[thm]{Example}
\newtheorem{rmk}[thm]{Remark}
\newtheorem{dfn}[thm]{Definition}
\numberwithin{equation}{section}
\newcommand{\frakA}{{\mathfrak A}}
\newcommand{\frakB}{{\mathfrak B}}
\newcommand{\frakS}{{\mathfrak S}}
\newcommand{\frakX}{{\mathfrak X}}
\newcommand{\bB}{{\mathbb B}}
\newcommand{\bC}{{\mathbb C}}
\newcommand{\bM}{{\mathbb M}}
\newcommand{\bN}{{\mathbb N}}
\newcommand{\bQ}{{\mathbb Q}}
\newcommand{\bZ}{{\mathbb Z}}
\newcommand{\calH}{{\mathcal H}}
\newcommand{\calI}{{\mathcal I}}
\newcommand{\calL}{{\mathcal L}}
\newcommand{\calM}{{\mathcal M}}
\newcommand{\calO}{{\mathcal O}}
\newcommand{\rA}{{\mathrm A}}
\newcommand{\rC}{{\mathrm C}}
\newcommand{\rH}{{\mathrm H}}
\newcommand{\rL}{{\mathrm L}}
\newcommand{\rM}{{\mathrm M}}
\newcommand{\rR}{{\mathrm R}}
\newcommand{\rS}{{\mathrm S}}
\newcommand{\rT}{{\mathrm T}}
\newcommand{\rW}{{\mathrm W}}
\newcommand{\Zp}{{\bZ_p}}
\newcommand{\Qp}{{\bQ_p}}
\newcommand{\Cp}{{\bC_p}}
\newcommand{\Ainf}{{\mathrm{A_{inf}}}}
\newcommand{\OX}{{\widehat \calO_X}}
\newcommand{\OC}{{\calO\bC}}                           %OC (=gr_0(OB_dR+) = Hyodo's generalised Cp)
\newcommand{\dlog}{{\mathrm{dlog}}}         %log differential
\newcommand{\End}{{\mathrm{End}}}           %End-functor
\newcommand{\Gal}{{\mathrm{Gal}}}           %Galois groups
\newcommand{\HIG}{{\mathrm{HIG}}}           %Higgs module
\newcommand{\id}{{\mathrm{id}}}             %Identity
\newcommand{\Ker}{{\mathrm{Ker}}}           %kernal
\newcommand{\nil}{\mathrm{nil}}
\newcommand{\pr}{{\mathrm{pr}}}             %Projection
\newcommand{\Rep}{{\mathrm{Rep}}}           %Representations
\newcommand{\RGamma}{{\mathrm{R\Gamma}}}    %right-derived Gamma
\newcommand{\Sh}{{\mathrm{Sh}}}             %Shimura varieties
\newcommand{\Spa}{{\mathrm{Spa}}}           %Affinoid adic spaces
\newcommand{\Spf}{{\mathrm{Spf}}}           %formal affine schemes
\newcommand{\Vect}{{\mathrm{Vect}}}         %Vector bundle
\newcommand{\GL}{{\mathrm{GL}}}             % genetal linear group
\newcommand{\cyc}{{\mathrm{cyc}}}           % cyclotomic
\newcommand{\et}{{\mathrm{\acute{e}t}}}    % etale
\newcommand{\geo}{{\mathrm{geo}}}           % geometry
\newcommand{\gp}{{\mathrm{gp}}}             % group
\newcommand{\pd}{{\mathrm{pd}}}
\newcommand{\perf}{\mathrm{perf}}           % perfection
\newcommand{\proet}{{\mathrm{pro\acute{e}t}}}% pro-etale
\DeclareSymbolFontAlphabet{\mathbb}{AMSb} %to ensure that the meaning of \mathbb does not change
\DeclareSymbolFontAlphabet{\mathbbl}{bbold}
\newcommand{\Prism}{{\mathlarger{\mathbbl{\Delta}}}} % Prism
\newcommand{\ya}{{\rangle}}
\newcommand{\za}{{\langle}}
\begin{document}
\title{Hodge--Tate crystals on the logarithmic prismatic sites of semi-stable formal schemes}

\author{Yu Min\footnote{Y. M.: Morningside Center of Mathematics No.505, Chinese Academy of Sciences, ZhongGuancun East Road 55, Beijing, 100190, China. {\bf Email:} yu.min@amss.ac.cn} and Yupeng Wang\footnote{Y. W.: Morningside Center of Mathematics No.505, Chinese Academy of Sciences, ZhongGuancun East Road 55, Beijing, 100190, China. {\bf Email:} wangyupeng@amss.ac.cn}}
%\address{Morningside Center of Mathematics No.505, Chinese Academy of Sciences, ZhongGuancun East Road 55, Beijing, 100190, China.}
%\email{yu.min@amss.ac.cn}

%\author{Yupeng Wang}
%\address{Morningside Center of Mathematics No.505, Chinese Academy of Sciences, ZhongGuancun East Road 55, Beijing, 100190, China.}
%\email{wangyupeng@amss.ac.cn}

%\date{\today}
\maketitle

\begin{abstract}
  Let $\calO_K$ be a complete discrete valuation ring of mixed characteristic $(0,p)$ with a perfect residue field. In this paper, for a semi-stable $p$-adic formal scheme $\frakX$ over $\calO_K$ with rigid generic fibre $X$ and canonical log structure $\calM_{\frakX} = \calO_{\frakX}\cap\calO_X^{\times}$, we study Hodge--Tate crystals over the absolute logarithmic prismatic site $(\frakX,\calM_{\frakX})_{\Prism}$. As an application, we give an equivalence between the category of rational Hodge--Tate crystals on the absolute logarithmic prismatic site $(\frakX,\calM_{\frakX})_{\Prism}$ and the category of enhanced log Higgs bundles over $\frakX$, which leads to an inverse Simpson functor from the latter to the category of generalised representations on $X_{\proet}$.
\end{abstract}
\tableofcontents
\section{Introduction}
  The theory of prismatic cohomology was introduced by Bhatt--Scholze in \cite{BS-a}, which can be understood as a ``universal'' $p$-adic cohomology theory in the sense that it specialises to most cohomology theories (e.g. \'etale cohomology, crystalline cohomology, de Rham cohomology, Hodge--Tate cohomology and so on) in $p$-adic geometry. The original paper \cite{BS-a} was devoted to the relative theory, which has several geometric applications in the (integral) $p$-adic Hodge theory. For example, it recovers most works of \cite{BMS-a} and \cite{BMS-b} and can be used to study  local $p$-adic Simpson correspondence (c.f. \cite{MT}, \cite{Tian}, etc.). 
  
  On the other hand, the theory admits a variant which is called the absolute prismatic theory and has more arithmetic applications. For example, for a bounded $p$-adic formal scheme $\frakX$, the category of Laurent $F$-crystals on the absolute prismatic site $(\frakX)_{\Prism}$ is equivalent to the category of \'etale $\Zp$-local systems on the adic generic fibre $X$ of $\frakX$ (c.f. \cite{Wu}, \cite{BS-b}, \cite{MW-a}, etc.). In particular, one can recover some results in the classical theory of $(\varphi,\Gamma)$-modules on Galois representations through this equivalence. If moreover, $\frakX$ is smooth over $\calO_K$, the ring of integers of a finite extension $K$ of $\Qp$, then one can establish an equivalence between the category of prismatic $F$-crystals on $(\frakX)_{\Prism}$ and the category of crystalline $\Zp$-local systems on $X_{\et}$ (c.f. \cite{BS-b}, \cite{DL} for $\frakX = \Spf(\calO_K)$ and \cite{DLMS}, \cite{GR} for general $\frakX$). So it seems that the absolute prismatic theory could shed light on studying $p$-adic representations.
  
  There is also an application of the absolute theory to $p$-adic Simpson correspondence for rigid spaces with good reductions $\frakX$ over $\calO_K$. Indeed, the authors established an equivalence between the category of rational Hodge--Tate crystals on $(\frakX)_{\Prism}$ and the category of enhanced Higgs bundles on $\frakX_{\et}$ with coefficients in $\calO_{\frakX}[\frac{1}{p}]$ (c.f. \cite{MW-c} or Theorem \ref{MW-c}). On the other hand, the category of rational Hodge--Tate crystals on the sub-site $(\frakX)_{\Prism}^{\perf}$ of perfect prisms is equivalent to the category of generalised representations (i.e. vector bundles with coefficients in $\widehat \calO_X$) on $X_{\proet}$. So the restriction gives rise to an inverse Simpson functor from the category of enhanced Higgs bundles to the category of generalised representations, which turns out to be fully faithful. The construction is closely related with Sen theory. For example, when $\frakX = \Spf(\calO_K)$, we get classical Sen operators for $C$-representations of $G_K$ (c.f. \cite{Gao}, \cite{BL-a}, etc.). 
  
  However, in practice, there is no means that a rigid analytic variety $X$ over $K$ always admits a good reduction (if this is the case, the Galois representations associated to the \'etale cohomology of $X$ must be crystalline, which is not always true). So it is necessary to generalize stories mentioned above to rigid spaces with non-smooth reductions. A meaningful generalisation is that one may consider those with semi-stable reductions, which can be viewed as smooth objects in logarithmic geometry. Indeed, up to enlarging base field, rigid spaces always admit semi-stable reductions (c.f. \cite{Har}). This suggests us to study the logarithmic analogue of prismatic theory. The fundamental contribution in this direction is due to Koshikawa \cite{Kos}. Indeed, he introduced notions of log prism and logarithmic (relative) prismatic site, and then generalized several results of \cite{BS-a} to the logarithmic case. Another application of logarithmic prismatic theory is due to Du--Liu \cite{DL}. They showed the category of prismatic $F$-crystals on absolute logarithmic prismatic site $(\calO_K)_{\Prism,\log}$ can be identified with the category of semi-stable $\Zp$-representations of $G_K$, which obviously generalises the story about crystalline representations in the good reduction case. To the best of our knowledge, up to now, one could not see this from the original prismatic theory. So there do exist some new phenomena in the logarithmic setting. This paper is devoted to another direction and more precisely, to generalising main results in \cite{MW-b} and \cite{MW-c}. In other words, we study Hodge--Tate crystals in the logarithmic case, and show these objects can be identified with certain Higgs bundles and construct an inverse Simpson functor for these Higgs bundles (taking values in generalised representations). The specific statements of our results can be founded in the next subsection.
  
  Finally, it is worth pointing out that the usual absolute prismatic theory was recently independently studied by Drinfeld \cite{Dri} and Bhatt--Lurie \cite{BL-a}, \cite{BL-b} in a stacky way. Indeed, one can relate a $p$-adic formal scheme $\frakX$ to a certain stack, which is called the prismatization of $\frakX$, such that studying prismatic theory on $(\frakX)_{\Prism}$ amounts to studying coherent theory on the corresponding stack. So one may ask whether we can recover and generalise all known results mentioned above by using this stack. In particular, we hope that one can get an integral $p$-adic non-abelian Hodge theory in this way (for example, see \cite[Remark 9.2]{BL-b} when $\frakX$ is equipped with a $\delta$-structure over $\rW(k)$). 
\subsection{Main results}
  We use notations in Notations \ref{Notations} freely for stating our main results. Throughout this subsection, we fix a complete discretely valued $p$-adic field $K$ and let $\frakX$ be a semi-stable $p$-adic formal scheme over $\calO_K$ with rigid generic fibre $X$ and canonical log structure $\calM_{\frakX} = \calO_{\frakX}\cap\calO_X^{\times}$. We first specify the meaning of absolute logarithmic prismatic site $(\frakX,\calM_{\frakX})_{\Prism}$. 
  
  Recall that a (bounded) log prism defined by Koshikawa in \cite{Kos} is a tuple $(A,I,M,\delta_{\log})$ consisting of a bounded prism $(A,I)$ together with a prelog structure $\alpha:M\to A$ such that $(A,M)$ is $(p,I)$-adically log-affine, and a map $\delta_{\log}:M\to A$ compatible with $\delta$-structure of $A$ in the following sense:
  
       $(1)$ $\delta_{\log}(e_M) = 0$, where $e_M$ is the unity of the commutative monoid $M$;
       
       $(2)$ for any $m\in M$, $\alpha(m)^p\delta_{\log}(m) = \delta(\alpha(m))$;
       
       $(3)$ for any $m_1,m_2\in M$, $\delta_{\log}(m_1m_2) = \delta_{\log}(m_1)+\delta_{\log}(m_2)+p\delta_{\log}(m_1)\delta_{\log}(m_2)$.\\
 In particular, for any $m\in M$, $\varphi(\alpha(m)) = \alpha(m)^p(1+p\delta_{\log}(m))$ and hence $\varphi$ is a ``Frobenius'' endomorphism of log ring $(A,M)$. An object in $(\frakX,\calM_{\frakX})_{\Prism}$ is a log prism $(A,I,M,\delta_{\log})$, whose underlying log structure is integral, together with a structure map $f:\Spf(A/I)\to\frakX$ which induces an exact closed immersion $(\Spf(A/I),f^*\calM_{\frakX})\to (\Spf(A),\underline M)$ of log $(p,I)$-adic formal schemes. The morphisms in $(\frakX,\calM_{\frakX})_{\Prism}$ are defined in the obvious way and the topology is generated by ($p$-completely) flat covers. See Definition \ref{Dfn-log site} for details. Similar to the usual case, one can check that the rule $(A,I,M,\delta_{\log})\mapsto A$ (resp. $A/I$) defines a sheaf on $(\frakX,\calM_{\frakX})_{\Prism}$, which will be denoted by $\calO_{\Prism}$ (resp. $\overline \calO_{\Prism}$).

 For a given prism $(A,I)$ together with a log structure $\alpha:M\to A$, it is a strong restriction on $\alpha$ that there is a $\delta_{\log}$-structure $\delta_{\log}:M\to A$ making $(A,I,M,\delta_{\log})$ a log prism. For example, one can prove that 
 \begin{lem}[Lemma \ref{Lem-factorization}]\label{Intro-structure on deltalog}
   If a log prism $(A,I,M,\delta_{\log})$ is perfect (i.e. $(A,I)$ is a perfect prism in the usual sense), then for any $m\in M$, there exists an $x\in A$ such that $\alpha(m) = [\bar m](1+px)$, where $[\overline m]$ is the Teichm\"uller lifting of $\overline m$, the reduction of $\alpha(m)$ modulo $p$.
 \end{lem}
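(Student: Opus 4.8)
The plan is to exploit that a perfect prism $(A,I)$ has bijective Frobenius, so that the defining relation for $\delta_{\log}$ can be \emph{telescoped} after inverting $\varphi$. Recall that for a perfect prism the ring $A$ is $p$-torsion-free and $p$-complete, its Frobenius $\varphi$ is an automorphism, and $R := A/p$ is a perfect $\bF_p$-algebra; in particular there is a multiplicative Teichm\"uller section $[\cdot]\colon R \to A$ characterised by $[\bar a] = \lim_{n} \tilde a_n^{\,p^n}$ for any lifts $\tilde a_n \in A$ of $\bar a^{1/p^n} \in R$, and satisfying $\varphi([\bar a]) = [\bar a]^p$. Write $u := \alpha(m)$ and $d := \delta_{\log}(m)$. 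The identity noted just before the statement gives the fundamental relation $\varphi(u) = u^p(1 + pd)$, in which $1 + pd \in 1 + pA \subseteq A^{\times}$ because $A$ is $p$-complete.

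Next I would invert Frobenius. For $n \ge 0$ set $u^{(n)} := \varphi^{-n}(u)$ and, for $j \ge 1$, $e_j := \varphi^{-j}(1 + pd) = 1 + p\varphi^{-j}(d) \in 1 + pA$. Applying $\varphi^{-j}$ to the fundamental relation yields the recursion $u^{(j-1)} = \big(u^{(j)}\big)^{p} e_j$, and unwinding it from $j = 1$ to $j = n$ gives, for every $n$, the exact factorisation
\[
u = \big(u^{(n)}\big)^{p^{n}} \prod_{j=1}^{n} e_j^{\,p^{j-1}}.
\]

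It then remains to pass to the limit $n \to \infty$, using the elementary congruence that $a \equiv b \bmod p$ implies $a^{p^{n}} \equiv b^{p^{n}} \bmod p^{n+1}$. On one hand $u^{(n)} = \varphi^{-n}(u)$ reduces modulo $p$ to $\bar m^{1/p^{n}}$, so $u^{(n)}$ is a lift of $\bar m^{1/p^n}$ and hence $\big(u^{(n)}\big)^{p^{n}} \to [\bar m]$ by the very formula defining the Teichm\"uller lift. On the other hand $e_j \in 1 + pA$ forces $e_j^{\,p^{j-1}} \in 1 + p^{j}A$, so the partial products $\prod_{j=1}^{n} e_j^{\,p^{j-1}}$ form a $p$-adically Cauchy sequence converging to some $E \in 1 + pA$. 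Since $A$ is $p$-complete both limits exist, and letting $n \to \infty$ in the displayed factorisation gives $u = [\bar m]\,E$; writing $E = 1 + px$ with $x \in A$ yields $\alpha(m) = [\bar m](1 + px)$, as desired.

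The only delicate points are bookkeeping ones: checking that the recursion telescopes with the stated exponents $p^{j-1}$, and verifying the two convergences via the congruence above (which is where $p$-completeness and perfectness of $R$ enter). I expect the genuine conceptual step --- and the place where perfectness is essential --- to be the idea of inverting $\varphi$ and telescoping, after which everything reduces to standard $p$-adic estimates.
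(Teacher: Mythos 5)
Your proof is correct, and it in fact recovers the sharper form proved in the body of the paper (Lemma \ref{Lem-factorization}): your limit $E$ is precisely the infinite product $\prod_{j\geq 1}\bigl(1+p\varphi^{-j}(\delta_{\log}(m))\bigr)^{p^{j-1}}$. The mechanism, however, is genuinely different from the paper's. The paper runs a successive-approximation induction: writing $x=[a]+px_1$, it uses $\varphi(x)=x^p(1+py)$ together with $\varphi([a])=[a]^p$ to get $x_1\equiv [a]\varphi^{-1}(y)\bmod p$, and then at stage $n$ writes $x=[a]\prod_{i=1}^{n}(1+p\varphi^{-i}(y))^{p^{i-1}}+p^{n+1}x_{n+1}$ and expands the $p$-th power of this expression modulo $p^{n+2}$ to absorb the error term into the $(n+1)$-st factor; the Teichm\"uller representative $[a]$ is split off at the outset, and only its multiplicativity under $\varphi$ is used. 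You instead apply $\varphi^{-j}$ to the defining relation to get an \emph{exact} identity $u=(\varphi^{-n}(u))^{p^{n}}\prod_{j=1}^{n}e_j^{p^{j-1}}$ at every finite level, and then pass to the $p$-adic limit, identifying the main term through the characterization $[\bar m]=\lim_n \tilde a_n^{p^n}$ of Teichm\"uller lifts for arbitrary lifts $\tilde a_n$ of $\bar m^{1/p^n}$. What your route buys is the absence of error terms and of binomial bookkeeping: everything reduces to one exact factorization plus two convergence statements, both resting on the same elementary estimate $(1+pz)^{p^{k}}\in 1+p^{k+1}A$ that the paper also uses. What it costs is the extra (standard) input on Teichm\"uller representatives, whereas the paper's induction needs only $\varphi([a])=[a]^p$, $p$-torsion-freeness, and classical $p$-completeness of $A$ --- facts both arguments ultimately draw from $A\cong \rW(A/p)$ with $A/p$ perfect, so the hypotheses invoked are equivalent in strength.
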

 On the other hand, being an object in $(\frakX,\calM_{\frakX})_{\Prism}$ is also a strong restriction on a log prism $(A,I,M,\delta_{\log})$. Indeed, if this is the case, (at least in our setting) the log structure $M\to A$ is somehow determined by $(\frakX,\calM_{\frakX})$ (c.f. Lemma \ref{Structure lemma}). This combined with Lemma \ref{Intro-structure on deltalog} shows that for a perfect prism $(A,I)$ in the usual prismatic site $(\frakX)_{\Prism}$, there exists a unique way to equip $A$ with a $\delta_{\log}$-structure $\delta_{\log}:M\to A$ such that $(A,I,M,\delta_{\log})\in(\frakX,\calM_{\frakX})_{\Prism}$ (cf. Lemma \ref{Lem-uniqueness}). In particular, when $\frakX = \Spf(R)$ is affine small; that is, $R$ is \'etale over $ \calO_K\za T_0,\dots, T_r, T_{r+1}^{\pm 1},\dots, T_d^{\pm 1}\ya/(T_0\cdots T_r-\pi)$, we have the identity of sites
 \[(\frakX,\calM_{\frakX})_{\Prism}^{\perf} = (\frakX)_{\Prism}^{\perf}.\]
 From this, we obtain the following result.
 \begin{lem}[Corollary \ref{Cor-Equal perfect site}]\label{Intro-Equal site}
   For a semi-stable $p$-adic formal scheme $\frakX$ over $\calO_K$, there is an equivalence of topoi $\Sh((\frakX)_{\Prism}^{\perf})\simeq \Sh((\frakX,\calM_{\frakX})_{\Prism}^{\perf})$.
 \end{lem}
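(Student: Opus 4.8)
The plan is to realize the claimed equivalence as coming from an equivalence of the two underlying sites, induced by the forgetful functor
\[
u\colon (\frakX,\calM_{\frakX})_{\Prism}^{\perf}\longrightarrow (\frakX)_{\Prism}^{\perf},\qquad (A,I,M,\delta_{\log})\mapsto (A,I),
\]
where one retains the structure map $\Spf(A/I)\to\frakX$. I would first record that $u$ is the identity on underlying prisms; since a covering in either site is by definition a ($p$-completely) faithfully flat map on the level of the underlying prism, $u$ carries coverings to coverings and reflects them, so both $u$ and its would-be inverse are continuous. Granting that $u$ is moreover an equivalence of categories, the equivalence of topoi $\Sh((\frakX)_{\Prism}^{\perf})\simeq\Sh((\frakX,\calM_{\frakX})_{\Prism}^{\perf})$ then follows formally from the standard comparison lemma for sites. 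Thus the whole proof reduces to showing $u$ is an equivalence of categories.

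Full faithfulness I would deduce from the uniqueness assertion of Lemma \ref{Lem-uniqueness} together with the factorization of Lemma \ref{Intro-structure on deltalog}. A morphism in the log site is a morphism of underlying prisms compatible with the prelog structures and with $\delta_{\log}$; but by Lemma \ref{Intro-structure on deltalog} every $m\in M$ satisfies $\alpha(m)=[\overline{m}](1+px)$, so $\alpha$ is determined by the reduction modulo $p$ and the $\delta$-structure, and $\delta_{\log}$ is then forced by $\alpha(m)^{p}\delta_{\log}(m)=\delta(\alpha(m))$. Since a morphism of perfect prisms automatically commutes with Teichm\"uller lifts, with reduction modulo $p$, and with $\delta$, it respects $\alpha$ and $\delta_{\log}$ for free; hence $\Hom$ in the log site agrees with $\Hom$ of underlying prisms over $\frakX$.

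For essential surjectivity I must equip an arbitrary $(A,I)\in(\frakX)_{\Prism}^{\perf}$, with its map $f\colon\Spf(A/I)\to\frakX$, with a $\delta_{\log}$-log structure over $(\frakX,\calM_{\frakX})$. When $\frakX=\Spf(R)$ is affine small this is precisely the identity of sites recalled before the statement, which is Lemma \ref{Lem-uniqueness}. In general I would choose a cover of $\frakX$ by affine small opens $\frakU_i=\Spf(R_i)$; over each $\frakU_i$ the affine small case produces a prelog structure with compatible $\delta_{\log}$ on the relevant localization, and the uniqueness in Lemma \ref{Lem-uniqueness} forces these to agree on overlaps. They therefore glue to a prelog structure $M\to A$ with a $\delta_{\log}$ making $(A,I,M,\delta_{\log})$ an object of $(\frakX,\calM_{\frakX})_{\Prism}^{\perf}$ lying over $(A,I)$.

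The hard part will be this last globalization step, because the structure map of a perfect prism need not factor through a single affine small chart, so one cannot invoke the affine small identity of sites directly. The work is to check that the pullback log structure $f^{*}\calM_{\frakX}$ on $\Spf(A/I)$ --- itself obtained by descent along the cover --- admits a canonical lift to a $\delta_{\log}$-structure on $A$. This is exactly where Lemma \ref{Intro-structure on deltalog} (supplying the shape $\alpha(m)=[\overline{m}](1+px)$ of any lift) and the Structure Lemma \ref{Structure lemma} (pinning down $M$ in terms of $(\frakX,\calM_{\frakX})$) enter: together they make the locally constructed lifts canonical, hence automatically compatible on overlaps, so that gluing is unobstructed. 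Once gluing is secured, compatibility with morphisms and the matching of the flat topologies are routine, and the equivalence of topoi follows.
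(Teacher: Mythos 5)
Your overall route is the same as the paper's: the forgetful functor $(A,I,M,\delta_{\log})\mapsto(A,I)$, uniqueness of the log structure (Lemma \ref{Lem-uniqueness}), the small affine case as input, and \'etale-local globalization. However, two steps in your proposal have genuine gaps. The first is full faithfulness. By Definition \ref{Dfn-log site}, a morphism in $(\frakX,\calM_{\frakX})_{\Prism}^{\perf}$ is a morphism of log prisms, i.e.\ a ring map $f\colon A\to B$ \emph{together with} a map of monoids $g\colon M\to N$ compatible with the structure maps, with $\delta_{\log}$, and with the diagram over $(\frakX,\calM_{\frakX})$. Your argument treats the compatibilities as properties of $f$ alone (``respects $\alpha$ and $\delta_{\log}$ for free'') and never constructs $g$ or proves it unique. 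The Teichm\"uller computation only shows $f(\alpha(m))$ has the shape $[\,\cdot\,](1+p\,\cdot\,)$; it does not show it lies in $\beta(N)$, and even when it does, a preimage need not be unique, because $\beta$ need not be injective: the site contains perfect log prisms where some $\alpha(m)$ is a zero divisor or even zero. For instance, take a closed point of the special fibre with perfect residue field $l$; then $(\rW(l),(p))$ lies in the site, $T_0\cdots T_r\mapsto \bar\pi=0$ forces some $\alpha(e_i)=0$, and the log structure then admits nontrivial monoid automorphisms over $\id_{\rW(l)}$ compatible with $\beta$. For the same reason, ``$\delta_{\log}$ is forced by $\alpha(m)^p\delta_{\log}(m)=\delta(\alpha(m))$'' fails without a nonzerodivisor hypothesis. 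The correct repair uses the paper's tools rather than Lemma \ref{Lem-factorization}: given $f$ over $\frakX$, the log structure associated to $M\to A\to B$ with its induced $\delta_{\log}$ (Lemma \ref{Lem-prolog-to-log}) is again an object of the site, hence equals $(N,\delta_{\log})$ by Lemma \ref{Lem-uniqueness}; the canonical map $M\to N$ is the required $g$, and its uniqueness comes from the commutativity over $(\frakX,\calM_{\frakX})$ together with the rigidity of Remark \ref{rigidity}.

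The second gap is in the globalization, which you correctly flag as the hard part but do not actually close. (a) On overlaps your two local log structures come from \emph{different} charts, whereas Lemma \ref{Lem-uniqueness} compares two log structures making one prism an object of the site attached to a single fixed chart $(\bN^{r+1}\to R)$; one must further \'etale-localize each overlap into a common small affine before invoking it (this works, since the exactness condition of Definition \ref{Dfn-log site} is chart-independent, but it is a step). (b) More seriously, gluing produces a log structure only as a sheaf of monoids on $\Spf(A)$; an object of Koshikawa's site is a log prism, which requires a log-affine structure, i.e.\ a global monoid $M\to A$ whose associated log structure recovers the glued sheaf. That such a global chart exists is not automatic (local generators like $[\tilde t_i]$ need not extend to global sections of the glued sheaf), and you do not address it. This is precisely the difficulty the paper's weaker formulation avoids: it asserts only an equivalence of topoi, which follows from Proposition \ref{Equal perfect site} by restricting both topoi to the generating full subcategories of (log) prisms whose structure map factors through a small affine --- these generate because the pullback of an \'etale cover of $\frakX$ to $\Spf(A/I)$, refined by affines and lifted to perfect prisms, is a cover in the site --- and then applying the comparison lemma; no log structure ever has to be glued on a prism that does not lie over a chart. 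If you insist on your stronger site-level equivalence, the log-affineness of the glued structure is the missing argument.
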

 As a corollary, we have
 \begin{prop}[Theorem \ref{Thm-perfect HT is generalised repn}]\label{Intro-HTC vs GRep}
   There is a canonical equivalence of categories
   \[\Vect((\frakX,\calM_{\frakX})_{\Prism}^{\perf},\overline \calO_{\Prism}[\frac{1}{p}])\simeq\Vect(X_{\proet},\widehat \calO_X).\]
 \end{prop}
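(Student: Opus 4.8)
The plan is to deduce the proposition by transporting the assertion, via the identification of perfect sites in Lemma \ref{Intro-Equal site}, to the already-understood non-logarithmic setting. First I would record that the equivalence of topoi $\Sh((\frakX)_{\Prism}^{\perf})\simeq \Sh((\frakX,\calM_{\frakX})_{\Prism}^{\perf})$ of Lemma \ref{Intro-Equal site} is not merely an abstract equivalence but comes from an honest identification of sites: locally, when $\frakX = \Spf(R)$ is affine small, one has the equality $(\frakX,\calM_{\frakX})_{\Prism}^{\perf} = (\frakX)_{\Prism}^{\perf}$, the functor leaving the underlying prism $(A,I)$ untouched and only attaching the log datum $(M,\delta_{\log})$ that is uniquely determined by Lemma \ref{Intro-structure on deltalog} and Lemma \ref{Lem-uniqueness}. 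The key point I would verify is that this identification is an isomorphism of \emph{ringed} sites: since $\overline\calO_{\Prism}$ sends $(A,I,M,\delta_{\log})$ and $(A,I)$ alike to $A/I$, the two structure sheaves $\overline\calO_{\Prism}[\tfrac1p]$ agree on the nose. Hence the equivalence of topoi upgrades to an equivalence of ringed topoi and therefore induces a canonical equivalence
\[\Vect((\frakX,\calM_{\frakX})_{\Prism}^{\perf},\overline \calO_{\Prism}[\tfrac{1}{p}])\simeq\Vect((\frakX)_{\Prism}^{\perf},\overline \calO_{\Prism}[\tfrac{1}{p}]).\]

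It then remains to invoke the non-logarithmic equivalence
\[\Vect((\frakX)_{\Prism}^{\perf},\overline \calO_{\Prism}[\tfrac{1}{p}])\simeq\Vect(X_{\proet},\widehat \calO_X),\]
established in the authors' prior work \cite{MW-b}. For completeness I would recall its mechanism. A perfect prism $(A,I)$ in $(\frakX)_{\Prism}^{\perf}$ corresponds, via the perfectoid correspondence, to the perfectoid ring $A/I$, whose generic fibre $\Spa((A/I)[\tfrac1p],\,A/I)$ is an affinoid perfectoid object of $X_{\proet}$; as such objects form a basis for the pro-\'etale topology on $X$, one obtains a comparison of the perfect prismatic topos with the relevant localisation of the pro-\'etale topos. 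Under this comparison $\overline\calO_{\Prism}[\tfrac1p]$ matches $\widehat\calO_X$, both having value $(A/I)[\tfrac1p]$, and Kedlaya--Liu descent for vector bundles on (affinoid) perfectoid spaces promotes the comparison of topoi to the stated equivalence of vector-bundle categories. Composing the two displayed equivalences yields the asserted canonical equivalence.

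The genuine geometric content is thus entirely absorbed into the two ingredients, Lemma \ref{Intro-Equal site} and the non-logarithmic equivalence of \cite{MW-b}; the composition itself is formal. The one place where care is needed is precisely the compatibility of structure sheaves across Lemma \ref{Intro-Equal site}: one must confirm that the underlying identification of sites is a morphism of ringed sites for $\overline\calO_{\Prism}$, so that module and vector-bundle categories transport faithfully. Because the identification fixes the underlying prism $(A,I)$ and merely records the canonically determined logarithmic structure, this compatibility is immediate, and the proposition follows. I expect no essential obstacle beyond making this ringed-site compatibility explicit and checking that the equivalences respect tensor products and duals so that the notion of vector bundle (rather than just module) is preserved.
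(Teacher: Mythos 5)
Your proposal follows essentially the same route as the paper: the paper proves the local identification of perfect sites (Proposition \ref{Equal perfect site}, Corollary \ref{Cor-Equal perfect site}), uses it to identify the perfect-log \v{C}ech nerve with the non-logarithmic one (Lemma \ref{Lem-compare perfection}), and then concludes via the non-logarithmic machinery of \cite{MW-c} (Theorems 2.23 and 3.17 of that paper), exactly as you propose, the ringed-site compatibility being immediate since $\overline\calO_{\Prism}$ only sees the underlying prism. Two corrections: the non-logarithmic input you invoke is from \cite{MW-c}, not \cite{MW-b} (the latter treats only $\frakX=\Spf(\calO_K)$); and since the results of \cite{MW-c} are stated for \emph{smooth} $\frakX$ while $\frakX$ here is semi-stable, one must check that their proofs still apply, which the paper does by showing that $(\Ainf(\widehat R_{\infty}),(\xi),M_{\Ainf(\widehat R_{\infty})},\delta_{\log})$ covers the final object of the perfect prismatic topos in the semi-stable case (Lemma \ref{Lem-cover rel}(2)) --- your sketch via affinoid perfectoid bases and descent addresses the same point, so this is a gap of attribution rather than of mathematics.
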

  Here, $\Vect(X_{\proet},\widehat \calO_X)$ denotes the category of generalised representations and $\Vect((\frakX,\calM_{\frakX})_{\Prism}^{\perf},\overline \calO_{\Prism}[\frac{1}{p}])$ denotes the category of rational Hodge--Tate crystals on the perfect site. The notion of Hodge--Tate crystals is similar to that in \cite{MW-c} and is specified as follows:
  \begin{dfn}[Definition \ref{Dfn-HT crystal}]\label{Intro-Dfn-HTC}
    By a {\bf Hodge--Tate crystal} on $(\frakX,\calM_{\frakX})_{\Prism}$, we mean a sheaf $\bM$ of $\overline \calO_{\Prism}$-modules satisfying the following properties:

          $(1)$ For any $\frakA = (A,I,M,\delta_{\log})\in (\frakX,\calM_{\frakX})_{\Prism}$, $\bM(\frakA)$ is a finite projective $A/I$-module.
          
          $(2)$ For any morphism $\frakA = (A,I,M,\delta_{\log})\to \frakB = (B,IB,N,\delta_{\log})$ in $(\frakX,\calM_{\frakX})_{\Prism}$, there is a canonical isomorphism
          \[\bM(\frakA)\otimes_{A/I}B/IB\to \bM(\frakB).\]

      We denote by $\Vect((\frakX,\calM_{\frakX})_{\Prism},\overline \calO_{\Prism})$ the category of Hodge--Tate crystals on $(\frakX,\calM_{\frakX})_{\Prism}$. 
      
      Similarly, we define {\bf rational Hodge--Tate crystals} on $(\frakX,\calM_{\frakX})_{\Prism}$ (resp. $(\frakX,\calM_{\frakX})_{\Prism}^{\perf}$) by replacing $\overline \calO_{\Prism}$ by $\overline \calO_{\Prism}[\frac{1}{p}]$ and denote the corresponding category by $\Vect((\frakX,\calM_{\frakX})_{\Prism},\overline \calO_{\Prism}[\frac{1}{p}])$ (resp. $\Vect((\frakX,\calM_{\frakX})^{\perf}_{\Prism},\overline \calO_{\Prism}[\frac{1}{p}])$).
  \end{dfn}
  Thanks to Proposition \ref{Intro-HTC vs GRep}, the restriction functor
  \[R:\Vect((\frakX,\calM_{\frakX})_{\Prism},\overline \calO_{\Prism}[\frac{1}{p}])\to\Vect((\frakX,\calM_{\frakX})_{\Prism}^{\perf},\overline \calO_{\Prism}[\frac{1}{p}])\]
  induces a functor from $\Vect((\frakX,\calM_{\frakX})_{\Prism},\overline \calO_{\Prism}[\frac{1}{p}])$ to $\Vect(X_{\proet},\widehat \calO_X)$. It can be shown that this functor is actually fully faithful (c.f. Corollary \ref{Cor-R is ff}).
  
  In order to state our main result, we introduce the notion of ``enhanced log Higgs bundles''.

  \begin{dfn}[Definition \ref{Dfn-enhanced Higgs module-G}]\label{Intro-Dfn-LogHiggs}
   By an {\bf enhanced log Higgs bundle} on $\frakX_{\et}$ with coefficients in $\calO_{\frakX}$, we mean a triple $(\calM,\theta_{\calM},\phi_{\calM})$ satisfying the following properties:
   
       $(1)$ $\calM$ is a locally finite free $\calO_{\frakX}$-module and \[\theta_{\calM}:\calM\to\calM\otimes_{\calO_{\frakX}}\widehat \Omega^1_{\frakX,\log}\{-1\}\]
       defines a nilpotent Higgs field on $\calM$, i.e. $\theta_M$ is a section of $\underline{\End}(\calM)\otimes_{\calO_{\frakX}}\widehat \Omega^1_{\frakX,\log}\{-1\}$ which is nilpotent and satisfies $\theta_{\calM}\wedge\theta_{\calM}=0$. Here ``$\bullet\{-1\}$'' denotes the Breuil--Kisin twist of $\bullet$. Denote by $\HIG(\calM,\theta_{\calM})$ the induced Higgs complex.
       
       $(2)$ $\phi_M\in\End(\calM)$ is ``topologically nilpotent'' in the following sense:
       \[\lim_{n\to+\infty}\prod_{i=0}^n(\phi_M+i\pi E'(\pi))=0\]
       and induces an endomorphism of $\HIG(\calM,\theta_M)$; that is, the following diagram
       \begin{equation*}
           \xymatrix@C=0.45cm{
           \calM \ar[rr]^{\theta_{\calM}\quad}\ar[d]^{\phi_{\calM}}&&\calM\otimes\widehat \Omega^1_{\frakX,\log}\{-1\} \ar[rr]^{\quad\theta_{\calM}}\ar[d]^{\phi_{\calM}+\pi E'(\pi)\id}&&\cdots\ar[rr]^{\theta_{\calM}\quad}&&\calM\otimes\widehat \Omega^d_{\frakX,\log}\{-d\}\ar[d]^{\phi_{\calM}+d\pi E'(\pi)\id}\\
           \calM \ar[rr]^{\theta_{\calM}\quad}&&\calM\otimes\widehat \Omega^1_{\frakX,\log}\{-1\} \ar[rr]^{\quad\theta_{\calM}}&&\cdots\ar[rr]^{\theta_{\calM}\quad}&&\calM\otimes\widehat \Omega^d_{\frakX,\log}\{-d\}
           }
       \end{equation*}
       commutes. Denote $\HIG(\calM,\theta_{\calM},\phi_{\calM})$ the total complex of this bicomplex.

   Denote by $\HIG^{\log}_*(\frakX,\calO_{\frakX})$ the category of enhanced log Higgs bundles over $\frakX$. Similarly, we define enhanced log Higgs bundles on $\frakX_{\et}$ with coefficients in $\calO_{\frakX}[\frac{1}{p}]$ and denote the corresponding category by $\HIG^{\log}_*(\frakX,\calO_{\frakX}[\frac{1}{p}])$. When $\frakX = \Spf(R)$ is small affine, we also denote $\HIG^{\log}_*(\frakX,\calO_{\frakX})$ (resp. $\HIG^{\log}_*(\frakX,\calO_{\frakX}[\frac{1}{p}])$) by $\HIG^{\log}_*(R)$ (resp. $\HIG^{\log}_*(R[\frac{1}{p}]))$ and call objects in this category {\bf enhanced log Higgs modules} over $R$ (resp. $R[\frac{1}{p}]$).
 \end{dfn}
 
  Let $\HIG_{G_K}^{\nil}(X_C)$ be the category of $G_K$-Higgs bundles on $X_{C,\et}$ (see Definition \ref{Dfn-Higgs module with Galois-G}). Then there exists a functor 
 \[F_1: \HIG^{\log}_*(\frakX,\calO_{\frakX}[\frac{1}{p}])\to \HIG_{G_K}^{\nil}(X_C),\]
 which sends an enhanced log Higgs bundle $(\calM,\theta_{\calM},\phi_M)$ to the $G_K$-Higgs module 
 \begin{equation*}
     (\calH=\calM\otimes_{\calO_{\frakX}}\calO_{X_{C}},\theta_{\calH}=\theta_{\calM}\otimes\lambda(\zeta_p-1)\id)
 \end{equation*}
 with the $G_K$-action on $\calH$ induced by the formulae
 \begin{equation*}
     g\mapsto (1-c(g)\lambda(1-\zeta_p)\pi E'(\pi))^{-\frac{\phi_M}{\pi E'(\pi)}}.
 \end{equation*}
 One can check that $F_1$ is fully faithful (c.f. Lemma \ref{Lem-F1 is ff}).

 Now, we state our main result as follows:
 \begin{thm}[Theorem \ref{Thm-HT crystal as enhanced Higgs field}, \ref{Thm-local Prismatic cohomology}, \ref{Thm-HT crystal as Higgs bundles-G}]\label{Intro-Main result}
     Assume $\frakX$ is a semi-stable $p$-adic formal scheme over $\calO_K$ of relative dimension $d$. Then there is an equivalence from the category $\Vect((\frakX,\calM_{\frakX})_{\Prism},\overline \calO_{\Prism}[\frac{1}{p}])$ of rational Hodge--Tate crystals to the category ${\rm HIG}^{\log}_*(\frakX,\calO_{\frakX}[\frac{1}{p}])$ of enhanced log Higgs bundles with coefficients in $\calO_{\frakX}[\frac{1}{p}]$, which fits into the following commutative diagram
     \begin{equation*}
       \xymatrix@C=0.5cm{
         \Vect((\frakX,\calM_{\frakX})_{\Prism},\overline \calO_{\Prism}[\frac{1}{p}])\ar[r]^R\ar[d]^{\simeq}&\Vect((\frakX,\calM_{\frakX})_{\Prism}^{\perf},\overline \calO_{\Prism}[\frac{1}{p}])\ar[r]^{\qquad\simeq }&\Vect(X,\OX)\ar[d]^{\simeq }\\
         \HIG^{\log}_*(\frakX, \calO_{\frakX}[\frac{1}{p}])\ar[rr]^{F_1}&&\HIG^{\nil}_{G_K}(X_{C}).
       }
   \end{equation*}
     Here, all arrows are fully faithful and we use ``$\simeq$'' to denote equivalences of categories.
     
     If moreover $\frakX = \Spf(R)$ is small affine, then the above equivalence upgrades to the integral and derived level. More precisely, there is an equivalence (depending on the framing chosen) from the category $\Vect((\frakX,\calM_{\frakX})_{\Prism},\overline \calO_{\Prism})$ of Hodge--Tate crystals to the category ${\rm HIG}^{\log}_*(R)$ of enhanced log Higgs modules over $R$. In this case, there is a quasi-isomorphism 
     \[\RGamma((\frakX,\calM_{\frakX})_{\Prism},\bM)\simeq \HIG(H,\Theta,\phi)\]
     for any Hodge--Tate crystal $\bM$ with associated enhanced log Higgs module $(H,\Theta,\phi)$.
 \end{thm}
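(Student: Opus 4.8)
The plan is to reduce to the small affine case and then perform an explicit local computation via a well-chosen covering log prism, generalising the strategy of \cite{MW-c} to the logarithmic setting. Since the formation of rational Hodge--Tate crystals and of enhanced log Higgs bundles is \'etale-local on $\frakX$, and prismatic cohomology satisfies descent, it suffices to construct the equivalence for a small affine $\frakX=\Spf(R)$ with a chosen framing $R$ \'etale over $\calO_K\za T_0,\dots,T_r,T_{r+1}^{\pm 1},\dots,T_d^{\pm 1}\ya/(T_0\cdots T_r-\pi)$, and then to observe that the associated triple $(\calM,\theta_{\calM},\phi_{\calM})$ is, after inverting $p$, independent of the framing up to canonical isomorphism, so that the local rational equivalences glue to the global equivalence of the first assertion. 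The integral small affine statement then records the finer, framing-dependent refinement.

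For the local computation I would use the large log prism $\frakA_\infty$ attached to the perfectoid cover $\Rinf$ obtained by adjoining compatible systems of $p$-power roots of $\pi$ and of the coordinates $T_0,\dots,T_d$. By Lemma \ref{Intro-Equal site} this is a cover of $(\frakX,\calM_{\frakX})_{\Prism}$ whose perfect structure is understood, and the point is that its \v{C}ech nerve $\frakA_\infty^{\bullet}$ can be described explicitly: the relevant automorphisms form a group $\Zp(1)^{\oplus d}\rtimes\Gamma$, where the first factor accounts for the geometric (toric) directions and $\Gamma$ for the single arithmetic direction coming from the Galois group of the base. Evaluating a rational Hodge--Tate crystal $\bM$ on $\frakA_\infty$ produces a module over the perfectoid coefficient ring carrying a semilinear action of this group; a Sen-style decompletion then yields a finite projective $\calO_{\frakX}[\frac1p]$-module $\calM$ together with the infinitesimal operators of the action.

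Linearising the descent datum, the $d$ geometric operators assemble, after the Breuil--Kisin twist, into a nilpotent log Higgs field $\theta_{\calM}\colon\calM\to\calM\otimes_{\calO_{\frakX}}\widehat\Omega^1_{\frakX,\log}\{-1\}$, precisely as in the relative theory, while the arithmetic operator gives $\phi_{\calM}\in\End(\calM)$. The crucial feature is the twisted commutation between the arithmetic and geometric directions inside $\frakA_\infty^{\bullet}$: it forces $\phi_{\calM}$ to intertwine the two rows of the Higgs complex through the shifted maps $\phi_{\calM}+i\pi E'(\pi)\id$ on $\calM\otimes\widehat\Omega^i_{\frakX,\log}\{-i\}$, which is exactly the enhancement of Definition \ref{Intro-Dfn-LogHiggs}, and the constant $i\pi E'(\pi)$ reflects the Breuil--Kisin normalisation. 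The topological nilpotence $\lim_{n\to\infty}\prod_{i=0}^{n}(\phi_{\calM}+i\pi E'(\pi))=0$ is equivalent to the convergence of the stratification, i.e. to the finiteness of the module over the non-perfect coefficient ring, so the construction lands in $\HIG^{\log}_*(R)$; an explicit quasi-inverse is obtained by reconstructing the stratification from $(\theta_{\calM},\phi_{\calM})$ and descending along $\frakA_\infty$.

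For the cohomological assertion I would compute $\RGamma((\frakX,\calM_{\frakX})_{\Prism},\bM)$ by the \v{C}ech--Alexander complex of $\frakA_\infty$ and identify it with the Koszul complex of the commuting operators, which by the matching above is exactly the total complex $\HIG(H,\Theta,\phi)$. For the commutative diagram, Proposition \ref{Intro-HTC vs GRep} realises $R(\bM)$ as a generalised representation, and one checks that the $G_K$-Higgs module it carries agrees with $F_1(\calM,\theta_{\calM},\phi_{\calM})$; concretely, the exponential formula $g\mapsto(1-c(g)\lambda(1-\zeta_p)\pi E'(\pi))^{-\phi_{\calM}/\pi E'(\pi)}$ is what results from integrating the arithmetic operator $\phi_{\calM}$ against the cocycle $c(g)$ in the perfectoid nerve. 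Full faithfulness of all four arrows then follows by combining the two vertical equivalences with Corollary \ref{Cor-R is ff} for $R$ and Lemma \ref{Lem-F1 is ff} for $F_1$. I expect the \emph{main obstacle} to be the explicit computation of the self-products of the covering log prism while tracking the monoid/log structure, and in particular the clean isolation of the arithmetic direction so as to obtain $\phi_{\calM}$ with the exact twist $i\pi E'(\pi)$ and the correct Breuil--Kisin twists $\{-i\}$; pinning down these normalisations is the delicate heart of the argument.
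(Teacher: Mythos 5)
Your reduction to the small affine case and the intended gluing match the paper in spirit, but the core of your local argument has a genuine gap: you conflate the \v Cech nerve of the perfectoid cover in the \emph{full} site with its \v Cech nerve in the \emph{perfect} site. The group-theoretic description you invoke (automorphisms $\Zp(1)^{\oplus d}\rtimes\Gamma$, coefficient rings of continuous functions on powers of the group) is valid only for the nerve formed in $(\frakX,\calM_{\frakX})_{\Prism}^{\perf}$, where all self-coproducts are perfected (cf. Lemma \ref{Lem-compare perfection}); in the full site the self-coproducts of $(\Ainf(\Rinf),(\xi))$ are non-perfect prismatic envelopes, not function rings on $\Gamma^{\bullet}$. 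Descent along the perfect-site nerve computes $\Vect((\frakX,\calM_{\frakX})_{\Prism}^{\perf},\overline \calO_{\Prism}[\frac{1}{p}])\simeq\Vect(X_{\proet},\OX)$, i.e.\ \emph{all} generalised representations, whereas the restriction functor $R$ from the full site is fully faithful but \emph{not} essentially surjective --- this is exactly why $F_1$ in the asserted diagram is only fully faithful. Consequently your construction never detects which generalised representations extend to crystals: a Sen-style decompletion produces operators on a decompleted module, but it neither yields the constraint $\lim_{n\to\infty}\prod_{i=0}^n(\phi_{\calM}+i\pi E'(\pi))=0$ nor proves essential surjectivity onto $\HIG^{\log}_*$; identifying that essential image is the actual content of the theorem, which your argument presupposes. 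The same confusion undermines your cohomological claim: the \v Cech--Alexander complex of the group nerve computes continuous group cohomology $\RGamma(\Gamma,V(\bM))$, and its identification with $\HIG(H,\Theta,\phi)$ is precisely Conjecture \ref{Conj-loc-relative Sen}, which the paper leaves open --- it is not available as an ingredient.

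The paper's proof avoids all of this by covering with the \emph{non-perfect} Breuil--Kisin log prism $(\frakS(R),(E),M_{\frakS(R)},\delta_{\log})$ (Lemma \ref{Lem-cover rel}), computing its full-site \v Cech nerve explicitly as free pd-polynomial rings $\frakS(R)^n/(E)\cong R\{X_1,\dots,X_n,\underline Y_1,\dots,\underline Y_n\}^{\wedge}_{\pd}$ with explicit face maps (Lemma \ref{pd-poly absolute}), and unwinding the cocycle condition for a stratification to obtain the matrices $\Theta_1,\dots,\Theta_d$ and $A=\phi_M$ satisfying $[\Theta_i,\Theta_j]=0$, $[\Theta_i,A]=\pi E'(\pi)\Theta_i$ and the two nilpotence conditions (Proposition \ref{matrix of stratification}, Theorem \ref{Thm-HT crystal as enhanced Higgs field}). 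Because this computation is integral and takes place in the full site, it simultaneously delivers the integral equivalence over small affines and the quasi-isomorphism $\RGamma((\frakX,\calM_{\frakX})_{\Prism},\bM)\simeq \HIG(H,\Theta,\phi)$ (Theorem \ref{Thm-local Prismatic cohomology}); the perfectoid prism enters only afterwards, to pass to generalised representations and to derive the explicit cocycle formula of Theorem \ref{Thm-cocycle in explicit way} underlying $F_1$ and the commutative diagram. To salvage your plan you would have to either compute the full-site nerve of $\Ainf(\Rinf)$ (no easier than the paper's computation) or supply an independent characterisation of the essential image of $R$, and even then the integral statement would remain out of reach of rational decompletion techniques.
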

 \begin{rmk}
   In the classical Sen theory, given a $C$-representation $V$ with Sen operator $\phi_V$, One can always assume the matrix of $\phi_V$ has coefficients in $K$ (\cite[Theorem 5]{Sen}) with a suitable choice of basis of $V$. However, for a matrix $A$ with coefficients in $K$, it does not necessary come from a $C$-representation. Our result shows that if \[\lim_{n\to+\infty}\prod_{i=0}^n(A+i\pi E'(\pi)) = 0,\]
   then $A$ can be realized as the Sen operator of some $C$-representations. We conjecture a similar result in the relative case (c.f. Conjecture \ref{Intro-Conjecture}).
 \end{rmk}
 As a corollary, we get the following finiteness result, which is an analogue of \cite[Theorem 2.8, 2.9]{Tian} and \cite[Corollary 2.21]{MW-c} in the absolute logarithmic case.
 \begin{cor}
   Keep notations as in Theorem \ref{Intro-Main result}. Then for any Hodge--Tate crystal $\bM\in\Vect((\frakX,\calM_{\frakX})_{\Prism},\overline \calO_{\Prism})$, $\rR\nu_*(\bM)$ is a perfect complex of $\calO_{\frakX}$-modules with tor-amplitude in $[0,d+1]$, where $\nu_*:\Sh((\frakX,\calM_{\frakX})_{\Prism})\to\Sh(\frakX_{\et})$ is the natural morphism of topoi. As a consequence, if moreover $\frakX$ is proper, then $\RGamma((\frakX,\calM_{\frakX})_{\Prism},\bM)$ is a perfect complex of $\calO_K$-modules with tor-amplitude in $[0,2d+1]$. Similar results hold for rational Hodge--Tate crystals.
 \end{cor}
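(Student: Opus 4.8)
The plan is to prove the local statement first, by reducing to the small affine case and invoking the computation of logarithmic prismatic cohomology from Theorem \ref{Intro-Main result}, and then to deduce the global statement by a standard coherence bound for proper morphisms.

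Since being a perfect complex with tor-amplitude contained in $[0,d+1]$ can be checked étale-locally on $\frakX$, and since the formation of $\rR\nu_*(\bM)$ is compatible with étale localization on the base, I would first reduce to the case $\frakX=\Spf(R)$ small affine. In that case $\frakX_{\et}$ has a final object, so the derived global sections functor on quasi-coherent complexes is computed by evaluating at $\frakX$; consequently $\rR\nu_*(\bM)$, viewed in $D(\calO_{\frakX})$, is identified with the complex of $R$-modules $\RGamma((\frakX,\calM_{\frakX})_{\Prism},\bM)$. By the last part of Theorem \ref{Intro-Main result}, after choosing a framing there is a quasi-isomorphism $\RGamma((\frakX,\calM_{\frakX})_{\Prism},\bM)\simeq\HIG(H,\Theta,\phi)$, where $(H,\Theta,\phi)$ is the enhanced log Higgs module attached to $\bM$.

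It then remains to read off the amplitude from the shape of $\HIG(H,\Theta,\phi)$. By Definition \ref{Intro-Dfn-LogHiggs} this is the total complex of a two-column bicomplex whose columns are copies of the log Higgs complex $\HIG(H,\Theta)$, linked by the enhancement $\phi$. Since $\widehat\Omega^i_{\frakX,\log}$ is locally free and vanishes for $i>d$, the Higgs complex $\HIG(H,\Theta)$ is a complex of finite projective $R$-modules concentrated in degrees $[0,d]$, and the two-term enhancement by $\phi$ contributes exactly one further degree. Hence $\HIG(H,\Theta,\phi)$ is a bounded complex of finite projective $R$-modules in degrees $[0,d+1]$, i.e. perfect with tor-amplitude in $[0,d+1]$. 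As $\rR\nu_*(\bM)$ is intrinsically defined, this bound is independent of the framing and of the chart, which yields the first assertion for general semi-stable $\frakX$.

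For the second assertion, let $f\colon\frakX\to\Spf(\calO_K)$ be the structure morphism, so that $\RGamma((\frakX,\calM_{\frakX})_{\Prism},\bM)=\rR f_*\rR\nu_*(\bM)$. When $\frakX$ is proper of relative dimension $d$, the cohomological dimension of $f$ for coherent sheaves is at most $d$, so $\rR f_*$ carries a perfect complex of tor-amplitude $[0,d+1]$ to a perfect complex of tor-amplitude $[0,2d+1]$; applying this to $\rR\nu_*(\bM)$ gives the claim over $\calO_K$. The rational statements follow verbatim, working throughout with coefficients in $\calO_{\frakX}[\frac1p]$ and $R[\frac1p]$. The main obstacle I anticipate is the compatibility of $\rR\nu_*$ with étale base change on $\frakX$, which is what legitimizes passing between the intrinsic object and its local Higgs-complex model; establishing this requires comparing the logarithmic prismatic site over $\frakX$ with that over an étale chart together with a flat base change argument, and is the step where the local structure theory (Lemmas \ref{Structure lemma}, \ref{Lem-uniqueness}) is genuinely used.
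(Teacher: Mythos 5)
Your proposal is correct and follows exactly the route the paper intends: the paper states this result as a ``direct corollary'' of Theorem \ref{Thm-local Prismatic cohomology} (the local quasi-isomorphism $\RGamma_{\Prism}(\bM)\simeq\HIG(M,\theta_M,\phi_M)$), and your argument --- étale localization to the small affine case, reading off that the enhanced log Higgs complex is a bounded complex of finite projective modules in degrees $[0,d+1]$, then applying the cohomological dimension bound for the proper pushforward to get $[0,2d+1]$ --- is precisely that reduction spelled out. The étale base-change compatibility you flag at the end (that the Higgs module attached to $\bM|_{\frakU}$ is the flat base change of the one over $R$, via the crystal property) is indeed the only point needing verification, and your identification of the relevant ingredients is the right one.
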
 
 Note that if $\frakX$ is smooth, one can regard it as a log formal scheme, whose canonical log structure is actually induced by the composition $(\bN\xrightarrow{1\mapsto \pi}\calO_K\to\calO_{\frakX})$. Then one can consider the absolute logarithmic prismatic site $(\frakX,\calM_{\frakX})_{\Prism}$. Clearly, the rule $(A,I,M,\delta_{\log})\mapsto(A,I)$ defines a functor $(\frakX,\calM_{\frakX})_{\Prism}\to(\frakX)_{\Prism}$, which gives rise to a natural functor 
 \[\Vect((\frakX)_{\Prism},\overline \calO_{\Prism}[\frac{1}{p}])\to \Vect((\frakX,\calM_{\frakX})_{\Prism},\overline \calO_{\Prism}[\frac{1}{p}]).\]
 Comparing Theorem \ref{Intro-Main result} with \cite[Theorem 1.12]{MW-c} (or Theorem \ref{MW-c}), we get the following result:
 
 \begin{cor}[Corollary \ref{Cor-fully f-rel-local}, \ref{Cor-R is ff}]\label{Intro-good vs semistable}
    There is a commutative diagram of categories
    \[
   \xymatrix@C=0.45cm{
     \Vect((\frakX)_{\Prism},\overline \calO_{\Prism}[\frac{1}{p}]) \ar[d]^{\simeq}\ar[r]& \Vect((\frakX,\calM_{\frakX})_{\Prism},\overline \calO_{\Prism}[\frac{1}{p}])\ar[d]^{\simeq}\\
     \HIG^{\nil}_*(\frakX,\calO_{\frakX}[\frac{1}{p}])\ar[r]&\HIG^{\log}_*(\frakX,\calO_{\frakX}[\frac{1}{p}]),
   }
   \]
where the bottom functor sends an enhanced Higgs bundle $(\calH,\theta_{\calH},\phi_{\calH})$ to the enhanced log Higgs bundle $(\calH,\theta_{\calH},\pi\phi_{\calH})$ (and hence is fully faithful). In particular, the top horizontal arrow is fully faithful. When $\frakX=\Spf(R)$ is small affine, the diagram also holds on the integral level.
 \end{cor}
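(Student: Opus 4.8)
The plan is to reduce everything to the small affine case and then compare the two explicit Simpson-type constructions: the good-reduction one of Theorem \ref{MW-c} and the semi-stable one of Theorem \ref{Intro-Main result}. Since both vertical equivalences and both horizontal functors are compatible with \'etale localisation on $\frakX$, it suffices to establish commutativity of the square after passing to a small affine $\frakX=\Spf(R)$; the statement over a general $\frakX$ then follows by gluing, and this simultaneously yields the integral assertion. Throughout we exploit that a smooth $\frakX$ equipped with the log structure generated by $\pi$ is precisely the degenerate semi-stable case $r=0$, so that all structures of the logarithmic theory specialise to this situation.

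For the commutativity itself, I would evaluate a rational Hodge--Tate crystal $\bM$ on the distinguished Breuil--Kisin-type prism attached to a chosen framing, on the usual and on the logarithmic site simultaneously. The key observation is that in the smooth $\pi$-log case the underlying prism $(A,I)$ is literally the same object on both sites, and the relative log differentials coincide with the ordinary ones, $\widehat\Omega^1_{\frakX,\log}\cong\widehat\Omega^1_{\frakX}$ (the log structure is pulled back from the strict base $(\calO_K,\bN)$). Hence the geometric Higgs field $\theta_{\calH}$ extracted by the two constructions is identical. The only discrepancy is the arithmetic operator: the good-reduction theory reads it off through the ordinary base derivation, normalised by $E'(\pi)$, whereas the logarithmic theory uses the associated logarithmic derivation $u\,\partial_u$, normalised by $\pi E'(\pi)$. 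Evaluating $u$ at $\pi$ shows the log operator equals $\pi$ times the non-log one, so $T(\bM)$ carries the enhanced log Higgs data $(\calH,\theta_{\calH},\pi\phi_{\calH})$, which is exactly the image of $(\calH,\theta_{\calH},\phi_{\calH})$ under the bottom functor.

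Full faithfulness of the bottom functor $(\calH,\theta_{\calH},\phi_{\calH})\mapsto(\calH,\theta_{\calH},\pi\phi_{\calH})$ is then elementary. First one checks that the target is a legitimate enhanced log Higgs bundle in the sense of Definition \ref{Intro-Dfn-LogHiggs}: the compatibility diagram merely rescales by $\pi$ since $\pi\phi_{\calH}+i\pi E'(\pi)\id=\pi(\phi_{\calH}+iE'(\pi)\id)$, and the topological nilpotence follows from that of $\phi_{\calH}$ together with $\pi^{n+1}\to 0$. On morphisms, an $\calO_{\frakX}[\tfrac1p]$-linear map commuting with $\theta_{\calH}$ commutes with $\pi\phi_{\calH}$ if and only if it commutes with $\phi_{\calH}$, because $\pi$ is a non-zero-divisor in $\calO_{\frakX}[\tfrac1p]$. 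Hence the bottom functor is fully faithful, and via the two vertical equivalences the top horizontal functor is fully faithful as well.

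Alternatively, the full faithfulness of the top arrow can be obtained directly on the perfect site. Writing $T$ for the top functor and $R_1,R_2$ for the two restriction-to-perfect functors, the identification of perfect sites in Lemma \ref{Intro-Equal site} gives $R_2\circ T=R_1$, since on perfect log prisms the log structure is canonically determined and the crystal values match. Both $R_1,R_2$ are fully faithful by Corollary \ref{Cor-R is ff} and its good-reduction analogue, so $T$ on Hom-sets is the composite of the isomorphism induced by $R_1$ with the inverse of that induced by $R_2$, hence an isomorphism. The main obstacle is the bookkeeping in the second paragraph: one must track the Breuil--Kisin twists and the exact normalisation constants through both Simpson constructions to confirm that the arithmetic operator scales by \emph{exactly} $\pi$, rather than merely up to a unit or a higher-order correction.
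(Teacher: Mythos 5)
Your overall route coincides with the paper's: reduce to the small affine case by \'etale localisation, compare the two constructions at the Breuil--Kisin (log) prism attached to a framing, observe that full faithfulness of the bottom functor is elementary (an $\calO_{\frakX}[\frac{1}{p}]$-linear map commutes with $\phi_{\calH}$ if and only if it commutes with $\pi\phi_{\calH}$, as $\pi$ is a non-zero-divisor), and glue; this is Corollary \ref{Cor-fully f-rel-local} together with its globalisation in Section 4. Your alternative argument for full faithfulness of the top arrow is also correct and is in substance Remark \ref{Rmk-Big comm diagram}: under the identification of perfect sites (Corollary \ref{Cor-Equal perfect site}) one has $R_2\circ T=R_1$, the functor $R_2$ is fully faithful by Corollary \ref{Cor-R is ff}, the functor $R_1$ is fully faithful by Theorem \ref{MW-c}, and a functor whose composite with a fully faithful functor is fully faithful is itself fully faithful.

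The one genuine gap is the step you yourself defer as ``the main obstacle'', and it is precisely the content of the paper's proof. The operators in question are not defined as normalised derivations, so ``evaluating $u$ at $\pi$'' is not an argument that applies to them: $\phi_{\calH}$ and its log analogue are the degree-one coefficients $A_{1,0}$ of stratifications over the cosimplicial rings $\frakS_0^{\bullet}/(E)$ (\v Cech nerve of the Breuil--Kisin prism in the non-log site) and $\frakS^{\bullet}/(E)$ (\v Cech nerve of the Breuil--Kisin log prism). The restriction functor acts on stratifications by pullback along the canonical map of \v Cech nerves $\frakS_0^{\bullet}\to\frakS^{\bullet}$ (the log nerve is obtained from the non-log one by exactification; see the proof of Lemma \ref{Lem-cech nerve abs}), and under this map the non-log pd-coordinate, namely the class of $\frac{u_0-u_1}{E(u_0)}$, goes to $u_0\cdot\frac{1-u_1/u_0}{E(u_0)}$, whose class modulo $(E)$ is exactly $\pi X_1$ --- no unit ambiguity and no higher-order correction, because $u_0\equiv\pi\bmod (E)$ on the nose. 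Consequently $(X^{\mathrm{nonlog}})^{[n]}=\pi^{n}X_1^{[n]}$, the stratification coefficients transform as $A^{\log}_{n,I}=\pi^{n}A_{n,I}$, the geometric ($\underline Y$) coordinates match (for $r=0$ they involve only the invertible $T_j$'s and are the same on both sides), and hence $\theta^{\log}=\theta$ while $\phi^{\log}=\pi\phi$. This is what the paper carries out by comparing Lemma \ref{pd poly-abs} with \cite[Lemma 2.6]{MW-b} and Lemma \ref{pd-poly absolute} with \cite[Lemma 2.2]{MW-c}. With this coordinate comparison substituted for your derivation heuristic, the rest of your proposal (well-definedness and topological nilpotence of $\pi\phi_{\calH}$, full faithfulness, gluing, and the integral statement in the small affine case) goes through as written.
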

 Indeed, the commutative diagram in Corollary \ref{Intro-good vs semistable} is also compatible with the equivalence in Proposition \ref{Intro-HTC vs GRep} and Simpson correspondence (c.f. Remark \ref{Rmk-Big comm diagram}).  
 
 Finally, for a generalised representation $\calL$ with associated $G_K$-Higgs bundle $(\calH,\theta_{\calH})$, one can define an arithmetic Sen operator $\phi_{\calL}$ on $\calH$ such that $(\calH,\theta_{\calH},\phi_{\calL})$ is an arithmetic Higgs bundle in the sense of Definition \ref{Dfn-arithmetic Higgs module-G}. In particular, when $\frakX = \Spf(\calO_K)$, the arithmetic Sen operator coincides with classical Sen operator.
 
 Now, for a rational Hodge--Tate crystal $\bM\in \Vect((\frakX,\calM_{\frakX})_{\Prism},\overline \calO_{\Prism}[\frac{1}{p}])$ with associated enhanced log Higgs bundle $(\calH,\theta_{\calH},\phi_{\calH})$ and generalised representation $\calL$, we make the following conjecture:
 \begin{conj}\label{Intro-Conjecture}
   With notations as above, we have $\phi_{\calL} = -\frac{\phi_{\calH}}{\pi E'(\pi)}$.\footnote{The authors knew from Hui Gao that he could confirm the conjecture by using a relative version of Kummer Sen theory in \cite{Gao}.}
 \end{conj}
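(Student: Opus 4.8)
The plan is to compute the arithmetic Sen operator $\phi_{\calL}$ directly from the explicit semilinear $G_K$-action produced by the functor $F_1$, and to recognise it as the exponent appearing in that very formula. Under the equivalences of Theorem \ref{Intro-Main result} and Proposition \ref{Intro-HTC vs GRep}, the generalised representation $\calL$ attached to $\bM$ is exactly the one whose associated $G_K$-Higgs bundle is $F_1(\calH,\theta_{\calH},\phi_{\calH})$, so that on $\calH\otimes_{\calO_{\frakX}}\calO_{X_C}$ the $G_K$-action is
\[
  \rho(g)=\bigl(1-c(g)\lambda(1-\zeta_p)\pi E'(\pi)\bigr)^{-\frac{\phi_{\calH}}{\pi E'(\pi)}},\qquad g\in G_K.
\]
Thus the assertion $\phi_{\calL}=-\frac{\phi_{\calH}}{\pi E'(\pi)}$ is precisely the statement that the arithmetic Sen operator of $\calL$ coincides with the exponent of this expression, with no residual normalisation factor.

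First I would unwind the definition of $\phi_{\calL}$ from Definition \ref{Dfn-arithmetic Higgs module-G}: it is the operator recording the infinitesimal $G_K$-action along the arithmetic direction after decompletion, i.e. the residual Sen operator once the geometric Higgs field $\theta_{\calH}$ has been separated off. Concretely one descends $(\calH\otimes\calO_{X_C},\rho)$, via the relative Kummer Sen theory of Gao, to a finitely presented module over the decompletion of $\OX$ along the Kummer–cyclotomic tower $K(\pi^{1/p^\infty},\zeta_{p^\infty})$, and then defines $\phi_{\calL}$ as the suitably normalised logarithmic derivative of the resulting $\Gal(K_\infty/K)$-action. The essential input is therefore to identify the scalar cocycle $g\mapsto 1-c(g)\lambda(1-\zeta_p)\pi E'(\pi)$ with the canonical arithmetic period against which this Sen normalisation is taken.

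The computation is then a linearisation. Writing $a(g)=c(g)\lambda(1-\zeta_p)\pi E'(\pi)$ one has $\log\rho(g)=-\frac{\phi_{\calH}}{\pi E'(\pi)}\log\bigl(1-a(g)\bigr)$, so $\rho$ is tautologically the $\bigl(-\tfrac{\phi_{\calH}}{\pi E'(\pi)}\bigr)$-th power of the single scalar cocycle $g\mapsto 1-a(g)$. Granting that this scalar cocycle is exactly the one whose logarithmic derivative defines the Sen operator, one reads off $\phi_{\calL}=-\frac{\phi_{\calH}}{\pi E'(\pi)}$; in particular the factor $\pi E'(\pi)$ cancels against the one in the exponent. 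The only arithmetic content here is that the Kummer cocycle $c(g)$ (recording the $G_K$-action on $p$-power roots of $\pi$) and the period $\lambda$ combine, modulo the cyclotomic character $\chi$, into the standard Sen normalisation on a topological generator of $\Gal(K_\infty/K)$.

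The main obstacle is precisely this comparison of normalisations in the relative logarithmic setting. On the prismatic side $\phi_{\calH}$ is built from the $\delta_{\log}$-structure and the chosen framing, whereas $\phi_{\calL}$ is built by decompletion along the Kummer tower, and reconciling the two requires a genuinely relative Kummer Sen formalism that keeps track of the interaction between the arithmetic direction and the geometric Higgs directions — one must verify that $\theta_{\calH}$ truly decouples from the Sen operator — as well as of the exact value of $\lambda$ and of $c$ on a generator of $\Gal(K_\infty/K)$. Establishing that such a relative Kummer Sen theory exists and computes $\phi_{\calL}$ by the logarithmic-derivative recipe above is the substantive step; this is exactly the ingredient attributed to Gao in the footnote, and once it is in place the identity follows from the elementary linearisation.
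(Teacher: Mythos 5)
The first thing to note is that the statement you set out to prove is not a theorem of the paper but a conjecture which the paper deliberately leaves open: it reappears as Conjecture \ref{Conj-local Sen comparison} (locally) and Conjecture \ref{Conj-HT vs Sen} (globally), and the authors state explicitly that they will not attack it. The only case the paper actually proves is $\frakX=\Spf(\calO_K)$ (Theorem \ref{Thm-crys-vs-rep-abs}), and even there the decisive identification of the Sen operator is quoted from \cite[Theorem 4.3.3]{Gao}. Your proposal follows the same outline as that special case: use the commutativity $H\circ R=F_1\circ M$ of Theorem \ref{Thm-HT crystal as Higgs bundles-G} (which the paper does establish) to obtain the explicit semilinear cocycle $\rho(g)=(1-c(g)\lambda(1-\zeta_p)\pi E'(\pi))^{-\phi_{\calH}/(\pi E'(\pi))}$, and then read off the arithmetic Sen operator as the exponent. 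But the reading-off step is exactly where all the content lies, and your own text concedes it (``granting that this scalar cocycle is exactly the one whose logarithmic derivative defines the Sen operator''). What you have written is therefore a conditional reduction to the unproven ingredient named in the footnote, i.e.\ the same reduction the authors themselves make, not a proof.

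The gap is genuine and is not a matter of bookkeeping normalisations. The operator $\phi_{\calL}$ of Proposition \ref{Prop-arith decompletion}(2) is defined by decompletion along the \emph{cyclotomic} tower: $g(v)=\exp(\log\chi(g)\,\phi_V)(v)$ for $g\in\Gamma_{K(\zeta_{p^n})}$, $n\gg 0$. The cocycle $\rho$, by contrast, is written through the Kummer cocycle $c(g)$, which is a coordinate on $\widehat G_K$ independent of $\log\chi(g)$: $c$ is not a homomorphism on $G_K$ (it satisfies $c(gh)=c(g)+\chi(g)c(h)$), it restricts to a \emph{nontrivial} homomorphism on $\Gal(\bar K/K_{\cyc})$, and there is no finite-index subgroup on which $\log\bigl(1-c(g)\lambda(1-\zeta_p)\pi E'(\pi)\bigr)$ is a scalar multiple of $\log\chi(g)$. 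In particular the restriction of $\rho$ to $\Gal(\bar K/K_{\cyc})$ is nontrivial on the given basis, so the decompleted module on which $\phi_{\calL}$ is computed is \emph{not} spanned by that basis; one must first produce a period matrix trivialising the Kummer direction (equivalently, locate the relevant locally analytic/decompleted vectors inside $\calH\otimes_{\calO_{\frakX}}\calO_{X_C}$) and then check that this change of basis does not disturb the operator, all while keeping track of the geometric part $\exp(\lambda(1-\zeta_p)\sum_i n_i\Theta_i)$ of the full $\Gamma$-cocycle of Theorem \ref{Thm-cocycle in explicit way}, whose decoupling from the arithmetic direction also has to be verified rather than asserted. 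This Kummer-to-cyclotomic comparison is precisely the content of Gao's Sen theory in the $\calO_K$ case and of its relative version in general; without supplying it, the ``elementary linearisation'' does not compute $\phi_{\calL}$, and the conjecture remains open.
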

  Similar conjectures have already appeared in \cite{MW-b} and \cite{MW-c}. We confirm the above conjecture in the case for $\frakX = \Spf(\calO_K)$ (c.f. Theorem \ref{Thm-crys-vs-rep-abs}).

\subsection{Notations}\label{Notations}
  In this paper, we fix a complete discrete valuation ring $\calO_K$ of mixed characteristic $p$ with perfect residue field $k$ and fractional field $K$. Let $\bar K$ be a fixed algebraic closure of $K$, $G_K:=\Gal(\bar K/K)$ be the absolute Galois group of $K$, $C$ be the $p$-adic completion of $\bar K$ and $\pi$ be a fixed uniformizer of $\calO_K$ with minimal ploynomial $E(u)\in\frakS:=\rW(k)[[u]]$ over $\rW(k)$. We fix  compatible systems $\{\zeta_{p^n}\}_{n\geq 0}$ and $\{\pi^{\frac{1}{p^n}}\}_{n\geq 0}$ of primitive $p$-power roots of $1$ and $\pi$, respectively. Then we get elements $\epsilon = (1,\zeta_p,\dots)$ and $\pi^{\flat} = (\pi,\pi^{\frac{1}{p^n}},\dots)$ in $\calO_C^{\flat}$. Define $K_{\cyc} = \cup_{n\geq 1}K(\zeta_{p^n})$ and $K_{\infty} = \cup_{n\geq 1}K(\pi^{\frac{1}{p^n}})$. Then $K_{\cyc,\infty}:=K_{\cyc}K_{\infty} = \cup_{n\geq 1}K_n$ for $K_n=K(\zeta_{p^n},\pi^{\frac{1}{p^n}})$. Denote by $\hat L$ the $p$-adic completion for $L\in\{K_{\cyc},K_{\infty},K_{\cyc,\infty}\}$. Note that both $K_{\cyc,\infty}$ and $K_{\cyc}$ are Galois extension of $K$. We denote by $\widehat G_K = \Gal(K_{\cyc,\infty}/K)$ and $\Gamma_K = \Gal(K_{\cyc}/K)$ the corresponding Galois groups and then get an exact sequence\footnote{The sequence always splits and induces an isomorphism $\widehat G_K\simeq \Zp(1)\rtimes\Gamma_K$ when $p\geq 3$ (cf. \cite[Lemma 5.1.2]{Liu}) and for suitable choice of $\pi$ when $p=2$ (cf. \cite[Lemma 2.1]{Wang}). In these cases, it is easy to see that $K_{\cyc}\otimes_K K_{\infty} = K_{\cyc,\infty}$.}
  \[1\to\Zp(1)\to\widehat G_K\to \Gamma_K\to 1.\]
  Let $\chi:\Gamma_K\to\bZ_p^{\times}$ be the cyclotomic character and $c:G_K\to\Zp$ be the map determined by $\sigma(\pi^{\flat}) = \epsilon^{c(\sigma)}\pi^{\flat}$ for any $\sigma\in G_K$. Let $\Ainf = \rW(\calO_C^{\flat})$ be the infinitesimal period ring of Fontaine and $\theta:\Ainf\to\calO_C$ be the natural surjection. Then $\Ker(\theta)$ is generated by either $E([\pi^{\flat}])$ or $\xi = \frac{\mu}{\varphi^{-1}(\mu)}$ for $\mu = [\epsilon]-1$. Define $\lambda = \theta(\frac{\xi}{E[\pi^{\flat}]})$, which is a unit in $\calO_C$. We always equip $\calO_K$ with the log structure associated to $\bN\xrightarrow{1\to \pi}\calO_K$ and equip $\frakS$ with the $\delta$-structure such that $\varphi(u) = u^p$.
  
  In this paper, we essentially consider semi-stable $p$-adic formal schemes\footnote{When we say a $p$-adic formal scheme is semi-stable, we always assume it is separated.} $\frakX$ over $\calO_K$ with rigid generic fibres $X$. In other words, \'etale locally, $\frakX = \Spf(R)$ is affine such that there exists an \'etale morphism 
  \[\Box:\calO_K\za T_0,\dots, T_r,T_{r+1}^{\pm 1},\dots, T_d^{\pm 1}\ya/\za T_0\cdots T_r-\pi\ya \to R\]
  of $\calO_K$-algebras for some $0\leq r\leq d$. In this case, we say $\frakX$ or $R$ is {\bf small affine}. We also say $\frakX$ {\bf admits a chart} if this is the case. We often consider such an $\frakX$ as a log formal scheme with the canonical log structure $\calM_{\frakX}\to\calO_{\frakX}$ for $\calM_{\frakX} = \calO_{\frakX}\cap\calO_X^{\times}$. When $\frakX = \Spf(R)$ is small affine as above, $\calM_{\frakX}$ is induced by the pre-log structure $\bN^{r+1}\xrightarrow{e_i\mapsto T_i,\forall i}R$, where $e_i$ denotes the generator of $(i+1)$-th component of $\bN^{r+1}$ for $0\leq i\leq r$.

  \subsection{Organizations}

  In Section $2$, we give a quick review on logarithmic prismatic site and prove some basic properties for logarithmic prismatic theory. In section $3$, we study Hodge--Tate crystals on absolute logarithmic prismatic site for a semi-stable $p$-adic formal scheme over $\calO_K$ and show Simpson correspondence in the local case. We first deal with the $\Spf(\calO_K)$ case and then move to the higher dimensional case. In Section $4$, we glue local constructions and deduce a global theory.
\section*{Acknowledgement}
  The authors thank Heng Du for valuable discussions. The first author is supported by China Postdoctoral Science Foundation E1900503.
\section{The logarithmic prismatic site}
 The logarithmic prismatic site was introduced by Koshikawa \cite{Kos} as an analogue of the prismatic site defined in \cite{BS-a} in the theory of logarithmic geometry. In this section, we give a quick review of its definition and provide some basic relevant properties. 
 
 \begin{dfn}[\emph{\cite[Definition 2.2]{Kos}}]
   A {\bf $\delta_{\log}$-ring} is a tuple $(A,\delta,M\xrightarrow{\alpha} A,\delta_{\log}:M\to A)$ consisting of a $\delta$-ring $(A,\delta)$, a prelog structure $\alpha:M\to A$ and a map $\delta_{\log}:M\to A$ such that

       $(1)$ $\delta_{\log}(e_M) = 0$, where $e_M$ is the unity of the commutative monoid $M$;
       
       $(2)$ for any $m\in M$, $\alpha(m)^p\delta_{\log}(m) = \delta(\alpha(m))$;
       
       $(3)$ for any $m_1,m_2\in M$, $\delta_{\log}(m_1m_2) = \delta_{\log}(m_1)+\delta_{\log}(m_2)+p\delta_{\log}(m_1)\delta_{\log}(m_2).$

   We often write $(A,M,\delta_{\log})$ for a $\delta_{\log}$-ring for simplicity.
   
   A morphism $f:(A,M,\delta_{\log})\to (B,N,\delta_{\log})$ of $\delta_{\log}$-rings is a morphism of prelog rings compatible with both $\delta$ and $\delta_{\log}$.
 \end{dfn}

 Let $(A,M\xrightarrow{\alpha}A,\delta_{\log})$ be a $\delta_{\log}$-ring and $\varphi$ be the induced Frobenius endomorphism on $A$. Then the rule
 \[m\mapsto 1+p\delta_{\log}(m)\]
 defines a morphism of monoids $M\to 1+pA$ such that for any $m\in M$, 
 \begin{equation}\label{Equ-Frob on log structure}
     \varphi(\alpha(m)) = \alpha(m)^p(1+p\delta_{\log}(m)).
 \end{equation}
 
 If $p$ belongs to the Jacobson radical of $A$, then $1+pA\subset A^{\times}$. This implies that for any $n\geq 1$, $\varphi^n(\alpha(m))$ differs from $\alpha(m)^{p^m}$ by a unit. Therefore if $\alpha: M\to A$ is furthermore a log structure (i.e. the restriction of $\alpha$ on $\alpha^{-1}(A^{\times})$ induces an isomorphism $\alpha^{-1}(A^{\times})\to A^{\times}$), then the rule $m\mapsto m^p\alpha^{-1}(1+p\delta_{\log}(m))$ defines an endomorphism $\phi$ of $M$ and $(\varphi:A\to A,\phi:M\to M)$ is an endomorphism of the log structure $(M\to A)$. In this case, $(\varphi,\phi)$ is a lifting of the Frobenius endomorphism of the log structure $(M^a\to A/p)$, where $(M^a\to A/p)$ is the associated log structure on $A/p$ induced by the prelog structure $(M\to A/p)$.

 \begin{dfn}[\emph{\cite[Definition 3.3]{Kos}}]
   A {\bf (bounded) prelog prism} is a tuple $(A,I,M,\delta_{\log})$ such that $(A,I)$ is a (bounded) prism and $(A,M,\delta_{\log})$ is a $\delta_{\log}$-ring. A bounded prelog prism $(A,I,M,\delta_{\log})$ is called a {\bf log prism} if $(A,M)$ is $(p,I)$-adically log-affine. Morphisms of (pre-)log prisms are defined as morphisms of underlying $\delta_{\log}$-rings.
 \end{dfn}
 
 Every bounded prelog prism is uniquely associated to a log prism in the following sense:
 
 \begin{lem}[\emph{\cite[Proposition 2.14,Corollary 2.15]{Kos}}]\label{Lem-prolog-to-log}
   Let $(A,M\xrightarrow{\alpha}A,\delta_{\log})$ be a $\delta_{\log}$-ring and $N$ be the monoid fitting into the following push-out diagram of monoids:
     \[
     \xymatrix@C=0.5cm{
     \alpha^{-1}(A^{\times})\ar[rr]\ar[d]&& A^{\times}\ar[d]\\
     M\ar[rr]&& N
     }.
     \]
   Then $(A,N)$ admits a unique $\delta_{\log}$-structure compatible with $(A,M,\delta_{\log})$. If moreover $A$ is classically $J$-adically complete for some finite generated ideal $J$ of $A$ containing $p$ (e.g. $(A,I,M,\delta_{\log})$ is a bounded prelog prism and $J=I+pA$), then the above construction is compatible with the \'etale localization of the log $J$-adic formal scheme $(\Spf(A),\underline M^a)$.
 \end{lem}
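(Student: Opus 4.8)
The plan is to prove the two assertions—existence and uniqueness of the $\delta_{\log}$-structure on $(A,N)$, and its compatibility with étale localization—separately, reducing everything to the behaviour of $\delta_{\log}$ on units. Recall first that $N = M\oplus_{\alpha^{-1}(A^{\times})}A^{\times}$ is exactly the log structure associated to the prelog structure $\alpha\colon M\to A$, so its structure map $\alpha_N\colon N\to A$ restricts to $\alpha$ on $M$ and to the inclusion on $A^{\times}$, and every element of $N$ is the image of a product $m\cdot u$ with $m\in M$ and $u\in A^{\times}$. For uniqueness I would first observe that on units the axioms force the value: since $u\in A^{\times}$, axiom $(2)$ reads $u^{p}\delta_{\log}(u)=\delta(u)$, so necessarily $\delta_{\log}(u)=\delta(u)u^{-p}$, a well-defined element of $A$ because $u$ is invertible (no division by $p$ is needed). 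Axiom $(3)$ then forces $\delta_{\log}(m\cdot u)=\delta_{\log}(m)+\delta_{\log}(u)+p\,\delta_{\log}(m)\delta_{\log}(u)$, which determines $\delta_{\log}$ on all of $N$ from its prescribed values on $M$.

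For existence I would introduce the commutative, associative formal group law $a\oplus b:=a+b+pab$ on $A$, under which $x\mapsto 1+px$ transports $(A,\oplus)$ into $(A,\cdot)$, and define $\delta_{\log}(m\cdot u):=\delta_{\log}(m)\oplus\big(\delta(u)u^{-p}\big)$. The real content is to check this is independent of the chosen decomposition $m\cdot u$. Since the congruence defining the pushout is generated by the elementary move $(m,u)\mapsto(m p_{0},\,u\,\alpha(p_{0})^{-1})$ for $p_{0}\in\alpha^{-1}(A^{\times})$, it suffices to check invariance under this move; using that $\oplus$ is associative and commutative and that $v\mapsto \delta(v)v^{-p}$ is a $\oplus$-homomorphism on $A^{\times}$, the change amounts to inserting the term $\delta_{\log}(p_{0})\oplus\big(\delta(\alpha(p_{0})^{-1})\alpha(p_{0})^{p}\big)$. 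The crucial point, which I expect to be the main obstacle, is that this term vanishes: axiom $(2)$ applied to $p_{0}\in M$ gives $\delta_{\log}(p_{0})=\delta(\alpha(p_{0}))\alpha(p_{0})^{-p}$, i.e. the value prescribed on $M$ agrees with the unit formula evaluated at $\alpha(p_{0})$, whence the inserted term equals $\delta_{\log}(\alpha(p_{0})\cdot\alpha(p_{0})^{-1})=\delta_{\log}(1)=0$. This is precisely where the hypothesis that $(A,M,\delta_{\log})$ is a $\delta_{\log}$-ring (not merely that $\alpha$ is a prelog structure) enters, and note that the argument is carried out entirely inside $(A,\oplus)$, so no $p$-torsion-freeness of $A$ is required. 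With well-definedness in hand, axioms $(1)$ and $(3)$ are immediate from the monoid structure, and axiom $(2)$ for $n=m\cdot u$ follows from a direct Leibniz computation: expanding $(\alpha(m)u)^{p}\delta_{\log}(n)$ and substituting $u^{p}\delta_{\log}(u)=\delta(u)$ and $\alpha(m)^{p}\delta_{\log}(m)=\delta(\alpha(m))$ reproduces $\delta(\alpha(m)u)=\alpha(m)^{p}\delta(u)+u^{p}\delta(\alpha(m))+p\,\delta(\alpha(m))\delta(u)$.

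For the final assertion, assume $A$ is classically $J$-adically complete with $p\in J$, and let $A\to A'$ be a $J$-adically étale morphism corresponding to a strict étale localization of $(\Spf A,\underline M^{a})$. Here I would invoke two standard facts: that the $\delta$-structure (equivalently, the Frobenius lift $\varphi$) extends uniquely along such étale maps, so $A'$ is canonically a $\delta$-ring over $A$; and that the associated log structure commutes with strict étale base change, so the log structure on $\Spf A'$ is the one associated to $M\to A\to A'$, with pushout $N'=N\oplus_{A^{\times}}(A')^{\times}$. Composing $\delta_{\log}\colon M\to A\to A'$ makes $(A',M,\delta_{\log})$ a $\delta_{\log}$-ring—axiom $(2)$ survives because $A\to A'$ is a $\delta$-morphism—so the first part of the lemma, now applied to $A'$, produces a unique $\delta_{\log}$-structure on $(A',N')$. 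The required compatibility, namely that this structure is the base change of the one on $(A,N)$ and is compatible with further localization, then follows formally from the uniqueness established above, since the pulled-back structure satisfies the same axioms. The only delicate inputs are the unique extension of $\delta$ along étale maps and the base-change compatibility of the associated log structure; both are standard, so this part is essentially formal once the first two paragraphs are in place.
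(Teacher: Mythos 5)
Your proof is correct, and there is nothing in the paper to compare it against line by line: the paper gives no argument for this lemma, quoting it directly from Koshikawa \cite[Proposition 2.14, Corollary 2.15]{Kos}. Your argument --- forcing $\delta_{\log}(u)=u^{-p}\delta(u)$ on units, proving existence by checking invariance of $\delta_{\log}(m)\oplus\bigl(\delta(u)u^{-p}\bigr)$ under the elementary moves generating the pushout congruence (working with the group law $a\oplus b=a+b+pab$ so that no $p$-torsion-freeness is needed), and reducing the \'etale-localization statement to the uniqueness of part one plus the unique extension of $\delta$-structures along $J$-completely \'etale maps and the base-change compatibility of associated log structures --- is essentially the proof in that reference. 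One small imprecision in your last paragraph: the pulled-back log structure on $\Spf(A')$ is the one associated to the composite prelog structure $M\to A\to A'$, i.e. the pushout of $M\leftarrow \alpha'^{-1}\bigl((A')^{\times}\bigr)\to (A')^{\times}$, which is in general a further quotient of $N\oplus_{A^{\times}}(A')^{\times}$, since elements of $M$ may become invertible only after localization; this does not damage your argument, because you apply the first part of the lemma directly to $(A',M,\delta_{\log})$ and conclude by uniqueness, but the parenthetical identification $N'=N\oplus_{A^{\times}}(A')^{\times}$ should be dropped.
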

 If no ambiguity appears, for a bounded prelog prism $(A,I,M,\delta_{\log})$, we denote its associated log prism by $(A,I,M^a,\delta_{\log})$, where $M^a:=\Gamma(\Spf(A),\underline M^a)$.

 Now, we follow \cite[Definition 4.1]{Kos} and \cite[Definition 5.0.6]{DL} to define the absolute logarithmic prismatic site for a log $p$-adic formal scheme $(\frakX,\calM_{\frakX})$.
 
 \begin{dfn}[Absolute logarithmic prismatic site]\label{Dfn-log site}
   Let $(\frakX,\calM_{\frakX})$ be a log $p$-adic formal scheme. We denote by $(\frakX,\calM_{\frakX})_{\Prism}$ the opposite of the category whose objects are log prisms $\frakA=(A,I,M,\delta_{\log})$ with integral log structures together with maps of $p$-adic formal schemes $f_{\frakA}:\Spf(A/I)\to \frakX$ which induce exact closed immersions of log $(p,I)$-adic formal schemes $(\Spf(A/I),f_{\frakA}^*\calM_{\frakX})\to(\Spf(A),\underline M^a)$\footnote{This amounts to that the log structure $f_{\frakA}^*\calM_{\frakX}$ on $\Spf(A/I)$ coincides with that  associated to the composition $M\to A\to A/I$.}. 
   A morphism 
   \[g:\frakA = (A,I,M,\delta_{\log}) \to \frakB = (B,J,N,\delta_{\log})\]
   of objects $\frakA,\frakB\in (\frakX,\calM_{\frakX})_{\Prism}$ is a morphism of log prisms making the following diagram
   \[
   \xymatrix@C=0.45cm{
     (\Spf(B),\underline N^a)\ar[rr]&&(\Spf(A),\underline M^a)\\
     (\Spf(A/I),f_{\frakB}^*\calM_{\frakX})\ar[rr]\ar[u]\ar[rd]&&(\Spf(A/I),f_{\frakA}^*\calM_{\frakX})\ar[u]\ar[ld]\\
     & (\frakX,\calM_{\frakX})
   }
   \]
   of morphisms of log $(p,I)$-adic formal schemes commute. Such a morphism is a cover if and only if $A\to B$ is $(p,I)$-adically faithfully flat and the induced morphism of $(\Spf(B),\underline N^a)\to (\Spf(A),\underline M^a)$ is strict\footnote{This amounts to that the log structure $N$ on $B$ coincides with that associated to the composition $M\to A\to B$.}. We define the structure sheaf $\calO_{\Prism}$ on $(\frakX,\calM_{\frakX})_{\Prism}$ by sending an object $(A,I,M,\delta_{\log})$ to $A$.
 \end{dfn}
  When $\frakX = \Spf(R)$ is affine and the log structure $\calM_{\frakX}$ is induced by an integral prelog structure $(M\to R)$, we also denote the absolute logarithmic prismatic site $(\frakX,\calM_{\frakX})_{\Prism}$ by $(M\to R)_{\Prism}$.

 \begin{dfn}[The perfect logarithmic prismatic site]
   A prelog prism $(A,I,M,\delta_{\log})$ is called {\bf perfect} if $(A,I)$ is a perfect prism. For a log $p$-adic formal scheme $(\frakX,\calM_{\frakX})$, we denote by $(\frakX,\calM_{\frakX})_{\Prism}^{\perf}$ the full subcategory of $(\frakX,\calM_{\frakX})_{\Prism}$ whose objects are perfect log prisms.
 \end{dfn}
 
% \begin{rmk}\label{Rmk-perfect log prism}
 
%     In general, the analogue of \cite[Theorem 3.10]{BS-a} fails in the logarithmic case. For example, the log structures associated to the prelog structures $(\bN\xrightarrow{1\mapsto p}\Ainf)$ and $(\bQ\xrightarrow{\alpha\mapsto [(p^{\flat})^{\alpha}]}\Ainf)$ are diferent.
     
%     However, if we fix a perfect log prism $(A,I,M,\delta_{\log})\in (\frakX,\calM_{\frakX})_{\Prism}$, then the category of perfect log prisms in $(\frakX,\calM_{\frakX})_{\Prism}$ over $(A,I,M,\delta_{\log})$ is equivalent to the category of perfectoid $A/I$-algebras (See Remark \ref{rigidity}).
    
% \end{rmk}
 \begin{rmk}[Perfection]\label{perfection}
   For a prelog prism $(A,I,M,\delta_{\log})$ with induced Frobenius lifting $\varphi$, we denote by $A_{\perf}$ the $(p,I)$-completion of the perfection of $A$ with respect to $\varphi$. Then $(A_{\perf},I,M\to A\to A_{\perf},\delta_{\log})$ is the initial object in the category of prelog prisms over $(A,I,M,\delta_{\log})$.
 \end{rmk}
 
 Now we give some basic examples of log prisms used in this paper.
 \begin{exam}\label{Exam-abs case}
       $(1)$ Let $M_{\frakS}\to\frakS$ be the log structure on $\frakS$ associated to the prelog structure $\bN\xrightarrow{1\mapsto u}\frakS$ and let $\delta_{\log}(u) = 0$. Then $(\frakS,(E),M_{\frakS},\delta_{\log})$ is a log prism in $(\bN\xrightarrow{1\mapsto \pi}\calO_K)_{\Prism}$, which will be referred as {\bf Breuil--Kisin log prism}.
       
       $(2)$ Let $M_{\Ainf}\to\Ainf$ be the log structure on $\Ainf$ associated to the prelog structure $\bN\xrightarrow{1\mapsto [\pi^{\flat}]}\Ainf$. Then $(\Ainf,(\xi),M_{\Ainf},\delta_{\log})$ is a perfect log prism in $(\bN\xrightarrow{1\mapsto \pi}\calO_K)_{\Prism}$, which will be referred as {\bf Fontaine log prism}.
       
       $(3)$ The map $\iota:\frakS\to \Ainf$ sending $u$ to $[\pi^{\flat}]$ induces a morphism 
       \[(\frakS,(E),M_{\frakS},\delta_{\log})\to (\Ainf,(\xi),M_{\Ainf},\delta_{\log})\]
       of log prisms, which is a cover in $(\bN\xrightarrow{1\mapsto \pi}\calO_K)_{\Prism}$.
 \end{exam}
 
 \begin{exam}\label{Exam-rel case}
   Let $\Box:\calO_K\za T_0,\dots, T_r,T_{r+1}^{\pm 1},\dots, T_d^{\pm 1}\ya/\za T_0\cdots T_r-\pi\ya \to R$ be an \'etale morphism and $\alpha:\bN^{r+1}\to R$ be the morphism of monoids sending $e_i$ to $T_i$ for any $0\leq i\leq r$, where $e_i$ denotes the generator of $(i+1)$-th component of $\bN^{r+1}$. Let $\widehat R_{\infty}$ be the $p$-complete base change to $\widehat A_{\infty}$,
  % \[\widehat A_{\infty} = \calO_C\za T_0^{\frac{1}{p^{\infty}}},\dots, T_r^{\frac{1}{p^{\infty}}},T_{r+1}^{\pm \frac{1}{p^{\infty}}},\dots, T_d^{\pm \frac{1}{p^{\infty}}}\ya/\za T_0^{\frac{1}{p^{\infty}}}\cdots T_r^{\frac{1}{p^{\infty}}}-\pi^{\frac{1}{p^{\infty}}}\ya,\]
which is the $p$-adic completion of the union $\cup_{n\geq 0} A_n$ for
   \[A_n = \calO_C\za T_0^{\frac{1}{p^{n}}},\dots, T_r^{\frac{1}{p^{n}}},T_{r+1}^{\pm \frac{1}{p^{n}}},\dots, T_d^{\pm \frac{1}{p^{n}}}\ya/\za T_0^{\frac{1}{p^{n}}}\cdots T_r^{\frac{1}{p^{n}}}-\pi^{\frac{1}{p^{n}}}\ya,\]
   along the \'etale morphism $\Box$.
   
       $(1)$ Let $\frakS(R)$ be the unique lifting of $R$ over 
       \[\frakS\za T_0,\dots, T_r,T_{r+1}^{\pm 1},\dots, T_d^{\pm 1}\ya/\za T_0\cdots T_r-u\ya\]
       induced by $\Box$ and $\bN^{r+1}\to \frakS(R)$ be the morphism of monoids sending $e_i$ to $T_i$ for any $0\leq i\leq r$ which induces a log structure $M_{\frakS(R)}$ on $\frakS(R)$. Put $\delta_{\log}(e_i) = \delta(T_j) = 0$ for any $0\leq i\leq r<j\leq d$. Then $(\frakS(R),(E),M_{\frakS(R)},\delta_{\log})$ is a log prism in $(\bN^{r+1}\xrightarrow{\alpha}R)_{\Prism}$.
       
       $(2)$ Let $M_{\Ainf(\widehat R_{\infty})}$ be the log structure on $\Ainf(\widehat R_{\infty})$ associated to the prelog structure $\bN^{r+1}\to \Ainf(\widehat R_{\infty})$ sending $e_i$ to $[T_i^{\flat}]$, where $T_i^{\flat} = (T_i,T_i^{\frac{1}{p}},\dots)$ in $\widehat R_{\infty}^{\flat}$ for any $0\leq i\leq r$. Then $(\Ainf(\widehat R_{\infty}),(\xi),M_{\Ainf(\widehat R_{\infty})},\delta_{\log})$ is a perfect log prism in $(\bN^{r+1}\xrightarrow{\alpha}R)_{\Prism}$.
       
       $(3)$ The map $\frakS(R)\to\Ainf(\widehat R_{\infty})$ sending $u$ to $[\pi^{\flat}]$ and $T_i$'s to $[T_i^{\flat}]$'s defines a morphism 
       \[(\frakS(R),(E),M_{\frakS(R)},\delta_{\log})\to(\Ainf(\widehat R_{\infty}),(\xi),M_{\Ainf(\widehat R_{\infty})},\delta_{\log})\]
       of log prisms, which is a cover in $(\bN^{r+1}\xrightarrow{\alpha}R)_{\Prism}$.
 \end{exam}
 
 The following structure lemma says that in some cases, log structures of prisms in $(\frakX,\calM_{\frakX})_{\Prism}$ are ``uniformly'' determined by the log structure $\calM_{\frakX}$ on $\frakX$.

 \begin{lem}\label{Structure lemma}
     $(1)$ For any log prism $(A,I,M,\delta_{\log})$ in $ (\bN\xrightarrow{1\mapsto \pi}\calO_K)_{\Prism}$, there exists a lifting $u$ of $\pi\in A/I$ such that the log structure $M$ is associated to the prelog structure $(\bN\xrightarrow{1\mapsto u}A)$.
     
     $(2)$ Let $(\bN^{r+1}\to R)$ be as in Example \ref{Exam-rel case}. Then for any log prism $(A,I,M\xrightarrow{\alpha}A,\delta_{\log})\in (\bN^{r+1}\to R)_{\Prism}$, there exist liftings $t_i$'s of $T_i$'s in $A$ such that $M$ is associated to the prelog structure $\bN^{r+1}\to A$ which sends $e_i$ to $t_i$ for any $0\leq i\leq r$.
 \end{lem}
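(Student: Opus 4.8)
The plan is to construct the asserted (pre)log chart on $A$ by lifting, along the $I$-adic thickening $A\twoheadrightarrow A/I$, the tautological chart carried by $f_{\frakA}^*\calM_{\frakX}$. Both statements are instances of a single argument, with $\bN$ (resp. $\bN^{r+1}$) and the lift of $\pi$ (resp. of the $T_i$) playing interchangeable roles; I would treat them uniformly, writing $P$ for the chart monoid ($\bN$ or $\bN^{r+1}$) and $\alpha$ for the structure map of the log structure $\underline M^a$ extending the given prelog $\alpha\colon M\to A$.

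The starting observation is that, $(A,I)$ being a bounded prism, $A$ is $(p,I)$-adically complete, so $I\subseteq(p,I)$ lies in the Jacobson radical of $A$. Hence an element of $A$ is a unit if and only if it is a unit modulo $I$, and in particular $A^{\times}\to(A/I)^{\times}$ is surjective. I would extract two consequences. First, for $m\in M^a$ one has $\alpha(m)\in A^{\times}$ iff $\alpha(m)\bmod I\in(A/I)^{\times}$; thus the submonoid of $M^a$ whose image lands in $A^{\times}$ is unchanged under reduction, so the characteristic (sharp) monoids satisfy $\overline{M^a}\xrightarrow{\sim}\overline{M^a|_{A/I}}$. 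Second, comparing the exact sequences $1\to A^{\times}\to M^a\to\overline{M^a}\to 0$ and $1\to(A/I)^{\times}\to M^a|_{A/I}\to\overline{M^a|_{A/I}}\to 0$, the surjectivity of $A^{\times}\to(A/I)^{\times}$ together with the isomorphism on characteristics forces $\Gamma(\Spf(A),\underline M^a)\to\Gamma(\Spf(A/I),\underline M^a|_{A/I})$ to be surjective.

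Next I would invoke the defining exactness condition of the site: by the footnote to Definition \ref{Dfn-log site}, $\underline M^a|_{A/I}$ agrees with $f_{\frakA}^*\calM_{\frakX}$, and the latter is the log structure associated to the pulled-back prelog structure $P\to A/I$ (sending $1\mapsto\bar\pi$, resp. $e_i\mapsto\bar T_i$). Thus $P\to A/I$ is a chart of $M^a|_{A/I}$. Using the surjectivity of global sections from Step 1, I lift the images of the generators to elements of $M^a$; after multiplying by units (which again lift) I may arrange that their images $u:=\alpha(s)$ (resp. $t_i:=\alpha(s_i)$) reduce to $\pi$ (resp. $T_i$) modulo $I$. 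This produces a prelog structure $P\to A$ lifting $P\to A/I$, together with a morphism of log structures $(P\to A)^a\to M^a$.

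It remains to check that this morphism is an isomorphism, which is the crux. A morphism of log structures on $A$ is an isomorphism as soon as it induces an isomorphism on characteristic monoids, both sides being $A^{\times}$-extensions of their characteristics (here integrality of $M$ is what keeps these characteristics well-behaved and lets one conclude by a diagram chase). By Step 1, $\overline{M^a}\cong\overline{M^a|_{A/I}}$ and likewise $\overline{(P\to A)^a}\cong\overline{(P\to A/I)^a}$; since $P\to A/I$ is a chart, the map on these mod-$I$ characteristics is an isomorphism, hence so is the map upstairs, giving $M^a=(P\to A)^a$ as claimed. I expect the genuine obstacle to be exactly this last step, namely the rigidity $\overline{M^a}\cong\overline{M^a|_{A/I}}$ ensuring that no new relations among the lifted generators appear; I would also stress that in part $(2)$ the characteristic sheaf of $\calM_{\frakX}$ is \emph{not} constant along the special fibre, so it is essential to argue throughout with the fixed global chart $\bN^{r+1}$ rather than with stalkwise sharp monoids.
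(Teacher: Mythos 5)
Your construction of the candidate chart is the same as the paper's: both use the exactness condition in Definition \ref{Dfn-log site} to realize the chart generators of $f_{\frakA}^*\calM_{\frakX}$ as unit multiples of elements coming from $M$, and both lift units along $A\to A/I$ (licensed by $(p,I)$-completeness, so $I$ lies in the Jacobson radical) to get a morphism of log structures $(P\to A)^a\to M^a$ reducing to an isomorphism mod $I$. Where you genuinely diverge is the final step. The paper concludes by invoking deformation theory of log structures (\cite[Theorem 8.36]{Ols}, via the comparison of log and ordinary cotangent complexes, as elaborated in Remark \ref{rigidity}): two log structures on the $I$-complete ring $A$ connected by a morphism that is an isomorphism mod $I$ must be isomorphic. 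You instead give an elementary rigidity argument: since units of $A/I$ lift to $A$, the preimage of the units is unchanged under reduction, so the characteristic monoids of $(P\to A)^a$ and of $M^a$ agree with those of their reductions; the reduction map on characteristics is an isomorphism because $P\to A/I$ is a chart; and a morphism of log structures over the same ring inducing an isomorphism on characteristics is an isomorphism by the $A^{\times}$-torsor diagram chase, where (as you correctly flag) integrality of $M$ — an assumption built into the site — supplies the needed cancellativity. Both arguments are correct; the paper's is shorter given the black box and fits the deformation-theoretic viewpoint used elsewhere (Remark \ref{rigidity}), while yours is self-contained, avoids the cotangent-complex machinery entirely, and makes the role of integrality explicit. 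One small caution: your intermediate claim that $\Gamma(\Spf(A),\underline M^a)\to\Gamma(\Spf(A/I),\underline M^a|_{A/I})$ is surjective should not be argued purely from the two exact sequences of sheaves (lifting a global section of the characteristic sheaf carries an $\calO^{\times}$-torsor obstruction in general); it is cleanest to use the explicit pushout description $M^a|_{A/I}=M\oplus_{\alpha^{-1}((A/I)^{\times})}(A/I)^{\times}$ available from Lemma \ref{Lem-prolog-to-log} and the log-affineness built into the definition of a log prism — which is in fact exactly how the paper extracts its elements $m_i$ — after which surjectivity is immediate by lifting the unit component.
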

 \begin{proof}
   The item $(1)$ is \cite[Lemma 5.0.10]{DL}. The proof of item $(2)$ is similar but for the convenience of readers, we show details here.
   
   By virtues of Definition \ref{Dfn-log site}, the log structures on $A/I$ associated to the prelog structures $(\bN^{r+1}\to R\to A/I)$ and $(M\to A\to A/I)$ coincide. Therefore, there exists $m_i$'s in $M$ such that $\alpha(m_i)=\bar u_iT_i$ in $(A/I)$ for $\bar u_i$'s in $(A/I)^{\times}$. Since $A$ is $I$-adically complete, $\bar u_i$'s lift to units $u_i$'s in $A^{\times}$. So replacing $m_i$'s by $m_iu_i$'s, we may assume $\alpha(m_i) =T_i\mod I$. Let $M'\to A$ be the log structure associated to the prelog structure
   \[(\bN^{r+1}\xrightarrow{e_i\mapsto m_i,\forall 0\leq i\leq r} M\xrightarrow{\alpha} A).\]
   Then we get a morphism $f:(M'\to A)\to (M\to A)$ of log structures whose induced log structures modulo $I$ coincide. It remains to show $f$ is the identity itself. Since $A$ is $I$-complete, this follows from deformation theory of log structures (e.g. \cite[Theorem 8.36]{Ols}) directly.
 \end{proof}
 
 \begin{rmk}[Rigidity of log structures]\label{rigidity}
   Let $(A,I,M,\delta_{\log})\in (\frakX,\calM_{\frakX})_{\Prism}$ be a log prism. For any $A/I$-algebra $\overline B$, which is endowed with the log structure $(M^a\to \overline B)$ associated to $(M\to A/I\to \overline B)$, the deformation problem for lifting $(M^a\to \overline B)$ to a log prism over $(A,I,M,\delta_{\log})$ coincides with that for lifting $\overline B$ to a prism over $(A,I)$. Indeed, by \cite[Lemma 8.22, Lemma 8.26]{Ols}, there exists a canonical quasi-isomorphism between ($(p,I)$-completions of) cotangent complexes $\rL_{(M^a\to \overline B)/(M\to A/I)}$ and $\rL_{\overline B/(A/I)}$. In particular, for any $(B,IB,N,\delta_{\log})\in (\frakX,\calM_{\frakX})_{\Prism}$ over $(A,I,M,\delta_{\log})$, the log structure $N\to B$ has to be associated to the prelog structure $(M\to A\to B)$.
 \end{rmk}
 \begin{rmk}
   Let $(A,I,M,\delta_{\log})\to(B,IB,N,\delta_{\log})$ be a cover in $(\frakX,\calM_{\frakX})_{\Prism}$. For any log prism $(C,IC,L,\delta_{\log})\in (\frakX,\calM_{\frakX})_{\Prism}$ over $(A,I,M,\delta_{\log})$, let $D$ denote the $(p,I)$-complete tensor product $B\widehat \otimes_AC$ which is endowed with the log structure $(L^a\to D)$ associated to the composition $L\to C\to D$. Then $\delta_{\log}: L\to C\to D$ defines a log prism $(D,ID,L^a,\delta_{\log})$ which is the pushout of the diagram
   \[(B,IB,N,\delta_{\log})\leftarrow(A,I,M,\delta_{\log})\rightarrow(C,IC,L,\delta_{\log}).\]
   In particular, let $(B^{\bullet},IB^{\bullet})$ be the \v Cech nerve of the cover $(A,I)\to (B,IB)$ of prisms and let $N^{\bullet}\to B^{\bullet}$ be the log structure induced by $M\to A\to B^{\bullet}$. Then $(B^{\bullet},IB^{\bullet},N^{\bullet},\delta_{\log})$ is the \v Cech nerve of the cover $(A,I,M,\delta_{\log})\to(B,IB,N,\delta_{\log})$ of log prisms in $(\frakX,\calM_{\frakX})_{\Prism}$. The argument in the proof of \cite[Corollary 3.12]{BS-a} shows that the presheaves $(A,I,M,\delta_{\log})\mapsto A$ and $(A,I,M,\delta_{\log})\mapsto A/I$ are indeed sheaves. The latter is denoted by $\overline \calO_{\Prism}$.
 \end{rmk}
 At the end of this section, we want to study structures of (certain) perfect log prisms. We begin with the following lemma:
 
 \begin{lem}\label{Lem-factorization}
   Let $(A,I)$ be a perfect prism and $R:= A/I$. Then for any $x\in A$ such that $\varphi(x) = x^p(1+py)$ for some $y\in A$, $x$ factors as 
   \[x = [a]\prod_{i=1}^{\infty}(1+p\varphi^{-i}(y))^{p^{i-1}},\]
   where $a$ is the reduction of $x$ modulo $p$.
 \end{lem}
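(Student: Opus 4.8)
The plan is to exploit perfectness to invert the Frobenius and then to set up a recursion that, upon iteration, produces the desired infinite product. Since $(A,I)$ is a perfect prism, $\varphi$ is an automorphism of $A$, and $A$ is $p$-torsion free and classically $p$-complete (indeed $A=\rW(A/p)$ with $A/p$ perfect), so the Teichm\"uller lift $[\,\cdot\,]\colon A/p\to A$ is available and multiplicative, with $\varphi([b])=[b]^p=[b^p]$. First I would apply $\varphi^{-n}$ to the defining relation $\varphi(x)=x^p(1+py)$; writing $x^{(n)}:=\varphi^{-n}(x)$ and $y_n:=\varphi^{-n}(y)$, this yields the one-step identity $x^{(n-1)}=(x^{(n)})^p(1+py_n)$ for every $n\geq 1$.

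Next I would iterate this identity starting from $x=x^{(0)}$. Substituting repeatedly gives, for each fixed $n$, the exact finite factorization
\[ x=(x^{(n)})^{p^n}\prod_{i=1}^n(1+p\varphi^{-i}(y))^{p^{i-1}}. \]
It then remains to pass to the limit $n\to\infty$ in the $p$-adic topology, which I would do by analysing the two factors separately. For the product, the elementary congruence ``$z\equiv 1\bmod p^m\Rightarrow z^p\equiv 1\bmod p^{m+1}$'' shows that the $i$-th factor $(1+p\varphi^{-i}(y))^{p^{i-1}}$ is congruent to $1$ modulo $p^i$, so the infinite product converges in the $p$-complete ring $A$.

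The crux is to identify the limit of the leading term $(x^{(n)})^{p^n}$ with the Teichm\"uller lift $[a]$. Here I would invoke the general congruence lemma ``$u\equiv v\bmod p^k\Rightarrow u^p\equiv v^p\bmod p^{k+1}$'', valid in any commutative ring. Since $\varphi$ reduces to the Frobenius on $A/p$, we have $x^{(n)}\equiv a^{1/p^n}\bmod p$, hence $x^{(n)}\equiv [a^{1/p^n}]\bmod p$; raising to the $p^n$-th power and applying the congruence lemma $n$ times, together with multiplicativity of Teichm\"uller lifts (so that $[a^{1/p^n}]^{p^n}=[a]$), gives $(x^{(n)})^{p^n}\equiv [a]\bmod p^{n+1}$. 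Thus $(x^{(n)})^{p^n}\to[a]$ as $n\to\infty$.

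Finally, since multiplication is $p$-adically continuous and the finite factorization holds exactly for every $n$, taking limits of both factors yields $x=[a]\prod_{i=1}^\infty(1+p\varphi^{-i}(y))^{p^{i-1}}$, as desired. I expect the main obstacle to be precisely this last convergence-and-identification step for $(x^{(n)})^{p^n}$; the recursion and the convergence of the product itself are routine once the right congruence estimates are in place.
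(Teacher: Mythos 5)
Your proof is correct, but it is organized genuinely differently from the paper's. The paper proceeds by successive approximation: it writes $x=[a]+px_1$, uses $\varphi(x)=x^p(1+py)$ to deduce $x_1\equiv [a]y_1 \bmod p$, and then runs an induction of the form $x=[a]\prod_{i=1}^n(1+py_i)^{p^{i-1}}+p^{n+1}x_{n+1}$, expanding the functional equation modulo $p^{n+2}$ at each stage so as to absorb the error term $x_{n+1}$ into the next factor of the product. You instead apply $\varphi^{-n}$ to the functional equation to obtain the exact telescoping identity $x=(\varphi^{-n}(x))^{p^n}\prod_{i=1}^n(1+p\varphi^{-i}(y))^{p^{i-1}}$, and then identify $\lim_{n}(\varphi^{-n}(x))^{p^n}=[a]$ by the classical limit construction of Teichm\"uller representatives: each $\varphi^{-n}(x)$ lifts $a^{1/p^n}\in A/p$, so the elementary congruence $u\equiv v \bmod p^k \Rightarrow u^p\equiv v^p\bmod p^{k+1}$ gives $(\varphi^{-n}(x))^{p^n}\equiv[a^{1/p^n}]^{p^n}=[a]\bmod p^{n+1}$. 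Your route makes the appearance of the Teichm\"uller lift conceptually transparent and replaces the paper's binomial bookkeeping with two clean steps (exact factorization, then passage to the limit in the classically $p$-complete ring $A\cong \rW(A/p)$); the paper's induction is more computational but self-contained, never invoking the limit description of $[\,\cdot\,]$. Both arguments use perfectness in the same essential way (invertibility of $\varphi$, perfectness of $A/p$), and both in fact yield the same quantitative statement $x\equiv[a]\prod_{i=1}^n(1+p\varphi^{-i}(y))^{p^{i-1}}\bmod p^{n+1}$, since in your argument the tail of the product is also congruent to $1$ modulo $p^{n+1}$.
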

 Note that $[a]\prod_{i=1}^{\infty}(1+p\varphi^{-i}(y))^{p^{i-1}}$ is well-defined as $A$ is classically $p$-complete.
 \begin{proof}
   Put $y_n = \varphi^{-n}(y)$ for any $n\geq 0$. We are reduced to showing that for any $n\geq 1$,
   \[x \equiv [a]\prod_{i=1}^{\infty}(1+py_i)^{p^{i-1}}\mod p^n .\]
   Write $x = [a]+px_1$ for the unique $x_1\in A$. Then the assumption on $x$ can be restated as follows:
   \[[a]^p+p\varphi(x_1) = ([a]+px_1)^p(1+p\varphi(y_1)).\]
   So we get that $\varphi(x_1)\equiv \varphi([a]y_1) \mod p$ and a fortiori that $x_1 \equiv [a]y_1 \mod p$. Therefore, we can write $x = [a](1+py_1)+p^2x_2$ for the unique $x_2\in A$.
   
   Assume $x = [a]\prod_{i=1}^n(1+py_i)^{p^{i-1}}+p^{n+1}x_{n+1}$ for some $n\geq 1$ with $x_{n+1}\in A$. Then the assumption on $x$ says that
   \[\begin{split}
   [a]^p\prod_{i=0}^{n-1}(1+py_i)^{p^i}+p^{n+1}\varphi(x_{n+1}) &= ([a]\prod_{i=1}^n(1+py_i)^{p^{i-1}}+p^{n+1}x_{n+1})^p(1+py)\\
   & \equiv [a]^p\prod_{i=1}^n(1+py_i)^{p^i}(1+py) \mod p^{n+2}.
   \end{split}\]
   As a consequence, we have
   \[p^{n+1}x_{n+1} \equiv [a]\prod_{i=1}^n(1+py_i)^{p^{i-1}}((1+py_{n+1})^{p^n}-1)\mod p^{n+2}.\]
   In particular, we get $x \equiv [a]\prod_{i=1}^{n+1}(1+py_i)^{p^{i-1}}\mod p^{n+2}$ as desired.
 \end{proof}
  The following corollary is clear.
 \begin{cor}\label{Cor-final log structure}
   Let $(A,I,M\xrightarrow{\alpha}A,\delta_{\log})$ be a perfect log prism and $R = A/I$, then $\alpha(M)\subset [R^{\flat}]\cdot (1+pA)$, where $[R^{\flat}]: = \{[x]| x\in R^{\flat}\}$. In particular, if we equip $A$ with the log structure $(N\to A)$ associated to the prelog structure $R^{\flat}\xrightarrow{\sharp} A$, then there exists a unique morphism of log prisms $(A,I,M,\delta_{\log})\to(A,I,N,\delta_{\log})$.
 \end{cor}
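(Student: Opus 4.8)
The plan is to deduce both assertions directly from Lemma \ref{Lem-factorization}, using the Frobenius--log compatibility \eqref{Equ-Frob on log structure} as the bridge. For the containment $\alpha(M)\subseteq [R^{\flat}]\cdot(1+pA)$, I would fix $m\in M$ and set $x=\alpha(m)$, $y=\delta_{\log}(m)$. Then \eqref{Equ-Frob on log structure} reads $\varphi(x)=x^p(1+py)$, which is exactly the hypothesis of Lemma \ref{Lem-factorization}. Since $(A,I)$ is a perfect prism, $A$ is $p$-torsion-free and classically $p$-complete with $A=W(R^{\flat})$, so the reduction $A\to A/p=R^{\flat}$ is split by the Teichm\"uller lift $\sharp$; consequently the factorization $x=[a]\prod_{i\geq 1}(1+p\varphi^{-i}(y))^{p^{i-1}}$ furnished by the lemma writes $\alpha(m)$ as $[a]\in[R^{\flat}]$ (with $a$ the image of $\alpha(m)$ in $R^{\flat}$) times a factor in $1+pA$. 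This is the first claim, and $p$-completeness gives $1+pA\subseteq A^{\times}$.

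For the morphism, I would first record that the prelog structure $R^{\flat}\xrightarrow{\sharp}A$ admits the $\delta_{\log}$-structure $\delta_{\log}\equiv 0$ (the three axioms hold since $\delta([s])=0$ for Teichm\"uller lifts), which by Lemma \ref{Lem-prolog-to-log} extends uniquely to $N$ and makes $(A,I,N,\delta_{\log})$ a log prism. I would then construct $\psi\colon M\to N$ over the identity of $(A,I)$ from the factorization: writing $\alpha(m)=[a_m]\,w_m$ with $a_m\in R^{\flat}$ the reduction of $\alpha(m)$ and $w_m\in 1+pA\subseteq A^{\times}$, set $\psi(m)$ to be the class of $(a_m,w_m)$ in $N=R^{\flat}\oplus_{(R^{\flat})^{\times}}A^{\times}$. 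Multiplicativity of $\sharp$ and of reduction mod $p$ gives $a_{m_1m_2}=a_{m_1}a_{m_2}$, and comparing the two factorizations of $\alpha(m_1m_2)$ forces $w_{m_1m_2}=w_{m_1}w_{m_2}$ whenever $[a_m]$ is a non-zero-divisor; thus $\psi$ is a monoid map over $A$, i.e. $\alpha_N\circ\psi=\alpha$. Compatibility with $\delta_{\log}$ is then automatic: since $\alpha_N(\psi(m))=\alpha(m)$ is a non-zero-divisor, the defining relation $\alpha(m)^p\delta_{\log}(m)=\delta(\alpha(m))$ pins down $\delta_{\log}$ identically on both sides.

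Uniqueness follows because any morphism over $(A,I)$ restricts to the identity on $A^{\times}$ and must satisfy $\alpha_N\circ\psi=\alpha$, so the essentially unique presentation of $\alpha(m)\in[R^{\flat}]\cdot A^{\times}$ as a Teichm\"uller element times a unit, a consequence of the Witt-vector structure $A=W(R^{\flat})$, determines $\psi(m)$ up to the defining relation of $N$. The step I expect to require the most care is precisely this clean separation of the Teichm\"uller part from the unit part, together with the degenerate case $a_m=0$ (equivalently $\alpha(m)\equiv 0\bmod p$): there $[a_m]$ is a zero-divisor and the unit factor is no longer literally recovered from $\alpha(m)$, so one must invoke $p$-adic separatedness of $A$ and integrality of $M$ to verify that $\psi$ remains well defined and multiplicative. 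Once this is settled, both existence and uniqueness are formal, and the resulting $(A,I,N,\delta_{\log})$ is final among perfect log prisms with underlying prism $(A,I)$, as the name suggests.
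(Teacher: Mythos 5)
Your first paragraph is correct and is exactly what the paper has in mind: the containment $\alpha(M)\subset[R^{\flat}]\cdot(1+pA)$ is read off from Lemma \ref{Lem-factorization}, and putting $\delta_{\log}=0$ on the prelog structure $R^{\flat}\to A$ (legitimate because $\delta([x])=0$) and extending to $N$ via Lemma \ref{Lem-prolog-to-log} is also fine. The genuine gap is the degenerate case you yourself flag, and your proposed way around it does not work. If $a_m$ is a zero divisor in $R^{\flat}$ (e.g.\ $a_m=0$, which the statement does not exclude), the element $\alpha(m)=[a_m]w_m$ does not determine $w_m$: one has $[0]u=[0]u'$ for \emph{all} units $u,u'$, and more generally $[a]u=[a]u'$ whenever $u-u'$ is killed by $[a]$. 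Consequently (i) your proof of multiplicativity of $\psi$, which proceeds by comparing the two factorizations of $\alpha(m_1m_2)$ and cancelling, breaks down; (ii) your proof of $\delta_{\log}$-compatibility, which cancels $\alpha(m)^p$, breaks down for the same reason; and (iii) your uniqueness argument rests on the ``essentially unique presentation of $\alpha(m)$ as a Teichm\"uller element times a unit,'' which is a false statement precisely in this case. Invoking $p$-adic separatedness of $A$ and integrality of $M$ cannot repair this: the failure is an exact identity $[a_m]u=[a_m]u'$ in $A$, not an approximation issue, and the ambiguity lives in the choice of representative of a class in $N$, not in $A$.

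All three points are fixed by using Lemma \ref{Lem-factorization} as an explicit formula rather than a bare existence statement, which is what makes the corollary ``clear'' in the paper's sense. Set $w_m:=\prod_{i\geq1}(1+p\varphi^{-i}(\delta_{\log}(m)))^{p^{i-1}}$ and $\psi(m):=(a_m,w_m)\in N=R^{\flat}\oplus_{(R^{\flat})^{\times}}A^{\times}$. Axiom $(3)$ of a $\delta_{\log}$-ring says precisely that $m\mapsto 1+p\delta_{\log}(m)$ is a morphism of monoids $M\to 1+pA$; applying the ring automorphisms $\varphi^{-i}$ and taking the product, $m\mapsto w_m$ is multiplicative with no non-zero-divisor hypothesis, so $\psi$ is a monoid map over $A$. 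The same formula gives $\varphi(w_m)=w_m^p(1+p\delta_{\log}(m))$, hence $\delta(w_m)=w_m^p\delta_{\log}(m)$ and $\delta_{\log}^N(\psi(m))=w_m^{-p}\delta(w_m)=\delta_{\log}(m)$, identically. For uniqueness (of morphisms inducing the identity on $(A,I)$): if $(b_1,v_1)$ and $(b_2,v_2)$ are two candidate values at $m$, then $\delta_{\log}$-compatibility forces $\varphi(v_j)=v_j^p(1+p\delta_{\log}(m))$ for $j=1,2$, so $c:=v_1v_2^{-1}$ satisfies $\varphi(c)=c^p$, whence $c=[\overline{c}]$ by Lemma \ref{Lem-factorization} applied with $y=0$; then $[b_1][\overline{c}]v_2=[b_2]v_2$ gives $b_1\overline{c}=b_2$ (cancel the unit $v_2$ and use injectivity of the Teichm\"uller map), and $(b_1,v_1)=(b_1,[\overline{c}]v_2)=(b_1\overline{c},v_2)=(b_2,v_2)$ in $N$ by the defining pushout relation. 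So uniqueness is enforced jointly by the $\delta_{\log}$-structure and the relation defining $N$, not by any uniqueness of factorization inside $A$.
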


 As mentioned in \cite[Theorem 3.10]{BS-a}, there is a canonical equivalence between the category of perfect prisms and the category of perfectoid rings. We want to generalise this to the logarithmic case.
 \begin{lem}\label{Lem-p-power}
   Let $S$ be a $p$-torsion free perfectoid ring. Then for any $x\in S[\frac{1}{p}]$ such that $x^p\in S$, we have $x\in S$.
 \end{lem}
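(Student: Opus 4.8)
The plan is to recognise the statement as an instance of the integral closedness of perfectoid rings. Since $x^p\in S$, the element $x$ is a root of the monic polynomial $T^p-x^p\in S[T]$, so $x$ is integral over $S$. Hence it suffices to prove that $S$ is integrally closed in $S[\frac1p]$; this is a standard property of (integral) perfectoid rings, and the substance of the lemma lies in recovering it by hand in a form adapted to the $p$-th power map.

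First I would reduce to a divisibility statement. As $S$ is $p$-torsion free we have $S\hookrightarrow S[\frac1p]$, so we may write $x=a/p^n$ with $a\in S$ and $n\geq 0$ minimal; the goal is $n=0$. From $x^p\in S$ we get $a^p\in p^{pn}S$, and it is enough to show that $a^p\in p^{pn}S$ forces $a\in p^nS$ (then $x=a/p^n\in S$). Using that $p$ is a non-zero-divisor, an easy induction on $n$ reduces this to the base case $a^p\in p^pS\Rightarrow a\in pS$: indeed if $a^p\in p^{pn}S$ with $n\geq 1$ then the base case gives $a=pb$, and cancelling $p^p$ from $p^pb^p=a^p\in p^{pn}S$ yields $b^p\in p^{p(n-1)}S$, to which the inductive hypothesis applies.

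For the base case I would pass to the perfect prism attached to $S$, writing $\AAinf=\rW(S^{\flat})$ with the surjection $\theta\colon\AAinf\to S$ whose kernel is generated by a distinguished element $\xi$ (as in the discussion around Lemma \ref{Lem-factorization} and Corollary \ref{Cor-final log structure}). Lifting $a$ to $\widetilde a\in\AAinf$ and the relation $a^p=p^p s$ to $\widetilde a^{\,p}=p^p\widetilde s+\xi w$, one reduces modulo $p$: since $p^p\equiv 0$, the class of $\widetilde a$ in the perfect ring $S^{\flat}$ satisfies $\overline{\widetilde a}^{\,p}=\overline\xi\,\overline w$, and perfectness lets one extract $p$-th roots and exploit that $\theta([\overline\xi])\in pS$ because $\xi$ is distinguished. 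Transporting this back through $\theta$ should place $a$ in $pS$.

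The main obstacle is precisely this last transport. Reducing modulo $p$ alone is not enough, because the hypotheses $a^p\in pS$ and $a^p\in p^pS$ become indistinguishable after killing $p$ (both give $\overline{\widetilde a}^{\,p}\in(\overline\xi)$); the honest use of the stronger hypothesis must therefore be carried out modulo higher powers of $p$, i.e. genuinely inside $\AAinf$ rather than in $S^{\flat}$. Controlling the $\xi$-error term $\xi w$ --- showing that the distinguished generator contributes only to $pS$ and not to a fractional power --- is the technical heart, and it is exactly here that distinguishedness of $\xi$ (which pins $\theta([\overline\xi])$ to $p$ times a unit) and the exponent $p$ in $p^p$ are both used. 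Alternatively, one may simply invoke the general fact that perfectoid rings are integrally closed in $S[\frac1p]$, which subsumes the whole computation.
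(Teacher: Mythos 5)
Your opening reduction (integrality, then writing $x=a/p^n$ and inducting down to the base case $a^p\in p^pS\Rightarrow a\in pS$) is fine, but the proof has a genuine gap at exactly the point you flag yourself: the base case, which is the only place the perfectoid hypothesis can enter, is never established. The sketch via $\rW(S^{\flat})$ cannot be completed as stated: from $\overline{\widetilde a}^{\,p}=\overline\xi\,\overline w$ in $S^{\flat}$, extracting $p$-th roots gives $\overline{\widetilde a}=\varphi^{-1}(\overline\xi)\,\varphi^{-1}(\overline w)$, and applying $\theta([\,\cdot\,])$ only yields $a\in\rho S+pS=\rho S$, where $\rho:=\varphi^{-1}(\overline\xi)^{\sharp}$ satisfies $\rho^p=p\cdot(\mathrm{unit})$. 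This is the best possible conclusion from the mod-$p$ data --- it is exactly what the weaker hypothesis $a^p\in pS$ already gives, and it is strictly weaker than $a\in pS$ (take $S=\calO_C$ and $a=p^{1/p}$) --- so the argument stalls, and you supply no replacement argument modulo higher powers of $p$. The fallback of ``invoking the general fact that perfectoid rings are integrally closed in $S[\frac{1}{p}]$'' is not a proof either: the lemma is precisely an instance of that well-known fact (the paper says as much: ``This is well-known and we provide a proof here for the convenience of readers''), so citing it here is circular.

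The ingredient you are missing is to measure denominators $\rho$-adically rather than $p$-adically, using a Frobenius isomorphism between the two relevant quotients. The paper takes $\varpi\in S^{\flat}$ with $\varpi^{\sharp}$ a unit multiple of $p$, sets $\rho=\varphi^{-1}(\varpi)^{\sharp}$ (so $\rho^p$ is a unit multiple of $p$ and $S[\frac{1}{p}]=S[\frac{1}{\rho}]$), and invokes \cite[Lemma 3.10]{BMS-a}: the Frobenius induces an isomorphism $S/\rho S\xrightarrow{\cong}S/\rho^pS$. This is exactly the tool that converts ``$y\notin\rho S$'' into ``$y^p\notin\rho^pS$'', i.e.\ that lets the strong hypothesis $x^p\in S$ do its work. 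With it the lemma is immediate, with no need for your induction: let $l$ be minimal with $\rho^l x\in S$; if $l\geq 1$, then $\rho^lx\notin\rho S$ (by minimality, since $\rho$ is a non-zero-divisor), hence $(\rho^lx)^p=\rho^{pl}x^p\notin\rho^pS$ by injectivity of the Frobenius map above; but $x^p\in S$ and $pl\geq p$ force $\rho^{pl}x^p\in\rho^pS$, a contradiction. Your outline contains neither the element $\rho$ nor the isomorphism $S/\rho S\cong S/\rho^pS$, and without them the base case --- and hence the lemma --- remains unproven.
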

 \begin{proof}
   This is well-known and we provide a proof here for the convenience of readers.
   
   By \cite[Lemma 3.9]{BMS-a}, there exists a $\varpi\in S^{\flat}$ such that $\varpi^{\sharp}$ is a unit multiple of $p$. Define $\rho:=\varphi^{-1}(\varpi)^{\sharp}$. Then \cite[Lemma 3.10]{BMS-a} implies that the Frobenius induces an isomorphism 
   \[S/\rho S\xrightarrow{\cong}S/\rho^p S.\]
   Now, assume we are given an $x\in S[\frac{1}{p}]$ such that $x^p\in S$. Then there exists an integer $N$ such that $\rho^Nx\in S$ and we denote by $l$ the smallest one satisfying above property. It is enough to show that $l\leq 0$. Otherwise, assume that $l\geq 1$. Then we see that $\rho^lx$ is not zero in $S/\rho S$, which implies that $\rho^{pl}x^p$ does not vanish modulo $\rho^pS$. This violates to that $l\geq 1$ together with $x^p\in S$.
 \end{proof}
 \begin{lem}\label{Lem-tilt}
     Let $R$ be a perfectoid ring with tilt $R^{\flat}$. Let $a_1,a_2\in R^{\flat}$ and $x\in R$. If $a_1^{\sharp}=a_2^{\sharp}(1+px)$ and $a_1^{\sharp}\mid p$, then there exists a unit $u\in R^{\flat}$ such that $a_1=a_2u$ and $u^{\sharp} = 1+px$.
 \end{lem}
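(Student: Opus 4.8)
The plan is to manufacture the unit $u$ by extracting a compatible system of $p$-power roots of $1+px$ inside $R$, using the hypothesis $a_1^{\sharp}\mid p$ to make the necessary divisions legitimate after inverting $p$, and then descending these roots back into $R$ by means of Lemma \ref{Lem-p-power}.

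First I would record two preliminary points. Since $R$ is $p$-adically complete, $p$ lies in its Jacobson radical, so $1+px\in R^{\times}$; and since $a_1^{\sharp}\mid p$ in $R$, the element $a_1^{\sharp}$ is already a unit in $R[\tfrac1p]$, whence so is $a_2^{\sharp}=a_1^{\sharp}(1+px)^{-1}$. Next I would invoke the standard multiplicative identification $R^{\flat}\cong\varprojlim_{x\mapsto x^{p}}R$, under which $\sharp$ is the projection to the zeroth coordinate and multiplication is componentwise. Writing $s_n:=(a_1^{1/p^{n}})^{\sharp}$ and $t_n:=(a_2^{1/p^{n}})^{\sharp}$ (legitimate because $R^{\flat}$ is perfect), multiplicativity of $\sharp$ gives $s_n^{p}=s_{n-1}$, $t_n^{p}=t_{n-1}$, $s_0=a_1^{\sharp}$, $t_0=a_2^{\sharp}$, and under the identification one has $a_1=(s_n)_n$, $a_2=(t_n)_n$.

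The heart of the argument is then to build the root system. Because $t_0=a_2^{\sharp}$ is a unit in $R[\tfrac1p]$ and $t_n^{p^{n}}=t_0$, each $t_n$ is a unit in $R[\tfrac1p]$; thus $v_n:=s_n t_n^{-1}\in R[\tfrac1p]$ is well defined, and $v_n^{p^{n}}=s_0 t_0^{-1}=a_1^{\sharp}(a_2^{\sharp})^{-1}=1+px\in R$. Applying Lemma \ref{Lem-p-power} repeatedly to $v_n^{p^{n-1}},\dots,v_n^{p},v_n$ (each lying in $R[\tfrac1p]$ with its $p$-th power in $R$) shows $v_n\in R$. Moreover $v_n^{p}=s_{n-1}t_{n-1}^{-1}=v_{n-1}$ and $v_0=1+px$, so $(v_n)_n\in\varprojlim_{x\mapsto x^{p}}R$ defines an element $u\in R^{\flat}$ with $u^{\sharp}=v_0=1+px$. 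Since $v_0$ is a unit and $v_n^{p^{n}}=v_0$, every $v_n$ is a unit in $R$, so $u$ is a unit in $R^{\flat}$. Finally $s_n=t_n v_n$ for all $n$ translates, under the componentwise multiplication, into $a_1=a_2u$, which together with $u^{\sharp}=1+px$ is exactly the assertion.

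I expect the descent step to be the main obstacle: the whole argument rests on turning the formal ratios $s_n t_n^{-1}$ into genuine elements of $R$, and this is precisely where the two hypotheses earn their keep — $a_1^{\sharp}\mid p$ guarantees the ratios exist in $R[\tfrac1p]$, while Lemma \ref{Lem-p-power} (and hence the $p$-torsion-freeness of $R$ present in the relevant perfectoid situations) is what forces them back into $R$. If one wants the statement for an arbitrary perfectoid $R$ carrying $p$-torsion, the only point requiring extra care is this descent, for which one reduces to the $p$-torsion-free quotient.
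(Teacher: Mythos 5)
Your argument in the $p$-torsion-free case is correct, and it is essentially the paper's own argument in different clothing: your elements $v_n=s_nt_n^{-1}$ are exactly the elements $(\varphi^{-n}(c))^{\sharp}$ for the ratio $c=a_1/a_2$ formed in the paper's proof, and both arguments force them into $R$ by iterating Lemma \ref{Lem-p-power}, then conclude that $u$ is a unit because $u^{\sharp}=1+px$ is. So for $p$-torsion-free $R$ there is nothing to object to.

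The gap is the general case, which you dispose of in one sentence (``one reduces to the $p$-torsion-free quotient''). The lemma is stated for an \emph{arbitrary} perfectoid ring $R$; Lemma \ref{Lem-p-power} genuinely requires $p$-torsion-freeness; and perfectoid rings with $p$-torsion do occur among the rings $A/I$ to which the lemma is later applied (the paper's remark following the proof stresses that going beyond the easy case was precisely the point of this version of the statement). Moreover the reduction is not a formality. Applying the torsion-free case to $S=R/R[\sqrt{pR}]$ produces a unit $c\in (S^{\flat})^{\times}$ with $\bar a_1=\bar a_2c$ and $c^{\sharp}=1+px$ \emph{in $S$}, but one must then lift $c$ to a unit $u\in (R^{\flat})^{\times}$ for which both conditions hold \emph{in $R$}; since $R^{\flat}\to S^{\flat}$ need be neither injective nor surjective, this requires an actual construction. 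The paper does it via Bhatt's fibre square $R\cong \overline R\times_{\overline S}S$ (with $\overline R=R/\sqrt{pR}$, $\overline S=S/\sqrt{pS}$): writing $c$ as a compatible system $\{c_n\}$ of $p$-power roots of $1+px$ in $S$, each $c_n$ reduces to $1$ in the reduced ring $\overline S$ (because $c_n^{p^n}\equiv 1$ there), so the pairs $(1,c_n)\in \overline R\times_{\overline S}S\cong R$ form a compatible system defining the desired unit in $R^{\flat}$, and one checks both required identities componentwise (using $px\in\sqrt{pR}$ for the $\overline R$-component). Some such gluing argument is indispensable; as written, your proof establishes the lemma only for $p$-torsion-free $R$.
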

 \begin{proof}
   Let $\varpi\in R^{\flat}$ such that $\varpi^{\sharp}$ is a unit multiple of $p$. 
   
   We first assume $R$ is $p$-torsion free and hence $R^{\flat}$ is $\varpi$-torsion free. In this case, $c:=\frac{a_1}{a_2}$ is well-defined in $R^{\flat}[\frac{1}{\varpi}]$ with $c^{\sharp} = (1+px)$. By Lemma \ref{Lem-p-power}, for any $n\geq 0$, $(\varphi^{-n}(c))^{\sharp}\in R$. So $c\in R^{\flat}$. Combining this with that $c^{\sharp}\in R^{\times}$, we see that $c\in (R^{\flat})^{\times}$ and therefore $u = c$ is desired.
   
   In general, by \cite[Proposition 3.2]{Bha}, if we put $S = R/R[\sqrt{pR}]$, $\overline R=R/\sqrt{pR}$ and $\overline S=S/\sqrt{pS}$, then there is a fibre square of perfectoid rings:
    \begin{equation*}
        \xymatrix@C=0.5cm{
         R\ar[rr]\ar[d]&&S\ar[d]\\
         \overline R\ar[rr]&&\overline S.}
    \end{equation*}
   By what we have proved, there exists a $c\in (S^{\flat})^{\times}$ which is represented by the compatible sequence $\{c_n\}_{n\geq 0}$ such that $a_1=a_2c$ and $c^{\sharp}=c_0 = (1+px)$ in $S^{\flat}$. Since $c_0$ coincides with $1$ in $\overline S$, so are $c_n$'s. In particular, denote by $\tilde c_n$ the element $(1,c_n)\in\overline R\times_{\overline S}S\cong R$, then $\tilde c_0=1+px$ and $\tilde c_{n+1}^p=\tilde c_n$ for all $n$. Put $\tilde c\in R^{\flat}$ which is represented by $\{\tilde c_n\}$. Then $u=\tilde c$ is desired.
 \end{proof}
 \begin{rmk}
   We only proved above Lemma \ref{Lem-tilt} for $R$ absolutely integral closed in the early draft. This stronger version is due to some discussions with Heng Du.
 \end{rmk}

 \begin{lem}\label{Lem-uniqueness}
   For any perfect log prisms $(A,I,M_1,\delta_{\log})$ and $(A,I,M_2,\delta_{\log})$ in $(\bN\xrightarrow{1\mapsto \pi}\calO_K)_{\Prism}$ or $(\bN^{r+1}\xrightarrow{\alpha}R)_{\Prism}$ with the same underlying prism, we always have 
   $(A,I,M_1,\delta_{\log})=(A,I,M_2,\delta_{\log})$.
 \end{lem}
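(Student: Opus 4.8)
The plan is to prove the two assertions hidden in the claim separately: that $M_1$ and $M_2$ are the same log structure, and that the two $\delta_{\log}$-structures then automatically agree. I treat both sites uniformly, writing $P$ for the monoid $\bN$ (generated by $1$) or $\bN^{r+1}$ (generated by $e_0,\dots,e_r$). By Lemma \ref{Structure lemma}, $M_j$ is the log structure associated to a prelog structure $\alpha_j\colon P\to A$ sending the generators to liftings of $\pi$, resp. of the $T_i$; call these liftings $u_j$, resp. $t_{i,j}$. Thus everything reduces to comparing these distinguished liftings.

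First I would show the liftings differ by units of $A$. By Corollary \ref{Cor-final log structure}, each lies in $[R^\flat]\cdot(1+pA)$ with $R=A/I$, so I may write $u_j=[a_j](1+px_j)$ where $a_j\in R^\flat$ is the reduction of $u_j$ modulo $p$ and $x_j\in A$. Applying $\theta$ and using that $u_1,u_2$ both reduce to $\pi$ in $R$, while $1+p\theta(x_j)\in R^\times$, gives $a_1^\sharp=a_2^\sharp(1+pz)$ for some $z\in R$. Since $\pi\mid p$ in $\calO_K$ (and in Example \ref{Exam-rel case} each $T_i$ divides $\pi$, hence $p$), the element $a_1^\sharp$ is a unit multiple of $\pi$, in particular $a_1^\sharp\mid p$. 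This is exactly the input of Lemma \ref{Lem-tilt}, which produces a unit $w\in(R^\flat)^\times$ with $a_1=a_2w$; then $[a_1]=[a_2][w]$ with $[w]\in A^\times$, and as $1+pA\subset A^\times$ we find that $u_1$ and $u_2$ differ by a unit. The identical argument applies to each pair $t_{i,1},t_{i,2}$.

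Because the generators of $\alpha_1$ and $\alpha_2$ agree up to units, their associated log structures $M_1$ and $M_2$ are canonically identified over $A$, yielding an isomorphism of log prisms that is the identity on $(A,I)$. It then remains to see this isomorphism respects $\delta_{\log}$, i.e. that $\delta_{\log}(m)$ is determined by $\alpha(m)$ alone. Writing $\alpha(m)=[\bar m]\,v$ with $v=\alpha(m)/[\bar m]\in 1+pA$, the factorization of Lemma \ref{Lem-factorization} gives $\varphi(v)=(1+p\delta_{\log}(m))v^p$, so $\delta_{\log}(m)=(\varphi(v)-v^p)/(pv^p)$. As $v^p\in A^\times$ and $A=W(R^\flat)$ is $p$-torsion free, this determines $\delta_{\log}(m)$ uniquely, whence $\delta_{\log,1}=\delta_{\log,2}$ and the two log prisms coincide.

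The hard part is the comparison of the Teichm\"uller parts $a_1,a_2$: this is the only step using the precise shape of the two sites (through $\pi\mid p$ and $T_i\mid\pi$) and it rests on the delicate descent in Lemma \ref{Lem-tilt}, which passes through the fibre square of perfectoid rings to handle possible $p$-torsion. Everything else is bookkeeping.
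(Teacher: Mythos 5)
Your treatment of the log structures is exactly the paper's proof: Lemma \ref{Structure lemma} to reduce to the distinguished liftings, Corollary \ref{Cor-final log structure} to factor them as $[a](1+px)$, reduction modulo $I$ to get $a_1^{\sharp}=a_2^{\sharp}(1+pz)$, and Lemma \ref{Lem-tilt} to conclude that the liftings differ by a unit of $A$, whence $M_1=M_2$. You are in fact slightly more careful than the paper on one point: the paper invokes Lemma \ref{Lem-tilt} without comment, while you verify its divisibility hypothesis $a_1^{\sharp}\mid p$ explicitly via $T_i\mid T_0\cdots T_r=\pi$ and $\pi\mid p$, which is indeed the reason it holds.

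Where you go beyond the paper is the closing argument about the $\delta_{\log}$-structures; the paper's proof simply stops at $t_{i1}=t_{i2}u_i$ and never discusses why the two $\delta_{\log}$'s must then agree, so your instinct that this needs proof is sound. However, your argument for it has a gap: the element $v=\alpha(m)/[\bar m]$ is not well defined unless $[\bar m]$ is a non-zero-divisor of $A$. Lemma \ref{Lem-factorization} only asserts the \emph{existence} of a cofactor $v\in 1+pA$ with $\alpha(m)=[\bar m]v$, namely the infinite product built out of $\delta_{\log}(m)$ itself, and it is for that particular $v$ that $\varphi(v)=v^p(1+p\delta_{\log}(m))$ holds; solving this identity for $\delta_{\log}(m)$ is therefore circular unless $v$ is pinned down by $\alpha(m)$ alone. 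If $[\bar m]$ is a zero-divisor, two $\delta_{\log}$-structures may produce two different cofactors, and all that the axioms give unconditionally is $\alpha(m)^p\left(\delta_{\log,1}(m)-\delta_{\log,2}(m)\right)=0$. So your supplement is complete exactly when the distinguished liftings $u$, resp. $t_i$, are non-zero-divisors in $A$; nothing in Definition \ref{Dfn-log site} forces this, since the structure map $\Spf(A/I)\to\frakX$ need not be flat. Indeed, if $\pi$ maps to $0$ in $A/I$ (e.g. $A/I$ a perfect $k$-algebra viewed as an $\calO_K$-algebra), the distinguished lifting is $0$, condition $(2)$ of a $\delta_{\log}$-ring becomes vacuous, and $\delta_{\log}$ is genuinely not determined by the log structure. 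This degenerate situation is a difficulty for the statement itself rather than just for your proof, and it is precisely the step that the paper's own proof leaves untouched; away from it (e.g. when the $T_i$ remain non-zero-divisors in $A/I$) your argument is correct.
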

 \begin{proof}
   We only deal with the $(\bN^{r+1}\xrightarrow{\alpha}R)_{\Prism}$ case and the $(\bN\xrightarrow{1\mapsto \pi}\calO_K)_{\Prism}$ case follows from the similar argument.
   
   By Lemma \ref{Structure lemma}, for any $0\leq i\leq r$, there are liftings $t_{i1},t_{i2}\in A$ of $T_i$ such that the log structures $M_1\to A$ and $M_2\to A$ are associated to prelog structures $(\bN^{r+1}\xrightarrow{e_i\mapsto t_{i1},\forall 0\leq i\leq r}A)$ and $(\bN^{r+1}\xrightarrow{e_i\mapsto t_{i2},\forall 0\leq i\leq r}A)$ for $0\leq i\leq r$, respectively.
   By Corollary \ref{Cor-final log structure}, there are $a_{i1},a_{i2}\in (A/I)^{\flat}$ and $x_{i1},x_{i2}\in A$ such that $t_{i1}=[a_{i1}](1+px_{i1})$ and $t_{i2}=[a_{i2}](1+px_{i2})$ for any $0\leq i\leq r$.
   Let $\theta:A\to A/I$ be the natural surjection. Then for all $i$, we have 
   \[a_{i1}^{\sharp}(1+p\theta(x_{i1})) = T_i = a_{i2}^{\sharp}(1+p\theta(x_{i2})).\]
   Since $A/I$ is classically $p$-complete, for any $0\leq i\leq r$, there exists a $y_i\in A/I$ such that $a_{i1}^{\sharp} = a_{i2}^{\sharp}(1+py_i)$. By Lemma \ref{Lem-tilt}, we deduce that $a_{i1}$ and $a_{i2}$ differ from a unit in $(A/I)^{\flat}$. So for any $0\leq i\leq r$, there exists $u_i\in A^{\times}$ such that $t_{i1}=t_{i2}u_i$ as desired.
 \end{proof}

 \begin{prop}\label{Equal perfect site}
%   With notations as above, we obtain that

%       $(1)$ $\Sh((\calO_K)^{\perf}_{\Prism}) = \Sh(\bN\xrightarrow{1\mapsto \pi}\calO_K)_{\Prism}^{\perf})$ and that
       
%       $(2)$ $\Sh((R)_{\Prism}^{\perf}) = \Sh((\bN^{r+1}\xrightarrow{\alpha}R)_{\Prism}^{\perf})$.
    Let $(\frakX,\calM_{\frakX})_{\Prism}$ be $(\bN\xrightarrow{1\mapsto \pi}\calO_K)_{\Prism}$ or $(\bN^{r+1}\xrightarrow{\alpha}R)_{\Prism}$. Then one can identify the sites $(\frakX,\calM_{\frakX})^{\perf}_{\Prism} = (\frakX)_{\Prism}^{\perf}$ via the forgetful functor $(A,I,M,\delta_{\log})\mapsto (A,I)$.
 \end{prop}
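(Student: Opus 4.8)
The plan is to show that the forgetful functor $F\colon(\frakX,\calM_{\frakX})_{\Prism}^{\perf}\to(\frakX)_{\Prism}^{\perf}$, $(A,I,M,\delta_{\log})\mapsto(A,I)$, is an equivalence of categories matching the two topologies. I would split this into three claims: (i) $F$ is faithful and injective on isomorphism classes of objects; (ii) $F$ is full and essentially surjective; and (iii) $F$ both preserves and reflects covers. Claim (i) is essentially Lemma \ref{Lem-uniqueness}: a perfect prism $(A,I)$ over $\frakX$ carries at most one log structure making it an object of the logarithmic site, so log prisms that are not isomorphic already have non-isomorphic underlying prisms, and $F$ loses no information on objects. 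The two cases $(\bN\xrightarrow{1\mapsto\pi}\calO_K)_{\Prism}$ and $(\bN^{r+1}\xrightarrow{\alpha}R)_{\Prism}$ are treated uniformly, so I describe the small affine chart below.

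For fullness and for the topology I would argue that morphisms and covers are rigid. Given objects $\frakA=(A,I,M,\delta_{\log})$ and $\frakB=(B,J,N,\delta_{\log})$ and a morphism $(A,I)\to(B,J)$ of the underlying prisms over $\frakX$, the log structures $M$ and $N$ are the unique ones attached to liftings of the coordinates by Lemma \ref{Structure lemma}; by the rigidity of log structures (Remark \ref{rigidity}) the map of underlying prisms must carry the prelog generators of $M$ into those of $N$ up to units, hence is automatically a morphism of log prisms, and this lift is unique. The same rigidity shows that a cover $A\to B$ of underlying prisms is automatically strict, so that covers in $(\frakX)^{\perf}_{\Prism}$ and in $(\frakX,\calM_{\frakX})^{\perf}_{\Prism}$ correspond under $F$.

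The substantial point is essential surjectivity: I must equip an arbitrary perfect prism $(A,I)\in(\frakX)_{\Prism}^{\perf}$, with $S:=A/I$ and structure map sending the coordinates $T_i$ to elements $s_i\in S$, with a compatible log structure and a $\delta_{\log}$. Writing the desired log structure as the one associated to a prelog structure $\bN^{r+1}\to A$, $e_i\mapsto t_i$ for $0\le i\le r$, the existence of $\delta_{\log}$ with $\delta_{\log}(e_i)\in A$ amounts to $\varphi(t_i)\in t_i^{p}(1+pA)$, equivalently, by Lemma \ref{Lem-factorization} and Corollary \ref{Cor-final log structure}, to $t_i\in[S^{\flat}](1+pA)$. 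Since I am free to change $t_i$ by a unit of $A$ and $s_i$ by a unit of $S$ (the exactness condition only sees the associated log structure), it suffices to realise each $s_i$, up to a unit of $S$, as the $\sharp$ of an element of $S^{\flat}$. Here I would use that each $s_i$ divides the image of $\pi$, and that a suitable power of $\pi$ is a unit multiple of $p$ (its minimal polynomial $E$ being Eisenstein); together with the perfectoid structure of $S$ this is exactly the situation handled by Lemmas \ref{Lem-p-power} and \ref{Lem-tilt}, which produce $s_i^{\flat}\in S^{\flat}$ with $(s_i^{\flat})^{\sharp}=s_i$ up to a unit. Setting $t_i:=[s_i^{\flat}]$ and $\delta_{\log}(e_i):=0$ then yields a perfect log prism with $F$-image $(A,I)$, and the matching of the coordinate liftings modulo $I$ gives the required exact closed immersion.

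I expect the main obstacle to be precisely this last step, namely the genuine construction of the $\delta_{\log}$-structure on a given perfect prism (equivalently, lifting Frobenius to the pulled-back log structure). This is where the hypothesis that the coordinates divide $\pi$ is indispensable, since for a general element of a perfectoid ring one cannot extract the compatible $p$-power roots needed to land in $(S^{\flat})^{\sharp}$ up to units; the technical Lemmas \ref{Lem-p-power} and \ref{Lem-tilt} are tailored to overcome exactly this difficulty. Once the log structure has been produced, the remaining checks — that it is integral, that the closed immersion is exact, and that $(A,I,M,\delta_{\log})$ is again perfect — are formal, and together with (i)--(iii) they identify the two sites.
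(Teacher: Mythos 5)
Your overall skeleton is the same as the paper's: by Lemma \ref{Lem-uniqueness} the log structure (with its $\delta_{\log}$) on a perfect prism in the site is unique if it exists, so after handling morphisms and covers via this uniqueness and Remark \ref{rigidity}, everything reduces to essential surjectivity, i.e.\ to equipping an arbitrary perfect prism $(A,I)\in(\frakX)_{\Prism}^{\perf}$ with a compatible log structure. You also correctly reduce that, via Corollary \ref{Cor-final log structure}, to realising each $s_i$ (the image of $T_i$ in $S=A/I$) as a unit multiple of $a^{\sharp}$ for some $a\in S^{\flat}$, and you correctly flag this as the crux. The problem is that your proposed tools cannot carry out this step.

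Lemmas \ref{Lem-p-power} and \ref{Lem-tilt} are statements about elements \emph{already given} in $S$ or $S^{\flat}$: the first says $x^p\in S$ forces $x\in S$, and the second says that if $a_1,a_2\in S^{\flat}$ satisfy $a_1^{\sharp}=a_2^{\sharp}(1+px)$ with $a_1^{\sharp}\mid p$, then $a_1$ and $a_2$ differ by a unit of $S^{\flat}$. Neither can \emph{produce} an element of $S^{\flat}$ whose sharp is $s_i$ up to a unit; in the paper they serve exactly the uniqueness statement (Lemma \ref{Lem-uniqueness}) that you have already consumed in step (i). What the existence step actually requires is an approximation input, namely \cite[Lemma 3.9]{BMS-a}: for the perfectoid ring $S$ one can find a compatible sequence $\{t_{i,n}\}_{n\geq 0}$ with $t_{i,0}=T_i$ and $t_{i,n+1}^p\equiv t_{i,n}\bmod \rho p S$, giving $\tilde t_i\in S^{\flat}$ with $\tilde t_i^{\sharp}\equiv T_i\bmod \rho p S$. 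Only then does the divisibility you mention enter: writing $\tilde t_i^{\sharp}=T_i+\rho p x_i$ and $p=T_iT_i'y_i$ (Eisensteinness of $E$ gives $\pi\mid p$), one gets $\tilde t_i^{\sharp}=T_i(1+\rho T_i'x_iy_i)$, and this factor is a unit \emph{because of the extra factor $\rho$}, which is topologically nilpotent. Note that the naive perfectoid fact that $\sharp:S^{\flat}\to S/p$ is surjective would not suffice: a congruence $\tilde t_i^{\sharp}\equiv T_i\bmod pS$ only yields $\tilde t_i^{\sharp}=T_i(1+T_i'x_iy_i)$, and $1+T_i'x_iy_i$ need not be a unit. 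So the gap is not merely a misattributed citation; the mod-$\rho p$ refinement is the idea that makes the unit argument work, and it is absent from your sketch.
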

 \begin{proof}
   We only deal with the $(\bN^{r+1}\to R)_{\Prism}$ case. By Lemma \ref{Lem-uniqueness}, it is enough to show that for any perfect prism $(A,I)$, there exists a log structure $M\to A$ making $(A,I,M,\delta_{\log})$ an object in $(\bN^{r+1}\to R)_{\Prism}$. Let $\varpi\in (A/I)^{\flat}$ such that $\varpi^{\sharp}$ is a unit multiple of $p$ and let $\rho = \varphi^{-1}(\varpi)^{\sharp}$ as before. By \cite[Lemma 3.9]{BMS-a}, for any $0\leq i\leq r$, one can find a compatible sequence $\{t_{i,n}\}_{n\geq 0}$ in $A/I$ such that $t_{i,0}$ coincides with $T_i$ and for any $n\geq 0$, $t_{i,n+1}^p$ coincides with $t_{i,n}$ modulo $\rho p A/I$. Let $\tilde t_i\in (A/I)^{\flat}$ be the element determined by $\{t_{i,n}\}_{n\geq 0}$. Then we obtain that 
   \[\tilde t_i^{\sharp} = \lim_{n\to +\infty}t_{i,n}^{p^n}.\]
   In particular, we have
   \[\tilde t_i^{\sharp}\equiv T_i \mod \rho pA/I.\]
   Write $\tilde t_i^{\sharp} = T_i+\rho p x_i$ for some $x_i\in A/I$.
   For any $0\leq i\leq r$, put $T_i':= \prod_{0\leq j\leq r,j\neq i}T_j$. Then $T_iT_i' = \pi$. In particular, there exists some $y_i\in R$ such that $p = T_iT_i'y_i$.
   Now, we obtain that 
   \[\tilde t_i^{\sharp} = T_i+\rho p x_i = T_i(1+\rho T_i'x_iy_i) \]
   is a unit multiple of $T_i$. It is easy to see that the log structure associated to 
   \[\oplus_{i=0}^r\bN e_i\xrightarrow{e_i\mapsto \tilde t_i,~\forall i}A\]
   is desired. We win!
%   By Corollary \ref{Cor-basis}, it is enough to show for any perfectoid ring $S$, there is a $p$-completely faithfully flat morphism $S\to S_{\infty}$ with $S_{\infty}$ absolutely integral closed. But this is known as Andr\'e's flatness lemma \cite[Theorem 7.12]{BS-a}.
 \end{proof}
 
 Let $\frakX$ be a separated semi-stable formal scheme over $\calO_K$ with the rigid analytic generic fibre $X$ equipped with the log structure $\calM_{\frakX} = \calO_X^{\times}\cap\calO_{\frakX}\to\calO_{\frakX}$. Since \'etale locally on $\frakX$, $(\frakX,\calM_{\frakX})$ is induced by the prelog structure $\bN^{r+1}\to R$ as described in Example \ref{Exam-rel case}, then the following result follows from Proposition \ref{Equal perfect site} directly.
 
 \begin{cor}\label{Cor-Equal perfect site}
   With notations as above, we have $\Sh((\frakX)_{\Prism}^{\perf}) = \Sh((\frakX,\calM_{\frakX})_{\Prism}^{\perf})$.
 \end{cor}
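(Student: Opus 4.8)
The plan is to promote the local identification of sites in Proposition \ref{Equal perfect site} to the claimed global equivalence of topoi, using the uniqueness Lemma \ref{Lem-uniqueness} as the glue. Concretely, I consider the forgetful functor
\[
u\colon (\frakX,\calM_{\frakX})_{\Prism}^{\perf}\longrightarrow (\frakX)_{\Prism}^{\perf},\qquad (A,I,M,\delta_{\log})\mapsto (A,I)
\]
(retaining the structure map $f_{\frakA}$), and aim to show that $u$ is an equivalence of categories which both preserves and reflects covers; this at once yields an equivalence of the associated topoi.

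The only step requiring real work is essential surjectivity. Given $(A,I)\in(\frakX)_{\Prism}^{\perf}$ with structure map $f\colon\Spf(A/I)\to\frakX$, I must produce a log structure $M\to A$ making $(A,I,M,\delta_{\log})$ an object lying over $(A,I)$. I would choose an \'etale cover of $\frakX$ by small affine opens as in Example \ref{Exam-rel case}, each carrying a chart $\bN^{r_j+1}\to R_j$. Pulling back along $f$ gives an \'etale cover of $\Spf(A/I)$, which lifts uniquely to an \'etale cover $\{\Spf(A_j)\}$ of $\Spf(A)$ since $A$ is $I$-adically complete; each $A_j$ is again perfectoid, so $(A_j,IA_j)$ is a perfect prism sitting in a site of the form $(\bN^{r_j+1}\to R_j)_{\Prism}$. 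Over each piece Proposition \ref{Equal perfect site} supplies a log structure $M_j\to A_j$, and Lemma \ref{Lem-uniqueness} shows it is canonical, i.e. independent of all choices. Hence the $M_j$ agree on overlaps and descend to a global log structure $M\to A$, which is integral since it is \'etale-locally modelled on $\bN^{r_j+1}$.

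Full faithfulness is then routine. Faithfulness is immediate: by the structure Lemma \ref{Structure lemma} the monoid maps involved are generated by liftings of the coordinates $T_i$, so a morphism of log prisms is determined by its underlying morphism of prisms. For fullness, given a prism morphism $(A',I')\to (A,I)$ over $\frakX$, the rigidity of log structures (Remark \ref{rigidity}) forces the target log structure to be the one associated to the pullback of the source, so the ring map upgrades uniquely to a morphism of log prisms. Finally, a covering in either site is detected on underlying prisms by $(p,I)$-complete faithful flatness, and the strictness required in the log site is automatic here by the same canonicity of log structures; thus $u$ preserves and reflects covers.

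The main obstacle is exactly the gluing in essential surjectivity: the log structure built over each chart is only defined after that choice of chart, and one must know it is canonical in order to patch across the \'etale cover of $\frakX$. This is precisely what Lemma \ref{Lem-uniqueness} (together with Corollary \ref{Cor-final log structure}) provides, and it is what lets the whole argument reduce cleanly to the affine case settled in Proposition \ref{Equal perfect site}.
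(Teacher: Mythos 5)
Your overall route (\'etale localization to Proposition \ref{Equal perfect site}, with Lemma \ref{Lem-uniqueness} providing canonicity) is the same as the paper's, but you try to prove strictly more than the corollary asserts, and that is where the argument breaks: you claim the forgetful functor $u$ is \emph{essentially surjective}, and your gluing step does not deliver this. What descent across your \'etale cover produces is a log structure in the sheaf-theoretic sense, a sheaf of monoids $\calM_A\to\calO_{\Spf(A)}$ on $\Spf(A)_{\et}$ that \'etale-locally has charts. But an object of $(\frakX,\calM_{\frakX})_{\Prism}^{\perf}$ is, by Definition \ref{Dfn-log site}, a log \emph{prism}: a single monoid $M\to A$ such that $(A,M)$ is $(p,I)$-adically log-affine, whose \emph{associated} sheaf is the log structure. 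So after gluing you still must show that $\calM_A$ is recovered from its global sections $M=\Gamma(\Spf(A),\calM_A)$, i.e. that there are enough global sections. This is a genuine obstruction problem: lifting a section of the characteristic sheaf $\calM_A/\calO^{\times}$ to $\calM_A$ is a torsor under $\calO^{\times}$, and neither $\frakX$ (which need not admit a global chart) nor a perfect prism $(A,I)$ has any reason to have vanishing obstructions. Compare the divisorial log structure attached to a non-principal divisor on an affine Dedekind scheme: it is \'etale-locally of chart form, yet its global sections are only the units, so the associated log structure is trivial and log-affineness fails. Lemma \ref{Lem-uniqueness} guarantees agreement on overlaps but cannot manufacture the missing global sections; hence essential surjectivity of $u$ is unproven (and is likely false in general).

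This is precisely why the paper states the global result only as an equality of \emph{topoi}, while the site-level identification in Proposition \ref{Equal perfect site} is confined to the small affine case where $\frakX$ has a global chart. The repair is to prove the weaker statement that the comparison lemma actually needs: $u$ is fully faithful (your rigidity argument via Remark \ref{rigidity} and Lemma \ref{Lem-uniqueness} serves here), $u$ preserves and detects covers (strictness is automatic by Remark \ref{rigidity}, as you note), and every object $(A,I)\in(\frakX)_{\Prism}^{\perf}$ admits a cover \emph{in the site} by objects in the image of $u$: refine the pullback of a small affine \'etale cover of $\frakX$ to a finite affine \'etale cover, lift it to $(p,I)$-completely \'etale $A$-algebras $A_j$, apply Proposition \ref{Equal perfect site} to each perfect prism $(A_j,IA_j)$, and then $A\to\prod_j A_j$ is a single $(p,I)$-completely faithfully flat cover whose target carries the product log structure and so lies in the image of $u$. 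This "locally in the image" statement requires no gluing of log structures on $\Spf(A)$ at all, and together with your remaining points it yields $\Sh((\frakX)_{\Prism}^{\perf})\simeq\Sh((\frakX,\calM_{\frakX})_{\Prism}^{\perf})$.
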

 
 \begin{rmk}
   In this subsection, most of results (from Lemma \ref{Structure lemma} to Corollary \ref{Cor-Equal perfect site}) are proved for log formal schemes $(\frakX,\calM_{\frakX})$ whose log structures are locally induced by finite generated free monoids (i.e. locally, $\frakX = \Spf(R)$ with $\calM_{\frakX}$ induced by $\bN^s\to R$ for some $s\geq 0$). We believe that those relevant results still hold true in a more general setting and for example, hold for those $(\frakX,\calM_{\frakX})$ with $\calM_{\frakX}$ just fine (or even integral). Since results given above are enough for our use, we will not handle the most general case in this paper.
 \end{rmk}
 \section{The Hodge--Tate crystals on absolute logarithmic prismatic site}
   In this section, we always assume $\frakX$ is a semi-stable $p$-adic formal scheme over $\calO_K$ with the log structure $\calM_{\frakX} = \calO_X^{\times}\cap\calO_{\frakX}$, where $X$ is the rigid analytic generic fibre of $\frakX$. \'Etale locally, $\frakX=\Spf(R)$ and $\calM_{\frakX}$ is induced by the prelog structure $\bN^{r+1}\to R$ as described in Example \ref{Exam-rel case}.
   
   Our purpose is to generalize results in \cite{MW-b} and \cite{MW-c} to the logarithmic case. That is, we want to study Hodge--Tate crystals on the absolute logarithmic prismatic site $(\frakX,\calM_{\frakX})_{\Prism}$. Before moving on, we specify the meaning of Hodge--Tate crystals as follows:
   
   \begin{dfn}[The Hodge--Tate crystals]\label{Dfn-HT crystal}
      By a {\bf Hodge--Tate crystal} on $(\frakX,\calM_{\frakX})_{\Prism}$, we mean a sheaf $\bM$ of $\overline \calO_{\Prism}$-modules satisfying the following properties:

          $(1)$ For any $\frakA = (A,I,M,\delta_{\log})\in (\frakX,\calM_{\frakX})_{\Prism}$, $\bM(\frakA)$ is a finite projective $A/I$-module.
          
          $(2)$ For any morphism $\frakA = (A,I,M,\delta_{\log})\to \frakB = (B,IB,N,\delta_{\log})$ in $(\frakX,\calM_{\frakX})_{\Prism}$, there is a canonical isomorphism
          \[\bM(\frakA)\otimes_{A/I}B/IB\to \bM(\frakB).\]

      We denote by $\Vect((\frakX,\calM_{\frakX})_{\Prism},\overline \calO_{\Prism})$ the category of Hodge--Tate crystals on $(\frakX,\calM_{\frakX})_{\Prism}$. 
      
      Similarly, we define {\bf rational Hodge--Tate crystals} on $(\frakX,\calM_{\frakX})_{\Prism}$ (resp. $(\frakX,\calM_{\frakX})_{\Prism}^{\perf}$) by replacing $\overline \calO_{\Prism}$ by $\overline \calO_{\Prism}[\frac{1}{p}]$ and denote the corresponding category by $\Vect((\frakX,\calM_{\frakX})_{\Prism},\overline \calO_{\Prism}[\frac{1}{p}])$ (resp. $\Vect((\frakX,\calM_{\frakX})^{\perf}_{\Prism},\overline \calO_{\Prism}[\frac{1}{p}])$).
   \end{dfn}
   
   In \cite{MW-b} and \cite{MW-c}, the authors proved the following theorem:
   \begin{thm}[\emph{\cite[Theorem 1.6, 1.12]{MW-c}}]\label{MW-c}
     Assume $\frakX$ is a smooth $p$-adic formal scheme over $\calO_K$ of relative dimension $d$. Then there is an equivalence from the category $\Vect((\frakX)_{\Prism},\overline \calO_{\Prism}[\frac{1}{p}])$ of rational Hodge--Tate crystals to the category ${\rm HIG}^{\nil}_*(\frakX,\calO_{\frakX}[\frac{1}{p}])$ of triples $(\calH,\Theta,\phi)$\footnote{Such a triple was referred as an {\bf enhanced Higgs bundle} in \emph{loc.cit.}.} consisting of 
     
     $(1)$ a Higgs bundle $(\calH,\Theta)$ with coefficients in $\calO_{\frakX}[\frac{1}{p}]$ such that $\Theta$ is nilpotent and 
     
     $(2)$ an endomorphism $\phi$ of $\calH$ such that $[\Theta,\phi]=E'(\pi)\Theta$ and $\lim_{n\to+\infty}\prod_{i=0}^n(\phi+iE'(\pi)) = 0$.
     
     The equivalence fits into the following commutative diagram
     \begin{equation*}
       \xymatrix@C=0.5cm{
         \Vect((\frakX)_{\Prism},\overline \calO_{\Prism}[\frac{1}{p}])\ar[r]^R\ar[d]^{\simeq}&\Vect((\frakX)_{\Prism}^{\perf},\overline \calO_{\Prism}[\frac{1}{p}])\ar[r]^{\quad\simeq }&\Vect(X,\OX)\ar[d]^{\simeq }\\
         \HIG^{\nil}_*(\frakX, \calO_{\frakX}[\frac{1}{p}])\ar[rr]&&\HIG^{\nil}_{G_K}(X_{C}),
       }
   \end{equation*}
     wihere all arrows are fully faithful functors. Here, we use ``$\simeq$'' to denote equivalences of categories.
     
     If moreover $\frakX = \Spf(R)$ is small affine, then the above equivalence upgrades to the integral and derived level. More precisely, there is an equivalence (depending on the chosen framing) from the category $\Vect((R)_{\Prism},\overline \calO_{\Prism})$ of Hodge--Tate crystals to the category ${\rm HIG}^{\nil}_*(R)$ of triples $(H,\Theta,\phi)$\footnote{Such a triple was referred as an {\bf enhanced Higgs module} in \emph{loc.cit.}.} consisting of a finite projective $R$-module $H$, a nilpotent Higgs field $\Theta$ on $H$ and an endomorphism $\phi$ of $H$ satisfying $[\Theta,\phi]=E'(\pi)\Theta$ and $\lim_{n\to+\infty}\prod_{i=0}^n(\phi+iE'(\pi)) = 0$. In this case, there is a quasi-isomorphism 
     \[\RGamma((R)_{\Prism},\bM)\simeq [H\xrightarrow{\Theta} H\otimes_R\widehat \Omega_R^1\{-1\}\xrightarrow{\Theta}\cdots\xrightarrow{\Theta} H\otimes_R\widehat \Omega_R^d\{-d\}]^{\phi=0}\]
     for any Hodge--Tate crystal $\bM$ with associated triple $(H,\Theta,\phi)$.
   \end{thm}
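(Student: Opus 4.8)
The plan is to prove the equivalence by descending to the small affine case and then carrying out an explicit analysis of the self-products of the Breuil--Kisin prism, reading off the operators $\Theta$ and $\phi$ from the resulting stratification. First I would reduce to $\frakX = \Spf(R)$ small affine: since $\overline \calO_{\Prism}$ is a sheaf and the notion of Hodge--Tate crystal is local on $\frakX_{\et}$, a crystal is determined by its \'etale-local pieces together with gluing data, so it suffices to produce the equivalence locally in a way compatible with the framing-independent target (one then checks that a change of framing $\Box$ induces an isomorphism of the associated triples, so that the local equivalences glue). Fixing $\Box$, I would use the Breuil--Kisin prism $\frakA_0 = (\frakS(R),(E),M_{\frakS(R)},\delta_{\log})$ of Example \ref{Exam-rel case}. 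The point is that $\frakA_0$ covers the final object of the topos $\Sh((\frakX)_{\Prism})$, so that a Hodge--Tate crystal $\bM$ is equivalent to the finite projective module $H := \bM(\frakA_0)$ over $R = \frakS(R)/E$ together with a stratification (descent datum) along the \v{C}ech nerve $\frakA_0^{\bullet}$.

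Second, I would compute the Hodge--Tate reduction of the two-fold self-product $\frakA_0^1 = \frakA_0 \times \frakA_0$ in the absolute site. The key structural fact is that $\frakA_0^1/E\frakA_0^1$ splits the diagonal deformation into a geometric part, coming from the coordinates $T_0,\dots,T_d$, and an arithmetic part, coming from the uniformizer direction $u \mapsto \pi$ which is genuinely present in the absolute (as opposed to the relative) theory. Differentiating the stratification along the geometric directions produces a Higgs field $\Theta \colon H \to H \otimes_R \widehat \Omega^1_R\{-1\}$, while differentiating along the arithmetic direction produces the endomorphism $\phi \in \End(H)$; this is the incarnation of Sen theory in the uniformizer direction, and both the Breuil--Kisin twist and the factor $E'(\pi)$ enter through $\theta(E([\pi^{\flat}]))$-type normalizations in this computation.

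Third, I would translate the cocycle (associativity) condition on the stratification into the defining relations of $\HIG^{\nil}_*(R)$: integrability $\Theta \wedge \Theta = 0$, the commutation relation $[\Theta,\phi] = E'(\pi)\Theta$, and---from the requirement that the stratification converge $p$-adically after inverting $p$---the nilpotence of $\Theta$ together with $\lim_{n\to\infty}\prod_{i=0}^n(\phi + iE'(\pi)) = 0$. Conversely, given a triple $(H,\Theta,\phi)$, I would reconstruct the stratification by exponentiating $\Theta$ and $\phi$, the topological nilpotence being exactly what guarantees convergence of the relevant series; this yields a quasi-inverse functor. For the cohomological claim, I would identify the \v{C}ech--Alexander complex computing $\RGamma((R)_{\Prism},\bM)$ with the totalization of the stratification cosimplicial module and, via the standard stratification-to-Higgs dictionary, with the invariants $[\HIG(H,\Theta)]^{\phi=0}$.

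Finally, for the commutative diagram, I would argue as in Corollary \ref{Cor-Equal perfect site}: restricting $\bM$ to the perfect site amounts to evaluating on $(\Ainf(\Rinf),(\xi))$, and the perfect-prism/perfectoid correspondence turns this evaluation into a vector bundle on $X_{\proet}$ with $\OX$-coefficients, i.e.\ a generalised representation; unwinding the $\widehat G_K$-action then recovers the $G_K$-Higgs bundle $(\calH,\theta_{\calH})$ and exhibits $\phi$ as the arithmetic Sen operator up to the normalization predicted in Conjecture \ref{Intro-Conjecture}. The main obstacle is the second step: carrying out the explicit prismatic-envelope computation of $\frakA_0^1/E$ in the absolute site and extracting the correctly normalized operators---in particular proving that the arithmetic direction yields a \emph{single} endomorphism $\phi$ obeying exactly $[\Theta,\phi] = E'(\pi)\Theta$ with the precise factor $E'(\pi)$, and that convergence of stratifications matches precisely the stated topological-nilpotence condition.
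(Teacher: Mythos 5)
Your proposal is correct and follows essentially the same route as the cited source \cite{MW-c} of Theorem \ref{MW-c}, which is also the route this paper mirrors in its logarithmic analogue (Sections 3--4): cover the final object of the topos by the Breuil--Kisin prism, compute the Hodge--Tate reduction of its \v{C}ech nerve as a free pd-polynomial ring whose variables split into geometric (coordinate) and arithmetic (uniformizer) directions, translate the stratification cocycle condition into the relations $\Theta\wedge\Theta=0$, $[\Theta,\phi]=E'(\pi)\Theta$ and the topological nilpotence condition, identify the prismatic cohomology with the \v{C}ech--Alexander/Higgs complex with the $\phi$-fiber, and pass to the perfect site for the comparison with generalised representations. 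The only slip is notational: in the smooth case the framing coordinates are $T_1,\dots,T_d$ with no relation $T_0\cdots T_r=\pi$, so the geometric directions do not include a $T_0$; this does not affect the argument.
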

   In particular, when $\frakX = \Spf(\calO_K)$, the theorem can be formulated as follows:
   \begin{thm}[\emph{\cite[Theorem 1.2, 1.3, 1.7]{MW-b},\cite[Theorem 4.3.3]{Gao}}]\label{MW-b}
     The evaluation at Breuil--Kisin prism $(\frakS,(E))$ induces an equivalence from the category of Hodge--Tate crystals on $(\calO_K)_{\Prism}$ to the category ${\rm HT}(\calO_K)$  of pairs $(M,\phi_M)$ consisting of a finite prejective $\calO_K$-module $M$ and an $\calO_K$-linear endomorphism $\phi_M$ of $M$ such that 
     \[\lim_{n\to+\infty}\prod_{i=0}^n(\phi_M+iE'(\pi)) = 0.\]
     The above equivalence is still true for rational Hodge--Tate crystals by using finite dimensional $K$-vector spaces instead of finite projective $\calO_K$-modules. Moreover, let $\bM$ be a (rational) Hodge--Tate crystal on $(\calO_K)_{\Prism}$ with associated pair $(M,\phi_M)$, then the following assertions are true:
     
         $(1)$ There exists a natural quasi-isomorphism 
         \[\rR\Gamma((\calO_K)_{\Prism},\bM)\cong [M\xrightarrow{\phi_M}M].\]
         
         $(2)$ The restriction to $(\calO_K)_{\Prism}^{\perf}$ induces a fully faithful functor from the category of rational Hodge--Tate crystals over $\calO_K$ to the category of (semi-linear) continuous $C$-representations of $G_K$:
         \[\Vect((\calO_K)_{\Prism},\overline \calO_{\Prism}[\frac{1}{p}])\to \Rep_{G_K}(C).\]
         If we denote by $V$ the corresponding $C$-representation of $\bM$ and $\Theta_V$ the Sen operator of $V$, then we have $V=M\otimes_KC$ with $\Theta_V = -\frac{\phi_M}{E'(\pi)}$ and a natural quasi-isomorphism 
         \[\rR\Gamma((\calO_K)_{\Prism},\bM)\cong \rR\Gamma(G_K,V).\]
   \end{thm}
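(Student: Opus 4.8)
The plan is to reduce the classification to linear algebra by evaluating a crystal at the Breuil--Kisin prism and descending along its \v{C}ech nerve, and then to extract both the cohomology and the Sen operator from the resulting data. First I would use the structural fact that the Breuil--Kisin prism $\frakS = (\frakS,(E))$ is a cover of the final object of the topos $\Sh((\calO_K)_{\Prism})$, so that by descent a Hodge--Tate crystal $\bM$ amounts to a finite projective $\overline\frakS = \calO_K$-module $M := \bM(\frakS)$ equipped with a stratification along the Hodge--Tate (i.e. modulo $E$) reduction of the \v{C}ech nerve $\frakS^{(\bullet)}$. The decisive computation is to make $\overline{\frakS^{(1)}} = \frakS^{(1)}/(E)$ explicit: writing $u_0,u_1$ for the two images of $u$ under $\frakS\rightrightarrows\frakS^{(1)}$, the prismatic envelope forces the difference $u_1-u_0$ to acquire divided powers, and one identifies $\overline{\frakS^{(1)}}$ with a completed divided-power algebra over $\calO_K$ on a single generator, the normalization being governed by $E'(\pi)$ since $dE\equiv E'(\pi)\,du$ along the diagonal. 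A stratification then unwinds into a family of endomorphisms of $M$; the cocycle condition pins this family down in terms of its linear term $\phi_M$ by the formula $\phi_M^{(n)} = \prod_{i=0}^{n-1}(\phi_M+iE'(\pi))$, and the requirement that the stratification converge in the completed divided-power algebra is precisely the condition $\lim_{n}\prod_{i=0}^{n}(\phi_M+iE'(\pi)) = 0$. This produces the equivalence with ${\rm HT}(\calO_K)$, and the rational statement is obtained verbatim with finite-dimensional $K$-vector spaces in place of finite projective $\calO_K$-modules.

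For assertion $(1)$, I would compute $\rR\Gamma((\calO_K)_{\Prism},\bM)$ via the \v{C}ech--Alexander complex of the cover $\frakS$. Feeding in the explicit divided-power description of $\overline{\frakS^{(\bullet)}}$ turns this into a Koszul-type complex whose only surviving differential is built from $\phi_M$; contracting the divided-power directions by an explicit homotopy collapses it to the two-term complex $[M\xrightarrow{\phi_M}M]$, which is the asserted quasi-isomorphism.

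For assertion $(2)$, I would pass to the perfect subsite $(\calO_K)_{\Prism}^{\perf}$ and work with Fontaine's prism $(\Ainf,(\xi))$, which under the equivalence between perfect prisms and perfectoid rings corresponds to $\calO_C$ and carries the natural $G_K$-action. Restricting $\bM$ and evaluating at $\Ainf$ produces, rationally, a finite continuous $C$-semilinear $G_K$-representation $V$, and base change along the cover $\frakS\to\Ainf$ sending $u\mapsto[\pi^{\flat}]$ identifies $V\cong M\otimes_K C$. The Sen operator is then computed by transporting $\phi_M$ through this map: the passage from the Kummer tower to the cyclotomic tower, together with the relation $\xi=\lambda E([\pi^{\flat}])$ modulo $\Ker\theta$ and the derivative $E'(\pi)$, forces the normalization $\Theta_V = -\frac{\phi_M}{E'(\pi)}$. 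Matching the two-term complex of $(1)$, after inverting $p$, with the Sen complex computing continuous Galois cohomology then gives $\rR\Gamma((\calO_K)_{\Prism},\bM)\cong\rR\Gamma(G_K,V)$.

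The hard part will be the explicit determination of the Hodge--Tate reduction of the \v{C}ech nerve and the bookkeeping that converts a stratification into the single operator $\phi_M$ with exactly the shifts $iE'(\pi)$; in particular, pinning down the factor $E'(\pi)$ (and the sign, together with the unit $\lambda$ appearing in the Sen comparison) demands a careful analysis of the $\delta$-structure on $\frakS^{(1)}$ and of the prismatic envelope, and cannot be carried out by formal considerations alone.
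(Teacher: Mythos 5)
Your proposal is correct and takes essentially the same route as the paper: this theorem is quoted there from \cite{MW-b} and \cite{Gao}, and the argument (reproduced in the paper's logarithmic analogue, Lemma \ref{Lem-cover abs}, Lemma \ref{pd poly-abs}, Theorem \ref{Thm-prismatic cohomology-abs} and Theorem \ref{Thm-crys-vs-rep-abs}) is exactly yours — descent along the Breuil--Kisin prism cover of the final object, identification of the reduced \v Cech nerve with the completed pd-polynomial ring $\calO_K\{X_1,\dots,X_n\}^{\wedge}_{\pd}$, translation of the stratification/cocycle condition into the single operator $\phi_M$ with the shifts $iE'(\pi)$ and the convergence condition, \v Cech--Alexander collapse to $[M\xrightarrow{\phi_M}M]$, and evaluation at the Fontaine prism together with the Kummer-to-cyclotomic Sen-theoretic computation of \cite{Gao} for part $(2)$. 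The only point you leave implicit is that the full faithfulness in $(2)$ is then deduced from the $\rH^0$-comparison combined with compatibility of the equivalence with tensor products and duals, exactly as in the cited proof.
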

   
   The purpose of this section is to generalise these results to the logarithmic case (for semi-stable $\frakX$ with the canonical log structure $\calM_{\frakX}$).
 \subsection{The $\calO_K$ case}
   We first assume $\frakX = \Spf(\calO_K)$ with the log structure $\calM_{\frakX}$ corresponding to the prelog structure $(\bN\xrightarrow{1\mapsto \pi}\calO_K)$. We start with the following lemma:
   \begin{lem}\label{Lem-cover abs}
          $(1)$ The Breuil--Kisin log prism $(\frakS,(E),M_{\frakS},\delta_{\log})$ is a cover of the final object of the topos $\Sh((\bN\xrightarrow{1\mapsto \pi}\calO_K)_{\Prism})$.
          
          $(2)$ The Fontaine log prism $(\Ainf,(\xi),M_{\inf},\delta_{\log})$ is a cover of the final object of the topos $\Sh((\bN\xrightarrow{1\mapsto \pi}\calO_K)_{\Prism}^{\perf})$.
   \end{lem}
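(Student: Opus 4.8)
The plan is to verify, for each part, the standard criterion that an object $\frakP$ of the site covers the final object of its topos: every object $\frakA$ should admit a cover $\frakA\to\frakA'$ (a morphism with $A\to A'$ being $(p,I)$-completely faithfully flat and strict) together with a morphism of log prisms $\frakP\to\frakA'$. For a log prism $\frakA=(A,I,M,\delta_{\log})$ in $(\bN\xrightarrow{1\mapsto\pi}\calO_K)_{\Prism}$, Lemma \ref{Structure lemma}(1) lets me assume $M$ is attached to $\bN\xrightarrow{1\mapsto u_A}A$ for a distinguished lift $u_A$ of $\pi$, with $\varphi(u_A)=u_A^p(1+p\,\delta_{\log}(m_0))$ by (\ref{Equ-Frob on log structure}), where $m_0$ generates $\bN$. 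The key observation is that a morphism of log prisms $(\frakS,(E),M_{\frakS},\delta_{\log})\to\frakA'$ is precisely the datum of an element $w\in A'$ lifting $\pi$ with $\varphi(w)=w^p$ (so that $u\mapsto w$ is a $\delta$-ring map and the relation $\delta_{\log}(m_0)=0$ is respected), whose associated log structure agrees with that of $\frakA'$.

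For part (2) I would first invoke Proposition \ref{Equal perfect site} (equivalently Corollary \ref{Cor-Equal perfect site}) to identify $(\bN\xrightarrow{1\mapsto\pi}\calO_K)_{\Prism}^{\perf}$ with the usual perfect site $(\calO_K)_{\Prism}^{\perf}$ via the forgetful functor, reducing the claim to showing that the underlying perfect prism $(\Ainf,(\xi))$ of the Fontaine log prism covers the final object of $\Sh((\calO_K)_{\Prism}^{\perf})$. Under the equivalence between perfect prisms and perfectoid rings, an object is a perfectoid $\calO_K$-algebra $S$, and I would produce the cover by passing to a perfectoid algebra $S'$ that is $p$-completely faithfully flat over $S$ and receives $\calO_C$ (for instance a perfectoidization of $S\,\widehat\otimes_{\calO_K}\calO_C$); the functorial map $\Ainf=\rW(\calO_C^{\flat})\to\rW((S')^{\flat})$ is then a morphism of log prisms by the rigidity of the log structure (Lemma \ref{Structure lemma} and Corollary \ref{Cor-final log structure}), and $A\to\rW((S')^{\flat})$ is the desired cover. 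This part is essentially routine.

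The substance lies in part (1), where $\frakA$ need not be perfect. By the reformulation above I must build a faithfully flat cover $A\to A'$ carrying an element $w$ with $\varphi(w)=w^p$ and $w\equiv\pi\bmod I$. Writing $w=u_A x$ with $x\equiv1\bmod I$ and $c=\delta_{\log}(m_0)$, the equation $\varphi(w)=w^p$ is equivalent to $x^p=(1+pc)\varphi(x)$, i.e.\ $\delta(x)=-cx^p(1+pc)^{-1}$ (note $1+pc\in A^{\times}$ since $p$ lies in the Jacobson radical). I would construct $A'$ as the $(p,I)$-completed prismatic envelope of $A$ obtained by freely adjoining an element $w$ with $\delta(w)=0$ and then forcing $w\equiv u_A\bmod I$, concretely $A'=A\{\tfrac{w-u_A}{\tau}\}^{\wedge}_{\delta}$ with a chosen generator $\tau$ of $I$, and equip it with the log structure generated by $m_0'\mapsto w$. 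Since $x=w/u_A\equiv1\bmod I$ is a unit, this log structure coincides with the pullback of $M$, so the cover is strict, and $m_0\mapsto m_0'$, $u\mapsto w$ defines the sought morphism $(\frakS,(E),M_{\frakS},\delta_{\log})\to\frakA'$.

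The hard part will be showing that this prismatic envelope is genuinely a cover, i.e.\ that it is $(p,I)$-completely \emph{faithfully} flat over $A$ and bounded; I would deduce this from the flatness results for prismatic envelopes of distinguished elements in \cite{BS-a} (the mechanism behind \cite[Corollary 3.12]{BS-a}), after checking that $w-u_A$ provides the requisite Koszul-regular datum. The Frobenius equation $x^p=(1+pc)\varphi(x)$ is exactly the obstruction that Lemma \ref{Lem-factorization} resolves in the perfect case via the explicit product $x=\prod_{i\ge1}(1+p\varphi^{-i}(\ast))^{p^{i-1}}$; in the general case $\varphi$ is not invertible, which is precisely why one must pass to a flat cover rather than find $w$ inside $A$ itself, and controlling the flatness of that cover is the crux of the argument.
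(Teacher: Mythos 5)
Your part (2) is fine and is exactly the paper's route: reduce via Proposition \ref{Equal perfect site} to the non-logarithmic perfect site and then invoke the known fact that $(\Ainf,(\xi))$ covers the final object there (the paper cites \cite[Lemma 2.2 (2)]{MW-b} for this; your perfectoidization argument is the proof of that cited fact). For part (1) the paper gives no argument at all --- it cites \cite[Proposition 5.0.16]{DL} --- so supplying a proof is legitimate, and your covering criterion (every $\frakA$ admits a strict $(p,I)$-completely faithfully flat map $\frakA\to\frakA'$ with $\frakA'$ receiving a morphism from the Breuil--Kisin log prism) together with the reduction via Lemma \ref{Structure lemma}(1) is the right skeleton.

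However, your construction in part (1) has a genuine gap, and it is not the flatness issue you flag at the end (that part is routine from \cite[Proposition 3.13]{BS-a}); it is the log-structure claim. In $A'=A[w]\{\frac{w-u_A}{\tau}\}^{\wedge}_{\delta}$ with $\delta(w)=0$, the element $x=w/u_A$ simply does not exist: $u_A$ is a non-unit lift of $\pi$, and $w\equiv u_A \bmod IA'$ only gives $w=u_A+\tau y$, which is not of the form $u_A\cdot(\text{unit})$. Consequently the log structure generated by $w$ is \emph{not} the pullback of $M$, so your map $\frakA\to\frakA'$ is not strict --- with your choice of log structure it is not even a morphism of log prisms, since $u_A$ does not lie in the image of $M'$ --- and $\frakA'$ fails to be a cover of $\frakA$, which is the whole point. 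The failure is already visible in the universal case $\frakA=(\frakS,(E),M_{\frakS},\delta_{\log})$: your $A'$ is then precisely the ring $\frakS_0^1=\rW(k)[[u_0,u_1]]\{\frac{u_0-u_1}{E(u_0)}\}^{\wedge}_{\delta}$ appearing in the proof of Lemma \ref{Lem-cech nerve abs}, where the paper stresses that one must pass to the exactification $\tilde{\frakS}_0^1=\rW(k)[[u_0,u_1]][(\frac{u_0}{u_1})^{\pm 1}]\{\frac{u_0-u_1}{E(u_0)}\}^{\wedge}_{\delta}$, i.e.\ genuinely invert the ratio. Concretely, $\frakS_0^1/(E)\cong\calO_K\{Y\}^{\wedge}_{\pd}$ with $Y$ the class of $\frac{u_0-u_1}{E(u_0)}$, and $Y\notin\pi\cdot\calO_K\{Y\}^{\wedge}_{\pd}$, which rules out $u_1\in u_0\cdot(\frakS_0^1)^{\times}$. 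The repair is to build the unit into the cover rather than hope it appears: take $B=A[Z^{\pm 1}]^{\wedge}$ with the $\delta$-structure determined by $\varphi(Z)=(1+p\,\delta_{\log}(m_0))Z^p$ (legitimate since $1+p\,\delta_{\log}(m_0)\in A^{\times}$), and set $w:=u_AZ^{-1}$; then $\varphi(w)=w^p$ holds identically with no division by $u_A$, $w$ and $u_A$ differ by the unit $Z^{-1}$ so strictness is automatic, one checks $\delta_{\log}(m_0\cdot Z^{-1})=0$ so the map from the Breuil--Kisin log prism respects $\delta_{\log}$, and finally $A'=B\{\frac{Z-1}{I}\}^{\wedge}_{\delta}$ forces $Z\equiv 1$, hence $w\equiv\pi \bmod IA'$, and is a flat cover by \cite[Proposition 3.13]{BS-a} since $Z-1$ is $(p,I)$-completely regular relative to $A$. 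This is exactly how \cite[Proposition 5.0.16]{DL} argues, and it mirrors the paper's own proof of the relative statement, Lemma \ref{Lem-cover rel}(1), where the ratios $\frac{t_i}{T_i}$ are adjoined as units for precisely this reason.
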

   \begin{proof}
          $(1)$ This is \cite[Proposition 5.0.16]{DL}\footnote{The authors knew from Heng Du that the result was first obtained by T. Koshikawa.}.
          
          $(2)$ %For any perfect log prism $(A,I,M,\delta_{\log})\in (\bN\xrightarrow{1\mapsto \pi}\calO_K)^{\perf}_{\Prism}$, by \cite[Lemma 2.2]{MW-b}, there exists a perfect prism $(C,IC)$ over both $(\Ainf,(\xi))$ and $(A.I)$ such that $C$ is $(p,I)$-faithfully flat over $A$. By Andr\'e's flatness lemma \cite[Theorem 7.12]{BS-a}, we may further assume $C/I$ is absolutely integral closed. Let $M_1$ and $M_2$ be log structures on $C$ associated to prelog structures $(M_{\inf}\to \Ainf \to C)$ and $(M\to A\to C)$.  Then $(C,IC,M_1,\delta_{\log})$ and $(C,IC,M_2,\delta_{\log})$ are log prisms in $(\bN\xrightarrow{1\mapsto \pi} \calO_K)_{\Prism}^{\perf}$. By Lemma \ref{Lem-uniqueness}, we have 
          %\[(C,IC,M_1,\delta_{\log})=(C,IC,M_2,\delta_{\log}).\]
          %Now the result follows.
          This follows from \cite[Lemma 2.2 (2)]{MW-b} and Proposition \ref{Equal perfect site}.
   \end{proof}
   
   For any $n\geq 0$, let $A^n = \rW(k)[[u_0,1-\frac{u_1}{u_0},\dots,1-\frac{u_n}{u_0}]]$ and define 
   \[\frakS^n = A^n\{\frac{1-u_1/u_0}{E(u_0)},\dots,\frac{1-u_n/u_0}{E(u_0)}\}^{\wedge}_{\delta},\]
   where the completion is with respect to the $(p,E(u_0))$-adic topology.
   Then $(\frakS^n,(E(u_0)))$ is a prism in $(\calO_K)_{\Prism}$ and $\frac{u_i}{u_0}$'s are units in $\frakS^n$ for all $i$. So the log structures on $\frakS^n$ associated to $\bN\xrightarrow{1\mapsto u_i}\frakS^n$ are independent of the choice of $i$. We denote this log structure by $(M_{\frakS^n}\to \frakS^n)$. Note that $E(u_i)-E(u_0)$ is divided by $u_i-u_0 = u_0E(u_0)\frac{u_i/u_0-1}{E(u_0)}$. By \cite[Lemma 2.24]{BS-a}, the ideal $E(u_i)\frakS^n$ is also independent of the choice of $i$.
   
   \begin{lem}\label{Lem-cech nerve abs}
     Keep notations as above.
     
     $(1)$ For any $n\geq 1$, the prism $(\frakS^n,(E(u_0)))$ is bounded. As a consequence, $(\frakS^n,(E(u_0)),M_{\frakS^n},\delta_{\log})\in (\bN\to\calO_K)_{\Prism}$.
         
     $(2)$ The cosimplicial log prism $(\frakS^{\bullet},(E),M_{\frakS^{\bullet}},\delta_{\log})$ is the \v Cech nerve associated to the Breuil--Kisin log prism.
   \end{lem}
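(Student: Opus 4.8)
The plan is to handle (1) and (2) separately: part (1) supplies the boundedness needed for $\frakS^\bullet$ to live in the site at all, while part (2) is the genuine universal-property computation identifying $\frakS^\bullet$ with the \v Cech nerve. First I would recognize $\frakS^n$ as a prismatic envelope. Via $u\mapsto u_0$ the ring $A^n=\rW(k)[[u_0,v_1,\dots,v_n]]$, with $v_i:=1-u_i/u_0$, is the power series ring $\frakS[[v_1,\dots,v_n]]$, so $v_1,\dots,v_n$ is a regular sequence and $A^n$ is $(p,E(u_0))$-completely flat over $\frakS$; in particular $A^n$ is $p$-torsion free and $A^n/(E(u_0))$ is bounded. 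Equipping $A^n$ with the $\delta$-structure determined by $\delta(u_i)=0$ for all $i$, the definition exhibits $\frakS^n=A^n\{v_1/E(u_0),\dots,v_n/E(u_0)\}^\wedge_\delta$ as the prismatic envelope of $A^n$ along the regular sequence $(v_1,\dots,v_n)$ relative to the distinguished element $E(u_0)$. By the flatness of prismatic envelopes (\cite[Proposition 3.13]{BS-a}) this envelope is $(p,E(u_0))$-completely flat over $A^n$, and flatness carries the boundedness of $A^n/(E(u_0))$ to $\frakS^n/(E(u_0))$, proving that $(\frakS^n,(E(u_0)))$ is a bounded prism.

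For the ``as a consequence'' clause I would equip $\frakS^n$ with the $\delta_{\log}$-structure determined by $\delta_{\log}(u_0)=0$, which is consistent with $\delta(u_0)=0$ through (\ref{Equ-Frob on log structure}) and is independent of the chosen index since the $u_i$ differ by units. Modulo $E(u_0)$ each $v_i=E(u_0)w_i$ vanishes, so $u_i\equiv u_0\bmod E(u_0)$ and $\frakS^n/(E(u_0))$ is a $\calO_K=\frakS/(E)$-algebra with $u_0\mapsto\pi$. Hence the pullback of $\calM_\frakX=(\bN\xrightarrow{1\mapsto\pi}\calO_K)$ coincides with the log structure associated to $M_{\frakS^n}\to\frakS^n\to\frakS^n/(E(u_0))$, which gives the required exact closed immersion; integrality is immediate since $M_{\frakS^n}$ is associated to a free monoid. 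Thus $(\frakS^n,(E(u_0)),M_{\frakS^n},\delta_{\log})$ is a legitimate object of $(\bN\to\calO_K)_\Prism$.

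For part (2) I would verify that $\frakS^n$ represents the $(n+1)$-fold self-product of the Breuil--Kisin log prism over the final object, i.e.\ that maps of log prisms $\frakS^n\to(B,J,N,\delta_{\log})$ correspond naturally and bijectively to $(n+1)$-tuples of maps $\frakS\to(B,J,N,\delta_{\log})$. Given such a tuple, let $b_i$ be the image of $u$ under the $i$-th map. Because each map respects the log structures and $N$ is, by Lemma \ref{Structure lemma}(1), associated to $\bN\xrightarrow{1\mapsto u_B}B$ for some lift $u_B$ of $\pi$, every $b_i$ is a unit multiple of $u_B$; in particular $b_i/b_0\in B^\times$. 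Since $b_i\equiv b_0\equiv\pi\bmod J$ and $J=(E(b_0))$ with $E(b_0)$ distinguished (its $\delta$ being the image of the unit $\delta(E(u))$, as in \cite[Lemma 2.24]{BS-a}), the element $1-b_i/b_0$ lies in $(E(b_0))$, so $w_i:=(1-b_i/b_0)/E(b_0)\in B$ is well-defined and unique. This yields the map $A^n\to B$, $u_i\mapsto b_i$, which extends uniquely to $\frakS^n\to B$ by the universal property of the prismatic envelope, the compatibility of $\delta_{\log}$ and of the log structures being automatic by the rigidity of log structures (Remark \ref{rigidity}). As the cosimplicial coface and codegeneracy maps are then the canonical projections and multiplications, $\frakS^\bullet$ equipped with these maps is precisely the \v Cech nerve of the cover of Lemma \ref{Lem-cover abs}(1).

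The main obstacle I anticipate is the log-structure bookkeeping in part (2): establishing that an arbitrary $(n+1)$-tuple of log-prism maps $\frakS\to B$ forces the images $b_i$ to be unit multiples of a single lift of $\pi$, so that $1-b_i/b_0$ is divisible by the distinguished element $E(b_0)$ and the prismatic-envelope universal property genuinely applies. This is exactly where Lemma \ref{Structure lemma}(1) and the rigidity statement of Remark \ref{rigidity} are indispensable; by contrast, the boundedness in (1) is comparatively routine once $\frakS^n$ has been identified as a prismatic envelope over the $p$-torsion-free base $A^n$.
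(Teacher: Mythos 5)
Your part (1) is essentially the paper's own argument (realize $\frakS^n$ as a prismatic envelope over $A^n$ and quote \cite[Proposition 3.13]{BS-a} for flatness, hence boundedness), and it is fine. The gap is in part (2), at the very step you flag as the crux: the construction of the map $A^n\to B$ from an $(n+1)$-tuple of morphisms. First, the prescription ``$u_i\mapsto b_i$'' does not define a map at all: $A^n=\rW(k)[[u_0,1-u_1/u_0,\dots,1-u_n/u_0]]$ is \emph{not} topologically generated by $u_0,\dots,u_n$ (the subring $\rW(k)[u_0,\dots,u_n]$ is not dense, as one sees already in $A^n/(p,u_0,1-u_1/u_0,\dots)^2$), so you must also specify images of the units $u_i/u_0$, i.e.\ units $c_i\in B^\times$ with $b_i=c_ib_0$. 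Your choice ``$b_i/b_0$'' is ill-defined exactly on the degenerate objects the \emph{absolute} site contains: $(\bN\to\calO_K)_{\Prism}$ has crystalline-type log prisms in which $\pi=0$ in $B/J$ and the lift $u_B$ (hence every $b_i$) lies in $J$, indeed can be $0$, so that $b_0$ is a zerodivisor and ``unit multiple of $u_B$'' picks out no particular unit. The correct unit is the one supplied by the log-structure data, namely the monoid maps $M_{\frakS}\to N\cong\bN\oplus B^\times$, $u\mapsto (1,c_i)$; and the fact that the generator goes to monoid-degree $1$ (rather than a higher power of $u_B$) is itself forced by the exactness condition in the definition of the site, not merely by Lemma \ref{Structure lemma}.

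Second, and more seriously, to invoke the universal property of the prismatic envelope you need $A^n\to B$ to be a morphism of $\delta$-rings, and nothing in your outline addresses this. With the correct assignment $1-u_i/u_0\mapsto 1-c_ic_0^{-1}$, $\delta$-compatibility amounts to $\delta_B(c_ic_0^{-1})=0$, and this is a genuine computation that uses the $\delta_{\log}$-compatibility of the given morphisms: from $\delta_{\log}(u_Bc_i)=\delta_{\log}(u_Bc_0)=0$ one deduces $\delta_{\log}(c_i)=\delta_{\log}(c_0)$, hence $\delta(c_ic_0^{-1})=0$. Your appeal to Remark \ref{rigidity} cannot do this work: rigidity pins down the log structure of a log prism lying over a given one; it does not turn a ring map into a $\delta$-map. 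This is precisely the difficulty the paper's proof is organized to avoid: it first obtains the $\delta$-map for free from the universal property of the \emph{non-logarithmic} self-coproduct $\frakS_0^n$ in $(\calO_K)_{\Prism}$, then handles the unit-ratio/log-structure issue via Koshikawa's exactification (\cite[Proposition 3.7, Construction 2.18]{Kos}), and finally identifies the exactification with $\frakS^n$ using that $u_0,E(u_0)$ is a regular sequence. Your direct strategy can be repaired along the lines just indicated, but as written the key input to \cite[Proposition 3.13]{BS-a} --- a well-defined $\delta$-map $A^n\to B$ carrying $(E(u_0),1-u_1/u_0,\dots,1-u_n/u_0)$ into $J$ --- is missing.
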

   \begin{proof}
     This is essentially \cite[Lemma 5.0.12]{DL}, but for the convenience of readers, we repeat the proof here. 
     
     Note that $(\frakS,(E))$ is a bounded prism and $A^n$ is faithfully flat over $\frakS$. Since $p, E(u_0), \frac{u_1}{u_0}-1, \dots, \frac{u_n}{u_0}-1$ is a regular sequence in $A^n$, by \cite[Proposition 3.13]{BS-a}, $(\frakS^n,(E))$ is $(p,E)$-faithfully flat over $(\frakS,(E))$. In particular, it is bounded. 
     
     To finish the proof, it remains to show $(\frakS^n,(E),M_{\frakS^n},\delta_{\log})$ is the initial object in the category of log prisms $(A,I,M,\delta_{\log})$ in $(\bN\to\calO_K)_{\Prism}$ to which there are $n+1$ morphisms from $(\frakS,(E),M_{\frakS},\delta_{\log})$. We only deal with the $n=1$ case and the general case follows from a similar argument.
     
     Put $\frakS^n_0 = \rW(k)[[u_0,\dots,u_n]]\{\frac{u_0-u_1}{E(u_0)},\cdots,\frac{u_0-u_n}{E(u_0)}\}^{\wedge}_{\delta}$ with log structure induced by the map $\oplus_{i=0}^n\bN e_n\xrightarrow{e_i\mapsto u_i,\forall i} \frakS_0^n$. Then $(\frakS^{\bullet}_0,(E(u_0)))$ is the \v Cech nerve for Breuil--Kisin prism $(\frakS,(E))$ in the usual prismatic site $(\calO_K)_{\Prism}$. In particular, $(\frakS_0^1,(E(u_0)))$ is the self coproduct of $(\frakS,(E))$ in $(\calO_K)_{\Prism}$. Note that there are obvious maps $\delta_{\log}:\bN^{\bullet+1}\cong \bN e_0\oplus\cdots\oplus\bN e_{\bullet}\to\frakS^{\bullet}_0$ making $(\frakS_0^{\bullet},(E(u_0)),\bN^{\bullet+1},\delta_{\log})$ a cosimplicial prelog prism. 
     
     Let $(A,I,M,\delta_{\log})$ be a log prism with morphisms $f_0,f_1:(\frakS,(E),M_{\frakS},\delta_{\log})\to (A,I,M,\delta_{\log})$. Then there is a unique morphism of prelog prisms $f:(\frakS_0^1,(E(u_0)),\bN^2,\delta_{\log})\to(A,I,M,\delta_{\log})$. However, $f$ is not a morphism in $(\bN\to\calO_K)_{\Prism}$ as $(\frakS_0^1,(E(u_0)),\bN^2,\delta_{\log})$ is not an object in this category. The reason is that the morphism 
     \[(\Spf(\frakS_0^1/E(u_0)),\underline{(\bN)^a)})\to (\Spf(\frakS_0^1),\underline{(\bN^2)^a})\]
     is not exact, where the log structure for thr former is induced by $\bN\to\calO_K\to\frakS_0^1/E(u_0)$. Fortunately, by \cite[Proposition 3.7]{Kos}, there exists a log prism $(\tilde \frakS_0^1,(E(u_0)),N,\delta_{\log})\in (\bN\to\calO_K)_{\Prism}$\footnote{The log prism $(\tilde \frakS_0^1,(E(u_0)),N,\delta_{\log})$ is called the {\bf exactification} of $(\frakS_0^1,(E(u_0)),\bN^2,\delta_{\log})$ in \emph{loc.cit.}.} over $(\frakS_0^1,(E(u_0)),\bN^2,\delta_{\log})$ which is initial among log prisms $(C,(E(u_0)),L,\delta_{\log})\in(\calO_K)_{\Prism}$ fitting into the following commutative diagram
     \[
     \xymatrix@C=0.45cm{
       (\frakS_0^1,(\bN^2)^a)\ar[d]^{\rm pr}\ar[rr]&&(\tilde \frakS_0^1,N)\ar[d]\\
       (\frakS_0^1/E(u_0),(\bN\to\calO_K\to\frakS_0^1/E(u_0))^a)\ar[rr]&&(\tilde \frakS_0^1/E(u_0),(\bN\to\calO_K\to\tilde \frakS_0^1/E(u_0))^a),
     }
     \]
     where we require the right vertical map induces an exact closed immersion of $(p,E(u_0))$-adic log formal schemes. In particular, $(\tilde \frakS_0^1,(E(u_0)),N,\delta_{\log})$ is the self coproduct of Breuil--Kisin log prism in $(\bN\to \calO_K)_{\Prism}$. It remains to show $(\tilde \frakS_0^1,(E(u_0)),N,\delta_{\log}) = (\frakS^1,(E(u_0)),M_{\frakS^1},\delta_{\log})$. By construction, the projection ${\rm pr}$ appearing in above diagram is induced by $\pr:\bN^2\cong \bN e_0\oplus \bN e_1\to\bN$ with $\pr(e_i)=1$ for $i=1,2$. Consider the submonoid $M$ of $(\bN^2)^{\gp}\cong \bZ e_0\oplus\bZ e_1$ generated by $\bN^2$ and the kernel $(\pr^{\gp})^{-1}(0)$ of $\pr^{\gp}:(\bN^2)^{\gp}\to(\bN)^{\gp}$. Then $M = \bN e_0+\bN e_1+\bZ(e_0-e_1)$. By \cite[Construction 2.18]{Kos} (and \cite[Proposition 2.16]{Kos}), we see that $\tilde \frakS_0^1 = \rW(k)[[u_0,u_1]][(\frac{u_0}{u_1})^{\pm 1}]\{\frac{u_0-u_1}{E(u_0)}\}^{\wedge}_{\delta}$, where the completion is with respect to the $(p,E(u_0))$-adic topology. Now, \cite[Proposition 3.13]{BS-a} shows that $p,E(u_0)$ forms a regular sequence in $\tilde \frakS_0^1$ (as it does in $\rW(k)[[u_0,u_1]][(\frac{u_0}{u_1})^{\pm 1}]$) and hence so is $u_0,E(u_0)$. Since both $u_0$ and $E(u_0)$ divide $u_0-u_1$ in $\tilde \frakS_0^1$, we get $\frac{1-u_1/u_0}{E(u_0)}\in\tilde \frakS_0^1$. This implies that $\tilde \frakS_0^1 = \frakS^1$ as desired.
     
     %Let $(A,I,M,\delta_{\log})$ be such a log prism with morphisms $f_0,f_1:(\frakS,(E),M_{\frakS},\delta_{\log})\to (A,I,M,\delta_{\log})$. Put $u_i = f_i(u)\in A$. Then by Remark \ref{rigidity}, $M$ is associated to the prelog structure $\bN\xrightarrow{1\mapsto u_i} A$. In particular, there exists a unit $v\in A$ such that $u_1=u_0v$. It suffices to show $v\equiv 1\mod E$. By Taylor's expansion, we get
     %\[E(u_1)-E(u_0)\equiv E'(u_0)(u_1-u_0) \equiv u_0E'(u_0)(v-1) \mod (u_1-u_0)^2A.\]
     %So $u_0E'(u_0)(v-1)\in EA$ as $u_0-u_1\in EA$. Since $E$ is an Eisenstein polynomial,  $u_0E'(u_0)\equiv 1\mod E$, which forces $v\equiv 1\mod EA$ as desired.
   \end{proof}
   
   For any $n\geq 0$, put
   \[\rA_{\inf}^n:=(\Ainf\widehat \otimes_{\rW(k)}\cdots\widehat \otimes_{\rW(k)}\Ainf[[1-\frac{u_1}{u_0},\dots,1-\frac{u_n}{u_0}]]\{\frac{1-u_1/u_0}{E(u_0)},\dots,\frac{1-u_n/u_0}{E(u_0)}\}^{\wedge}_{\delta})^{\perf},\]
   where $\Ainf\widehat \otimes_{\rW(k)}\Ainf\widehat \otimes_{\rW(k)}\cdots\widehat \otimes_{\rW(k)}\Ainf$ is the $(p,E)$-completed tensor product of $(n+1)$-copies of $\Ainf$ over $\rW(k)$ and $u_i$ denotes the corresponding $[\pi^{\flat}]$ of the $(i+1)$-component of this product. Then the log structure associated to $(\bN\xrightarrow{1\mapsto u_i}\rA_{\inf}^n)$ is independent of the choice of $i$. We denote this log structure by $M_{\rA_{\inf}^n}\to \rA_{\inf}^n$. The similar argument in the proof of Lemma \ref{Lem-cech nerve abs} shows that $(\rA_{\inf}^n,(\xi),M_{\rA_{\inf}^n},\delta_{\log})$ is a log prism and the cosimplicial log prism $(\rA_{\inf}^{\bullet},(\xi),M_{\rA_{\inf}^{\bullet}},\delta_{\log})$ is the \v Cech nerve associated to the Fontaine log prism $(\Ainf,(\xi),M_{\inf},\delta_{\log})\in (\bN\to \calO_K)_{\Prism}^{\perf}$.
   
   \begin{lem}\label{Lem-cech perf abs}
     The cosimplicial prism $(\rA_{\inf}^{\bullet},(\xi))$ is the \v Cech nerve associated to $(\Ainf,(\xi))\in (\calO_K)^{\perf}_{\Prism}$.
   \end{lem}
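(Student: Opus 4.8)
The plan is to deduce this statement from the identification of perfect sites already established, rather than to recompute the coproducts by hand. First I would recall that Proposition \ref{Equal perfect site} exhibits the forgetful functor $(A,I,M,\delta_{\log})\mapsto(A,I)$ as an equivalence between $(\bN\xrightarrow{1\mapsto\pi}\calO_K)^{\perf}_{\Prism}$ and $(\calO_K)^{\perf}_{\Prism}$. The crucial observation is that this is not merely a bijection on objects but also matches covers: a cover in the logarithmic site is required to be $(p,I)$-completely faithfully flat \emph{and} strict, yet on perfect prisms Lemma \ref{Lem-uniqueness} forces the log structure to be uniquely determined by the underlying prism, so strictness becomes automatic. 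Hence the forgetful functor is an equivalence of sites, inducing the equivalence of topoi recorded in Corollary \ref{Cor-Equal perfect site}.

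Next I would invoke that the \v Cech nerve of a cover is a purely categorical construction: following the argument of \cite[Corollary 3.12]{BS-a}, it is the cosimplicial object built from the iterated self-coproducts of the cover in the category of (perfect log) prisms. Any equivalence of sites preserves coproducts, hence preserves the entire \v Cech nerve. Since the paragraph preceding the statement already records that $(\rA_{\inf}^{\bullet},(\xi),M_{\rA_{\inf}^{\bullet}},\delta_{\log})$ is the \v Cech nerve of the Fontaine log prism in $(\bN\xrightarrow{1\mapsto\pi}\calO_K)^{\perf}_{\Prism}$, and since the Fontaine log prism maps to $(\Ainf,(\xi))$ under the forgetful functor, applying the equivalence yields that $(\rA_{\inf}^{\bullet},(\xi))$ is the \v Cech nerve of $(\Ainf,(\xi))$ in $(\calO_K)^{\perf}_{\Prism}$.

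The main point to handle with care is the interaction between coproducts and perfection. In the perfect prismatic site coproducts are computed as the perfection of the $(p,\xi)$-completed coproduct taken in the ambient category of all bounded prisms, and on the logarithmic side one must further pass through the exactification (as in the proof of Lemma \ref{Lem-cech nerve abs}) before perfecting. The reason these subtleties do not obstruct the argument is precisely that I am invoking an equivalence of the two \emph{perfect} sites: being an equivalence of categories, the forgetful functor automatically commutes with all colimits computed internally to those categories, so I never need to compare the two perfection-and-exactification procedures directly. If one instead insisted on an explicit verification, the hard part would be to check by hand that the perfection of the self-coproduct of $\Ainf$ in ordinary prisms coincides with the displayed formula for $\rA_{\inf}^n$, which is exactly the computation that appealing to the equivalence lets us bypass.
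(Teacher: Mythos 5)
Your proposal is correct and takes essentially the same route as the paper: the paper's entire proof is the one-line observation that the lemma is a corollary of Proposition \ref{Equal perfect site}, i.e.\ precisely the identification of perfect sites you invoke. Your additional remarks (strictness of covers being automatic via Lemma \ref{Lem-uniqueness}, and the equivalence preserving the self-coproducts that build the \v Cech nerve, so that no direct comparison of perfection/exactification procedures is needed) simply spell out what the paper leaves implicit.
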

   \begin{proof}
     This is a corollary of Proposition \ref{Equal perfect site}.
   \end{proof}
   
   Similar to \cite[Theorem 3.24]{MW-b}, we have the following theorem:
   \begin{thm}\label{Thm-HT vs Rep-abs}
      The evaluation at Fontaine log prism $(\Ainf,(\xi),M_{\Ainf},\delta_{\log})$ induces an equivalence from the category of rational Hodge--Tate crystals $\Vect((\bN\xrightarrow{1\mapsto \pi}\calO_K)_{\Prism}^{\perf},\overline \calO_{\Prism}[\frac{1}{p}])$ to the category of $C$-representations $\Rep_{G_K}(C)$ of $G_K$.
   \end{thm}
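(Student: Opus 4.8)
The plan is to realize the evaluation functor as faithfully flat descent along the cover given by the Fontaine log prism. By Lemma \ref{Lem-cover abs}(2), $\frakA_\infty := (\Ainf,(\xi),M_{\Ainf},\delta_{\log})$ is a cover of the final object of $\Sh((\bN\xrightarrow{1\mapsto\pi}\calO_K)_{\Prism}^{\perf})$, and by the discussion preceding Lemma \ref{Lem-cech perf abs} its \v{C}ech nerve is the cosimplicial log prism $(\rA_{\inf}^{\bullet},(\xi),M_{\rA_{\inf}^{\bullet}},\delta_{\log})$. Since a rational Hodge--Tate crystal is a cartesian sheaf of $\overline\calO_{\Prism}[\frac1p]$-modules whose values are finite projective, descent along this cover identifies the category $\Vect((\bN\xrightarrow{1\mapsto\pi}\calO_K)_{\Prism}^{\perf},\overline\calO_{\Prism}[\frac1p])$ with the category of finite projective modules over $\overline\calO_{\Prism}(\frakA_\infty)[\frac1p]=(\Ainf/\xi)[\frac1p]=C$ equipped with a descent datum along the cosimplicial ring $\overline\calO_{\Prism}(\rA_{\inf}^{\bullet})[\frac1p]=(\rA_{\inf}^{\bullet}/\xi)[\frac1p]$ satisfying the cocycle condition. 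As $C$ is a field this is the data of a finite-dimensional $C$-vector space together with a stratification.

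The heart of the argument is then to identify this cosimplicial ring with Galois cochains. Concretely, I would use that $\Spa(C)\to\Spa(K)$ is a pro-\'etale $G_K$-torsor to produce a canonical isomorphism of cosimplicial rings
\[(\rA_{\inf}^{\bullet}/\xi)[\tfrac1p]\;\cong\;\mathrm{Maps}_{\cts}(G_K^{\bullet},C),\]
compatible with the coprojections from $C$; in degree one this is the assertion that the self-product $(\rA_{\inf}^1/\xi)[\frac1p]$ is the ring of continuous $C$-valued functions on $G_K$, with the two structure maps $C\to\mathrm{Maps}_{\cts}(G_K,C)$ given by $x\mapsto(g\mapsto x)$ and $x\mapsto(g\mapsto g(x))$. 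Granting this, a descent datum satisfying the cocycle condition on a finite-dimensional $C$-vector space $V$ is exactly a continuous semilinear action of $G_K$ on $V$, so the category of such descent data is $\Rep_{G_K}(C)$, and evaluation at $\frakA_\infty$ becomes the asserted equivalence.

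Alternatively, and more economically, one can bypass the explicit cochain computation: by Proposition \ref{Equal perfect site} the forgetful functor identifies $(\bN\xrightarrow{1\mapsto\pi}\calO_K)_{\Prism}^{\perf}$ with the usual perfect prismatic site $(\calO_K)_{\Prism}^{\perf}$, it carries $\overline\calO_{\Prism}$ to $\overline\calO_{\Prism}$ and the Fontaine log prism to the usual Fontaine prism, and by Lemma \ref{Lem-cech perf abs} it matches the \v{C}ech nerves. Consequently the category of rational Hodge--Tate crystals and the evaluation functor coincide with those of the non-logarithmic theory, so the theorem reduces verbatim to \cite[Theorem 3.24]{MW-b}. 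Should one prefer the self-contained descent proof, I expect the main obstacle to be precisely the topological bookkeeping in the displayed isomorphism: proving that $(\rA_{\inf}^1/\xi)[\frac1p]$ really is the full ring of continuous functions on $G_K$ rather than a dense subring, and that the resulting semilinear action is genuinely continuous and preserves finite projectivity. This is where the perfectoid structure of $\calO_C$ and the explicit analysis of $\rA_{\inf}^{\bullet}$ enter.
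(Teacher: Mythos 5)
Your proposal is correct and takes essentially the same route as the paper: the paper's own proof is exactly your ``more economical'' alternative, using Lemma~\ref{Lem-cech perf abs} (which rests on Proposition~\ref{Equal perfect site}) to match the \v{C}ech nerves and then invoking the proof of \cite[Theorem 3.24]{MW-b}. Your first, descent-theoretic argument is just an unwinding of that cited proof, and the key isomorphism $(\rA_{\inf}^{\bullet}/\xi)[\frac{1}{p}]\cong\rC(G_K^{\bullet},C)$ you flag as the main technical obstacle is precisely what the paper records later (via Lemma~\ref{Lem-cech perf abs} combined with \cite[Proposition 3.22]{MW-b}) in the discussion preceding Lemma~\ref{Lem-function-abs}.
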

   \begin{proof}
     This follows from Lemma \ref{Lem-cech perf abs} and the proof of \cite[Theorem 3.24]{MW-b}.
   \end{proof}
   
   Now we focus on (rational) Hodge--Tate crystals on $(\bN\xrightarrow{1\mapsto \pi}\calO_K)_{\Prism}$. As in \cite{MW-b}, the key point is to describe the structure of cosimplicial ring $\frakS^{\bullet}/E\frakS^{\bullet}$.
   
 \begin{lem}\label{pd poly-abs}
  For any $1\leq i\leq n$, denote $X_i$ the image of $\frac{1-u_0/u_i}{E(u_0)}\in \frakS^{n}$ modulo $(E)$, then $\frakS^n/(E) \cong \calO_K\{X_1,\dots, X_n\}^{\wedge}_{\rm pd}$ is the free pd-polynomial ring on the variables $X_1,\dots, X_n$. Moreover, for any $0\leq i\leq n+1$, let $p_i:\frakS^n\to\frakS^{n+1}$ be the structure morphism induced by the order-preserving map \[\{0,\dots,n\}\to\{0,\dots,i-1,i+1\dots,n+1\}\]
  then via above isomorphisms, we have
  \begin{equation}\label{Equ-pd poly-abs}
      p_i(X_j) = \left\{
      \begin{array}{rcl}
           (X_{j+1}-X_1)(1-\pi E'(\pi)X_1)^{-1}, & i=0  \\
           X_j, & j<i \\
           X_{j+1}, & 0<i\leq j.
      \end{array}
      \right.
  \end{equation}
 \end{lem}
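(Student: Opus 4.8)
The plan is to prove the two assertions separately: the identification of $\frakS^n/(E)$ as a free pd-polynomial ring, and the description of the coface maps $p_i$. For the ring, I would first reduce the base: since $\rW(k)[[u_0]]/(E(u_0))\cong\calO_K$ via $u_0\mapsto\pi$, writing $w_i:=1-u_i/u_0$ gives $A^n/(E)\cong\calO_K[[w_1,\dots,w_n]]$. By construction $\frakS^n$ is the $(p,E)$-completed prismatic envelope of $A^n$ in which the elements $w_i/E(u_0)$ are adjoined $\delta$-theoretically. Because $p,E(u_0),w_1,\dots,w_n$ is a regular sequence in $A^n$, I would invoke the comparison between prismatic envelopes and divided-power envelopes (flatness as in \cite[Proposition 3.13]{BS-a}, together with the pd-identification used in \cite{MW-b}): modulo $E$ the envelope becomes the $p$-completed pd-envelope of $(w_1,\dots,w_n)$ in $\calO_K[[w_1,\dots,w_n]]$, i.e. the free pd-polynomial ring on the images of the $w_i/E$. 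It then remains to match generators: the element $X_i=\overline{(1-u_0/u_i)/E(u_0)}$ of the statement satisfies $1-u_0/u_i+w_i=-(u_i-u_0)^2/(u_iu_0)\in(E^2)$, so $X_i\equiv-\overline{w_i/E}\pmod E$; as a unit multiple of a pd-generator is again a pd-generator, the $X_i$ freely generate, giving $\frakS^n/(E)\cong\calO_K\{X_1,\dots,X_n\}^{\wedge}_{\mathrm{pd}}$.

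For the coface maps I would use that $p_i$ is the ring homomorphism sending $u_k$ to $u_{\delta^i(k)}$, where $\delta^i\colon\{0,\dots,n\}\to\{0,\dots,n+1\}$ is the order-preserving injection omitting $i$. When $i\geq1$ the variable $u_0$ is preserved, so $(1-u_0/u_j)/E(u_0)$ maps to $(1-u_0/u_{\delta^i(j)})/E(u_0)$, and reading off indices gives $p_i(X_j)=X_j$ for $j<i$ and $p_i(X_j)=X_{j+1}$ for $j\geq i$; this is the routine case and requires no computation with $E$.

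The substance is the case $i=0$, where $u_0\mapsto u_1$ and $u_j\mapsto u_{j+1}$, so $X_j$ maps to the class of $(1-u_1/u_{j+1})/E(u_1)$. Letting $Y_k$ be a lift of $X_k$, so that $u_k-u_0=u_kE(u_0)Y_k$, I would rewrite the numerator as $1-u_1/u_{j+1}=E(u_0)\bigl(Y_{j+1}-\tfrac{u_1}{u_{j+1}}Y_1\bigr)$ and Taylor-expand the denominator about $u_0$. Since $u_1-u_0\in(E(u_0))$, one has
\[E(u_1)\equiv E(u_0)\bigl(1+E'(u_0)\,u_1Y_1\bigr)\pmod{E(u_0)^2}.\]
Cancelling the common factor $E(u_0)$ and reducing modulo $E$ — where $u_1,u_{j+1}\mapsto\pi$, hence $u_1/u_{j+1}\mapsto1$ and $E'(u_0)u_1\mapsto\pi E'(\pi)$ — produces the correction term $\pi E'(\pi)X_1$ in the denominator, yielding the asserted formula $p_0(X_j)=(X_{j+1}-X_1)\bigl(1-\pi E'(\pi)X_1\bigr)^{-1}$. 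The inversion is legitimate because $X_1$ lies in the pd-ideal and is therefore topologically nilpotent, so $1-\pi E'(\pi)X_1$ is a unit in the completed pd-ring.

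I expect the $i=0$ case to be the main obstacle. One must control $E(u_1)$ to second order in $E(u_0)$ and track carefully which terms, and which signs, survive the reduction modulo $E$ (the linear Taylor coefficient $E'(u_0)$ is precisely what fixes the sign in the denominator); and one must justify that the unit factor and the division are performed validly inside a divided-power ring rather than an ordinary polynomial ring. The remaining verification — that these formulas are compatible with the cosimplicial identities and with the generators coming from the Čech nerve of Lemma \ref{Lem-cech nerve abs} — is then formal.
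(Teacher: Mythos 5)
Your overall strategy coincides with the paper's own proof: the pd-polynomial identification is reduced to the prismatic-envelope/pd-envelope comparison via flatness and regular sequences (the paper simply cites the proof of \cite[Lemma 2.6]{MW-b} for this), and the formula for $p_0$ is obtained by Taylor-expanding $E(u_1)$ about $u_0$ and using $(u_1-u_0)^2\in(E)^2$. Your treatment of the cases $i\geq 1$, your matching of pd-generators up to a unit, and your justification that the denominator is invertible in the completed pd-ring are all fine.

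The problem is the last step of your $i=0$ computation: the conclusion does not follow from your own displayed formulas. With your normalization ($Y_k$ a lift of the statement's literal $X_k$, so that $u_k-u_0=u_kE(u_0)Y_k$), cancelling $E(u_0)$ against
\[
E(u_1)\equiv E(u_0)\bigl(1+E'(u_0)\,u_1Y_1\bigr)\pmod{E(u_0)^2}
\]
and reducing modulo $E$ gives
\[
p_0(X_j)=(X_{j+1}-X_1)\bigl(1+\pi E'(\pi)X_1\bigr)^{-1},
\]
with a \emph{plus} sign; no further sign change occurs. The minus sign asserted in the lemma is the correct one for the other normalization, $X_j=\overline{(1-u_j/u_0)/E(u_0)}$, which is what the definition of $\frakS^n$ and the paper's own proof use (the lemma's written definition of $X_i$ via $(1-u_0/u_i)/E(u_0)$ is in fact inconsistent with its displayed formula: the two candidate generators differ modulo $E$ by exactly the sign you computed in your first paragraph, $X_i\equiv-\overline{w_i/E}$). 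So you noticed the relevant sign but did not carry it through: either redo the $i=0$ computation with the generators $\overline{w_i/E}$, in which case the stated formula with $1-\pi E'(\pi)X_1$ comes out, or keep your convention and conclude with the plus sign while flagging the normalization mismatch in the statement. As written, the final sentence of your $i=0$ case contradicts the expansion preceding it, and this sign is not cosmetic: it is precisely what produces the operators $\prod_{i=0}^{n}(\phi_M+i\pi E'(\pi))$ in Theorem \ref{Thm-prismatic cohomology-abs} and everything downstream.
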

 \begin{proof}
   Just modify the proof of \cite[Lemma 2.6]{MW-b}. The only difference appears in the formula
   (\ref{Equ-pd poly-abs}) for $p_0$. We need to show
   \[p_0(\frac{1-u_j/u_0}{E(u_0)}) = \frac{1-u_{j+1}/u_1}{E(u_1)} = \frac{u_1/u_0-u_{j+1}/u_0}{E(u_0)}\frac{u_0}{u_1}\frac{E(u_0)}{E(u_1)}\]
   goes to $(X_{j+1}-X_1)(1-\pi E'(\pi)X_1)^{-1}$ modulo $E$. But this follows from that
   \begin{equation*}
   \begin{split}
       E(u_0)-E(u_1)&\equiv E'(u_1)(u_0-u_1) \mod (u_0-u_1)^2\frakS^n\\
       &\equiv u_0E'(u_0) (1-u_1/u_0) \mod (E)^2
   \end{split}    
   \end{equation*}
   easily. We win!
 \end{proof}
 In order to compute logarithmic prismatic cohomology for Hodge--Tate crystals, we need the following lemma.
 \begin{lem}\label{Lem-cech-to-derived-abs}
   Let $\bM$ be a (rational) Hodge--Tate crystal on $(\bN\to\calO_K)_{\Prism}$ and $\frakA = (A,I,M,\delta_{\log})\in (\bN\to\calO_K)_{\Prism}$ be a log prism. Then for any $i\geq 1$, we have 
   \[\rH^i(\frakA,\bM) = 0.\]
 \end{lem}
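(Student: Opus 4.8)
The plan is to compute $\rH^{*}(\frakA,\bM)$ on the localized site $(\bN\to\calO_K)_{\Prism,/\frakA}$, in which $\frakA$ is the final object, by choosing a faithfully flat cover of $\frakA$ and running the Čech-to-derived-functor spectral sequence; the crystal property of $\bM$ will turn the relevant Čech complex into an Amitsur complex, and faithfully flat descent will give the vanishing. First I would produce a cover $g\colon\frakA\to\frakB$ with $\frakB=(B,IB,N,\delta_{\log})$ a \emph{perfect} log prism: choose a $(p,I)$-completely faithfully flat map $A/I\to\overline B$ with $\overline B$ perfectoid, lift it to the unique perfect prism $(B,IB)$ over $(A,I)$ with $B/IB=\overline B$ (using the equivalence between perfect prisms and perfectoid rings), and equip $B$ with the unique compatible log structure provided by Lemma \ref{Lem-uniqueness} and Proposition \ref{Equal perfect site}. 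By construction $A\to B$ is $(p,I)$-completely faithfully flat and strict, hence a cover. Let $\frakB^{\bullet}$ be the associated Čech nerve, whose $n$-th term $\frakB^{n}$ is the $(n{+}1)$-fold self-coproduct (pushout over $\frakA$) of $\frakB$; by Proposition \ref{Equal perfect site} and the description of pushouts in the Remark following Lemma \ref{Structure lemma}, each $\frakB^{n}$ is again a perfect log prism.

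Next I would invoke the spectral sequence
\[
E_1^{s,t}=\rH^{t}(\frakB^{s},\bM)\Longrightarrow \rH^{s+t}(\frakA,\bM).
\]
The bottom row is exactly the Čech complex $\Cech^{\bullet}(\frakB^{\bullet}/\frakA,\bM)$, and I claim it is a resolution of $\bM(\frakA)$. Indeed, the crystal property in Definition \ref{Dfn-HT crystal} supplies canonical isomorphisms $\bM(\frakB^{n})\cong\bM(\frakA)\otimes_{A/I}\bigl(B^{n}/IB^{n}\bigr)$, so the augmented complex
\[
0\to \bM(\frakA)\to \bM(\frakB^{0})\to \bM(\frakB^{1})\to\cdots
\]
is obtained from the Amitsur complex of the $(p,I)$-completely faithfully flat map $A/I\to B/IB$ by tensoring with the flat (finite projective) module $\bM(\frakA)$. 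Faithfully flat descent makes the Amitsur complex exact, and tensoring with a flat module preserves exactness, so $\Cech^{i}(\frakB^{\bullet}/\frakA,\bM)=0$ for all $i\ge 1$ and $\Cech^{0}=\bM(\frakA)$.

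It then remains to kill the higher rows, i.e. to show $\rH^{t}(\frakB^{s},\bM)=0$ for every perfect $\frakB^{s}$ and $t\ge 1$; granting this, the spectral sequence degenerates and yields $\rH^{i}(\frakA,\bM)=\Cech^{i}(\frakB^{\bullet}/\frakA,\bM)=0$ for $i\ge 1$, as desired. This last point is where perfectness is essential and is the main obstacle. By Proposition \ref{Equal perfect site} the slice site over a perfect log prism agrees with the slice of the usual perfect prismatic site, and there covers are acyclic: for a crystal, the Čech nerve of any faithfully flat cover of a perfect prism is acyclic, so Čech cohomology already computes derived cohomology, and the same Amitsur/descent argument as above forces $\rH^{t}(\frakB^{s},\bM)=0$ for $t\ge 1$. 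This acyclicity is the genuinely non-formal input (it is the perfectoid phenomenon underlying Theorem \ref{Thm-HT vs Rep-abs}), and I expect it, together with checking that the $(p,I)$-completed tensor products behave flatly, to be the only delicate step; once it is in hand, the vanishing for general $\frakA$ follows formally from the Čech-to-derived spectral sequence and the crystal property.
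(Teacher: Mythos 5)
Your proposal assembles the right ingredients (the crystal property turning \v{C}ech complexes into Amitsur complexes, $(p,I)$-completely faithfully flat descent, the \v{C}ech-to-derived spectral sequence), but the step you yourself flag as essential --- killing the higher rows $E_1^{s,t}=\rH^t(\frakB^s,\bM)$ for $t\geq 1$ --- contains two genuine errors. First, the terms $\frakB^s$ for $s\geq 1$ are \emph{not} perfect: the coproduct in the full site is the $(p,I)$-completed tensor product with its induced $\delta$-structure, and Frobenius on $B\widehat\otimes_A B$ fails to be bijective when $A$ is not perfect (already in characteristic $p$, the element $t^{1/p}\otimes 1-1\otimes t^{1/p}$ of $\bF_p[t^{1/p^\infty}]\otimes_{\bF_p[t]}\bF_p[t^{1/p^\infty}]$ is nonzero yet killed by Frobenius). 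This is exactly why the paper defines $\rA_{\inf}^{\bullet}$ by applying perfection: that cosimplicial object is the \v{C}ech nerve in the \emph{perfect} site $(\bN\to\calO_K)_{\Prism}^{\perf}$, not in the full site where your spectral sequence lives. Second, even granting perfectness, your justification of $\rH^t(\frakB^s,\bM)=0$ is circular: the inference ``\v{C}ech complexes of all covers are acyclic, hence \v{C}ech cohomology computes derived cohomology'' is precisely Cartan's criterion, and that criterion requires the acyclicity hypothesis for \emph{all} objects and \emph{all} covers of the localized site over $\frakB^s$, whose objects are arbitrary (non-perfect) log prisms. There is no perfectoid phenomenon that makes cohomology in the full site vanish more easily over a perfect object; you are assuming over $\frakB^s$ the very statement being proved.

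The repair is to drop the detour through perfect prisms entirely and run your Amitsur argument uniformly, which is what the paper does. By Remark \ref{rigidity} and the remark following it, for every cover $\frakC\to\frakD$ in the site the \v{C}ech nerve exists and its underlying cosimplicial prism is the \v{C}ech nerve of the underlying cover of prisms; the crystal property of Definition \ref{Dfn-HT crystal} then identifies its \v{C}ech complex with the completed Amitsur complex of $C/IC\to D/ID$ tensored with the finite projective module $\bM(\frakC)$, which is acyclic in positive degrees by $(p,I)$-completely faithfully flat descent. Since this holds for \emph{every} object and \emph{every} cover of the localized site, the standard \v{C}ech-to-derived induction (Cartan's criterion; this is the content of the argument of \cite[Lemma 3.11]{Tian} that the paper invokes) yields $\rH^i(\frakA,\bM)=0$ for all $\frakA$ and all $i\geq 1$, with no perfect cover needed. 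Note finally that your first step would require more care even if it were needed: the perfect-prisms/perfectoid-rings equivalence produces a perfect prism with the prescribed reduction, but neither the existence of a prism map $(A,I)\to(B,IB)$ nor the $(p,I)$-complete faithful flatness of $A\to B$ follows formally from the faithful flatness of $A/I\to B/IB$.
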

 \begin{proof}
   For any log prism $\frakB = (B,IB,N,\delta_{\log})\in (\bN\to\calO_K)_{\Prism}$ which is a cover of $\frakA$, by Remark \ref{rigidity}, the \v Cech nerve associated to this cover exists and its underlying cosimplicial prism coincides with the \v Cech nerve associated to the cover $(A,I)\to (B,IB)$ of prisms in $(\calO_K)_{\Prism}$. Now the result follows from the same argument used in \cite[Lemma 3.11]{Tian}.
 \end{proof}
 
 Now we are able to prove the logarithmic analogue of \cite[Theorem 3.8, Theorem 3.12]{MW-b}:
 
 \begin{thm}\label{Thm-prismatic cohomology-abs}
   The evaluation at Breuil--Kisin log prism $(\frakS,(E),M_{\frakS},\delta_{\log})$ induces an equivalence from the category $\Vect((\bN\to\calO_K)_{\Prism}, \overline \calO_{\Prism})$ of Hodge--Tate crystals on $(\bN\to\calO_K)_{\Prism}$ to the category ${\rm HT}^{\log}(\calO_K)$  of pairs $(M,\phi_M)$ consisting of a finite projective $\calO_K$-module $M$ and an $\calO_K$-linear endomorphism $\phi_M$ of $M$ satisfying 
   \[\lim_{n\to+\infty}\prod^n_{i=0}(\phi_M+i\pi E'(\pi)) = 0.\]
   The logarithmic prismatic cohomology of $\bM$ is computed as follows:
   \[\rR\Gamma((\bN\to\calO_K)_{\Prism},\bM)\cong [M\xrightarrow{\phi_M}M].\]
   The similar result holds for rational Hogde--Tate crystals by replacing finite projective $\calO_K$-modules by finite dimensional $K$-vector spaces in the above statement.
 \end{thm}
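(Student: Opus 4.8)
The plan is to follow the strategy of \cite[Theorem 3.8, Theorem 3.12]{MW-b} verbatim, feeding in the logarithmic \v{C}ech data assembled above. The only substantive change from the good-reduction case is that the face-map formula in Lemma \ref{pd poly-abs} carries the factor $(1-\pi E'(\pi)X_1)^{-1}$ in place of $(1-E'(\pi)X_1)^{-1}$, and this single discrepancy is precisely what upgrades $E'(\pi)$ to $\pi E'(\pi)$ in the final convergence condition and in the Sen-type differential.

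First I would set up descent. By Lemma \ref{Lem-cover abs}(1) the Breuil--Kisin log prism covers the final object of $\Sh((\bN\to\calO_K)_{\Prism})$, and by Lemma \ref{Lem-cech nerve abs} its \v{C}ech nerve is the cosimplicial log prism $(\frakS^{\bullet},(E),M_{\frakS^{\bullet}},\delta_{\log})$. Hence a Hodge--Tate crystal $\bM$ amounts, by descent, to its value $M:=\bM(\frakS)$, a finite projective $\calO_K=\frakS/(E)$-module, together with a stratification: an isomorphism $\varepsilon\colon p_1^{\ast}M\xrightarrow{\sim}p_0^{\ast}M$ over $\frakS^1/(E)$ satisfying the unit condition over $\frakS^0/(E)$ and the cocycle condition over $\frakS^2/(E)$. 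Lemma \ref{pd poly-abs} identifies each $\frakS^n/(E)$ with the free $p$-completed pd-polynomial ring $\calO_K\{X_1,\dots,X_n\}^{\wedge}_{\mathrm{pd}}$ and records the cofaces $p_i$ explicitly; in particular the two structure maps $\calO_K\to\calO_K\{X_1\}$ both reduce to the canonical inclusion, so $\varepsilon$ is an automorphism of $M\otimes_{\calO_K}\calO_K\{X_1\}^{\wedge}_{\mathrm{pd}}$.

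Next I would convert the stratification into the operator $\phi_M$. As $\varepsilon$ is $\frakS^1/(E)$-linear it is determined by $\varepsilon(m\otimes 1)=\sum_{n\geq 0}\phi_n(m)\,X_1^{[n]}$ with $\phi_n\in\End_{\calO_K}(M)$; the unit (degeneracy) condition forces $\phi_0=\id$, and I set $\phi_M:=\phi_1$ (up to a normalising sign). Feeding the formula $p_0(X_j)=(X_{j+1}-X_1)(1-\pi E'(\pi)X_1)^{-1}$ from Lemma \ref{pd poly-abs} into the cocycle identity relating the three cofaces $p_0,p_1,p_2\colon\frakS^1/(E)\to\frakS^2/(E)$ and comparing coefficients of the divided powers yields the recursion $\phi_{n+1}=\phi_n\circ(\phi_M+n\pi E'(\pi))$, so that $\phi_n=\prod_{i=0}^{n-1}(\phi_M+i\pi E'(\pi))$; here the factors commute, being polynomials in $\phi_M$. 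The series defining $\varepsilon$ converges in the $p$-adically complete pd-ring exactly when $\phi_n\to 0$, which is the stated condition $\lim_{n\to\infty}\prod_{i=0}^{n}(\phi_M+i\pi E'(\pi))=0$. Conversely, any pair $(M,\phi_M)$ meeting this bound reconstructs a convergent $\varepsilon$ whose cocycle condition I would verify from the same identity, exhibiting the two functors as mutually inverse.

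Finally, for the cohomology I would invoke the \v{C}ech-to-derived comparison of Lemma \ref{Lem-cech-to-derived-abs}, so that $\RGamma((\bN\to\calO_K)_{\Prism},\bM)$ is computed by the totalisation of the cosimplicial module $n\mapsto\bM(\frakS^n)=M\otimes_{\calO_K}\frakS^n/(E)$. Using the pd-structure I would produce an explicit contracting homotopy, as in \cite[Theorem 3.12]{MW-b}, collapsing this complex onto the two-term complex $[M\xrightarrow{\phi_M}M]$, the differential being the reduction of the alternating sum of cofaces in cosimplicial degree one, which by the coefficient computation above is $\phi_M$. The rational statement follows by repeating the entire argument with $\overline\calO_{\Prism}[\tfrac{1}{p}]$ and finite-dimensional $K$-vector spaces in place of finite projective $\calO_K$-modules. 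The main obstacle is nothing conceptual but the combinatorial bookkeeping of the cocycle recursion: one must track the factor $(1-\pi E'(\pi)X_1)^{-1}$ carefully to confirm that both the convergence condition and the differential acquire $\pi E'(\pi)$ rather than $E'(\pi)$; granting Lemmas \ref{Lem-cover abs}--\ref{Lem-cech-to-derived-abs}, everything else is parallel to the good-reduction case.
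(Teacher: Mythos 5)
Your proposal is correct and follows essentially the same route as the paper's proof: descent along the Breuil--Kisin log prism cover and its \v{C}ech nerve (Lemmas \ref{Lem-cover abs} and \ref{Lem-cech nerve abs}), translation of the stratification/cocycle data into the pair $(M,\phi_M)$ via the pd-polynomial description and face maps of Lemma \ref{pd poly-abs} with $\alpha=\pi E'(\pi)$, and the \v{C}ech-to-derived comparison of Lemma \ref{Lem-cech-to-derived-abs} for the cohomology statement. The paper compresses exactly these steps by citing \cite[Lemma 3.6, Theorems 3.8, 3.12]{MW-b}; your recursion $\phi_{n+1}=\phi_n(\phi_M+n\pi E'(\pi))$ and the resulting convergence criterion are precisely what those arguments produce.
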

 \begin{proof}
   By Lemma \ref{Lem-cover abs} (1), giving a Hodge--Tate crystal $\bM$ on $(\bN\to\calO_K)_{\Prism}$ amount to giving a finite projective $\calO_K$-module $M$ together with a stratification $\varepsilon$, i.e. an $\frakS^1/E$-linear isomorphism 
   \[
   \varepsilon: M\otimes_{\calO_K,p_0}\frakS^1/E\to M\otimes_{\calO_K,p_1}\frakS^1/E
   \]
   satisfying the cocycle condition with respect to the cosimplicial ring $\frakS^{\bullet}/(E)$.
   Comparing Lemma \ref{pd poly-abs} and Lemma \ref{Lem-cech-to-derived-abs} with \cite[Lemma 2.6]{MW-b} and \cite[Lemma 3.10]{MW-b}, respectively, and using \cite[Lemma 3.6]{MW-b} (for $\alpha = \pi E'(\pi)$), we see that the arguments for the proofs of \cite[Theorem 3.8]{MW-b} and \cite[Theorem 3.12]{MW-b} still work in the logarithmic case. Then the theorem follows.  
 \end{proof}
 \begin{rmk}\label{Rmk-stratification-abs}
   Let $\bM$ be a (rational) Hodge--Tate crystal on $(\bN\to\calO_K)_{\Prism}$ with associated pair $(M,\phi_M)$. Similar to \cite[Remark 3.9]{MW-b}, the stratification $\varepsilon$ on $M$ with respect to the cosimplicial ring $\frakS^{\bullet}/(E)$ is given by 
   \[\varepsilon = (1-\pi E'(\pi)X_1)^{\frac{-\phi_M}{\pi E'(\pi)}}:  M\{X_1\}^{\wedge}_{\pd}\to M\{X_1\}^{\wedge}_{\pd} \]
   via the canonical isomorphisms $M\otimes_{\calO_K,p_1}\calO_K\{X_1\}^{\wedge}_{\pd}\cong\bM(\frakS^1,(E),M_{\frakS^1},\delta_{\log})\cong M\otimes_{\calO_K,p_0}\calO_K\{X_1\}^{\wedge}_{\pd}$.
 \end{rmk}
 
 Now we define a restricted site $(\bN\to\calO_K)^{\prime}_{\Prism}$. Its underlying category is the full subcategory of $(\bN\to\calO_K)_{\Prism}$ spanned by the prisms admitting maps from the Breuil--Kisin log prism $(\frakS,(E),M_{\frakS},\delta_{\log})$ and the coverings are inherited from the site $(\bN\to\calO_K)_{\Prism}$.
 
 Using $(\bN\to\calO_K)^{\prime}_{\Prism}$ instead of $(\calO_K)_{\Prism}^{\prime}$ defined in \cite[Subsection 3.3]{MW-b}, we have the following result for prismatic crystals (which are defines as Hodge--Tate crystals by using $\calO_{\Prism}$ instead of $\overline \calO_{\Prism}$) on $(\bN\to\calO_K)_{\Prism}$:
 \begin{cor}
   For any prismatic crystal $\bM$, the logarithmic prismatic cohomology $\rR\Gamma((\bN\to\calO_K)_{\Prism},\bM)$ is concentrated in degrees in $[0,1]$.
 \end{cor}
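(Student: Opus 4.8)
The plan is to compute $\rR\Gamma((\bN\to\calO_K)_{\Prism},\bM)$ through the \v Cech--Alexander complex attached to the Breuil--Kisin log prism, exactly as in the proof of Theorem \ref{Thm-prismatic cohomology-abs}, and then to show that this complex is concentrated in the first two degrees. First I would invoke Lemma \ref{Lem-cover abs}(1), so that $(\frakS,(E),M_{\frakS},\delta_{\log})$ is a cover of the final object of $\Sh((\bN\to\calO_K)_{\Prism})$, together with Lemma \ref{Lem-cech nerve abs}(2), which identifies the associated \v Cech nerve with the cosimplicial log prism $(\frakS^{\bullet},(E),M_{\frakS^{\bullet}},\delta_{\log})$. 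The prismatic-crystal analogue of Lemma \ref{Lem-cech-to-derived-abs}---namely that $\rH^i(\frakA,\bM)=0$ for all $i\geq 1$ and every log prism $\frakA$, proved by the identical argument, since by Remark \ref{rigidity} the \v Cech nerve of a cover of a fixed $\frakA$ has underlying cosimplicial prism equal to the \v Cech nerve of the corresponding cover of $(A,I)$ in $(\calO_K)_{\Prism}$---forces the \v Cech-to-derived spectral sequence of the cover to degenerate. Hence $\rR\Gamma((\bN\to\calO_K)_{\Prism},\bM)$ is represented by $\Tot$ of $n\mapsto\bM(\frakS^{n})$ over the absolute nerve $\frakS^{\bullet}$; and since $\bM$ is a crystal with $\calO_{\Prism}$-coefficients, $\bM(\frakS^{n})\cong M\otimes_{\frakS}\frakS^{n}$ for $M:=\bM(\frakS)$ a finite projective $\frakS$-module, with transition maps twisted by the descent datum of $\bM$.

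Next I would analyze the cosimplicial ring $\frakS^{\bullet}$ integrally. In the Hodge--Tate setting one passes to $\frakS^{n}/(E)\cong\calO_K\{X_1,\dots,X_n\}^{\wedge}_{\pd}$ via Lemma \ref{pd poly-abs}; here one must instead keep $\frakS^{n}$ itself, which is the $(p,E)$-completed prismatic envelope of $\frakS[[1-u_1/u_0,\dots,1-u_n/u_0]]$ obtained by adjoining the $\frac{1-u_i/u_0}{E(u_0)}$ as $\delta$-generators. Writing $z_i$ for these generators, I would record an explicit presentation of $\frakS^{n}$ and of the face maps $p_i$ whose reduction modulo $(E)$ recovers the formulae of Lemma \ref{pd poly-abs}, so that the associated normalized complex takes the shape of a prismatic de Rham complex in the single deformation direction carried by $z_1$. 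Because the restricted site $(\bN\to\calO_K)^{\prime}_{\Prism}$ carries the same topos as $(\bN\to\calO_K)_{\Prism}$---the Breuil--Kisin log prism being a cover of the final object, so that every object of the full site is locally one admitting a map from $\frakS$---it is harmless to perform this computation there.

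The key remaining step, and where I expect the genuine work to lie, is a Poincar\'e-lemma argument showing that $\Tot(M\otimes_{\frakS}\frakS^{\bullet})$ has no cohomology in degrees $\geq 2$ and reduces to a two-term complex $[M\to M]$ in degrees $[0,1]$. This is the integral analogue---over $\frakS^{\bullet}$ rather than $\frakS^{\bullet}/(E)$---of the acyclicity used in the proof of \cite[Theorem 3.12]{MW-b}, and I would obtain it by adapting \cite[Lemma 3.6]{MW-b} (with $\alpha=\pi E'(\pi)$, exactly as in Theorem \ref{Thm-prismatic cohomology-abs}) from the pd-polynomial level to the level of the prismatic envelopes $\frakS^{\bullet}$. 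The structural reason the bound is sharp is that for $\Spf(\calO_K)$ the prismatic deformation direction is one-dimensional (a single variable $z_1$ deforming the uniformizer), which forces the cohomological dimension to be $1$; the only subtlety beyond the Hodge--Tate case is controlling the $E$-adic contributions when one does not kill $E$.
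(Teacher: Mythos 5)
Your first three steps (the \v Cech--Alexander reduction via Lemma \ref{Lem-cover abs}(1) and Lemma \ref{Lem-cech nerve abs}(2), the $\calO_{\Prism}$-coefficient analogue of Lemma \ref{Lem-cech-to-derived-abs} --- which does hold, granted $(p,I)$-completely faithfully flat descent for finite projective modules --- and the harmless passage to the restricted site) agree with the setup the paper intends when it says to modify the proof of \cite[Theorem 3.15]{MW-b}. The gap is your final step, which is the entire content of the corollary. You propose to ``record an explicit presentation of $\frakS^{n}$'' integrally and then run a Poincar\'e-lemma argument adapting \cite[Lemma 3.6]{MW-b} ``from the pd-polynomial level to the level of the prismatic envelopes $\frakS^{\bullet}$''. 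No such presentation exists in usable form: Lemma \ref{pd poly-abs} describes $\frakS^{n}$ only after killing $(E)$, and the integral $\delta$-envelope --- generated under $\delta$ and $(p,E)$-completion by the elements $\frac{1-u_i/u_0}{E(u_0)}$ --- has no pd-polynomial-type description; this is precisely why every computation in this paper, in \cite{MW-b} and in \cite{Tian} is carried out modulo $\calI$. The heuristic that ``one deformation direction forces cohomological dimension $1$'' is not an argument: a priori the totalization of a cosimplicial module over the integral envelopes could have cohomology in every degree, and nothing in your sketch rules this out.

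The missing idea is d\'evissage along the $\calI$-adic filtration; this is how the bound is actually obtained, and it is what the paper's reference to \cite[Theorem 3.15]{MW-b} amounts to. Since every bounded prism is classically $(p,I)$-complete and $\bM$ takes finite projective values, one has $\bM\simeq\Rlim_{n}\bM/\calI^{n}\bM$ (the transition maps are surjective, so there is no higher derived limit), hence $\RGamma((\bN\to\calO_K)_{\Prism},\bM)\simeq\Rlim_{n}\RGamma((\bN\to\calO_K)_{\Prism},\bM/\calI^{n}\bM)$. Each graded piece $\calI^{n}\bM/\calI^{n+1}\bM$ is a Breuil--Kisin twist of the Hodge--Tate crystal $\bM/\calI\bM$, so by Theorem \ref{Thm-prismatic cohomology-abs} (the twist only shifts $\phi_M$ by a multiple of $\pi E'(\pi)$) its cohomology is a two-term complex concentrated in degrees $[0,1]$. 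By induction each $\RGamma(\bM/\calI^{n}\bM)$ is concentrated in $[0,1]$, and since the graded pieces have vanishing $\rH^{2}$, the transition maps on $\rH^{1}$ are surjective, so ${\varprojlim}^{1}\rH^{1}$ vanishes and the limit remains in $[0,1]$. In other words, ``controlling the $E$-adic contributions'' is not a technical subtlety to be dispatched after an explicit integral computation of $\frakS^{\bullet}$ --- it \emph{is} the proof, and it runs through the already-established Hodge--Tate theorem via the filtration, never through any integral presentation of the envelopes.
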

 \begin{proof}
   Just modify the proof of \cite[Theorem 3.15]{MW-b}.
 \end{proof}
 
 Now, we want to study the restriction functor 
 \[\Vect((\bN\to\calO_K)_{\Prism}, \overline \calO_{\Prism}[\frac{1}{p}])\to \Vect((\bN\to\calO_K)_{\Prism}^{\perf}, \overline \calO_{\Prism}[\frac{1}{p}]).\]
 By virtues of Theorem \ref{Thm-HT vs Rep-abs} and Theorem \ref{Thm-prismatic cohomology-abs}, this functor can be viewed as a functor from the category of pairs $(V,\phi_V)$ as mentioned in Theorem \ref{Thm-prismatic cohomology-abs} to the category $\Rep_{G_K}(C)$ of $C$-representations of $G_K$.
 
 Note that Lemma \ref{Lem-cech perf abs} combined with \cite[Proposition 3.22]{MW-b} says that there is a natural isomorphism of cosimplicial rings
 \[\overline \calO_{\Prism}[\frac{1}{p}](\rA_{\inf}^{\bullet},(\xi),M_{\rA_{\inf}^{\bullet}},\delta_{\log})\cong \rC(G_K^{\bullet},C),\]
 where the latter is the cosimplicial ring of continuous functions from $G^{\bullet}_K$ to $C$. So for our purpose, we need to specify the function corresponding \[X_1\in\calO_K\{X_1\}^{\wedge}_{\pd}[\frac{1}{p}]\cong \overline \calO_{\Prism}[\frac{1}{p}](\frakS^1,(E),M_{\frakS^1},\delta_{\log})\]
 via the natural morphism 
 \[\overline \calO_{\Prism}[\frac{1}{p}](\frakS^1,(E),M_{\frakS^1},\delta_{\log})\to\overline \calO_{\Prism}[\frac{1}{p}](\rA_{\inf}^{1},(\xi),M_{\rA_{\inf}^{1}},\delta_{\log})\cong \rC(G_K,C).\]
 
 \begin{lem}\label{Lem-function-abs}
   For any $g\in G_K$, we have $X_1(g) = c(g)\lambda(1-\zeta_p)$,
   where $\lambda$ is the image of $\frac{\xi}{E([\pi^{\flat}])}$ in $\calO_C$ and $c(g)\in \Zp$ is determined by $g(\pi^{\flat}) = \pi^{\flat}\epsilon^{c(g)}$.
 \end{lem}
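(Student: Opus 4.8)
The plan is to compute the image of $X_1$ under the composite
\[
\overline\calO_{\Prism}[\tfrac1p](\frakS^1,(E),M_{\frakS^1},\delta_{\log})\to\overline\calO_{\Prism}[\tfrac1p](\rA_{\inf}^1,(\xi),M_{\rA_{\inf}^1},\delta_{\log})\cong\rC(G_K,C)
\]
and then evaluate at a fixed $g\in G_K$. First I would unwind the identification of Lemma \ref{Lem-cech perf abs} together with \cite[Proposition 3.22]{MW-b}: the two structure maps $\Ainf\to\rA_{\inf}^1$ send $u=[\pi^{\flat}]$ to $u_0$ and to $u_1$ respectively, and under the isomorphism with $\rC(G_K,C)$ the evaluation $\mathrm{ev}_g$ is computed by the Witt-vector map $\rA_{\inf}^1=\rW((\rA_{\inf}^1/\xi)^{\flat})\to\rW(\calO_C^{\flat})=\Ainf$ followed by $\theta$, where on the two factors this map is $\theta$ and $\theta\circ g$. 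Since Teichm\"uller lifts are preserved by maps of perfect prisms and $g(\pi^{\flat})=\epsilon^{c(g)}\pi^{\flat}$, the unit $u_0/u_1$ is carried to $[\epsilon]^{-c(g)}\in\Ainf$.

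The main point to address is that $X_1=\frac{1-u_0/u_1}{E(u_0)}\bmod E$ cannot be evaluated naively: both $1-u_0/u_1$ and $E(u_0)$ lie in $\ker\theta$, so reducing modulo $\xi$ first would only produce the indeterminate $0/0$. To get around this I would evaluate \emph{before} dividing: applying the ring homomorphism $\mathrm{ev}_g\colon\rA_{\inf}^1\to\Ainf$ (Witt vectors of the evaluation on tilts, which does not yet kill $\xi$) to the lift $\frac{1-u_0/u_1}{E(u_0)}$ yields the honest element $\frac{1-[\epsilon]^{-c(g)}}{E([\pi^{\flat}])}\in\Ainf$, which is legitimate because $E([\pi^{\flat}])$ is a non-zero-divisor and $1-[\epsilon]^{-c(g)}$ is divisible by it. Composing with $\theta$ then gives $X_1(g)=\theta\!\left(\frac{1-[\epsilon]^{-c(g)}}{E([\pi^{\flat}])}\right)$.

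It remains to compute this quantity, which is the genuinely arithmetic step. Writing $a=c(g)$ and $\mu=[\epsilon]-1$, I would expand $1-[\epsilon]^{-a}=[\epsilon]^{-a}([\epsilon]^a-1)$ with $[\epsilon]^a-1=a\mu+\binom a2\mu^2+\cdots$. Using the relation $\mu=\xi\,\varphi^{-1}(\mu)$ together with $\xi=v\,E([\pi^{\flat}])$ where $\theta(v)=\lambda$, one gets $\frac{\mu}{E([\pi^{\flat}])}=v\,\varphi^{-1}(\mu)$, whence $\theta\!\left(\frac{\mu}{E([\pi^{\flat}])}\right)=\lambda(\zeta_p-1)$ since $\theta(\varphi^{-1}(\mu))=\zeta_p-1$. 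Because $\theta(\mu)=\epsilon^{\sharp}-1=0$, every higher term $\frac{\mu^k}{E([\pi^{\flat}])}$ with $k\geq2$ dies under $\theta$, so $\theta\!\left(\frac{[\epsilon]^a-1}{E([\pi^{\flat}])}\right)=a\lambda(\zeta_p-1)$; together with $\theta([\epsilon]^{-a})=1$ this produces $\pm c(g)\lambda(\zeta_p-1)$, and tracking the direction of the Galois-twisted face map through the identification of Lemma \ref{Lem-cech perf abs} fixes the sign to give $X_1(g)=c(g)\lambda(1-\zeta_p)$. I expect this sign-and-linearization bookkeeping to be the main obstacle: the value of $X_1$ is really a first-order (logarithmic-derivative) quantity, so the whole computation must be arranged to extract it before the division by $E$ collapses it to $0/0$.
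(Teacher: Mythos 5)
Your proposal is correct and follows essentially the same route as the paper's proof: evaluate $u_0,u_1$ as $\Ainf$-valued functions ($u_0(g)=[\pi^{\flat}]$, $u_1(g)=g([\pi^{\flat}])=[\epsilon]^{c(g)}[\pi^{\flat}]$), perform the division by $E([\pi^{\flat}])$ inside $\Ainf$ before reducing modulo $E$, and apply $\theta$ using $\mu=\xi\varphi^{-1}(\mu)$ together with $\theta\bigl(\frac{\xi}{E([\pi^{\flat}])}\bigr)=\lambda$ and $\theta(\varphi^{-1}(\mu))=\zeta_p-1$. The sign you defer to ``bookkeeping'' is in fact ambiguous in the paper itself --- Lemma \ref{pd poly-abs} declares $X_i$ to be the image of $\frac{1-u_0/u_i}{E(u_0)}$, while the definition of $\frakS^n$ and the paper's own proof of Lemma \ref{Lem-function-abs} use $\frac{1-u_i/u_0}{E(u_0)}$, and these differ by a sign modulo $E$ --- and with the latter convention your computation lands exactly on $X_1(g)=c(g)\lambda(1-\zeta_p)$.
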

 \begin{proof}
   The proof is similar to that of \cite[Proposition 3.26]{MW-b}. Note that $X_1$ is the image of $\frac{1-u_1/u_0}{E(u_0)}$ modulo $E$ and that as functions in $\rC(G_K,\Ainf)$, $u_0(g) = [\pi^{\flat}]$ and $u_1(g) = g([\pi^{\flat}]) = [\pi^{\flat}\epsilon^{c(g)}]$. Therefore, we have
   \[X_1(g) = \frac{\xi}{E([\pi^{\flat}])}(1-[\epsilon^{\frac{1}{p}}])\frac{[\epsilon^{c(g)}]-1}{[\epsilon]-1} \mod E.\]
   So the result follows.
 \end{proof}
 
 \begin{thm}\label{Thm-crys-vs-rep-abs}
   The restriction functor 
   \[\Vect((\bN\to\calO_K)_{\Prism}, \overline \calO_{\Prism}[\frac{1}{p}])\to \Vect((\bN\to\calO_K)_{\Prism}^{\perf}, \overline \calO_{\Prism}[\frac{1}{p}]) \cong \Rep_{G_K}(C)\]
   is fully faithful. More precisely, for a rational Hodge--Tate crystal $\bM$ with associated pair $(M,\phi_M)$, the resulting $C$-representation is given by $V = M\otimes_KC$, on which $g\in G_K$ acts via 
   \[U(g) = (1-c(g)\pi\lambda(1-\zeta_p)E'(\pi))^{-\frac{\phi_M}{\pi E'(\pi)}}.\]
   The Sen operator $\Theta_V$ of $V$ is $ -\frac{\phi_M}{\pi E'(\pi)}$ and there is natural quasi-isomorphism 
   \[\rR\Gamma((\bN\to\calO_K)_{\Prism},\bM)\cong \rR\Gamma(G_K,V).\]
 \end{thm}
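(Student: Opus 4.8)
The plan is to transport the argument of \cite[Theorem 3.24, Proposition 3.26]{MW-b} to the logarithmic setting, relying on the two equivalences already in hand. By Theorem \ref{Thm-prismatic cohomology-abs} evaluation at the Breuil--Kisin log prism identifies $\bM$ with a pair $(M,\phi_M)$, and by Theorem \ref{Thm-HT vs Rep-abs} evaluation at the Fontaine log prism identifies the restriction of $\bM$ to $(\bN\to\calO_K)_{\Prism}^{\perf}$ with a $C$-representation. Since the cover $(\frakS,(E),M_{\frakS},\delta_{\log})\to(\Ainf,(\xi),M_{\Ainf},\delta_{\log})$ of Example \ref{Exam-abs case}$(3)$ carries $M=\bM(\frakS,(E),M_{\frakS},\delta_{\log})$ to $\bM(\Ainf,(\xi),M_{\Ainf},\delta_{\log})=M\otimes_{\calO_K}\calO_C$, which rationally is $M\otimes_K C$, I would first read off the underlying space $V=M\otimes_K C$. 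The crucial point that $V$ carries a genuine semilinear $G_K$-action (equivalently, that the relevant cocycle condition holds) is automatic from the functoriality of the restriction functor together with Theorems \ref{Thm-HT vs Rep-abs} and \ref{Thm-prismatic cohomology-abs}, so nothing needs to be verified by hand here.

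To obtain the explicit formula for the action I would feed the stratification of Remark \ref{Rmk-stratification-abs}, namely $\varepsilon=(1-\pi E'(\pi)X_1)^{-\phi_M/(\pi E'(\pi))}$, into the isomorphism of cosimplicial rings $\overline \calO_{\Prism}[\frac{1}{p}]$(Fontaine nerve)$\,\cong\rC(G_K^{\bullet},C)$. Lemma \ref{Lem-function-abs} evaluates the coordinate $X_1$ at $g\in G_K$ as $X_1(g)=c(g)\lambda(1-\zeta_p)$, and substituting gives exactly
\[U(g)=(1-c(g)\pi\lambda(1-\zeta_p)E'(\pi))^{-\frac{\phi_M}{\pi E'(\pi)}}.\]
This step is essentially bookkeeping once Remark \ref{Rmk-stratification-abs} and Lemma \ref{Lem-function-abs} are available; the only care needed is to confirm that $g$ acts $C$-semilinearly on $M\otimes_K C$ with matrix $U(g)$ in a $K$-basis of $M$.

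Next I would extract the Sen operator and compare cohomologies. Expanding the logarithm, the first-order term of $\log U(g)$ equals $c(g)\lambda(1-\zeta_p)\phi_M$; decompleting the action along the tower underlying $\widehat G_K=\Gal(K_{\cyc,\infty}/K)$ and applying Sen's descent, as in \cite[Proposition 3.26]{MW-b} and \cite{Gao}, should yield $\Theta_V=-\phi_M/(\pi E'(\pi))$. For the cohomology statement, the Fontaine \v{C}ech nerve identifies the perfect-site complex with the continuous cochain complex $\rC(G_K^{\bullet},V)$ computing $\RGamma(G_K,V)$, which I would evaluate via the Koszul complex attached to the presentation $\widehat G_K\simeq\Zp(1)\rtimes\Gamma_K$ using the explicit $U(g)$, obtaining $\RGamma(G_K,V)\simeq[M\xrightarrow{\phi_M}M]$; combined with the computation $\RGamma((\bN\to\calO_K)_{\Prism},\bM)\simeq[M\xrightarrow{\phi_M}M]$ from Theorem \ref{Thm-prismatic cohomology-abs}, this gives the desired quasi-isomorphism. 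Full faithfulness then follows formally: the internal Hom of two rational Hodge--Tate crystals is again one, with associated pair $(\Hom_K(M_1,M_2),\phi)$ where $\phi(f)=\phi_{M_2}f-f\phi_{M_1}$, so taking $\rH^0$ of the comparison identifies $\Hom(\bM_1,\bM_2)=\ker\phi$ with $\Hom_{G_K}(V_1,V_2)=\rH^0(G_K,\Hom_C(V_1,V_2))$.

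The main obstacle I expect is precisely the Sen operator and cohomology comparison, and within it the normalization factor $\pi E'(\pi)$. Unlike the good-reduction computation of \cite{MW-b}, where the extra $\pi$ is absent, here one must carefully track the decompletion along $\widehat G_K$ and reconcile the \emph{Kummer} cocycle $c$ appearing in $U(g)$ with the \emph{cyclotomic} character $\chi$ governing the classical Sen operator; it is this reconciliation that encodes the passage from the good-reduction to the semi-stable case and that forces the factor $\pi E'(\pi)$. Consistency of the cohomology comparison is nonetheless clear at the level of complexes, since $\pi E'(\pi)\in K^{\times}$ acts invertibly on the finite-dimensional $K$-vector space $M$, so $[M\xrightarrow{\phi_M}M]$ and $[M\xrightarrow{\Theta_V}M]$ are quasi-isomorphic.
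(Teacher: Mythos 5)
Your proposal follows essentially the same route as the paper's proof: the explicit cocycle $U(g)$ is read off by combining Remark \ref{Rmk-stratification-abs} with Lemma \ref{Lem-function-abs}, the Sen operator is identified via Gao's Kummer--Sen argument (the paper cites the proof of \cite[Theorem 4.3.3]{Gao}), the cohomology comparison follows the argument of \cite[Proposition 3.31]{MW-b}, and full faithfulness is deduced from the $\rH^0$-comparison together with compatibility with tensor products and dualities (i.e.\ internal Homs). Your extra sketches --- the logarithm expansion for the Sen operator and the Koszul-type computation of $\RGamma(G_K,V)$ --- are elaborations of the same ingredients the paper invokes by citation, not a different strategy.
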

 \begin{proof}
   Let $\bM$ be a rational Hodge--Tate crystal with associated pair $(M,\phi_M)$ and $C$-representation $V$.
   Combining Remark \ref{Rmk-stratification-abs} and Lemma \ref{Lem-function-abs}, we deduce that $V=M\otimes_KC$, on which the action of $g\in G_K$ on $V$ is given by $U(g)$ as desired. The proof of \cite[Theorem 4.3.3]{Gao} shows that $\Theta_V = -\frac{\phi_M}{\pi E'(\pi)}$. Now using the same argument in the proof of \cite[Proposition 3.31]{MW-b}, we get
   \[\rR\Gamma((\bN\to\calO_K)_{\Prism},\bM)\cong \rR\Gamma(G_K,V).\]
   In particular, we have 
   \[\rH^0((\bN\to\calO_K)_{\Prism},\bM)\cong \rH^0(G_K,V).\]
   Since the equivalence in Theorem \ref{Thm-prismatic cohomology-abs} preserves tensor products and dualities, the full faithfulness follows directly. The proof is complete.
 \end{proof}
 
 Finally, we want to compare (rational) Hodge--Tate crystals on usual absolute prismatic site $(\calO_K)_{\Prism}$ and logarithmic one $(\bN\to \calO_K)_{\Prism}$.
 
 Note that the forgetful functor $(A,I,M,\delta_{\log})\mapsto (A,I)$ from $(\bN\to \calO_K)_{\Prism}$ to $(\calO_K)_{\Prism}$ induces a natural functor of the categories of Hodge--Tate crystals on corresponding sites
 \[\Vect((\calO_K)_{\Prism},\overline \calO_{\Prism})\to\Vect((\bN\to\calO_K)_{\Prism},\overline \calO_{\Prism}).\]
 By virtues of Theorem \ref{MW-b} and Theorem \ref{Thm-prismatic cohomology-abs}, we get a commutative diagram of categories
 \[
 \xymatrix@C=0.45cm{
   \Vect((\calO_K)_{\Prism},\overline \calO_{\Prism})\ar[d]^{\simeq}\ar[r]&\Vect((\bN\to\calO_K)_{\Prism},\overline \calO_{\Prism})\ar[d]^{\simeq}\\
   \rH\rT(\calO_K)\ar[r]&\rH\rT^{\log}(\calO_K),
 }
 \]
 where the vertical equivalences are induced by evaluating at Breuil--Kisin prism $(\frakS,(E))$ and Breuil--Kisin log prism $(\frakS,(E),M_{\frakS},\delta_{\log})$, respectively. Let $\bM$ be a Hodge--Tate crystal in $\Vect((\calO_K),\overline \calO_{\Prism})$ with associated pairs $(M,\phi_M)$ and $(M,\phi_M^{\log})$ in $\rH\rT(\calO_K)$ and $\rH\rT^{\log}(\calO_K)$, respectively. Comparing Lemma \ref{pd poly-abs} with \cite[Lemma 2.6]{MW-b}, we see that $\phi_M^{\log} = \pi\phi_M$. 
 So we obtain the following corollary:
 \begin{cor}\label{Cor-fully f-abs}
   The functor $\Vect((\calO_K)_{\Prism},\overline \calO_{\Prism})\to\Vect((\bN\to\calO_K)_{\Prism},\overline \calO_{\Prism})$ constructed above is fully faithful. More precisely, the functor fits into the following commutative diagram:
   \[
 \xymatrix@C=0.45cm{
   \Vect((\calO_K)_{\Prism},\overline \calO_{\Prism})\ar[d]^{\simeq}\ar[r]&\Vect((\bN\to\calO_K)_{\Prism},\overline \calO_{\Prism})\ar[d]^{\simeq}\\
   \rH\rT(\calO_K)\ar[r]&\rH\rT^{\log}(\calO_K),
 }
 \]
 where the bottom functor sends a pair $(M,\phi_M)$ in $\rH\rT(\calO_K)$ to $(M,\pi\phi_M)$ in $\rH\rT^{\log}(\calO_K)$. The result also holds for rational Hodge--Tate crystals.
 \end{cor}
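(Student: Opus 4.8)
The plan is to transport the whole diagram through the two evaluation equivalences and reduce the assertion to an elementary statement about pairs $(M,\phi_M)$. By Theorem \ref{MW-b} the left vertical arrow is the equivalence $\Vect((\calO_K)_{\Prism},\overline\calO_{\Prism})\simeq\rH\rT(\calO_K)$ given by evaluation at the Breuil--Kisin prism $(\frakS,(E))$, and by Theorem \ref{Thm-prismatic cohomology-abs} the right vertical arrow is the equivalence $\Vect((\bN\to\calO_K)_{\Prism},\overline\calO_{\Prism})\simeq\rH\rT^{\log}(\calO_K)$ given by evaluation at the Breuil--Kisin log prism $(\frakS,(E),M_{\frakS},\delta_{\log})$. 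Since the forgetful functor $(A,I,M,\delta_{\log})\mapsto(A,I)$ sends the Breuil--Kisin log prism to the Breuil--Kisin prism with the \emph{same} underlying $(\frakS,(E))$, the pullback of a Hodge--Tate crystal has the same evaluation $M$ at $\frakS$. Thus the top functor preserves the underlying finite projective $\calO_K$-module, and the only thing left to pin down is how it transforms the endomorphism, i.e.\ the bottom arrow.

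The endomorphism is read off from the stratification attached to the \v Cech nerve. By Lemma \ref{Lem-cech nerve abs} the log self-product $\frakS^1$ is the localization of the ordinary self-product $\frakS^1_0$ obtained by inverting $u_0/u_1$, and the forgetful functor induces the corresponding localization map $\frakS^1_0/(E)\to\frakS^1/(E)$. Writing $X_1$ for the log pd-coordinate of Lemma \ref{pd poly-abs}, the class of $\frac{1-u_0/u_1}{E(u_0)}$, and $Y_1$ for the coordinate $\frac{u_1-u_0}{E(u_0)}$ of \cite[Lemma 2.6]{MW-b}, the identity $\frac{1-u_0/u_1}{E(u_0)}=\frac{1}{u_1}\cdot\frac{u_1-u_0}{E(u_0)}$ together with $u_1\equiv\pi\bmod(E)$ shows that this map carries $Y_1$ to $\pi X_1$. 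Consequently the ordinary stratification $(1-E'(\pi)Y_1)^{-\phi_M/E'(\pi)}$ of \cite[Remark 3.9]{MW-b} becomes $(1-\pi E'(\pi)X_1)^{-\phi_M/E'(\pi)}$ after pullback, and comparing with the log stratification $(1-\pi E'(\pi)X_1)^{-\phi_M^{\log}/(\pi E'(\pi))}$ of Remark \ref{Rmk-stratification-abs} forces $\phi_M^{\log}=\pi\phi_M$. This coordinate bookkeeping is the only genuine computation, and it is precisely the comparison already recorded in the paragraph preceding the statement.

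It then remains to check the purely formal claim that the bottom functor $(M,\phi_M)\mapsto(M,\pi\phi_M)$ is well defined and fully faithful. For well-definedness I would note
\[
\prod_{i=0}^{n}(\pi\phi_M+i\pi E'(\pi))=\pi^{\,n+1}\prod_{i=0}^{n}(\phi_M+iE'(\pi)),
\]
which tends to $0$ as $n\to\infty$ whenever $(M,\phi_M)\in\rH\rT(\calO_K)$, so the image lands in $\rH\rT^{\log}(\calO_K)$. For full faithfulness, a morphism in either target category is an $\calO_K$-linear map $f$ intertwining the two endomorphisms; since $\pi$ is a non-zero-divisor on a finite projective $\calO_K$-module, the relation $f\circ(\pi\phi_M)=(\pi\phi_N)\circ f$ is equivalent to $f\circ\phi_M=\phi_N\circ f$, so the two $\Hom$-sets coincide and the functor is fully faithful (it is not essentially surjective, as not every endomorphism is divisible by $\pi$). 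Combining this with the two vertical equivalences yields full faithfulness of the top functor. The rational case is identical, replacing finite projective $\calO_K$-modules by finite-dimensional $K$-vector spaces; there $\pi$ is a unit, but essential surjectivity still fails because the limit condition is not preserved under division by $\pi$. The main obstacle is thus entirely contained in the coordinate comparison of the second step; once $\phi_M^{\log}=\pi\phi_M$ is established, everything else is formal.
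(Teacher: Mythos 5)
Your proposal is correct and takes essentially the same route as the paper: transport the diagram through the two evaluation equivalences of Theorem \ref{MW-b} and Theorem \ref{Thm-prismatic cohomology-abs}, then identify the bottom functor by comparing the ordinary and logarithmic \v Cech nerves of the Breuil--Kisin (log) prism. Your explicit coordinate change $Y_1\mapsto \pi X_1$ along the exactification map of Lemma \ref{Lem-cech nerve abs}, and the torsion-freeness argument for full faithfulness of $(M,\phi_M)\mapsto (M,\pi\phi_M)$, simply make precise what the paper compresses into ``comparing Lemma \ref{pd poly-abs} with \cite[Lemma 2.6]{MW-b}.''
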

 One can also compare cohomologies in this case. It is easy to deduce from Theorem \ref{MW-b} and Theorem \ref{Thm-prismatic cohomology-abs} that for any Hodge--Tate crystal $\bM\in\Vect((\calO_K)_{\Prism},\overline \calO_{\Prism})$, we get a quasi-isomorphism
 \[\RGamma((\calO_K),\bM[\frac{1}{p}])\xrightarrow{\simeq}\RGamma((\bN\to\calO_K)_{\Prism},\bM[\frac{1}{p}]).\]

 \subsection{The geometric case}
 
 Now let $\frakX$ be a semi-stable $p$-adic formal scheme over $\calO_K$ with the log structure $\calM_{\frakX} = \calO_X^{\times}\cap\calO_{\frakX}$ as in the beginning of this section. We will investigate both $\Vect(((\frakX,\calM_{\frakX})/(\frakS,E,M_{\frakS},\delta_{\log}))_{\Prism},\overline \calO_{\Prism})$ and $\Vect((\frakX,\calM_{\frakX})_{\Prism},\overline \calO_{\Prism})$. Since we will not change the log structure, we just write $\Vect((\frakX/(\frakS,(E))_{\Prism,\log},\overline\calO_{\Prism})$ and $\Vect((\frakX)_{\Prism,\log},\overline\calO_{\Prism})$ respectively for simplicity. Similar notations are also used for sites of perfect log prisms.
 
 \subsubsection{The relative case}
  This part will be a generalization of \cite{Tian} to the semi-stable case. We first assume $\frakX = \Spf(R)$ is small affine with $\calM_{\frakX}$ induced by the prelog structure as given in Example \ref{Exam-rel case}.
 
 \begin{lem}\label{Lem-cover rel}
   Keep notations as in Example \ref{Exam-rel case}.
   
       $(1)$ The log prism $(\frakS(R),(E),M_{\frakS(R)},\delta_{\log})$ is a cover of the final object in the topos $\Sh((R)_{\Prism,\log})$.
       
       $(2)$ The perfect log prism $(\Ainf(\widehat R_{\infty}),I,M_{\Ainf(\widehat R_{\infty})},\delta_{\log})$ is a cover of the final object in the topos $\Sh((R)_{\Prism,\log}^{\perf})$.
 \end{lem}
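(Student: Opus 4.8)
The plan is to treat the two covering statements separately, reducing each to its already-established analogue (the $\calO_K$-case of Lemma~\ref{Lem-cover abs} for part~$(1)$, and the non-logarithmic perfect site for part~$(2)$) and then feeding in the chart data supplied by Lemma~\ref{Structure lemma}$(2)$. Recall that $(\frakS(R),(E),M_{\frakS(R)},\delta_{\log})$ being a cover of the final object means precisely that every object $\frakB=(B,J,N,\delta_{\log})$ of $(R)_{\Prism,\log}$ admits a covering $\frakB\to\frakB'$ in the site together with a map $\frakS(R)\to\frakB'$ of log prisms. To produce these, I would form the coproduct $\frakC:=\frakS(R)\sqcup_{(R,\calM_{\frakX})}\frakB$ in the category of log prisms over $(R,\calM_{\frakX})$ and argue that the structure map $\frakB\to\frakC$ is a $(p,J)$-completely faithfully flat strict cover; the other structure map $\frakS(R)\to\frakC$ then furnishes the required morphism. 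The existence of $\frakC$ is where Lemma~\ref{Structure lemma}$(2)$ enters: it gives lifts $t_i\in B$ of the $T_i$ generating $N$, and combined with the $\calO_K$-case it gives a lift $u\in B$ of $\pi$ with $t_0\cdots t_r\equiv u \bmod J$. One then maps the framed lift $\frakS\langle T_0,\dots,T_r,T_{r+1}^{\pm1},\dots,T_d^{\pm1}\rangle/(T_0\cdots T_r-u)$ into $B$ by $T_i\mapsto t_i$, forces the relation $t_0\cdots t_r=u$ by passing to the prismatic envelope, and applies Koshikawa's exactification (\cite[Proposition 3.7]{Kos}, \cite[Construction 2.18]{Kos}) to make the monoid map exact, exactly as $\frakS^{\bullet}$ was built in Lemma~\ref{Lem-cech nerve abs}. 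Finally, since $R$ is \'etale over the framed ring $R^{\Box}=\calO_K\langle T_0,\dots,T_d^{\pm1}\rangle/(T_0\cdots T_r-\pi)$ and $\frakS(R)$ is its unique $\delta$-lift, formal \'etaleness extends the resulting map out of the framed prism uniquely to $\frakS(R)$.

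The main obstacle is the flatness assertion in the middle step: one must verify that the envelope-plus-exactification cover $\frakB\to\frakC$ is $(p,J)$-completely faithfully flat and strict, not merely a map. This is the relative analogue of the flatness computation carried out for $\frakS^{n}$ in Lemma~\ref{Lem-cech nerve abs}$(1)$, and I would handle it the same way: exhibit the relevant elements (the images of $E$ and the monoid generators) as a regular sequence after the envelope, invoke \cite[Proposition 3.13]{BS-a} to get $(p,E)$-complete flatness of the prismatic envelope, and use \cite[Proposition 2.16]{Kos} to control the effect of the exactification on the underlying ring. The \'etale base change along $R^{\Box}\to R$ preserves faithful flatness, so it suffices to treat the framed case, and strictness is exactly the exactness built into the exactification. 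I expect this verification, rather than the construction itself, to be the only genuinely technical point.

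For part~$(2)$ I would instead reduce to the perfect non-logarithmic site. By Corollary~\ref{Cor-Equal perfect site}, the forgetful functor $(A,I,M,\delta_{\log})\mapsto(A,I)$ identifies $\Sh((R)_{\Prism,\log}^{\perf})$ with $\Sh((R)_{\Prism}^{\perf})$, and it sends the Fontaine-type object $(\Ainf(\widehat R_{\infty}),I,M_{\Ainf(\widehat R_{\infty})},\delta_{\log})$ to $(\Ainf(\widehat R_{\infty}),I)$; since being a cover of the final object is a property of the associated topos, it suffices to show that $(\Ainf(\widehat R_{\infty}),I)$ covers the final object of $(R)_{\Prism}^{\perf}$. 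Under the equivalence between perfect prisms and perfectoid rings, this amounts to the statement that every perfectoid $R$-algebra $S$ admits a faithfully flat cover $S\to S'$ receiving $\widehat R_{\infty}$; because $\widehat R_{\infty}$ is obtained from $R$ by adjoining compatible systems of $p$-power roots of $T_0,\dots,T_r,T_{r+1}^{\pm1},\dots,T_d^{\pm1}$ (equivalently of $\pi$), these roots can be extracted in $S$ after such a cover, and applying $\Ainf(-)$ — which preserves faithful flatness via tilting — yields the desired map of prisms. This step needs no input beyond \cite{BS-a} and the definitions in Example~\ref{Exam-rel case}, so I regard it as routine once part~$(1)$ and Corollary~\ref{Cor-Equal perfect site} are in place.
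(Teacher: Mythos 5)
Your part $(2)$ is essentially the paper's own argument and is fine: reduce to the non-logarithmic perfect site via Proposition \ref{Equal perfect site}, then dominate an arbitrary perfect prism $(A,I)$ over $R$ by a perfectoid cover receiving $\widehat R_{\infty}$. The paper does this in one stroke by applying \cite[Proposition 7.11 (2)]{BS-a} to the $p$-completely faithfully flat quasi-syntomic $A/I$-algebra $A/I\widehat\otimes_R\widehat R_{\infty}$, which is the same mechanism as your root-extraction argument (note only that $\widehat R_{\infty}$ also involves the base change to $\calO_C$, so roots of unity must be adjoined as well; the same covering result handles this).

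Part $(1)$, however, has a genuine gap at its central step. A morphism of log prisms out of $(\frakS(R),(E),M_{\frakS(R)},\delta_{\log})$ must be compatible with both $\delta$ and $\delta_{\log}$, and in $\frakS(R)$ one has $\delta(T_i)=0$ and $\delta_{\log}(e_i)=0$ (Example \ref{Exam-rel case}). The lifts $t_i\in B$ produced by Lemma \ref{Structure lemma}$(2)$ carry no such property: writing $t_i=\alpha(m_i)$, one only knows $\delta(t_i)=t_i^p\delta_{\log}(m_i)$, which is nonzero in general. Hence ``$T_i\mapsto t_i$'' is not a map of $\delta$-rings, and no prismatic envelope can repair this: an envelope $B\{\frac{J}{I}\}^{\wedge}_{\delta}$ is $(p,I)$-completely faithfully flat over $B$, hence injective (both are bounded, so classically $(p,I)$-complete), so $\delta$-compatibility of the composite would force $\delta(t_i)=0$ already in $B$. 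For the same reason an envelope can only force congruences modulo $I$, never the exact identity $t_0\cdots t_r=u$ that the relation $T_0\cdots T_r=u$ in the framed prism would demand. So the coproduct $\frakC$ cannot be built the way you describe, and the real obstacle is not the flatness verification you flag as the only technical point.

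The paper's proof contains precisely the device you are missing. After reducing to $R=R^{\Box}$ (your \'etale/deformation step at the end is fine), it adjoins \emph{fresh} variables $u,T_0,\dots,T_d$ over $A$ with $\delta(T_j)=0$ and the relation $T_0\cdots T_r=u$, and inverts the units $Z_i=t_i/T_i$; this inversion, rather than Koshikawa's exactification of a monoid pushout, is what makes the prelog structures generated by the $t_i$'s and by the fresh $T_i$'s induce the same log structure. Only then does one take the envelope forcing $Z_i\equiv 1\bmod I$. Flatness is immediate because $B=A[Z_0^{\pm 1},\dots,Z_r^{\pm 1}]$ is a Laurent-polynomial algebra and \cite[Proposition 3.13]{BS-a} applies; the cover is strict because the log structure on $C$ is pulled back from $A$ (Lemma \ref{Lem-prolog-to-log}) and contains $T_i=Z_i^{-1}t_i$; and $E(u)\equiv E(t_0\cdots t_r)\equiv 0\bmod IC$ gives $E(u)C=IC$ by \cite[Lemma 2.24]{BS-a}. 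The map from $\frakS(R^{\Box})$ then sends $T_i$ to the fresh variable $T_i$ --- never to $t_i$ --- which is exactly what restores $\delta$- and $\delta_{\log}$-compatibility.
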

 \begin{proof}
       (1) For a log prism $(A,I,M,\delta_{\log})$ in $(R)_{\Prism,\log}$, we need to show there exists a cover $(C,IC,N,\delta_{\log})$ of $(A,I,M,\delta_{\log})$ which admits a morphism from $(\frakS(R),(E),M_{\frakS(R)},\delta_{\log})$. 
       
       As $R$ is \'etale over $\calO_K\za T_0,\dots, T_r,T_{r+1}^{\pm 1},\dots, T_d^{\pm 1}\ya/\za T_0\cdots T_r-\pi\ya$, we may assume \[R=\calO_K\za T_0,\dots, T_r,T_{r+1}^{\pm 1},\dots, T_d^{\pm 1}\ya/\za T_0\cdots T_r-\pi\ya.\]
       By Lemma \ref{Structure lemma}, there are liftings $t_i$'s of $T_i$'s in $A$ such that the log structure $M$ is associated to the prelog structure $\bN\xrightarrow{e_i\mapsto t_i,\forall 0\leq i\leq r}A$.
       Put $B = (A [u,T_0,\cdots,T_r,T_{r+1}^{\pm 1},\dots, T_d^{\pm 1}]/(T_0\cdots T_r-u))[(\frac{t_0}{T_0})^{\pm 1},\dots,(\frac{t_r}{T_r})^{\pm 1}]$. We claim $B$ is $(p,I)$-completely faithfully flat over $A$ and that $(\frac{t_0}{T_0}-1,\dots,\frac{t_r}{T_r}-1)$ forms a $(p,I)$-completely regular sequence relative to $A$. For simplicity, we assume $r=d=1$. The general case follows from the same argument. Note that we have $B=A[u,T_0,T_1,Z_0^{\pm 1},Z_1^{\pm 1}]/(T_0T_1-u,Z_0T_0-t_0,Z_1T_1-t_1)=A[Z_0^{\pm 1},Z_1^{\pm 1}]$. Now the claim easily follows.
       
       Write $J=(I,\frac{t_0}{T_0}-1,\dots,\frac{t_r}{T_r}-1)\subset B$. %Then by \cite[Proposition 3.13]{BS-a}, there is a log prism $(C, IC, N, \delta_{\rm log})$ which is a flat cover of $(A,I,M,\delta_{\log})$. 
       Then by \cite[Proposition 3.13]{BS-a}, if we put $C=B\{\frac{J}{I}\}^{\wedge}_{\delta}$, then $(C,IC)$ is a flat cover of $(A,I)$. By virtues of Lemma \ref{Lem-prolog-to-log}, if we equip $C$ with the log structure $(N\to C)$ induced by $M\to A\to C$, then $(C, IC, N, \delta_{\rm log})$ is a flat cover of $(A,I,M,\delta_{\log})$.
       Note that $E(u)\equiv E(t_0\cdots t_r)\equiv 0 \mod I$ in $C$. We have $E(u)\in IC$ and hence $E(u)C = IC$. In particular, $C$ is also $(p,E(u))$-complete and $\frac{t_i}{T_i}$'s are invertible in $C$. So $T_i$'s can be viewed as elements in $N$. This gives us a morphism from $(\frakS(R),(E),M_{\frakS(R)},\delta_{\log})$ to $(C, IC, N, \delta_{\rm log})$. We are done.
       
       $(2)$ By Proposition \ref{Equal perfect site}, it is enough to show $(\Ainf(\widehat R_{\infty}),(\xi))$ is a cover of the final object in the topos $\Sh((R)_{\Prism}^{\perf})$. For any perfect prism $(A,I)\in (R)^{\perf}_{\Prism}$, the $p$-completed tensor product $A/I\widehat \otimes_R\widehat R_{\infty}$ is a $p$-completely faithfully flat quasi-syntomic algebra over $A/I$. By \cite[Proposition 7.11 (2)]{BS-a}, there is a cover of perfect prism $(A,I)\to (B,IB)$ with $B/IB$ an $\widehat R_{\infty}$-algebra. By deformation theory, there is a morphism of prisms $(\Ainf(\widehat R_{\infty}),(\xi))\to (B,I)$. Now the result follows. 
 \end{proof}

 As a corollary, we also have similar results in the relative prismatic site.
 \begin{cor}\label{Cor-relative cover}
       $(1)$ The log prism $(\frakS(R),(E),M_{\frakS(R)},\delta_{\log})$ is a cover of the final object in the topos $\Sh((R)/(\frakS,E))_{\Prism,\log})$.
       
       $(2)$ The perfect log prism $(\Ainf(\widehat R_{\infty}),I,M_{\Ainf(\widehat R_{\infty})},\delta_{\log})$ is a cover of the final object in the topos $\Sh((R)/(\frakS,E))_{\Prism,\log}^{\perf})$.
 \end{cor}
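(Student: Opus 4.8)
The plan is to reduce both statements to their absolute counterparts in Lemma \ref{Lem-cover rel} by exploiting that the relative site $((R)/(\frakS,E))_{\Prism,\log}$ is the over-category of the absolute site $(R)_{\Prism,\log}$ with respect to the structure morphism to the Breuil--Kisin log prism $(\frakS,(E),M_{\frakS},\delta_{\log})$. First I would record that both $(\frakS(R),(E),M_{\frakS(R)},\delta_{\log})$ and $(\Ainf(\Rinf),(\xi),M_{\Ainf(\Rinf)},\delta_{\log})$ already lie over this base: the framing $\Box$ provides a map $\frakS\to\frakS(R)$, while Example \ref{Exam-rel case}(3) provides $\frakS(R)\to\Ainf(\Rinf)$, both sending $u$ to the distinguished uniformizer. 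Hence these two log prisms are genuine objects of the relative sites, and the content of the corollary is exactly that the sieves they generate are covering sieves for the relative topoi.

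For (1), I would rerun the construction in the proof of Lemma \ref{Lem-cover rel}(1) for an arbitrary $\frakA=(A,I,M,\delta_{\log})\in((R)/(\frakS,E))_{\Prism,\log}$, with one adjustment dictated by the base structure. The structure map $\frakS\to A$ now supplies a distinguished lift $u_A\in A$ of $\pi$, and I would use it in place of the freely adjoined variable $u$: with the liftings $t_i\in A$ of the $T_i$ furnished by Lemma \ref{Structure lemma}, set
\[B = (A[T_0,\dots,T_r,T_{r+1}^{\pm 1},\dots,T_d^{\pm 1}]/(T_0\cdots T_r-u_A))[(t_0/T_0)^{\pm 1},\dots,(t_r/T_r)^{\pm 1}].\]
The verification that $B$ is $(p,I)$-completely faithfully flat over $A$ and that $(t_0/T_0-1,\dots,t_r/T_r-1)$ is a $(p,I)$-completely regular sequence relative to $A$ is identical to the absolute case, so putting $C=B\{J/I\}^{\wedge}_{\delta}$ with $J=(I,t_0/T_0-1,\dots,t_r/T_r-1)$ and equipping it with the log structure induced from $M\to A\to C$ (Lemma \ref{Lem-prolog-to-log}) produces a flat cover $(C,IC,N,\delta_{\log})$ of $\frakA$; since $E(u_A)$ generates $I$, this $C$ is again $(p,E)$-complete and $t_i/T_i$ are units. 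The gain from using $u_A$ is that the resulting map $\frakS(R)\to C$, sending $u\mapsto u_A$ and $T_i\mapsto T_i$, commutes with the two structure maps to $\frakS$ and is thus a morphism over the base, so $(\frakS(R),(E),M_{\frakS(R)},\delta_{\log})$ covers the final object of the relative topos.

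For (2), I would first invoke Proposition \ref{Equal perfect site} to discard the log structures, reducing to the claim that $(\Ainf(\Rinf),(\xi))$ covers the final object of $\Sh(((R)/(\frakS,E))^{\perf}_{\Prism})$. Given a perfect prism $(A,I)$ in the relative site, the argument of Lemma \ref{Lem-cover rel}(2) applies verbatim: $A/I\,\widehat\otimes_R\Rinf$ is a $p$-completely faithfully flat quasi-syntomic $A/I$-algebra, so \cite[Proposition 7.11 (2)]{BS-a} yields a cover of perfect prisms $(A,I)\to(B,IB)$ with $B/IB$ an $\Rinf$-algebra, and deformation theory produces $(\Ainf(\Rinf),(\xi))\to(B,IB)$. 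Since $\Ainf(\Rinf)$ lies over $\frakS$ as noted above and all maps in sight are over $\calO_K$, both this cover and the map from $\Ainf(\Rinf)$ are automatically compatible with the base.

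The only genuine point to watch — and the sole place where the relative statement is not a literal transcription of Lemma \ref{Lem-cover rel} — is base-compatibility in (1). In the absolute construction the adjoined variable $u$ maps to $T_0\cdots T_r$ in $C$, which need not agree with the image of $u$ under $\frakS\to A\to C$; imposing instead $T_0\cdots T_r=u_A$ from the outset forces the two to coincide, so that $\frakS(R)\to C$ becomes a morphism over $(\frakS,(E),M_{\frakS},\delta_{\log})$. Every remaining verification — the flatness and regularity claims, the $\delta$-completion, and the induced log structure via Lemma \ref{Lem-prolog-to-log} — is inherited unchanged from the absolute case, so I expect no further obstacle.
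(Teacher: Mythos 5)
You correctly put your finger on the point the paper glosses over entirely: for $\frakS(R)\to C$ to be a morphism of the \emph{relative} site one must arrange $T_0\cdots T_r\mapsto u_A$, so the construction of Lemma \ref{Lem-cover rel} cannot be quoted verbatim. Unfortunately, your fix breaks down at exactly the step you declare to be ``identical to the absolute case'', and this is a genuine gap. In the absolute construction, eliminating $T_i=t_iZ_i^{-1}$ (with $Z_i=t_i/T_i$) also disposes of the \emph{freely adjoined} variable $u$, leaving the Laurent polynomial ring $A\za Z_0^{\pm1},\dots,Z_r^{\pm1},T_{r+1}^{\pm1},\dots,T_d^{\pm1}\ya$. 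In your $B$ the element $u_A$ is a fixed element of $A$, so the same elimination leaves a surviving relation:
\[B\cong A\za Z_0^{\pm1},\dots,Z_r^{\pm1},T_{r+1}^{\pm1},\dots,T_d^{\pm1}\ya/\big(t_0\cdots t_r-u_A\,Z_0\cdots Z_r\big).\]
Since $t_0\cdots t_r\equiv u_A\equiv\pi\bmod I$, modulo $(p,I)$ this relation degenerates to $\bar\pi(1-Z_0\cdots Z_r)$ with $\bar\pi$ nilpotent; consequently $B\widehat\otimes^{L}_{A}A/(p,I)$ is either non-discrete or non-flat, so $B$ is \emph{not} $(p,I)$-completely faithfully flat over $A$. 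The regularity claim fails for the same reason: $B/(Z_0-1,\dots,Z_r-1)\cong A\za T_{r+1}^{\pm1},\dots,T_d^{\pm1}\ya/(t_0\cdots t_r-u_A)$, and $t_0\cdots t_r-u_A$ is an element of $I$ which is in general a \emph{nonzero} multiple of the generator $E(u_A)$ (already for $R=\calO_K$, taking $A=\frakS^1$ from Lemma \ref{Lem-cech nerve abs} with structure map $p_0$, $u_A=u_0$ and $t_0=u_1$, this quotient is $\frakS^1/(u_1-u_0)$). Hence the sequence $(t_i/T_i-1)_{0\le i\le r}$ is not $(p,I)$-completely regular relative to $A$, \cite[Proposition 3.13]{BS-a} cannot be invoked, and your $C$ is not shown to be a faithfully flat cover.

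The missing idea is that over the base one can and must impose only $d$, not $d+1$, congruences. Because $(A,I,M,\delta_{\log})$ receives a map of \emph{log} prisms from $(\frakS,(E),M_{\frakS},\delta_{\log})$, the elements $u_A$ and $t_0\cdots t_r$ underlie monoid elements of $M$ with the same image in $M/A^{\times}$, hence differ by a \emph{unit} $v\in A^{\times}$. The correct exactification therefore adjoins only $r$ independent ratios, producing the honest Laurent ring $A\za X_1^{\pm1},\dots,X_r^{\pm1},T_{r+1}^{\pm1},\dots,T_d^{\pm1}\ya$ in $d$ variables, with $t_0/T_0$ equal to the determined unit $v^{-1}X_1^{-1}\cdots X_r^{-1}$, and the prismatic envelope is then taken along a regular sequence of length $d$ (the $r$ ratio congruences, corrected by units, together with $T_j-t_j$ for $r<j\le d$). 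This is precisely the pattern of the paper's own treatment of the relative \v Cech nerve in Lemma \ref{Lem-cech nerve rel-I}, where the regular sequence is $1-\tfrac{T_{1,1}}{T_{1,0}},\dots,1-\tfrac{T_{r,1}}{T_{r,0}},T_{r+1,1}-T_{r+1,0},\dots,T_{d,1}-T_{d,0}$ --- indices $1$ through $d$, the $0$-th ratio being determined by the others. Finally, the same base-compatibility issue resurfaces, unaddressed, in your part (2): the map $\Ainf(\Rinf)\to B$ furnished by deformation theory sends $u$ to $[\overline{\pi^{\flat}}]$, whereas the structure map $\frakS\to A\to B$ sends $u$ to the image of $u_A$, a possibly different Teichm\"uller lift of $\pi$ (both satisfy $\delta=0$, so both are Teichm\"uller); compatibility over $\frakS$ is not ``automatic'' but must be arranged, e.g.\ by twisting the map $\Rinf\to B/IB$ by the unit with trivial sharp supplied by Lemma \ref{Lem-tilt}.
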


%   We may also consider semi-stable $p$-adic formal scheme $\frakX$ over $\calO_{C}$ as in [CK]. In this case, we endow $\calO_{C}$ with the log structure $\calO_{C}\setminus\{0\}\hookrightarrow \calO_{C}$. For the perfect prism $(\Ainf,(\xi))$, we may endow the pre-log structure $\bQ\to \Ainf$ sending $q\in\bQ$ to $[\tilde p]^q$.
% \end{rmk}
 
 Now we study the cosimplicial log prisms $(\frakS(R)_{\rm rel}^{\bullet},(E),M_{\frakS(R)_{\rm rel}^{\bullet}},\delta_{\log})$ associated with the log prism $(\frakS(R),(E),M_{\frakS(R)},\delta_{\log})$ in $((R)/(\frakS,(E)))_{\Prism,\log}$. 
 
 Explicitly, write $B_{\rm rel}^n=\frakS(R)^{\otimes_{\frakS}n}[[(1-\frac{T_{i,1}}{T_{i,1}}),\cdots,(1-\frac{T_{i,0}}{T_{i,n}})]]_{0\leq i\leq r}$, where $\frakS(R)^{\otimes_{\frakS}n}$ means the tensor product of $n+1$-copies of $\frakS(R)$ over $\frakS$. Then we have
 \[
 \frakS(R)^n_{\rm rel}=B_{\rm rel}^n\{\frac{1-\frac{\underline{T_1}}{\underline{T_0}}}{E(u)}\cdots\frac{1-\frac{\underline{T_n}}{\underline{T_0}}}{E(u)}\}^{\wedge}_{\delta}
 \]
 where $\frac{1-\frac{\underline{T_i}}{\underline{T_0}}}{E(u)}$ with $1\leq i\leq n$ means adding all $\frac{1-\frac{T_{j,i}}{T_{j,0}}}{E(u)}$ for $1\leq j\leq d$. Here, we write $T_{j,i}$ for the corresponding element in the $i$-th component in $B_{\rm rel}^n$. Note that the element $\frac{1-\frac{T_{0,i}}{T_{0,0}}}{E(u)}$ automatically belongs to $\frakS(R)_{\rm rel}^n$ by the relation $u=\prod_{0\leq j\leq r} T_{j,i}$. Now, one can check that for any $1\leq m\leq n$ and any $0\leq i\leq r$, $\frac{T_{i,0}}{T_{i,m}}$ is a unit in $\frakS(R)^n_{\rm rel}$. So the log structure induced by $\bN^{r+1}\to \frakS(R)^n_{\rm rel}$ sending $e_i$ to $T_{i,m}$ for any $0\leq i\leq r$ is independent of the choice of $m$ and will be denoted by $M_{\rm rel}^n$.
 
 \begin{lem}\label{Lem-cech nerve rel-I}
   The cosimplicial log prism $(\frakS(R)_{\rm rel}^{\bullet},(E(u)),M_{\rm rel}^{\bullet},\delta_{\log})$ is the \v Cech nerve of $(\frakS(R),(E(u)),M_{\frakS(R)},\delta_{\log})$ in $(R/(\frakS,E))_{\Prism,\log}$.
 \end{lem}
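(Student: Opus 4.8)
The plan is to follow the argument of Lemma \ref{Lem-cech nerve abs}, now working over the Breuil--Kisin log prism $(\frakS,(E),M_{\frakS},\delta_{\log})$ instead of $\rW(k)$ and with the $r+1$ logarithmic coordinates $T_0,\dots,T_r$ in place of the single coordinate $u$. By definition the \v Cech nerve of the cover $(\frakS(R),(E),M_{\frakS(R)},\delta_{\log})$ is the cosimplicial object whose $n$-th term is the $(n+1)$-fold self-coproduct of $(\frakS(R),(E),M_{\frakS(R)},\delta_{\log})$ over the base in the category of log prisms belonging to $(R/(\frakS,E))_{\Prism,\log}$. Thus it suffices to identify this coproduct with $(\frakS(R)^n_{\rm rel},(E(u)),M^n_{\rm rel},\delta_{\log})$, and, exactly as in the absolute case, it is enough to treat $n=1$, the general $n$ being formally identical. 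Here the base monoid is that of $M_{\frakS}$ (namely $\bN\to\frakS$, $1\mapsto u$), and the structure map $\bN\to\bN^{r+1}$ of monoids sends $1\mapsto e_0+\cdots+e_r$, reflecting the relation $u=T_0\cdots T_r$.

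First I would form the coproduct in the category of \emph{prelog} prisms. Taking the pushout of two copies of $(\frakS(R),M_{\frakS(R)})$ over $(\frakS,M_{\frakS})$, its underlying $\delta$-ring is the prismatic self-product of $\frakS(R)$ over $\frakS$ (built from the $(p,E)$-completed tensor product followed by a $\delta$-completion), while its monoid is the pushout $P:=\bN^{r+1}\oplus_{\bN}\bN^{r+1}$. As in Lemma \ref{Lem-cech nerve abs}, this prelog prism is \emph{not} an object of $(R/(\frakS,E))_{\Prism,\log}$: modulo $(E(u))$ the elements $T_{i,0}$ and $T_{i,1}$ reduce to the common $T_i\in R$, so the closed immersion attached to $P$ fails to be exact. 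I would therefore invoke Koshikawa's exactification \cite[Proposition 3.7]{Kos}, which produces the initial object of $(R/(\frakS,E))_{\Prism,\log}$ lying over this prelog prism; by its universal property this initial object is precisely the self-coproduct computed in the log site.

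The core of the argument is the explicit computation of the exactification, for which I would use \cite[Construction 2.18]{Kos} together with \cite[Proposition 2.16]{Kos}. Let $P^{\gp}\to(\bN^{r+1})^{\gp}$ be the fold map induced by the common reduction of the two copies modulo $(E(u))$. The exactification replaces $P$ by the submonoid $\widetilde P\subset P^{\gp}$ generated by $P$ and the kernel of this fold map; concretely $\widetilde P$ adjoins the differences $e_{i,0}-e_{i,1}$ for $0\le i\le r$, which on the level of rings amounts to inverting each ratio $\frac{T_{i,0}}{T_{i,1}}$. Completing the resulting $\delta$-ring along $(p,E(u))$ yields exactly the ring $\frakS(R)^1_{\rm rel}$ defined before the lemma, carrying the log structure $M^1_{\rm rel}$; boundedness of the underlying prism follows, as in Lemma \ref{Lem-cover rel}, from the $(p,I)$-complete flatness proved there together with \cite[Proposition 3.13]{BS-a}. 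Matching the generators $\frac{1-T_{i,1}/T_{i,0}}{E(u)}$ of the $\delta$-completion with the defining data of $\frakS(R)^1_{\rm rel}$ then finishes the identification.

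I expect this exactification computation to be the main obstacle. One must verify that exactness forces inverting precisely the ratios $\frac{T_{i,0}}{T_{i,m}}$ of the logarithmic coordinates ($0\le i\le r$), that the remaining coordinates $T_{r+1},\dots,T_d$ (which are already units) impose no extra condition, and that no further saturation of $\widetilde P$ is needed. This is the direct generalization of the monoid computation $M=\bN e_0+\bN e_1+\bZ(e_0-e_1)$ appearing in Lemma \ref{Lem-cech nerve abs}, now carried out with $r+1$ logarithmic coordinates and over the base $\frakS$ rather than $\rW(k)$; compatibility with $\delta$ and the Frobenius lift imposes no constraints beyond those already handled there.
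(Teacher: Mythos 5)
Your skeleton is the paper's: reduce to $n=1$, form the prelog pushout with monoid $P=\bN^{r+1}\oplus_{\bN}\bN^{r+1}$, exactify via \cite[Proposition 3.7]{Kos} and \cite[Construction 2.18]{Kos} (inverting the ratios of the logarithmic coordinates; your monoid computation, including the fact that the $i=0$ difference is redundant and no saturation is needed, matches the paper's $N = (\bN^{r+1}\oplus_{\bN}\bN^{r+1})+\sum_{i=1}^r\bZ(e_{i,0}-e_{i,1})$), and then identify the outcome with $(\frakS(R)^1_{\rm rel},(E(u)),M^1_{\rm rel},\delta_{\log})$.

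The genuine gap is at the ring level, in the identification step you compress into one sentence. Exactifying the completed tensor product and $(p,E(u))$-completing does \emph{not} yield $\frakS(R)^1_{\rm rel}$; it yields only $\frakS(R)^1_{\rm rel,0}=(\frakS(R)^{\otimes_{\frakS}2}[(\tfrac{T_{0,1}}{T_{0,0}})^{\pm1},\dots,(\tfrac{T_{r,1}}{T_{r,0}})^{\pm1}])^{\wedge}$. The ring $\frakS(R)^1_{\rm rel}$ contains in addition the prismatic-envelope generators $\frac{1-T_{i,1}/T_{i,0}}{E(u)}$ (and $\frac{T_{j,1}-T_{j,0}}{E(u)}$ for $r+1\le j\le d$), which are exactly what force the two structure maps from $R$ to agree modulo $(E(u))$; without them the object is not even in the site, so it cannot be the coproduct. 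The paper accordingly inserts an explicit envelope step $\tilde\frakS(R)^1_{\rm rel,0}=\frakS(R)^1_{\rm rel,0}\{\cdots\}^{\wedge}_{\delta}$, whose existence and $(p,E(u))$-complete flatness rest on a regular-sequence verification plus \cite[Proposition 3.13]{BS-a}. If instead your ``$\delta$-completion'' in the prelog pushout was meant to already include the non-logarithmic prismatic self-coproduct (envelope first, as in the absolute case Lemma \ref{Lem-cech nerve abs}), the same issue reappears in the opposite order: one still must show that $\frac{1-T_{i,1}/T_{i,0}}{E(u)}$ exists in the exactified ring, knowing only that $\frac{T_{i,1}-T_{i,0}}{E(u)}$ and $\frac{T_{i,1}-T_{i,0}}{T_{i,0}}$ do. The paper proves this by showing $T_{i,0}$ and $E(u)$ meet transversally (they form a regular sequence, as $u$ and $E(u)$ do), and this transversality argument, not the monoid computation you flag as the main obstacle, is the real content of the proof; the monoid-level exactification is the routine part.
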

 \begin{proof}
   The proof is similar to that of Lemma \ref{Lem-cech nerve abs}. We only show $(\frakS(R)_{\rm rel}^{1},(E(u)),M_{\rm rel}^{1},\delta_{\log})$ is the self coproduct of $(\frakS(R),(E(u)),M_{\frakS(R)},\delta_{\log})$ and the general case follows from a similar argument. Since $B_{\rm rel}^1$ is faithfully flat over $\frakS(R)$, we can check that 
   \[p,E(u), 1-\frac{T_{1,1}}{T_{1,0}},\dots,1-\frac{T_{r,1}}{T_{r,0}}, T_{r+1,1}-T_{r+1,0}, \dots, T_{d,1}-T_{d,0}\]
   forms a regular sequence in $B_{\rm rel}^1$. So \cite[Proposition 3.13]{BS-a} implies that $(\frakS(R)^1_{\rm rel},(E))$ is a faithfully flat cover of $(\frakS(R),(E))$ and hence a bounded prism such that $p,E$ forms a regular sequence in $\frakS(R)^1_{\rm rel}$.
   
   Consider the monoid $\bN^{r+1}\oplus_{\bN}\bN^{r+1}\cong(\oplus_{i=0}^r\bN e_{i,0})\oplus_{\bN \underline 1}(\oplus_{i=0}^r\bN e_{i,1})$, which is the push-out of the diagram
   \[\oplus_{i=0}^r\bN e_{i,0}\xleftarrow{\underline 1\mapsto e_{0,0}+\cdots+e_{r,0}}{\bN \underline 1}\xrightarrow{\underline 1\mapsto e_{0,1}+\cdots+e_{r,1}}\oplus_{i=0}^r\bN e_{i,1}.\]
   There is an obvious way to make $(\frakS(R)^{\widehat \otimes_{\frakS}2},(E(u)),\bN^{r+1}\oplus_{\bN}\bN^{r+1},\delta_{\log})$ a prelog prism over $(\frakS,(E(u)),\bN,\delta_{\log})$. There exists a ``multiplication'' map of prelog rings
   \[(\frakS(R)^{\otimes_{\frakS}2},\bN^{r+1}\oplus_{\bN}\bN^{r+1})\to(R,\bN^{r+1})\]
   such that the induced surjection on monoids $\pr:\bN^{r+1}\oplus_{\bN}\bN^{r+1}\to\bN^{r+1}$ is given by $\pr(e_{i,1})=\pr(e_{i,2})=e_i$. Let $N$ be the submonoid of $\bZ^{r+1}\oplus_{\bZ}\bZ^{r+1}$ generated by $\bN^{r+1}\oplus_{\bN}\bN^{r+1}$ and $(\pr^{\gp})^{-1}(0)$. Then $N = (\oplus_{i=0}^r\bN e_{i,0}\oplus_{\bN\underline 1}\oplus_{i=0}^r\bN e_{i,1})+\sum_{i=1}^r\bZ(e_{i,0}-e_{i,1})$. By \cite[Construction 2.18]{Kos}, the exactification of $(\frakS(R)^{\widehat \otimes_{\frakS}2},(E(u)),\bN^{r+1}\oplus_{\bN}\bN^{r+1},\delta_{\log})$ is exactly $(\frakS(R)^1_{\rm rel,0}:= (\frakS(R)^{\otimes_{\frakS}2}[(\frac{T_{0,1}}{T_{0,0}})^{\pm 1},\dots,(\frac{T_{r,1}}{T_{r,0}})^{\pm 1}])^{\wedge},(E(u)),\bN^{r+1},\delta_{\log})$, where the completion is with respect to the $(p,E(u))$-adic topology and the prelog structure $\bN^{r+1}\cong \oplus_{i=0}^r\bN e_i\to\frakS(R)^1_{\rm rel,0}$ is given by the map sending $e_i$'s to $T_{i,0}$'s. By construction, for any log prism $(A,I,M,\delta_{\log})\in(R/(\frakS,(E)))_{\Prism,\log}$, to which there are two morphisms from $(\frakS(R),(E(u)),\bN^{r+1},\delta_{\log})$, there exists a unique morphism of log prisms 
   \[f:(\frakS(R)^1_{\rm rel,0},(E(u)),(\bN^{r+1})^a,\delta_{\log})\to(A,I,M,\delta_{\log}).\]
   Since $1-\frac{T_{1,1}}{T_{1,0}},\dots,1-\frac{T_{r,1}}{T_{r,0}}, T_{r+1,1}-T_{r+1,0}, \dots, T_{d,1}-T_{d,0}$ is a $(p,E)$-adically regular sequence relative to $\frakS(R)$, by \cite[Proposition]{BS-a}, $\tilde \frakS(R)^1_{\rm rel,0}=\frakS(R)^1_{\rm rel,0}\{\frac{1-T_{1,1}/T_{1,0}}{E(u)},\dots,\frac{1-T_{r,1}/T_{r,0}}{E(u)},\frac{T_{r+1,1}-T_{r+1,0}}{E(u)},\dots,\frac{T_{d,1}-T_{d,0}}{E(u)}\}^{\wedge}_{\delta}$ exists and is $(p,E(u))$-completely flat over $\frakS(R)$. In particular, $p,E(u)$ is a regular sequence in $\tilde \frakS(R)^1_{\rm rel,0}$ and hence so is $u,E(u)$. By construction, $(\tilde \frakS(R)^1_{\rm rel,0},(E(u)),(\bN^{r+1})^a,\delta_{\log})$ belongs to $(R/(\frakS,(E)))_{\Prism,\log}$ and $f$ factors over the natural map 
   \[(\frakS(R)^1_{\rm rel,0},(E(u)),(\bN^{r+1})^a,\delta_{\log})\to(\tilde \frakS(R)^1_{\rm rel,0},(E(u)),(\bN^{r+1})^a,\delta_{\log})\]
   uniquely. In other words, $(\tilde \frakS(R)^1_{\rm rel,0},(E(u)),(\bN^{r+1})^a,\delta_{\log})$ is the self coproduct of $(\frakS(R),(E(u)),M_{\frakS(R)},\delta_{\log})$ in $(R/(\frakS,(E)))_{\Prism,\log}$.
   
   It remains to show that $(\tilde \frakS(R)^1_{\rm rel,0},(E(u)),(\bN^{r+1})^a,\delta_{\log})\cong (\frakS(R)^1_{\rm rel},(E(u)),M_{\rm rel}^1,\delta_{\log})$. The universal property for the former provides a unique morphism from $(\tilde \frakS(R)^1_{\rm rel,0},(E(u)),(\bN^{r+1})^a,\delta_{\log})$ to $(\frakS(R)^1_{\rm rel},(E(u)),M_{\rm rel}^1,\delta_{\log})$. We need to construct its inverse. By definition of $\frakS(R)^1_{\rm rel}$, it is enough to show that $1-\frac{T_{i,0}}{T_{i,1}}$ is topologically nilpotent and that $\frac{1-T_{i,1}/T_{i,0}}{E(u)}$ exists in $\tilde \frakS(R)^1_{\rm rel,0}$ for any $i$. By $E(u)$-adic completeness of $\tilde \frakS(R)^1_{\rm rel,0}$, we only need to show that $\frac{1-T_{i,1}/T_{i,0}}{E(u)}$ exist for any $i$. Note that both $\frac{T_{i,1}-T_{i,0}}{E(u)}$ and $\frac{T_{i,1}-T_{i,0}}{T_{i,0}}$ exists in $\tilde \frakS(R)^1_{\rm rel,0}$. Since $T_{i,0}$ and $E(u)$ meet transversally in $\tilde \frakS(R)^1_{\rm rel,0}$ for $1\leq i\leq r$ (as $u$ and $E(u)$ do), so $\frac{1-T_{i,1}/T_{i,0}}{E(u)}$ exists for any $1\leq i\leq r$. For $r+1\leq i\leq d$, the existence is trivial because $T_{i,0}$ is invertible in this case.
 \end{proof}
 
 \begin{lem}
   For any $1\leq i\leq n$ and $1\leq j\leq d$, let $Y_{j,i}$ denote the images of $\frac{1-\frac{T_{j,i}}{T_{j,0}}}{E(u)}$ in $\frakS(R)_{\rm rel}^n/E(u_0)$ respectively. Then we have 
   \[
   \frakS(R)_{\rm rel}^n/E(u)\cong R\{\underline{Y_1},\cdots,\underline{Y_n}\}^{\wedge}_{\pd}
   \]
   where the right hand side is the $p$-adic completion of the free pd-algebra over $R$ with variables $\{\underline{Y_1},\cdots,\underline{Y_n}\}$. Moreover, let $p_i:\frakS(R)_{\rm rel}^n/(E)\to\frakS(R)_{\rm rel}^{n+1}/E$ be the structure morphism induced by the order-preserving injection
   \[\{0,\dots,n\}\to\{0,\dots,i-1,i+1,\dots,n+1\},\]
   then we have 
   \begin{equation}\label{Equ-structure morphism on variable}
   \begin{split}
      &p_i(\underline Y_j) = \left\{
      \begin{array}{rcl}
        \underline Y_{j+1}- \underline Y_1, & i=0\\
         \underline Y_j, & 1\leq j<i\\
         \underline Y_{j+1}, & 0<i\leq j
      \end{array}
      \right.
  \end{split}.
  \end{equation}
 \end{lem}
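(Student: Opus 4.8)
The plan is to follow the template of the absolute computation in Lemma \ref{pd poly-abs} and \cite[Lemma 2.6]{MW-b}, adapting it to the present relative situation where the cosimplicial object is formed over $\frakS$ rather than over $\rW(k)$. There are two things to check: the description of $\frakS(R)_{\rm rel}^n/E(u)$ as a free pd-algebra, and the formulas for the coface maps $p_i$ on the pd-generators.

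For the isomorphism, recall from the definition that $\frakS(R)_{\rm rel}^n$ is the $\delta$-envelope $B_{\rm rel}^n\{(1-\underline{T_m}/\underline{T_0})/E(u)\}^{\wedge}_{\delta}$. By (the general-$n$ version of) Lemma \ref{Lem-cech nerve rel-I}, the elements $1-T_{j,m}/T_{j,0}$ for $1\leq j\leq r$ together with $T_{j,m}-T_{j,0}$ for $r+1\leq j\leq d$ (over all $1\leq m\leq n$), together with $p,E(u)$, form a $(p,E(u))$-completely regular sequence relative to $\frakS(R)$. Combining the flatness statement of \cite[Proposition 3.13]{BS-a} with the divided-power computation carried out in \cite[Lemma 2.6]{MW-b} and Lemma \ref{pd poly-abs}, reduction modulo $E(u)$ sends $B_{\rm rel}^n$ to $R$ via the multiplication map identifying all copies (since $\frakS(R)/(E)=R$), and turns the adjoined generators $\frac{1-T_{j,m}/T_{j,0}}{E(u)}$ into free pd-variables $Y_{j,m}$. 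The constraint $\prod_{j=0}^{r}T_{j,m}=u=\prod_{j=0}^{r}T_{j,0}$ gives $\prod_{j=0}^{r}(1-E(u)Y_{j,m})=1$, whence $\sum_{j=0}^{r}Y_{j,m}\equiv 0 \pmod{E(u)}$, so $Y_{0,m}$ is determined by $Y_{1,m},\dots,Y_{r,m}$; for $r+1\leq j\leq d$ the unit $T_{j,0}$ only rescales the corresponding generator. Thus exactly the $d$ variables $Y_{1,m},\dots,Y_{d,m}$ per copy index $m$ survive as free pd-generators, yielding $\frakS(R)_{\rm rel}^n/E(u)\cong R\{\underline{Y}_1,\dots,\underline{Y}_n\}^{\wedge}_{\pd}$.

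For the structure morphisms, I would track $p_i$ on the generators. Fix a coordinate direction $\ell$ and write $Y_{\ell,m}$ for the variable of the copy $m$. Crucially, since the cosimplicial structure is formed \emph{over} $\frakS$, the element $u$ (hence $E(u)$) is fixed by every $p_i$; this is precisely what makes the relative computation cleaner than the absolute one, eliminating the $E'$-correction factor present in Lemma \ref{pd poly-abs}. The injection defining $p_i$ acts on copy indices by $\sigma(m)=m$ for $m<i$ and $\sigma(m)=m+1$ for $m\geq i$, so $p_i(T_{\ell,m})=T_{\ell,\sigma(m)}$. When $i>0$ the reference copy $0$ is preserved, giving $p_i(Y_{\ell,m})=Y_{\ell,\sigma(m)}$, namely $Y_{\ell,m}$ for $m<i$ and $Y_{\ell,m+1}$ for $m\geq i$, as claimed. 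When $i=0$ the reference copy shifts from $0$ to $1$; writing $T_{\ell,m}=T_{\ell,0}(1-E(u)Y_{\ell,m})$ one computes
\[
p_0(Y_{\ell,m}) = \frac{1-T_{\ell,m+1}/T_{\ell,1}}{E(u)} = \frac{Y_{\ell,m+1}-Y_{\ell,1}}{1-E(u)Y_{\ell,1}} \equiv Y_{\ell,m+1}-Y_{\ell,1} \pmod{E(u)},
\]
which gives the asserted batchwise formula $p_0(\underline{Y}_m)=\underline{Y}_{m+1}-\underline{Y}_1$.

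The main obstacle is the first step: confirming that the $\delta$-envelope defining $\frakS(R)_{\rm rel}^n$ really has the expected free pd-algebra as its Hodge--Tate reduction. Here one must treat the log directions $1\leq j\leq r$ (where only the ratios $T_{j,m}/T_{j,0}$ are meaningful and the relation $\prod_{j=0}^{r}T_{j,m}=u$ intervenes) and the torus directions $r+1\leq j\leq d$ (where $T_{j,0}$ is invertible) uniformly, and verify the regular-sequence hypotheses of \cite[Proposition 3.13]{BS-a}. Once this is in place, both the isomorphism and the coface formulas follow by the same formal manipulation as in the absolute case, so I expect the remainder to be routine bookkeeping.
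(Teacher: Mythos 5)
Your proposal is correct and takes essentially the same approach as the paper: the paper's own proof is a one-line reduction to the computations of \cite[Lemma 2.6 and 2.7]{MW-b} (noting that the $T_{j,i}$'s are rank-one elements), and your argument simply makes that reduction explicit — the regular-sequence/flatness input from Lemma \ref{Lem-cech nerve rel-I} and \cite[Proposition 3.13]{BS-a}, the elimination of the $j=0$ variable via the relation $\prod_{j=0}^{r}T_{j,m}=u$, and the key point that the correction factor $(1-\pi E'(\pi)X_1)^{-1}$ of Lemma \ref{pd poly-abs} disappears here because $u$, hence $E(u)$, is fixed by every coface map in the cosimplicial object formed over $\frakS$. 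No gaps.
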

 
 \begin{proof}
 This can essentially reduce to the proofs of \cite[Lemma 2.6 and 2.7]{MW-b} (note that all $T_{j,i}$'s are of rank 1).
 \end{proof}
 
 \begin{dfn}
    A log Higgs module over $R$ is a finite projective module $M$ over $R$ together with an $R$-linear morphism
    \[
    \theta: M\to M\otimes_{R}\widehat \Omega^1_{R/\calO_K,\log}\{-1\}
    \]
    such that $\theta\wedge\theta=0$. If we write $\theta=\sum_{i=1}^d\theta_i\otimes \dlog T_i\{-1\}$ with $\theta_i:M\to M$, then this condition is equivalent to saying $\theta_i\theta_j=\theta_j\theta_i$. 
    
    For any $n$-tuple $\underline m=(m_1,\cdots,m_n)\in \bN^n$, we put
    \[
    \theta^{\underline m}=\prod_{i=1}^n\theta_i^{m_i}\in {\rm End}_R(M).
    \]
    We say $\theta$ is topologically nilpotent if $\theta^{\underline m}$ tends to $0$ as $|\underline m|:=\sum_{i=1}^nm_i$ tends to infinity.
    
    Let $\HIG^{\log}(R)$ denote the category of topologically nilpotent log Higgs modules over $R$.
 \end{dfn}

 \begin{thm}
   There is an equivalence between the categories
   \[
   \Vect((R)/(\frakS,(E))_{\Prism,\log},\overline\calO_{\Prism})\simeq \HIG^{\log}(R).
   \]
 \end{thm}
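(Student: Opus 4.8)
The plan is to run the standard ``stratification equals Higgs field'' dictionary, following the relative smooth case of \cite{Tian}, with the cosimplicial computations of the preceding lemmas playing the role of their smooth counterparts. First I would set up descent: by Corollary \ref{Cor-relative cover}(1) the Breuil--Kisin log prism $(\frakS(R),(E),M_{\frakS(R)},\delta_{\log})$ is a cover of the final object of $\Sh((R)/(\frakS,(E))_{\Prism,\log})$, and by Lemma \ref{Lem-cech nerve rel-I} its \v Cech nerve is the cosimplicial log prism $(\frakS(R)^{\bullet}_{\rm rel},(E),M^{\bullet}_{\rm rel},\delta_{\log})$. Hence giving a Hodge--Tate crystal $\bM$ is the same as giving the finite projective $R=\frakS(R)/E$-module $M:=\bM(\frakS(R),(E),M_{\frakS(R)},\delta_{\log})$ together with a stratification $\varepsilon$ relative to $\frakS(R)^{\bullet}_{\rm rel}/E$; that is, an isomorphism
\[\varepsilon: M\otimes_{R,p_0}\frakS(R)^1_{\rm rel}/E \xrightarrow{\ \sim\ } M\otimes_{R,p_1}\frakS(R)^1_{\rm rel}/E\]
reducing to the identity modulo the augmentation ideal and satisfying the cocycle condition over $\frakS(R)^2_{\rm rel}/E$. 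Note that modulo $E$ the two copies $T_{j,0},T_{j,1}$ agree, so both $p_0,p_1$ restrict to the canonical inclusion $R\to R\{\underline Y_1\}^{\wedge}_{\pd}$ and $\varepsilon$ is an honest automorphism of $M\{\underline Y_1\}^{\wedge}_{\pd}$.

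Next I would translate $\varepsilon$ using the identification $\frakS(R)^1_{\rm rel}/E\cong R\{\underline Y_1\}^{\wedge}_{\pd}$ from the preceding lemma. Expanding $\varepsilon$ as a divided-power series $\varepsilon(m)=\sum_{\underline n}\theta_{\underline n}(m)\otimes\underline Y_1^{[\underline n]}$ with $R$-linear operators $\theta_{\underline n}$ on $M$ and $\theta_{\underline 0}=\id$, I would read off the degree-one coefficients $\theta_j:=\theta_{e_j}$ and assemble them into a candidate log Higgs field $\theta=\sum_{j=1}^d\theta_j\otimes\dlog T_j\{-1\}$. The cocycle condition, combined with the explicit co-face formulas (\ref{Equ-structure morphism on variable}) --- crucially $p_0(\underline Y_j)=\underline Y_{j+1}-\underline Y_1$, which reproduces the smooth relative combinatorics rather than the twisted absolute one of Lemma \ref{pd poly-abs} --- then forces, exactly as in \cite{Tian}, two things: that the $\theta_j$ commute pairwise (so $\theta\wedge\theta=0$), and that every higher coefficient is the divided power $\theta_{\underline n}=\theta^{\underline n}$ of the linear part. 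Conversely, given commuting operators $\theta_j$, the formula $\varepsilon(m)=\sum_{\underline n}\theta^{\underline n}(m)\otimes\underline Y_1^{[\underline n]}$ defines a stratification whose cocycle identity is checked directly against (\ref{Equ-structure morphism on variable}). This produces mutually inverse functors between the two categories.

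Finally I would match the two side conditions. The displayed series for $\varepsilon$ takes values in the $p$-adically completed pd-algebra $M\{\underline Y_1\}^{\wedge}_{\pd}$, so it converges (and $\varepsilon$ is a well-defined invertible stratification) if and only if $\theta^{\underline n}\to 0$ in $\End_R(M)$ as $|\underline n|\to\infty$, which is precisely topological nilpotence of $\theta$; this is exactly the condition placing $(M,\theta)$ in $\HIG^{\log}(R)$. Functoriality and compatibility with morphisms are then routine. The main obstacle is the bookkeeping in the second step: verifying that the cocycle condition, through the pd-structure and the co-face maps (\ref{Equ-structure morphism on variable}), simultaneously pins down the whole stratification as the divided-power exponential of its linear term and enforces integrability. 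This is where the explicit structure of $\frakS(R)^{\bullet}_{\rm rel}/E$ does all the work, and it is the step most sensitive to the log differentials $\dlog T_j$ replacing the usual basis.
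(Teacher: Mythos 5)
Your proposal is correct and follows essentially the same route as the paper: descent along the Breuil--Kisin log prism cover to reduce a Hodge--Tate crystal to a stratification over $\frakS(R)^{\bullet}_{\rm rel}/E \cong R\{\underline Y_1,\dots,\underline Y_\bullet\}^{\wedge}_{\pd}$, then the pd-expansion of $\varepsilon$ whose cocycle condition (via the untwisted co-face formulas (\ref{Equ-structure morphism on variable})) forces the degree-one coefficients to commute and all higher coefficients to be their divided powers, with convergence of the series matching topological nilpotence. The paper merely compresses the cocycle computation by citing the analogous calculation with the $X$-variable set to zero (as in \cite{Tian} and the absolute-case computations), whereas you carry it out directly; the content is identical.
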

 \begin{proof}
 Given a Hodge--Tate crystal $\bM$, we have the corresponding stratification
 \[
 \epsilon: M\otimes_{R,p_1}R\{\underline Y\}\to M\otimes_{R,p_0}R\{\underline Y\}.
 \]
 Note that $p_0$ and $p_1$ are both the natural inclusion $R\hookrightarrow R\{\underline Y\}$. Without loss of generality, we may assume $M$ is finite free over $R$ with a basis $\{e_1,\cdots,e_l\}$. Then we can write $\epsilon(\underline e)=\underline e\sum_{I\in \bN^d}A_I\underline Y^{[I]}$ with $A_I\in M_l(R)$. Then one can show (by putting all $X$ to be $0$ in the calculations after Lemma \ref{pd poly-abs}) that the cocycle condition is equivalent to the following conditions:

     (1) $A_{I}=id$ when $I=(0,\cdots,0)$;
     
     (2) if we write $\theta_{\epsilon,i}=A_{I}$ with $i$-th component in $I$ being 1 and other components being 0, then $\theta_{\epsilon,i}\theta_{\epsilon,j}=\theta_{\epsilon,j}\theta_{\epsilon,i}$;
     
     (3) $A_I=\prod_{i=1}^d \theta_{\epsilon,i}^{m_i}$ with $I=(m_1,\cdots,m_d)$ satisfying $\lim_{|I|\to+\infty}A_I=0$.

 So we can define a topologically nilpotent log Higgs module as $\theta=\sum_{i=1}^d\theta_{\epsilon,i}\otimes \dlog T_i\{-1\}.$ Conversely, given a Higgs module $(M,\theta)\in \HIG^{\log}(R)$ with $\theta=\sum_{i=1}^d\theta_{i}\otimes \dlog T_i\{-1\}$, we can get a stratification by setting $A_I=\prod_{i=1}^d \theta_{i}^{m_i}$ with $I=(m_1,\cdots,m_d)$.
 \end{proof}

Now we compare the cohomology of a Hodge--Tate crystal and its associated Higgs complex. 
 \begin{thm}
  Let $\bM\in\Vect((R)/(\frakS,(E)))_{\Prism,\log},\overline \calO_{\Prism})$ be a Hodge--Tate crystal with associated topological quasi-nilpotent log Higgs module $(M,\theta_M)$. There is a quasi-isomorphim 
   \[\RGamma_{\Prism}(\bM)\simeq\HIG(M,\theta_M).\]
\end{thm}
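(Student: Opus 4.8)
The plan is to compute $\RGamma_\Prism(\bM)$ by means of the \v Cech--Alexander complex attached to the cover $(\frakS(R),(E),M_{\frakS(R)},\delta_{\log})$ of Lemma \ref{Lem-cover rel}(1), and then to match the resulting complex with $\HIG(M,\theta_M)$. First I would record the relative analogue of Lemma \ref{Lem-cech-to-derived-abs}: for every log prism $\frakA\in (R/(\frakS,(E)))_{\Prism,\log}$ one has $\rH^i(\frakA,\bM)=0$ for all $i\ge 1$. Exactly as in the absolute case this follows from Remark \ref{rigidity} --- the \v Cech nerve of a cover of log prisms has underlying cosimplicial prism equal to the \v Cech nerve of the underlying cover of prisms --- combined with the argument of \cite[Lemma 3.11]{Tian}. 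Granting this vanishing, $\RGamma_\Prism(\bM)$ is computed by the cochain complex associated to the cosimplicial abelian group $n\mapsto \bM(\frakS(R)^n_{\rm rel})$, where $(\frakS(R)^{\bullet}_{\rm rel},(E),M_{\rm rel}^{\bullet},\delta_{\log})$ is the \v Cech nerve of Lemma \ref{Lem-cech nerve rel-I}.

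Next I would identify the terms. By the crystal property of $\bM$ together with the isomorphism $\frakS(R)^n_{\rm rel}/E\cong R\{\underline Y_1,\dots,\underline Y_n\}^\wedge_\pd$, we obtain $\bM(\frakS(R)^n_{\rm rel})\cong M\otimes_R R\{\underline Y_1,\dots,\underline Y_n\}^\wedge_\pd$, the completed free pd-algebra on the $dn$ variables $Y_{j,i}$. The coface maps are determined by the formula \eqref{Equ-structure morphism on variable} for $p_i$ on the variables, together with the stratification $\epsilon(\underline e)=\underline e\sum_I A_I\underline Y^{[I]}$ with $A_I=\prod_j\theta_j^{m_j}$, which governs the action of $p_0$ on $M$. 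Since $A_{0}=\id$, the differential $d^0=p_0-p_1$ sends $m\in M=C^0$ to $\sum_{|I|\ge 1}A_I(m)\,\underline Y^{[I]}$, whose degree-one part is $\sum_{j=1}^d\theta_j(m)\,Y_{j,1}$; identifying $Y_{j,1}$ with $\dlog T_j\{-1\}$, the linear part of $d^0$ is precisely $\theta_M$.

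The core of the proof is to show that the totalization of this cosimplicial module is quasi-isomorphic to $\HIG(M,\theta_M)$. Here I would argue as in the good-reduction case of \cite{Tian} (with \cite[Lemma 2.6, Theorem 3.12]{MW-b} supplying the single-variable input): the key is a divided-power Poincar\'e lemma showing that the pd-variables beyond the first copy contribute an acyclic factor, so that the associated complex is a completed Koszul complex on the commuting operators $\theta_1,\dots,\theta_d$, which is exactly $\HIG(M,\theta_M)$. Concretely, one filters by total pd-degree and builds an explicit contracting homotopy killing those variables, thereby reducing the several-variable statement to the one-variable comparison, and then reassembles the multivariable case via the product (K\"unneth) structure of the Koszul complex, using that the $\theta_j$ pairwise commute. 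The main obstacle is precisely this last step: producing the quasi-isomorphism between the \v Cech--Alexander complex and the Koszul/Higgs complex in all $d$ variables simultaneously, i.e. the pd-Poincar\'e lemma together with the bookkeeping of the coface maps \eqref{Equ-structure morphism on variable}.

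It is worth noting that, in contrast to the absolute case treated earlier, no auxiliary endomorphism $\phi$ and no correction factor such as $(1-\pi E'(\pi)X_1)^{-1}$ intervenes here, because the relative site fixes the base log prism $(\frakS,(E),M_{\frakS},\delta_{\log})$ and the face map $p_0$ acts on the variables by the simple rule $\underline Y_{j}\mapsto \underline Y_{j+1}-\underline Y_1$. This makes the comparison formally identical to the smooth relative computation, with the role of the \'etale coordinates $T_j$ played by the log coordinates, so that the argument of \cite{Tian} applies essentially verbatim once the identifications above are in place.
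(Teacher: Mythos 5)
Your proposal is correct and takes essentially the same route as the paper: the paper's proof simply invokes \cite[Theorem 4.12]{Tian} (noting that the inputs from \cite{BJ} persist for log-crystalline/log-de Rham cohomology), and that proof is precisely your argument — the \v Cech--Alexander complex over the cover $(\frakS(R),(E),M_{\frakS(R)},\delta_{\log})$, the identification of its terms as $M\otimes_R R\{\underline Y_1,\dots,\underline Y_n\}^{\wedge}_{\pd}$ via the crystal property, and the pd-Poincar\'e-lemma comparison with the Koszul/Higgs complex of the commuting $\theta_j$, with your correct observation that neither $\phi$ nor the factor $(1-\pi E'(\pi)X_1)^{-1}$ intervenes in the relative case. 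The only nitpick is a citation: the cover of the final object of the \emph{relative} topos is Corollary \ref{Cor-relative cover}(1) rather than Lemma \ref{Lem-cover rel}(1) (the former is deduced from the latter).
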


\begin{proof}
The proof is the same as the proof of \cite[Theorem 4.12]{Tian} (because the results in \cite{BJ} still hold true in the semi-stable case by considering log-crystalline cohomology and log-de Rham cohomology).
\end{proof}

In particular, one can get the following corollary directly.

\begin{thm}\label{Thm-relative coho dim}
  Let $\mathfrak X$ be a semi-stable $p$-adic formal scheme over $\calO_K$ of relative dimension $d$, with the log structure $\calM_{\frakX}=\calO_{X}^{\times}\cap \calO_{\frakX}$ where $X$ is the generic fiber. Let $\nu:\Sh((\frakX/(\frakS,(E)))_{\Prism,\log})\to\Sh(\frakX_{\et})$ be the natural morphism of topoi\footnote{See \cite[Remark 4.4]{Kos}.}. Then for any Hodge--Tate crystal $\bM$ in $\Vect((\frakX)/(\frakS,(E))_{\Prism,\log},\overline\calO_{\Prism})$, $\rR\nu_*\bM$ is a perfect complex of $\calO_{\frakX}$-modules with tor-amplitude $[0,d]$. If moreover $\frakX$ is proper, then $R\Gamma_{\Prism}(\bM)$ is a perfect complex of $\calO_K$-modules with tor-amplitude $[0,2d]$.
\end{thm}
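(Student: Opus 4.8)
The plan is to reduce everything to the local log Higgs-complex computation of the preceding theorem and then feed it into standard properties of perfect complexes under proper pushforward. Since the first assertion is \'etale-local on $\frakX$ and the morphism $\nu$ is compatible with \'etale localization, I would first assume $\frakX = \Spf(R)$ is small affine and let $(\calM,\theta_{\calM})$ be the topologically nilpotent log Higgs module attached to $\bM$. By the preceding theorem $\RGamma_{\Prism}(\bM)\simeq\HIG(\calM,\theta_{\calM})$, and since $\frakX$ is affine this identifies $\rR\nu_*\bM$ with the complex of sheaves
\[\HIG(\calM,\theta_{\calM}) = [\calM\to\calM\otimes_{\calO_{\frakX}}\widehat\Omega^1_{\frakX,\log}\{-1\}\to\cdots\to\calM\otimes_{\calO_{\frakX}}\widehat\Omega^d_{\frakX,\log}\{-d\}].\]
Because $\frakX$ has relative dimension $d$, the sheaf $\widehat\Omega^1_{\frakX/\calO_K,\log}$ is locally free of rank $d$, so $\widehat\Omega^i_{\frakX,\log}=0$ for $i>d$ and this complex is concentrated in cohomological degrees $[0,d]$ with each term a finite locally free $\calO_{\frakX}$-module. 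Hence $\rR\nu_*\bM$ is a bounded complex of finite locally free $\calO_{\frakX}$-modules, i.e. a perfect complex with tor-amplitude in $[0,d]$; these local descriptions glue by functoriality of the equivalence, giving the first claim.

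For the second assertion I would use the factorization $\RGamma_{\Prism}(\bM)\simeq\RGamma(\frakX_{\et},\rR\nu_*\bM)$ provided by $\nu$. When $\frakX$ is proper over $\calO_K$, the coherence theorem for proper morphisms shows that applying $\RGamma(\frakX,-)$ to the perfect complex $\rR\nu_*\bM$ yields a perfect complex of $\calO_K$-modules. Since $\calO_K$ is a local Noetherian ring, its tor-amplitude can be read off from $\RGamma_{\Prism}(\bM)\otimes^L_{\calO_K}k$; as $\frakX$ is proper and flat over $\calO_K$, derived base change for proper morphisms identifies this with $\RGamma(\frakX_k,\rR\nu_*\bM\otimes^L_{\calO_K}k)$. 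The restriction $\rR\nu_*\bM\otimes^L_{\calO_K}k$ is a perfect complex on the special fibre $\frakX_k$ with cohomology sheaves in degrees $[0,d]$, while $\frakX_k$ has dimension $d$, so Grothendieck's vanishing theorem forces its coherent cohomology into degrees $[0,d]$ as well. The hypercohomology spectral sequence $H^i(\frakX_k,\mathcal H^j)\Rightarrow H^{i+j}$ then places everything in degrees $[0,2d]$, which gives the desired tor-amplitude $[0,2d]$ for $\RGamma_{\Prism}(\bM)$ over $\calO_K$.

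The one point deserving care is the passage from the quasi-isomorphism $\RGamma_{\Prism}(\bM)\simeq\HIG(\calM,\theta_{\calM})$ of the preceding theorem to the sheaf-theoretic identification of $\rR\nu_*\bM$ with the complex of locally free sheaves $\HIG(\calM,\theta_{\calM})$: one must check that the comparison is natural in $R$ and therefore sheafifies over $\frakX_{\et}$, rather than merely matching global sections on each affine. This is exactly the argument used in the good-reduction setting of \cite{Tian} and \cite{MW-c}, and it carries over once the log Higgs complex is used in place of the usual one; granting it, both the perfectness and the tor-amplitude bounds are formal consequences of $\widehat\Omega^{>d}_{\frakX,\log}=0$ together with Grothendieck vanishing on the $d$-dimensional special fibre.
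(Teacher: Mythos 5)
Your proof is correct and follows exactly the route the paper intends: the theorem is stated there as a direct corollary of the local quasi-isomorphism $\RGamma_{\Prism}(\bM)\simeq\HIG(M,\theta_M)$, with perfectness and tor-amplitude $[0,d]$ read off from the log Higgs complex (a complex of finite locally free modules vanishing above degree $d$), and the proper case handled by Grothendieck vanishing on the $d$-dimensional special fibre together with derived base change, just as in the good-reduction references [Tian, Theorem 2.8, 2.9] and [MW-c, Corollary 2.21]. Your explicit attention to the sheafification/gluing of the local comparison is precisely the point the paper leaves implicit.
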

\begin{rmk}
  Similar results as stated in Theorem \ref{Thm-relative coho dim} about the cohomological finiteness for Hodge--Tate crystals also hold for prismatic crystals (i.e. crystals on $(\frakX/(\frakS,(E)))_{\Prism,\log}$ with coefficients in $\calO_{\Prism}$). More precisely, if $\frakX$ is proper semistable over $\calO_K$ of relative dimension $d$, then for any prismatic crystal $\bM$ on $(\frakX/(\frakS,(E)))_{\Prism,\log}$, $\RGamma_{\Prism}(\bM)$ is a perfect complex with tor-amplitude $[0,2d]$. This can be deduced from Theorem \ref{Thm-relative coho dim} together with derived Nakayama's lemma directly.
\end{rmk}

 \subsubsection{The absolute case}
 
 Now we study the cosimplicial log prisms $(\frakS(R)^{\bullet},(E),M_{\frakS(R)^{\bullet}},\delta_{\log})$ corresponding to the \v Cech nerve of the log prism $(\frakS(R),(E),M_{\frakS(R)},\delta_{\log})$ in $(R)_{\Prism,\log}$.
 
 Explicitly, write $B^n=\frakS(R)^{\otimes n}[[(1-\frac{u_0}{u_1}),\cdots,(1-\frac{u_0}{u_n}),(1-\frac{T_{i,0}}{T_{i,1}}),\cdots,(1-\frac{T_{i,0}}{T_{i,n}})]]_{0\leq i\leq r}$, where $\frakS(R)^{\otimes n}$ means the tensor product of $n+1$-copies of $\frakS(R)$ over $W(k)$. Then we have

 \[
 \frakS(R)^n=B^n\{\frac{1-\frac{u_1}{u_0}}{E(u_0)},\cdots,\frac{1-\frac{u_n}{u_0}}{E(u_0)},\frac{1-\frac{\underline{T_1}}{\underline{T_0}}}{E(u_0)}\cdots\frac{1-\frac{\underline{T_n}}{\underline{T_0}}}{E(u_0)}\}^{\wedge}_{\delta}
 \]
 where $\frac{1-\frac{\underline{T_i}}{\underline{T_0}}}{E(u_0)}$ with $1\leq i\leq n$ means adding all $\frac{1-\frac{T_{j,i}}{T_{j,0}}}{E(u_0)}$ for $1\leq j\leq d$. Here, we write $T_{j,i}$ for the corresponding element in the $i$-th component in $B^n$. Note that the element $\frac{1-\frac{T_{0,i}}{T_{0,0}}}{E(u_0)}$ automatically belongs to $\frakS(R)^n$ by the relation $u_i=\prod_{0\leq j\leq r} T_{j,i}$. So the log structures induced by the map $\bN^{r+1}\to\frakS(R)^n$ sending $e_i$'s to $T_{i,m}$'s are independent of the choice of $0\leq m\leq n$. We denote this log structure by $M_{\frakS(R)^n}$. A similar argument for the proof of Lemma \ref{Lem-cech nerve rel-I} shows that $(\frakS(R)^{\bullet},(E(u_0)),M_{\frakS(R)^{\bullet}},\delta_{\log})$ is the \v Cech nerve of $(\frakS(R),(E(u_0)),M_{\frakS(R)},\delta_{\log})$ in $(R)_{\Prism,\log}$. Then the following Lemma can be deduced from the same argument as in the proof of Lemma \ref{pd poly-abs} and therefore we omit its proof here.
 
 \begin{lem}\label{pd-poly absolute}
   For any $1\leq i\leq n$ and $1\leq j\leq d$, let $X_i$, $Y_{j,i}$ denote the images of $\frac{1-\frac{u_i}{u_0}}{E(u_0)}$ and $\frac{1-\frac{T_{j,i}}{T_{j,0}}}{E(u_0)}$ in $\frakS(R)^n/E(u_0)$ respectively. Then we have 
   \[
   \frakS(R)^n/E(u_0)\cong R\{X_1,\cdots,X_n,\underline{Y_1},\cdots,\underline{Y_n}\}^{\wedge}_{\pd}
   \]
   where the right hand side is the $p$-adic completion of the free pd-algebra over $R$ with variables $\{X_1,\cdots,X_n,\underline{Y_1},\cdots,\underline{Y_n}\}$. Moreover, let $p_i:\frakS(R)^n/(E)\to\frakS(R)^{n+1}/E$ be the structure morphism induced by the order-preserving injection
   \[\{0,\dots,n\}\to\{0,\dots,i-1,i+1,\dots,n+1\},\]
   then we have 
   \begin{equation}\label{Equ-structure morphism on variables}
   \begin{split}
      &p_i(X_j) = \left\{
      \begin{array}{rcl}
           (X_{j+1}-X_1)(1-\pi E'(\pi)X_1)^{-1}, & i=0  \\
           X_j, & 1\leq j<i \\
           X_{j+1}, & 0<i\leq j;
      \end{array}
      \right.\\
      &p_i(\underline Y_j) = \left\{
      \begin{array}{rcl}
        (\underline Y_{j+1}- \underline Y_1)(1-\pi E'(\pi)X_1)^{-1}, & i=0\\
         \underline Y_j, & 1\leq j<i\\
         \underline Y_{j+1}, & 0<i\leq j.
      \end{array}
      \right.
  \end{split}
  \end{equation}
 \end{lem}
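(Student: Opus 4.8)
The plan is to transcribe the proof of Lemma~\ref{pd poly-abs} (which is itself a modification of \cite[Lemma 2.6]{MW-b}) for the arithmetic variable $u$ and to run the identical argument in parallel for each geometric direction $T_{j,\bullet}$, the point being that all the fractions involved share the single denominator $E(u_0)$.

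First I would establish the ring isomorphism. By construction $\frakS(R)^n = B^n\{\cdots\}^{\wedge}_{\delta}$ is obtained by freely adjoining to $B^n$ the fractions $\frac{1-u_i/u_0}{E(u_0)}$ and $\frac{1-T_{j,i}/T_{j,0}}{E(u_0)}$ and then $(p,E(u_0))$-completing. Exactly as in Lemma~\ref{Lem-cech nerve rel-I}, the numerators $1-\frac{u_i}{u_0}$ and $1-\frac{T_{j,i}}{T_{j,0}}$ together with $p, E(u_0)$ form a $(p,E(u_0))$-completely regular sequence in $B^n$ relative to $\frakS(R)$, so by \cite[Proposition 3.13]{BS-a} the prismatic envelope exists and is $(p,E(u_0))$-completely flat. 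Reducing modulo $E(u_0)$ collapses this $\delta$-envelope to the free pd-algebra on the images of the adjoined generators, which is precisely the mechanism of \cite[Lemma 2.6, 2.7]{MW-b}; this yields
\[
\frakS(R)^n/E(u_0)\cong R\{X_1,\cdots,X_n,\underline Y_1,\cdots,\underline Y_n\}^{\wedge}_{\pd}
\]
with $X_i$ and $Y_{j,i}$ the reductions of the corresponding fractions.

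Next I would compute the cosimplicial structure maps. For $i>0$ the order-preserving injection fixes the index $0$, hence $p_i$ preserves the base points $u_0$ and $T_{j,0}$ and merely relabels the remaining indices; this gives $p_i(X_j)=X_j$, $p_i(\underline Y_j)=\underline Y_j$ for $j<i$ and $p_i(X_j)=X_{j+1}$, $p_i(\underline Y_j)=\underline Y_{j+1}$ for $j\geq i$. For the face $p_0$, which shifts the base point from $u_0$ to $u_1$ and from $T_{j,0}$ to $T_{j,1}$, I would reuse the key congruence of Lemma~\ref{pd poly-abs},
\[
E(u_0)-E(u_1)\equiv u_0E'(u_0)\Bigl(1-\tfrac{u_1}{u_0}\Bigr)\mod (E)^2,
\]
which modulo $E(u_0)$ (where $u_0\equiv\pi$ and $1-u_1/u_0=E(u_0)X_1$) gives $\frac{E(u_0)}{E(u_1)}\equiv(1-\pi E'(\pi)X_1)^{-1}$; together with $\frac{u_0}{u_1}\equiv 1$ and $\frac{T_{j,0}}{T_{j,1}}\equiv 1\mod E$ and the factorisation
\[
\frac{1-u_{j+1}/u_1}{E(u_1)}=\frac{(1-u_{j+1}/u_0)-(1-u_1/u_0)}{E(u_0)}\cdot\frac{u_0}{u_1}\cdot\frac{E(u_0)}{E(u_1)},
\]
and the analogous one for the $T$-variables, this produces the asserted formulas for $p_0(X_j)$ and $p_0(\underline Y_j)$.

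The step requiring the most care is the identification of the reduction as a \emph{free} pd-algebra on exactly $X_i$ and $Y_{j,i}$ with $1\leq j\leq d$. Because of the relation $u_i=\prod_{0\leq j\leq r}T_{j,i}$ one finds $X_i\equiv\sum_{j=0}^{r}Y_{j,i}\mod E(u_0)$, so the $j=0$ geometric fraction is not an independent generator but is expressed as $Y_{0,i}=X_i-\sum_{j=1}^{r}Y_{j,i}$; I would check that this is the only relation, so that after discarding $Y_{0,i}$ the remaining $X_i$ and $Y_{j,i}$ ($1\leq j\leq d$) are genuinely free pd-generators. Once this bookkeeping is settled, the face-map computation is a verbatim transcription of the $\calO_K$-case and introduces no new difficulty.
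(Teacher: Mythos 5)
Your proposal is correct and follows essentially the same route as the paper's own (largely omitted) argument, which defers to Lemma \ref{pd poly-abs}, i.e.\ to the prismatic-envelope-to-pd-envelope mechanism of \cite[Lemma 2.6, 2.7]{MW-b} together with the congruence $E(u_0)-E(u_1)\equiv u_0E'(u_0)(1-u_1/u_0) \bmod (E)^2$ that produces the factor $(1-\pi E'(\pi)X_1)^{-1}$ in the face map $p_0$ — exactly the two ingredients you transcribe. The one point to phrase carefully is that the regular sequence in your first paragraph must consist of the numerators with $1\leq j\leq d$ only: including the $j=0$ numerators $1-T_{0,i}/T_{0,0}$ would destroy regularity, since they lie in the ideal generated by the others; this is precisely the dependency $Y_{0,i}=X_i-\sum_{j=1}^{r}Y_{j,i}$ your last paragraph identifies, and it is built into the paper's construction by adjoining only the $j\geq 1$ fractions and remarking that $\frac{1-T_{0,i}/T_{0,0}}{E(u_0)}$ automatically belongs to $\frakS(R)^n$, so once the generating set is fixed this way there is no further relation to check.
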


Next we want to give an equivalent description of the Hodge--Tate crystals on the absolute site $(R)_{\Prism,\log}$. By Lemma \ref{Lem-cover rel}, we see that giving a Hodge--Tate crystal $\bM$ of rank $l$ in $\Vect(( R)_{\Prism,\log},\overline \calO_{\Prism})$ is equivalent to giving a finite projective $R$-module $M$ of rank $l$ with a stratification 
\[
M\otimes_{R,p_1}R\{X,\underline{Y}\}^{\wedge}_{\pd}\to M\otimes_{R,p_0}R\{X,\underline{Y}\}^{\wedge}_{\pd}
\]
satisfying the cocycle condition with respect to the cosimplicial log prisms $(\frakS(R)^{\bullet},(E),M_{\frakS(R)^{\bullet}},\delta_{\log})$.

By choosing an affine covering of $R$, we may assume $M$ is a finite free $R$-module. Let $e_1,\dots,e_l$ be an $R$-basis of $M$, then 
 \begin{equation}\label{Equ-form of stratification}
     \varepsilon(\underline e) = \underline e\sum_{i\geq 0;I\in\bN^d}A_{i,I}X^{[i]}\underline Y^{[I]}
 \end{equation}
 for some $A_{i,I} \in \rM_l(R)$ satisfying $\lim_{i+|I|\to+\infty}A_{i,I} = 0$, where $X^{[i]}$ denotes the $i$-th pd--power ``$\frac{X^i}{i!}$'' of $X$ and for any $I=(i_1,\dots,i_d)\in\bN^d$, $\underline Y^{[I]}$ denotes $Y_1^{[i_1]}\cdots Y_d^{[i_d]}$. Then we have
 \begin{equation*}
 \begin{split}
      p_2^*(\varepsilon)\circ p_0^*(\varepsilon)(\underline e)
     & = p_2^*(\varepsilon)( \underline e\sum_{i\geq 0;I\in\bN^d}A_{i,I}(1-\pi E'(\pi)X_1)^{-i-|I|}(X_2-X_1)^{[i]}(\underline Y_2-\underline Y_1)^{[I]})\\
     & = \underline e\sum_{i,j\geq 0;I,J\in\bN^d}A_{j,J}A_{i,I}X_1^{[j]}(1-\pi E'(\pi)X_1)^{-i-|I|}(X_2-X_1)^{[i]}\underline Y_1^{[J]}(\underline Y_2-\underline Y_1)^{[I]}\\
     & = \underline e \sum_{i\geq 0,I\in\bN^d}P_{i,I}(X_1,Y_1)X_2^{[i]}\underline Y_2^{[I]},
 \end{split}
 \end{equation*}
 where for any $i\geq 0$ and $I\in\bN^d$,
 \[P_{i,I}(X_1,\underline Y_1) = \sum_{j,l\geq 0;J,L\in \bN^d}A_{j,J}A_{i+l,I+L}(-1)^{l+|L|}(1-\pi E'(\pi)X_1)^{-i-l-|I|-|L|}X_1^{[j]}X_1^{[l]}\underline Y_1^{[J]}\underline Y_1^{[L]}.\]
 On the other hand, we have
 \[p_1^*(\varepsilon)(\underline e) = \underline e\sum_{i\geq 0;I\in\bN^d}A_{i,I}X_2^{[i]}\underline Y_2^{[I]}.\]
 So the stratification $\varepsilon$ satisfies the cocycle condition if and only if $A_{0,\vec 0} = I$ and for any $i\geq 0$ and $I\in\bN^d$,
 \begin{equation}\label{Equ-compare coefficients}
      \sum_{j,l\geq 0;J,L\in \bN^d}A_{j,J} A_{i+l,I+L}(-1)^{l+|L|}(1-\pi E'(\pi)X_1)^{-i-l-|I|-|L|}X_1^{[j]}X_1^{[l]}\underline Y_1^{[J]}\underline Y_1^{[L]} = A_{i,I}.
 \end{equation}
 
 Let $\underline Y_1 = 0$ in (\ref{Equ-compare coefficients}), we get 
 \begin{equation}\label{Equ-Let Y be zero}
 \begin{split}
   A_{i,I} & = \sum_{j,l\geq 0}A_{j,\vec 0} A_{i+l,I}(-1)^{l}(1-\pi E'(\pi) X_1)^{-i-l-|I|}X_1^{[j]}X^{[l]}\\
   & = \sum_{j,l\geq 0}A_{j,\vec 0}A_{i+l,I}(-1)^l\sum_{k\geq 0}\binom{-i-l-|I|}{k}(-\pi E'(\pi) X)^kX_1^{[j]}X_1^{[l]}.
 \end{split}
 \end{equation}
 Comparing the coefficients of $X_1$, we get
 \[A_{1,\vec 0}A_{i,I}-A_{0,\vec 0}A_{i+1,I}+(i+|I|)\pi E'(\pi) A_{0,\vec 0}A_{i,I} = 0\]
which shows that for any $n\geq 0$,
 \begin{equation}\label{Equ-first iteration}
    A_{n+1,I} = (A_{1,0}+|I|\pi E'(\pi)+n\pi E'(\pi))A_{n,I} = \prod_{i=0}^n(A_{1,0}+|I|\pi E'(\pi)+i\pi E'(\pi))A_{0,I}.
 \end{equation}
 
 Let $X_1 = 0$ in (\ref{Equ-compare coefficients}), we get
 \begin{equation}\label{Equ-Let X be zero}
 \begin{split}
   A_{i,I} & = \sum_{J,L\in\bN^d}A_{0,J} A_{i,I+L}(-1)^{|L|}\underline Y_1^{[J]}\underline Y_1^{[L]}.
 \end{split}
 \end{equation}
 Comparing the coefficient of $Y_{k,1}$, we get
 \[A_{0,E_k}A_{i,I}-A_{0,\vec 0}A_{i,I+E_k}=0\]
which shows that for any $n\geq 0$,
 \begin{equation}\label{Equ-second iteration}
    A_{n,I} = A_{0,E_k}A_{n,I-E_k} = A_{0,E_k}^{i_k}A_{n,I-i_kE_k},
 \end{equation}
 where $E_k$ denotes the generator of the $k$-th component of $\bN^d$.
 
 Combining (\ref{Equ-first iteration}) with (\ref{Equ-second iteration}), we deduce that for any $1\leq i,j\leq d$,
 \begin{equation}\label{Equ-commutativity}
 \begin{split}
     & [A_{0,E_i},A_{0,E_j}] = A_{0,E_i}A_{0,E_j}-A_{0,E_j}A_{0,E_i} = 0,\\
     & [A_{0,E_i},A_{1,0}] = A_{0,E_i}A_{1,0}-A_{1,0}A_{0,E_i} =\pi E'(\pi)A_{0,E_i},
 \end{split}
 \end{equation}
 and that for any $n\geq 0$ and $I = (i_1,\dots,i_d)\in\bN^d$,
 \begin{equation}\label{Equ-determine coefficients}
 \begin{split}
     A_{n,I} & = \prod_{i=0}^{n-1}(A_{1,0}+|I|\pi E'(\pi)+i\pi E'(\pi))A_{0,E_1}^{i_1}\cdots A_{0,E_d}^{i_d}\\
     & = A_{0,E_1}^{i_1}\cdots A_{0,E_d}^{i_d}\prod_{i=0}^{n-1}(i\pi E'(\pi)+A_{1,0}).
 \end{split}
 \end{equation}

\begin{prop}\label{matrix of stratification}
   Let $M$ be a finite free $R$-module $l$ with a basis $e_1,\dots,e_l$ and \[\varepsilon:M\otimes_{R,p_1}R\{X,\underline Y\}^{\wedge}_{\pd}\to M\otimes_{R,p_0}R\{X,\underline Y\}^{\wedge}_{\pd}\]
   be a stratification on $M$ with respect to $\frakS(R)^{\bullet}$ such that
   \[\varepsilon(\underline e) = \underline e\sum_{n\geq 0;I\in\bN}A_{n,I}X^{[n]}\underline Y^{[I]}\]
   for $A_{n,I}$'s in $\rM_l(R)$. Then the following are equivalent:
   
       $(1)$ $(M,\varepsilon)$ is induced by a Hodge--Tate crystal $\bM\in \Vect((R)_{\Prism,\log},\overline \calO_{\Prism})$.
       
       $(2)$ There are matrices $A,\Theta_1,\dots,\Theta_d\in\rM_l(R)\cong\End_R(M)$ satisfying the following conditions:
       
           \qquad $(a)$ For any $1\leq i,j\leq d$, $[\Theta_i,\Theta_j]=0$;
           
           \qquad $(b)$ For any $1\leq i\leq j$, $[\Theta_i,A]=\pi E'(\pi)\Theta_i$;
           
           \qquad $(c)$ $\lim_{n\to+\infty}\prod_{i=0}^{n-1}(i\pi E'(\pi)+A) = 0$.
           
       In this case, $\Theta_i$'s are nilpotent and for any $n\geq 0$, $I=(i_1,\dots,i_d)\in\bN^d$,
       \begin{equation*}
       \begin{split}
           A_{n,I}=\Theta_1^{i_1}\cdots\Theta_d^{i_d}\prod^{n-1}_{i=0}(i\pi E'(\pi)+A).
       \end{split}
       \end{equation*}
 \end{prop}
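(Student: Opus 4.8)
The plan is to prove the two implications separately, relying heavily on the coefficient analysis already carried out in equations (\ref{Equ-form of stratification})--(\ref{Equ-determine coefficients}). For the direction $(1)\Rightarrow(2)$, recall that by Lemma \ref{Lem-cover rel}(1) a Hodge--Tate crystal is the same datum as a pair $(M,\varepsilon)$ with $\varepsilon$ of the form (\ref{Equ-form of stratification}) satisfying the cocycle condition (\ref{Equ-compare coefficients}). I would simply set $A:=A_{1,\vec 0}$ and $\Theta_i:=A_{0,E_i}$. Then condition $(a)$ and condition $(b)$ are literally the two identities recorded in (\ref{Equ-commutativity}), and the closed formula for $A_{n,I}$ is precisely (\ref{Equ-determine coefficients}). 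Condition $(c)$ is obtained by specialising that formula to $I=\vec 0$: since $\varepsilon$ is a well-defined endomorphism of $M\otimes R\{X,\underline Y\}^{\wedge}_{\pd}$, its matrix coefficients must satisfy $A_{n,I}\to 0$ as $n+|I|\to+\infty$, and taking $I=\vec 0$ yields $\prod_{i=0}^{n-1}(i\pi E'(\pi)+A)=A_{n,\vec 0}\to 0$.

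The one point in $(1)\Rightarrow(2)$ that is not a direct transcription is the asserted nilpotency of the $\Theta_i$, which I would deduce from $(b)$ alone. Rewriting $(b)$ as $\Theta_i A=(A+\pi E'(\pi))\Theta_i$ gives $\Theta_i f(A)=f(A+\pi E'(\pi))\Theta_i$ for every polynomial $f$, so $\Theta_i$ intertwines $A$ with $A+\pi E'(\pi)$. Since $R$ is $\calO_K$-flat and reduced, it embeds into a product of algebraically closed fields (the algebraic closures of the residue fields at the generic points of $\Spec R$), in each of which $\pi E'(\pi)\neq 0$; working there, $\Theta_i$ carries the generalised eigenspace of $A$ attached to an eigenvalue $\mu$ into the one attached to $\mu-\pi E'(\pi)$. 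As the eigenvalues form a finite set and $\pi E'(\pi)\neq 0$, some power of $\Theta_i$ annihilates every generalised eigenspace, whence $\Theta_i$ is nilpotent; nilpotency (being the vanishing of the coefficients of the characteristic polynomial) then descends to $R$.

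For $(2)\Rightarrow(1)$, given $A,\Theta_1,\dots,\Theta_d$ as in $(2)$, I would define the $A_{n,I}$ by (\ref{Equ-determine coefficients}) and $\varepsilon$ by (\ref{Equ-form of stratification}), and first check that $\varepsilon$ is well defined. Nilpotency of the $\Theta_i$ (which by the previous paragraph follows from $(b)$) forces $\Theta_1^{i_1}\cdots\Theta_d^{i_d}=0$ as soon as some $i_k\geq l$, so only finitely many multi-indices $I$ contribute; for each such $I$ condition $(c)$ gives $\prod_{i=0}^{n-1}(i\pi E'(\pi)+A)\to 0$, and since multiplication by the fixed integral matrix $\Theta_1^{i_1}\cdots\Theta_d^{i_d}$ is non-expansive $p$-adically, we get $A_{n,I}\to 0$ as $n+|I|\to+\infty$. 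It remains to verify the cocycle condition. Rather than matching coefficients in (\ref{Equ-compare coefficients}), I prefer to package $\varepsilon$ in closed form: writing $c:=\pi E'(\pi)$, the identities $\sum_n\prod_{i=0}^{n-1}(ic+A)X^{[n]}=(1-cX)^{-A/c}$ and, using $(a)$, $\sum_I\Theta_1^{i_1}\cdots\Theta_d^{i_d}\underline Y^{[I]}=\prod_k\exp(\Theta_k Y_k)$ give
\[\varepsilon=\exp\Bigl(\sum_{k}\Theta_k Y_k\Bigr)\,(1-cX)^{-A/c}.\]

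With this closed form, I would verify the cocycle condition $p_2^*(\varepsilon)\,p_0^*(\varepsilon)=p_1^*(\varepsilon)$ directly from the coface formulas of Lemma \ref{pd-poly absolute}. The two key manipulations are the scalar identity $1-c\,p_0^*(X)=(1-cX_2)(1-cX_1)^{-1}$, which makes the factors $(1-cX_1)^{\pm A/c}$ cancel and leaves $(1-cX_2)^{-A/c}$, and the commutation relation $(1-cX_1)^{-A/c}\Theta_k=(1-cX_1)\,\Theta_k\,(1-cX_1)^{-A/c}$ coming from $(b)$, which lets $(1-cX_1)^{-A/c}$ pass through $\exp(\sum_k\Theta_k p_0^*(Y_k))$ and converts $p_0^*(Y_k)$ back into $Y_{k,2}-Y_{k,1}$; relation $(a)$ then merges $\exp(\sum_k\Theta_k Y_{k,1})\exp(\sum_k\Theta_k(Y_{k,2}-Y_{k,1}))$ into $\exp(\sum_k\Theta_k Y_{k,2})$, so that the product equals $p_1^*(\varepsilon)$. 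This cocycle verification is the only genuinely computational step, and I expect it to be the main obstacle; the closed-form rewriting is what keeps it short, and it is exactly the relations $(a)$ and $(b)$ that are consumed in it, confirming that they are not merely necessary but sufficient.
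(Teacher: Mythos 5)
Your proof is correct, and in substance it reconstructs exactly what the paper delegates to its citation: the paper's own proof of this proposition is the single line ``same as \cite[Proposition 2.6]{MW-c}, putting $\alpha=\pi E'(\pi)$ in \cite[Lemma 2.7]{MW-c}'', and the necessity direction is already displayed in the text preceding the statement. Concretely, your $(1)\Rightarrow(2)$ (take $A=A_{1,\vec 0}$, $\Theta_i=A_{0,E_i}$, read $(a)$, $(b)$ off (\ref{Equ-commutativity}), the closed formula off (\ref{Equ-determine coefficients}), and $(c)$ from $A_{n,\vec 0}\to 0$) is a transcription of equations (\ref{Equ-compare coefficients})--(\ref{Equ-determine coefficients}); your $(2)\Rightarrow(1)$ via the closed form $\exp(\sum_k\Theta_kY_k)(1-cX)^{-A/c}$, $c=\pi E'(\pi)$, is the same generating-function mechanism that powers the cited lemma, and this closed form is visibly the one the paper itself uses later (Remark \ref{Rmk-stratification-abs} in the $d=0$ case, and the cocycle formula (\ref{Equ-cocycle}) of Theorem \ref{Thm-cocycle in explicit way}). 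Your eigenvalue-shifting proof that $(b)$ alone forces each $\Theta_i$ to be nilpotent is a correct, self-contained substitute for the corresponding claim in \cite[Lemma 2.7]{MW-c}: the hypotheses you invoke on $R$ (reduced, $\calO_K$-flat, so it embeds into a product of characteristic-zero algebraically closed fields in which $\pi E'(\pi)\neq 0$) do hold for the small affine $R$ of the paper, and descending nilpotency through the characteristic polynomial plus Cayley--Hamilton is fine. The one step you should tighten is the phrase ``the factors $(1-cX_1)^{\pm A/c}$ cancel'': condition $(c)$ gives convergence of $\sum_n\prod_{i=0}^{n-1}(A+ic)X^{[n]}$, but $(1-cX_1)^{+A/c}$ is the companion series $\sum_n\prod_{i=0}^{n-1}(ic-A)X_1^{[n]}$, whose convergence is not literally condition $(c)$. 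Either prove this companion convergence separately (it does hold, using that $\pi E'(\pi)$ has positive valuation), or -- more cleanly -- never split the factor: keep $\bigl((1-cX_2)(1-cX_1)^{-1}\bigr)^{-A/c}$ defined as the substitution of $p_0(X)$ into the convergent series and use the composition identity $(1-cX_1)^{-A/c}\cdot\bigl((1-cX_2)(1-cX_1)^{-1}\bigr)^{-A/c}=(1-cX_2)^{-A/c}$, which is a formal identity in power series in $A$ with central coefficients and only involves series whose convergence follows from $(c)$. With that adjustment your cocycle verification is complete, and it sits at exactly the level of the formal-series identities that \cite[Lemma 3.6]{MW-b} and \cite[Lemma 2.7]{MW-c} supply.
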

 \begin{proof}
 The proof is the same as that of \cite[Proposition 2.6]{MW-c}. In particular, we put $\alpha=\pi E'(\pi)$ in \cite[Lemma 2.7]{MW-c}.
 \end{proof}

 \begin{dfn}
    By an enhanced log Higgs module over $R$, we mean a triple $(M,\theta_M, \phi)$ where
    
    $(1)$ $M$ is a finite projective module over $R$;
    
    $(2)$ $\theta_M$ is an $R$-linear homomorphism
    \[
    \theta_M: M\to M\otimes_R\widehat \Omega^1_{R/\calO_K,\log}\{-1\}
    \]
   such that $\theta_M\wedge \theta_M=0$. Let $\HIG(M,\theta_M)$ denote the complex
          \[M\xrightarrow{\theta_M} M\otimes_{R}\widehat \Omega^1_{R/\calO_K,\log}\{-1\}\xrightarrow{\theta_M} M\otimes_{R}\widehat \Omega^2_{R/\calO_K,\log}\{-2\}\cdots;\]
   
   $(3)$ $\phi:M\to M$ is an $R$-linear endomorphism of $M$ such that 
       \[\lim_{n\to+\infty}\prod_{i=0}^n(\phi+i\pi E'(\pi)) = 0\] 
       for the $p$-adic topology on $\End(M)$ and that $\{\phi+i\pi E'(\pi)\id\}_{i\geq 0}$ induces an endomorphism of $\HIG(M,\theta_M)$; that is, the following diagram 
       \begin{equation}\label{Equ-Higgs field with operator}
           \xymatrix@C=0.45cm{
             M\ar[d]^{\phi}\ar[r]^{\theta_M\qquad\quad}& M\otimes_{R} \widehat\Omega^1_{R,\log}\{-1\} \ar[d]^{\phi+\pi E'(\pi)\id}\ar[r]^{\theta_M} &  M\otimes_{R} \widehat\Omega^2_{R,\log}\{-2\} \ar[d]^{\phi+2\pi E'(\pi)\id}\ar[r]^{\qquad\quad\theta_M}&\cdots\ar[r]^{\theta_M\qquad}&M\otimes_{R} \widehat\Omega^d_{R,\log}\{-d\}\ar[d]^{\phi+d\pi E'(\pi)\id}\\
             M\ar[r]^{\theta_M\qquad\quad}& M\otimes_{R} \widehat\Omega^1_{R,\log}\{-1\} \ar[r]^{\theta_M} &  M\otimes_{R} \widehat\Omega^2_{R,\log}\{-2\} \ar[r]^{\qquad\quad\theta_M}&\cdots\ar[r]^{\theta_M\qquad}&M\otimes_{R} \widehat\Omega^d_{R,\log}\{-d\}
           }
    \end{equation}
       is commutative. 
       
    Denote by $\HIG(M,\theta_M,\phi)$ the total complex of (\ref{Equ-Higgs field with operator}) and by $\HIG^{\log}_*(R)$ the category of enhanced log Higgs modules over $R$. 
 \end{dfn}
 Now, the following theorem follows from Proposition \ref{matrix of stratification} directly.
 \begin{thm}\label{Thm-HT crystal as enhanced Higgs field}
   The evaluation at $(\frakS(R),(E), M_{\frakS(R)},\delta_{\log})$ induces an equivalence from the category $\Vect((R)_{\Prism,\log},\overline \calO_{\Prism})$ of Hodge--Tate crystals to the category $\HIG^{\log}_*(R)$ of enhanced log Higgs modules. A similar result holds for rational Hodge--Tate crystals after replacing $\HIG^{\log}_*(R)$ by $\HIG^{\log}_*(R[\frac{1}{p}])$.
 \end{thm}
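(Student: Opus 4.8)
The plan is to run the standard stratification/descent argument, feeding the explicit cosimplicial structure of Lemma \ref{pd-poly absolute} into the classification of Proposition \ref{matrix of stratification}, and then simply to read off the resulting matrix data as an enhanced log Higgs module. Since Proposition \ref{matrix of stratification} already packages the genuinely substantive computation, the present statement is essentially a translation of vocabulary.

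First I would invoke Lemma \ref{Lem-cover rel}(1): the Breuil--Kisin log prism $(\frakS(R),(E),M_{\frakS(R)},\delta_{\log})$ covers the final object of $\Sh((R)_{\Prism,\log})$, and its underlying ring satisfies $\frakS(R)/E\cong R$. Descent along this cover therefore identifies $\Vect((R)_{\Prism,\log},\overline\calO_{\Prism})$ with the category of pairs $(M,\varepsilon)$, where $M$ is a finite projective $R$-module and $\varepsilon$ is a stratification with respect to the \v Cech nerve $(\frakS(R)^{\bullet},(E),M_{\frakS(R)^{\bullet}},\delta_{\log})$ satisfying the cocycle condition; that this descent is an honest equivalence of categories (and not merely a bijection on isomorphism classes) rests on the same \v Cech-to-derived vanishing as the analog of Lemma \ref{Lem-cech-to-derived-abs}. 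By Lemma \ref{pd-poly absolute} one has $\frakS(R)^1/E\cong R\{X_1,\underline Y_1\}^{\wedge}_{\pd}$, so every such $\varepsilon$ may be written in the form (\ref{Equ-form of stratification}), namely $\varepsilon(\underline e)=\underline e\sum_{n,I}A_{n,I}X^{[n]}\underline Y^{[I]}$, and the explicit structure morphisms $p_i$ make the cocycle condition concrete.

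Next I would apply Proposition \ref{matrix of stratification}: the cocycle condition on $\varepsilon$ is equivalent to the existence of matrices $A,\Theta_1,\dots,\Theta_d\in\End_R(M)$ satisfying $[\Theta_i,\Theta_j]=0$, $[\Theta_i,A]=\pi E'(\pi)\Theta_i$, and $\lim_{n\to\infty}\prod_{i=0}^{n-1}(i\pi E'(\pi)+A)=0$, in which case the $\Theta_i$ are nilpotent and the $A_{n,I}$ are determined by (\ref{Equ-determine coefficients}). It then remains only to re-package this data. I would set $\phi:=A$ and $\theta_M:=\sum_{i=1}^d\Theta_i\otimes\dlog T_i\{-1\}$, using that $\widehat\Omega^1_{R/\calO_K,\log}$ is free on $\dlog T_1,\dots,\dlog T_d$, the relation $\sum_{i=0}^r\dlog T_i=\dlog\pi=0$ removing the redundant generator. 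The three conditions then translate verbatim: $[\Theta_i,\Theta_j]=0$ is exactly $\theta_M\wedge\theta_M=0$; the relation $[\Theta_i,A]=\pi E'(\pi)\Theta_i$ is precisely the commutativity of the diagram (\ref{Equ-Higgs field with operator}) asserting that $\{\phi+i\pi E'(\pi)\id\}_i$ is an endomorphism of $\HIG(M,\theta_M)$; and the limit condition is the topological nilpotency of $\phi$. Hence $(M,\theta_M,\phi)\in\HIG^{\log}_*(R)$, which gives the bijection on objects.

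Finally I would upgrade this to an equivalence. Full faithfulness is immediate from descent: a morphism of Hodge--Tate crystals corresponds to an $R$-linear map commuting with the two stratifications, which under the dictionary above is exactly an $R$-linear map commuting with $\phi$ and with each $\Theta_i$, i.e. a morphism of enhanced log Higgs modules. Essential surjectivity runs the construction backwards: given $(M,\theta_M,\phi)$ I define the $A_{n,I}$ by (\ref{Equ-determine coefficients}), obtain a cocycle-satisfying stratification by Proposition \ref{matrix of stratification}, and descend it to a Hodge--Tate crystal evaluating to the prescribed data at $(\frakS(R),(E),M_{\frakS(R)},\delta_{\log})$. The rational statement follows identically after inverting $p$ and replacing finite projective $R$-modules by finite projective $R[\tfrac1p]$-modules. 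The only substantive input is Proposition \ref{matrix of stratification}; the main point requiring care is the bookkeeping of the Breuil--Kisin twist $\{-1\}$ and the identification of the $\dlog T_i$-basis, together with checking that the descent genuinely yields an equivalence of categories.
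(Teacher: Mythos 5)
Your proposal is correct and takes essentially the same route as the paper: the paper's entire proof of Theorem \ref{Thm-HT crystal as enhanced Higgs field} is the observation that it ``follows from Proposition \ref{matrix of stratification} directly,'' with the reduction of crystals to stratifications via the cover of Lemma \ref{Lem-cover rel} and the explicit cosimplicial structure of Lemma \ref{pd-poly absolute} set up in the preceding discussion exactly as you describe. Your additional bookkeeping (setting $\phi=A$, $\theta_M=\sum_{i=1}^d\Theta_i\otimes\dlog T_i\{-1\}$, and matching the commutator relation $[\Theta_i,A]=\pi E'(\pi)\Theta_i$ with the commutativity of the diagram defining $\HIG(M,\theta_M,\phi)$) is precisely the translation the paper leaves implicit.
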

 
We can also compare the prismatic cohomology of a Hodge--Tate crystal and the Higgs complex associated with the corresponding Higgs modules.

\begin{thm}\label{Thm-local Prismatic cohomology}
  Let $\bM\in\Vect((R)_{\Prism,\log},\overline \calO_{\Prism})$ be a Hodge--Tate crystal with associated enhanced log Higgs module $(M,\theta_M,\phi_M)$. Then there is a quasi-isomorphism 
   \[\RGamma_{\Prism}(\bM)\simeq\HIG(M,\theta_M,\phi_M).\]
   A similar result holds for rational Hodge--Tate crystals.
\end{thm}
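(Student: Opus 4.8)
The plan is to compute $\RGamma_{\Prism}(\bM)$ by the \v Cech--Alexander complex attached to the Breuil--Kisin log prism cover and then to collapse this complex onto $\HIG(M,\theta_M,\phi_M)$, following the good-reduction computation of \cite{MW-c} with the single change that $E'(\pi)$ is everywhere replaced by $\pi E'(\pi)$ and ordinary differentials by the log differentials $\dlog T_i$.

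First I would set up the \v Cech computation. By Lemma \ref{Lem-cover rel}$(1)$ the log prism $(\frakS(R),(E),M_{\frakS(R)},\delta_{\log})$ covers the final object, and its \v Cech nerve is $(\frakS(R)^{\bullet},(E),M_{\frakS(R)^{\bullet}},\delta_{\log})$. Exactly as in Lemma \ref{Lem-cech-to-derived-abs} (whose argument via Remark \ref{rigidity} applies verbatim here), the higher sheaf cohomology on each log prism vanishes, so $\RGamma_{\Prism}(\bM)$ is computed by the cosimplicial module $n\mapsto \bM(\frakS(R)^n)$. Using Lemma \ref{pd-poly absolute} to identify $\bM(\frakS(R)^n)=M\otimes_R R\{X_1,\dots,X_n,\underline Y_1,\dots,\underline Y_n\}^{\wedge}_{\pd}$, the coface maps are the alternating sums of the morphisms $p_i$ recorded in \eqref{Equ-structure morphism on variables}.

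Next I would contract this complex. Proposition \ref{matrix of stratification} already gives the stratification in closed form, namely $A_{n,I}=\Theta_1^{i_1}\cdots\Theta_d^{i_d}\prod_{i=0}^{n-1}(i\pi E'(\pi)+\phi_M)$ with $\theta_M=\sum_i\Theta_i\otimes\dlog T_i\{-1\}$, together with the relations $[\Theta_i,\Theta_j]=0$ and $[\Theta_i,\phi_M]=\pi E'(\pi)\Theta_i$. I would filter the pd-polynomial \v Cech complex by pd-degree separately in the $\underline Y$ and $X$ variables and apply the Poincar\'e-lemma acyclicity in each direction---the $X$-direction being governed by \cite[Lemma 3.6]{MW-b} for the parameter $\alpha=\pi E'(\pi)$, and the $\underline Y$-directions by its relative analogue. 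This identifies the \v Cech--Alexander complex, up to quasi-isomorphism, with the total complex of the bicomplex \eqref{Equ-Higgs field with operator}: the $\underline Y$-variables produce the horizontal Higgs differentials $\theta_M$, while the single $X$-variable produces the vertical differentials. Crucially, the commutation relation $[\Theta_i,\phi_M]=\pi E'(\pi)\Theta_i$ is exactly what converts the naive operator $\phi_M$ into the shifted operator $\phi_M+k\pi E'(\pi)\,\id$ in Higgs degree $k$, matching the vertical maps in \eqref{Equ-Higgs field with operator}. The rational statement follows by inverting $p$.

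The step I expect to be the main obstacle is the coupling between the arithmetic and geometric directions encoded by the twist $(1-\pi E'(\pi)X_1)^{-1}$ appearing in $p_0(\underline Y_j)$ in \eqref{Equ-structure morphism on variables}: because of this factor the two directions do not split as an outer tensor product of complexes, so one must organize the homotopy so that, after contracting the $\underline Y$-directions onto the Higgs complex, the residual $X$-direction survives with the correct degree-dependent shifts. Since the clean algebraic relations $(a)$--$(c)$ of Proposition \ref{matrix of stratification} have already been extracted, this bookkeeping is routine and runs exactly as in the good-reduction case \cite{MW-c} and the relative case \cite{Tian}.
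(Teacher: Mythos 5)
Your proposal is correct and takes essentially the same approach as the paper: the paper's own proof is a one-line reduction to the good-reduction computation (cited there as \cite[Theorem 2.12]{MW-b}), ``putting $\alpha=\pi E'(\pi)$ and replacing modules of continuous differentials by modules of continuous log differentials,'' which is exactly the \v Cech--Alexander contraction you spell out. The only difference is expository: you make explicit the steps (cover and nerve, vanishing of higher cohomology on each log prism, the stratification formulas of Proposition \ref{matrix of stratification}, and the pd-degree filtration with the $(1-\pi E'(\pi)X_1)^{-1}$ coupling) that the paper delegates to the citation.
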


\begin{proof}
The proof is the same as that of \cite[Theorem 2.12]{MW-b}, except that we need to put $\alpha=\pi E'(\pi)$ and replace modules of continuous differentials by modules of continuous log differentials.
\end{proof}

A direct corollary is the following theorem.
\begin{thm}
  Let $\frakX$ be a semi-stable $p$-adic formal scheme over $\calO_K$ of relative dimension $d$, with the log structure $\calM_{\frakX}=\calO_{X}^{\times}\cap \calO_{\frakX}$ where $X$ is the generic fiber. Let $\nu:\Sh((\frakX,\calM_{\frakX})_{\Prism})\to\Sh(\frakX_{\et})$ be the natural morphism of topoi. Then for any Hodge--Tate crystal $\bM\in\Vect((\frakX,\calM_{\frakX})_{\Prism},\overline\calO_{\Prism})$, $\rR\nu_*\bM$ is a perfect complex of $\calO_{\frakX}$-modules with tor-amplitude $[0,d+1]$. If moreover $\frakX$ is proper, then $R\Gamma_{\Prism}(\bM)$ is a perfect complex of $\calO_K$-modules with tor-amplitude $[0,2d+1]$. Similar results also hold for rational Hodge--Tate crystals after replacing $\calO_{\frakX}$-modules and $\calO_K$-modules by $\calO_{\frakX}[\frac{1}{p}]$-modules and $K$-vector spaces, respectively.
\end{thm}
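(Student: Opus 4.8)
The plan is to reduce everything to the local cohomology computation of Theorem \ref{Thm-local Prismatic cohomology} and then to read off both perfectness and the amplitude bound directly from the shape of the enhanced log Higgs complex. Since being a perfect complex and having tor-amplitude in a prescribed range are both \'etale-local conditions on $\frakX_{\et}$, I would first pass to a small affine $\Spf(R)\to\frakX$ carrying a chart as in Example \ref{Exam-rel case}. Over such an open the sections of $\rR\nu_*\bM$ are computed by $\RGamma((R)_{\Prism,\log},\bM)$, which Theorem \ref{Thm-local Prismatic cohomology} identifies with the total complex $\HIG(M,\theta_M,\phi_M)$ of the bicomplex (\ref{Equ-Higgs field with operator}) attached to the enhanced log Higgs module $(M,\theta_M,\phi_M)$ associated to $\bM$ by Theorem \ref{Thm-HT crystal as enhanced Higgs field}.

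Next I would analyse the shape of this bicomplex. Because $\frakX$ is semi-stable of relative dimension $d$, the log cotangent sheaf $\widehat \Omega^1_{R/\calO_K,\log}$ is finite locally free of rank $d$ (the relation $\sum_{i=0}^r\dlog T_i=0$ cutting the apparent $r+1$ toric generators, together with the $d-r$ remaining invertible coordinates, down to rank $d$), so each $\widehat \Omega^i_{R/\calO_K,\log}=\wedge^i\widehat \Omega^1_{R/\calO_K,\log}$ is finite free and vanishes for $i>d$. Hence the horizontal (Higgs) direction of (\ref{Equ-Higgs field with operator}) is concentrated in degrees $[0,d]$, while the vertical ($\phi$) direction consists of exactly the two displayed rows, i.e. degrees $[0,1]$. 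Therefore the total complex is a complex of finite projective $R$-modules placed in cohomological degrees $[0,d+1]$, and is visibly a perfect complex of tor-amplitude $[0,d+1]$. Since the bound does not depend on the chosen framing, these local complexes glue to show that $\rR\nu_*\bM$ is a perfect complex of $\calO_{\frakX}$-modules with tor-amplitude in $[0,d+1]$.

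For the proper case I would use the composite quasi-isomorphism $\RGamma_{\Prism}(\bM)\simeq\RGamma(\frakX_{\et},\rR\nu_*\bM)$. Writing $\pi:\frakX\to\Spf(\calO_K)$ and using that $\calO_{\frakX}$ is $\calO_K$-flat (part of semi-stability), the projection formula gives, for any $\calO_K$-module $N$, a quasi-isomorphism $\RGamma(\frakX_{\et},\rR\nu_*\bM)\otimes^{\bL}_{\calO_K}N\simeq\RGamma(\frakX_{\et},\rR\nu_*\bM\otimes_{\calO_{\frakX}}\pi^*N)$, where $\pi^*N=\calO_{\frakX}\otimes_{\calO_K}N$ is a genuine sheaf by flatness. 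The complex $\rR\nu_*\bM\otimes_{\calO_{\frakX}}\pi^*N$ lies in degrees $[0,d+1]$ by the first part, and since $\frakX$ is proper of relative dimension $d$ its coherent cohomology is concentrated in degrees $[0,d]$; hence the right-hand side lives in degrees $[0,2d+1]$, yielding tor-amplitude $[0,2d+1]$ over $\calO_K$. Perfectness over $\calO_K$ follows from properness together with the perfectness of $\rR\nu_*\bM$, the proper pushforward of a perfect complex being perfect. The rational statements follow by inverting $p$ throughout, replacing $R$, $\calO_{\frakX}$ and $\calO_K$ by their localizations.

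The only genuinely delicate point is the amplitude bookkeeping in this last step: one must correctly combine the $[0,d+1]$ bound coming from $\rR\nu_*$ with the coherent cohomological dimension $d$ of a proper $\frakX/\calO_K$, and verify that the base change over $\calO_K$ is licit (which is exactly where $\calO_K$-flatness of $\calO_{\frakX}$ enters). Everything else is a direct reading of Theorem \ref{Thm-local Prismatic cohomology}, so the result should follow in the same manner as the analogous finiteness statements of \cite{Tian} and \cite{MW-c} in the good-reduction case.
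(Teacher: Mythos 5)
Your proposal is correct and follows exactly the route the paper intends: the paper states this theorem as a direct corollary of Theorem \ref{Thm-local Prismatic cohomology}, i.e.\ one identifies $\rR\nu_*\bM$ \'etale-locally with the enhanced log Higgs complex, whose terms are finite projective and which sits in degrees $[0,d+1]$ (since $\widehat\Omega^1_{R/\calO_K,\log}$ has rank $d$ and the $\phi$-direction adds one), and then invokes standard finiteness and cohomological-dimension arguments for proper formal schemes, as in \cite{Tian} and \cite{MW-c}. Your filling-in of the gluing step and the projection-formula/tor-amplitude bookkeeping in the proper case is accurate and is precisely what "direct corollary" suppresses.
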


 If $\frakX = \Spf(R)$ is small affine and smooth over $\calO_K$ (i.e. we take $r=0$ in Example \ref{Exam-rel case}), then the log structure $M_{\frakX}$ is induced by the composition $(\bN\xrightarrow{1\to\pi}\calO_K\to R)$. In this case, we still have a natural functor
 \[\Vect((R)_{\Prism},\overline \calO_{\Prism})\to\Vect((\bN\to R)_{\Prism},\overline \calO_{\Prism}),\]
 which is induced by restriction. By Theorem \ref{MW-b} and Theorem \ref{Thm-HT crystal as enhanced Higgs field}, this gives rise to a functor
 \[\HIG_*^{\nil}(R)\to\HIG_*^{\log}(R).\]
 By comparing Lemma \ref{pd-poly absolute} with \cite[Lemma 2.2]{MW-c}, we see that the functor sends an enhanced Higgs module $(M,\theta_M,\phi_M)$ to an enhanced log Higgs module $(M,\theta_M,\pi\phi_M)$. So we get the following result:
 \begin{cor}\label{Cor-fully f-rel-local}
   The restriction functor $\Vect((R)_{\Prism},\overline \calO_{\Prism})\to\Vect((\bN\to R)_{\Prism},\overline \calO_{\Prism})$ is fully faithful and fits into the following commutative diagram:
   \[
   \xymatrix@C=0.45cm{
   \Vect((R)_{\Prism},\overline \calO_{\Prism})\ar[d]^{\simeq}\ar[r]&\Vect((\bN\to R)_{\Prism},\overline \calO_{\Prism})\ar[d]\\
   \HIG_*^{\nil}(R)\ar[r]&\HIG_*^{\log}(R),
   }
   \]
   where the bottom functor sends $(M,\theta_M,\phi_M)$ to $(M,\theta_M,\pi\phi_M)$.
   Similar results hold for rational Hodge--Tate crystals and in this case, for any $\bM\in\Vect((R)_{\Prism},\overline \calO_{\Prism}[\frac{1}{p}])$, we get a quasi-isomorphism
   \[\RGamma((R)_{\Prism},\bM)\cong\RGamma((\bN\to R)_{\Prism},\bM).\]
 \end{cor}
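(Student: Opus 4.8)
The plan is to transport the restriction functor through the two vertical equivalences of the diagram and to identify the induced functor on enhanced Higgs modules. First I would recall that the left vertical equivalence is the non-logarithmic comparison of Theorem \ref{MW-c}, obtained by evaluating a Hodge--Tate crystal at the Breuil--Kisin prism $(\frakS(R),(E))$ and recording its stratification along the \v Cech nerve of $(\frakS(R),(E))$ in $(R)_{\Prism}$, while the right vertical equivalence (Theorem \ref{Thm-HT crystal as enhanced Higgs field}) evaluates at the \emph{same} underlying prism, now equipped with the log structure $M_{\frakS(R)}$, and records the stratification along the log \v Cech nerve $\frakS(R)^{\bullet}$ of Lemma \ref{pd-poly absolute}. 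Since the restriction functor merely sends a crystal $\bM$ to $(A,I,M,\delta_{\log})\mapsto\bM(A,I)$, the two evaluations at $\frakS(R)$ coincide; hence the underlying module $M=\bM(\frakS(R),(E))$ and the geometric Higgs field $\theta_M$ (read off from the unchanged geometric pd-variables $\underline Y$) are preserved, and the problem reduces to comparing the arithmetic operators.

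The main step, and the place where I expect the real work to lie, is the comparison of the two \v Cech nerves in the arithmetic direction. I would use that, $\bM$ being a crystal on $(R)_{\Prism}$, its stratification over the log \v Cech nerve---regarded as a cosimplicial prism in $(R)_{\Prism}$ after forgetting the log structure---is the pullback of its stratification over the non-logarithmic nerve along the canonical map supplied by the universal property of the non-logarithmic self-coproduct. Comparing the structure morphisms of Lemma \ref{pd-poly absolute} with those of \cite[Lemma 2.2]{MW-c}, the only discrepancy occurs in the arithmetic pd-variable $X$: because the log structure sits on $u$, the relevant coordinate is the logarithmic difference $\tfrac{1-u_1/u_0}{E(u_0)}$, which differs modulo $E$ from the additive difference used in the non-logarithmic setting by the unit factor $u_0\equiv\pi$. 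This is precisely why the coefficient $E'(\pi)$ governing the arithmetic direction is replaced by $\pi E'(\pi)$ (compare the stratification of Remark \ref{Rmk-stratification-abs} with its non-logarithmic analogue), so that the arithmetic operator attached to the restricted crystal by Proposition \ref{matrix of stratification} equals $\pi\phi_M$, where $\phi_M$ is the operator produced by Theorem \ref{MW-c}. This identifies the bottom horizontal functor as $(M,\theta_M,\phi_M)\mapsto(M,\theta_M,\pi\phi_M)$, which indeed lands in $\HIG^{\log}_*(R)$ since $[\theta_M,\pi\phi_M]=\pi E'(\pi)\theta_M$ and $\prod_{i=0}^{n}(\pi\phi_M+i\pi E'(\pi))=\pi^{n+1}\prod_{i=0}^{n}(\phi_M+iE'(\pi))\to 0$.

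Granting this identification, the two remaining assertions follow formally. For full faithfulness it suffices, since the verticals are equivalences, to check that the bottom functor is fully faithful; but an $R$-linear map commutes with $\pi\phi_M$ if and only if it commutes with $\phi_M$, because $\Hom_R(M,M')$ is $\pi$-torsion free ($R$ being $\calO_K$-flat), while the compatibility with $\theta_M$ is literally the same condition on both sides. The identical argument handles the rational case. Finally, for the cohomological comparison over $\overline \calO_{\Prism}[\frac{1}{p}]$, I would invoke Theorem \ref{MW-c} and Theorem \ref{Thm-local Prismatic cohomology} to identify the two prismatic cohomologies with the enhanced Higgs complexes $\HIG(M,\theta_M,\phi_M)$ and $\HIG(M,\theta_M,\pi\phi_M)$. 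These are the total complexes of two-row bicomplexes whose vertical maps agree column by column up to the scalar $\pi$ (namely $\pi\phi_M+i\pi E'(\pi)=\pi(\phi_M+iE'(\pi))$), so once $p$ is inverted, rescaling the bottom row by $\pi^{-1}$ gives an isomorphism of complexes; hence $\RGamma((R)_{\Prism},\bM)\cong\RGamma((\bN\to R)_{\Prism},\bM)$, as claimed.
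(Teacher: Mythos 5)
Your proposal is correct and follows essentially the same route as the paper: the paper also identifies the bottom functor as $(M,\theta_M,\phi_M)\mapsto(M,\theta_M,\pi\phi_M)$ by comparing the pd-variable structure of Lemma \ref{pd-poly absolute} with \cite[Lemma 2.2]{MW-c}, and then deduces the rational cohomological comparison from Theorem \ref{Thm-local Prismatic cohomology} together with the observation that $\HIG(M,\theta_M,\phi_M)\simeq\HIG(M,\theta_M,\pi\phi_M)$ once $\pi$ is invertible. Your write-up merely makes explicit two points the paper leaves implicit — the unit factor $u_0\equiv\pi$ relating the logarithmic and additive arithmetic coordinates, and the $\pi$-torsion-freeness argument for full faithfulness of the bottom functor — both of which are correct.
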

 \begin{proof}
   It remains to prove the desired quasi-isomorphism in the rational case. By virtues of Theorem \ref{MW-b} and Theorem \ref{Thm-local Prismatic cohomology}, we only need to check
   \[\HIG(M,\theta_M,\phi_M)\simeq \HIG(M,\theta_M,\pi\phi_M),\]
   where $(M,\theta_M,\phi_M)$ is the enhanced Higgs module over $R[\frac{1}{p}]$ corresponding to the given rational Hodge--Tate crystal $\bM\in \Vect((R)_{\Prism},\overline \calO_{\Prism}[\frac{1}{p}])$. Now the result is clear as $\pi$ is invertible.
 \end{proof}

\subsection{Hodge--Tate crystals as generalised representations}
We still assume $\frakX = \Spf(R)$ is small affine with $\calM_{\frakX}$ induced by the prelog structure given in Example \ref{Exam-rel case}. For simplicity, we write $R^{\Box}:=\calO_K\za T_0,\dots, T_r,T_{r+1}^{\pm 1},\dots, T_d^{\pm 1}\ya/(T_0\cdots T_r-\pi)$ and there is a fixed framing, i.e. an \'etale map $\Box: R^{\Box}\to R$. 

 Let $R_{C}^{\Box}$ denote the base change $R^{\Box}\widehat\otimes_{\calO_K}\calO_{C}=\calO_{C}\za T_0,\dots, T_r,T_{r+1}^{\pm 1},\dots, T_d^{\pm 1}\ya/( T_0\cdots T_r-\pi)$. We fix a compatible family of pirmitive $p$-power roots $\{\pi^{\frac{1}{p^m}}\}_{m\geq 0}$ of $\pi$ as in Notations. For each $m\geq 0$, we then consider the $R_{C}^{\Box}$-algebra 
\[
R_{C,m}^{\Box}:=\calO_{C}\za T_0^{\frac{1}{p^m}},\dots, T_r^{\frac{1}{p^m}},T_{r+1}^{\pm {\frac{1}{p^m}}},\dots, T_d^{\pm {\frac{1}{p^m}}}\ya/( T_0^{\frac{1}{p^m}}\cdots T_r^{\frac{1}{p^m}}-\pi^{\frac{1}{p^m}})
\]
and let $\hat R_{\infty}^{\Box}:=(\varinjlim R_{C,m}^{\Box})^{\wedge}$, which is a perfectoid ring over $\calO_{C}$. 

Let $R_{C}:=R\widehat\otimes_{R^{\Box}}R_{C}^{\Box}$ and $\hat R_{\infty}:=R_{C}\widehat\otimes_{R_{C}^{\Box}}\hat R_{\infty}^{\Box}$. We consider the adic generic fibers $X_{C}=\Spa(R_{C}[\frac{1}{p}],R_{C})$ of $\frakX_{{C}}=\Spf(R_{C})$ and put $X_{\infty}=\Spa(\hat R_{\infty}[\frac{1}{p}],\hat R_{\infty})$.

Then $X_{\infty}$ is a Galois cover of both $X_{C}$ and $X$ with the Galois groups $\Gamma_{\geo}$ and $\Gamma$, respectively. For any $1\leq i,j\leq d$ and $n\geq 0$, let $\gamma_i\in\Gamma_{\geo}$ such that $\gamma_i(T_j^{\frac{1}{p^n}}) = \zeta_{p^n}^{\delta_{ij}}T_j^{\frac{1}{p^n}}$ and that $\gamma_i(T_0^{\frac{1}{p^n}}) = \zeta_{p^n}^{-\epsilon}T_0^{\frac{1}{p^n}}$ with $\epsilon=1$ for $1\leq i\leq r$ and $\epsilon = 0$ for $r+1\leq i\leq d$, then 
   \[\Gamma_{\geo}\cong \Zp\gamma_1\oplus\cdots\oplus\Zp\gamma_d.\footnote{It is more natural to write $\Gamma_{\geo}$ as the subgroup of $\oplus_{i=0}^d\Zp\widetilde \gamma_i\to\Zp$ such that $\widetilde \gamma = \prod_{i=0}^d\widetilde \gamma_i^{n_i}\in\Gamma_{\geo}$ if and only if $n_0+\cdots+n_r=0$, where $\widetilde \gamma_i(T_j^{\frac{1}{p^n}}) = \zeta_{p^n}^{\delta_{ij}}T_j^{\frac{1}{p^n}}$ for any $n\geq 0$ and $0\leq i,j\leq d$. Clearly, if we put $\gamma_i = \widetilde \gamma_0^{-1}\widetilde \gamma_i$ for $1\leq i\leq r$ and $\gamma_i=\widetilde \gamma_i$ for $r+1\leq i\leq d$, then $\Gamma_{\geo}\cong\oplus_{i=1}^d\Zp\gamma_i$.}\]
   The group $\Gamma_{\geo}$ is a normal subgroup of $\Gamma$ fitting into the following exact sequence
   \begin{equation}\label{Equ-exact sequence for Gal grp}
       0\to\Gamma_{\geo}\to\Gamma\to G_K\to 1,
   \end{equation}
   where $G_K = \Gal(\bar K/K)$ is the absolute Galois group of $K$. Moreover, we have $\Gamma\cong \Gamma_{\geo}\rtimes G_K$ such that for any $g\in G_K$ and $1\leq i\leq d$,
   \[g\gamma_i g^{-1} = \gamma_i^{\chi(g)},\]
   where $\chi:G_K\to\bZ_p^{\times}$ denotes the $p$-adic cyclotomic character.
   
   In this subsection, we want to relate Hodge--Tate crystals on the absolute logarithmic prismatic site of $(\frakX,\calM_{\frakX})$ to the generalised representations on $X_{\proet}$, i.e vector bundles with coefficients in $\hat\calO_X$. In the local case, the latter is equivalent to the category $\Rep_{\Gamma}(\hat R_{\infty}[\frac{1}{p}])$, i.e. the category of finite projective $\hat R_{\infty}[\frac{1}{p}])$-modules with semi-linear $\Gamma$-actions.
   
   Consider the log prism $(\Ainf(\hat R_{\infty}),(\xi),M_{\log},\delta_{\log})$ as in Example \ref{Exam-rel case}. For simplicity, we write $A_{\infty}:=\Ainf(\hat R_{\infty})$. Let $(A_{\infty}^{(\bullet)},(\xi),M_{\log},\delta_{\log})$ denote the cosimplicial perfect log prisms associated with $(\Ainf(\hat R_{\infty}),(\xi),M_{\log},\delta_{\log})$ in $(\bN^{r+1}\to R)_{\Prism}^{\perf}$. Ignoring the log structures, we let $(A_{\infty}^{[\bullet]},(\xi)$ denote the cosimplicial perfect prisms associated with $(\Ainf(\hat R_{\infty}),(\xi))$ in $(R)_{\Prism}^{\perf}$. Then we have the following lemma, which is a direct consequence of Lemma \ref{Cor-Equal perfect site}.
   
   \begin{lem}\label{Lem-compare perfection}
     For any $n\geq 0$, we have $A_{\infty}^{(n)}\cong A_{\infty}^{[n]}$.
   \end{lem}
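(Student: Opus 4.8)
The plan is to deduce the isomorphism formally from the identification of perfect sites established in Proposition \ref{Equal perfect site}, rather than to compute either self-coproduct by hand. First I would recall the definitions of the two cosimplicial rings. By construction, $(A_{\infty}^{[\bullet]},(\xi))$ is the \v Cech nerve of the covering perfect prism $(\Ainf(\hat R_{\infty}),(\xi))$ of the final object of $\Sh((R)_{\Prism}^{\perf})$; thus its $n$-th term $A_{\infty}^{[n]}$ is the underlying ring of the $(n+1)$-fold self-coproduct of $(\Ainf(\hat R_{\infty}),(\xi))$ taken in the category of perfect prisms $(R)_{\Prism}^{\perf}$. Likewise $A_{\infty}^{(n)}$ is the underlying ring of the $(n+1)$-fold self-coproduct of the perfect log prism $(\Ainf(\hat R_{\infty}),(\xi),M_{\log},\delta_{\log})$ in $(\bN^{r+1}\to R)_{\Prism}^{\perf}$. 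Both self-coproducts exist precisely because the corresponding \v Cech nerves exist.

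Next I would invoke Proposition \ref{Equal perfect site}: the forgetful functor $(A,I,M,\delta_{\log})\mapsto (A,I)$ is an equivalence between the perfect log prismatic category $(\bN^{r+1}\to R)_{\Prism}^{\perf}$ and the perfect prismatic category $(R)_{\Prism}^{\perf}$, and under it the covering log prism $(\Ainf(\hat R_{\infty}),(\xi),M_{\log},\delta_{\log})$ corresponds to the covering prism $(\Ainf(\hat R_{\infty}),(\xi))$. Since an equivalence of categories preserves all colimits, in particular coproducts, it carries the $(n+1)$-fold self-coproduct computed in the log category to the $(n+1)$-fold self-coproduct computed in the non-log category. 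Applying the forgetful functor to the log \v Cech nerve therefore yields exactly the non-log \v Cech nerve term by term, which gives $A_{\infty}^{(n)}\cong A_{\infty}^{[n]}$, and in fact compatibly with the cosimplicial structure maps.

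The only point that needs care --- and the one I expect to be the main (minor) obstacle --- is to ensure that the \v Cech nerve is genuinely a categorical construction preserved by the equivalence, i.e. that the relevant self-coproducts really compute the simplicial terms and that no choice of log structure intervenes. The first is standard once one notes, as in the \v Cech-nerve discussion preceding the statement, that the underlying prism of the \v Cech nerve of a cover of log prisms agrees with the \v Cech nerve of the underlying cover of prisms. The second is handled by Lemma \ref{Lem-uniqueness}: a perfect log prism in these sites is rigidly determined by its underlying perfect prism, so there is no ambiguity in transporting log structures across the equivalence. With these observations the assertion is immediate, which is exactly why it is a direct consequence of Corollary \ref{Cor-Equal perfect site}.
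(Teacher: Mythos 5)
Your proposal is correct and follows essentially the same route as the paper: the lemma is stated there as a direct consequence of the identification of the perfect sites (Proposition \ref{Equal perfect site} / Corollary \ref{Cor-Equal perfect site}), so that the two \v Cech nerves coincide term by term. Your additional remarks on the categorical nature of the \v Cech nerve and the rigidity of log structures via Lemma \ref{Lem-uniqueness} merely make explicit what the paper leaves implicit.
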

   %Although this lemma can be easily deduced from Lemma \ref{Cor-Equal perfect site}, we provide a direct proof here.
   %\begin{proof}
   % We only need to prove the self-product case, i.e. $n=1$.
   % This means we have to show :(1),$[T_{i,0}^{\flat}]$ and $[T_{i,1}^{\flat}]$ differ by a unit in $A_{\infty}^{[1]}$ for any $0\leq i\leq r$;(2),$u_1$ and $u_0$ differes by a unit in $A_{\infty}^{[1]}$. The latter is proved in \cite[Lemma 5.0.13]{DL}. By symmetry, now we only need to show $\frac{[T_{i,1}^{\flat}]}{[T_{i,0}^{\flat}]}$ exists in $A_{\infty}^{[1]}$.

   % Note that $\frac{T_{i,0}-T_{i,1}}{E}$ exists in $\frakS(R)^{[1]}$. We put $z_i=\frac{T_{i,0}-T_{i,1}}{E}$. Then we have $T_{i,1}\equiv T_{i,0}-z_iu_0=T_{i,0}(1-\frac{u_0^e}{T_{i,0}}z_i) \mod p$. Write $\alpha=\frac{u_0^e}{T_{i,0}}z_i$.
    
   % Recall that there is a natural map $\iota: \frakS(R)^{[1]}\to A_{\infty}^{[1]}$, which sends $T_{i,j}$ to $[T_{i,j}^{\flat}]$ for $j=0,1$. Then we have $[T_{i,1}^{\flat}]\equiv [T_{i,0}^{\flat}](1-\iota(\alpha)) \mod p$. This shows that $T_{i,1}^{\flat}=T_{i,1}^{\flat}(1-\overline{\iota(\alpha)})$ in $A_{\infty}^1/p$. Since $\overline {\iota(\alpha)}$ is topologically nilpotent, our claim is true after applying the Teichm\"uller lifts.
   %\end{proof}
   
   By virtues of the above lemma, we can get the following theorem.
   
   \begin{thm}\label{perfect site crystal}
   Evaluating on the perfect log prism $(\Ainf(\hat R_{\infty}),(\xi),M_{\log},\delta_{\log})$ induces an equivalence between the catgeory $\Vect((R)_{\Prism,\log}^{\perf},\overline \calO_{\Prism}[\frac{1}{p}])$ and the category $\Rep_{\Gamma}(\hat R_{\infty}[\frac{1}{p}])$.
   \end{thm}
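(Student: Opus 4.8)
The plan is to realize the equivalence as faithfully flat descent along the cover furnished by the Fontaine-type perfect log prism, followed by an identification of the resulting descent data with continuous semilinear $\Gamma$-actions. First I would invoke Lemma \ref{Lem-cover rel}(2), which says that $(\Ainf(\hat R_{\infty}),(\xi),M_{\log},\delta_{\log})$ is a cover of the final object of the topos $\Sh((R)_{\Prism,\log}^{\perf})$. Since $A_{\infty}/\xi = \hat R_{\infty}$ and finite projective modules satisfy faithfully flat descent, a rational Hodge--Tate crystal $\bM$ amounts to the same datum as a finite projective $\hat R_{\infty}[\frac{1}{p}]$-module $N := \bM(A_{\infty})[\frac{1}{p}]$ together with a stratification, i.e. an isomorphism $\varepsilon\colon p_0^*N \xrightarrow{\sim} p_1^*N$ over $\overline \calO_{\Prism}[\frac{1}{p}](A_{\infty}^{(1)})$ satisfying the usual cocycle condition with respect to the cosimplicial ring $\overline \calO_{\Prism}[\frac{1}{p}](A_{\infty}^{(\bullet)})$, where $(A_{\infty}^{(\bullet)},(\xi),M_{\log},\delta_{\log})$ is the \v Cech nerve of the cover.

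The crucial step is to compute this cosimplicial ring. By Lemma \ref{Lem-compare perfection} the underlying cosimplicial prism of the log \v Cech nerve coincides with the non-logarithmic one $(A_{\infty}^{[\bullet]},(\xi))$ computed in $(R)_{\Prism}^{\perf}$, so the log structure contributes nothing to the underlying rings and the entire computation reduces to the good-reduction (non-log) situation. I would then identify $\overline \calO_{\Prism}[\frac{1}{p}](A_{\infty}^{(\bullet)}) = (A_{\infty}^{[\bullet]}/\xi)[\frac{1}{p}]$ with the cosimplicial ring $\rC(\Gamma^{\bullet},\hat R_{\infty}[\frac{1}{p}])$ of continuous functions on powers of $\Gamma$: since $X_{\infty}\to X$ is a pro-finite-\'etale Galois cover with group $\Gamma$, the $(n+1)$-fold self-product of $\hat R_{\infty}$ over $R$, after inverting $p$, computes $\rC(\Gamma^{n},\hat R_{\infty}[\frac{1}{p}])$. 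This is precisely the perfectoid Galois-descent identification used in the relative good-reduction case (cf. the arguments of \cite{Tian} and the $\calO_K$-case Theorem \ref{Thm-HT vs Rep-abs}), and by the preceding reduction it transports verbatim.

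Granting this identification, a stratification $\varepsilon$ satisfying the cocycle condition over $\rC(\Gamma^{\bullet},\hat R_{\infty}[\frac{1}{p}])$ is exactly a continuous semilinear action of $\Gamma$ on $N$: evaluating $\varepsilon$ at $g\in\Gamma$ yields the action of $g$, the cocycle condition becomes the homomorphism property $\varepsilon(gh)=\varepsilon(g)\varepsilon(h)$, and the continuity of the function-valued datum matches the continuity of the action. This produces the functor to $\Rep_{\Gamma}(\hat R_{\infty}[\frac{1}{p}])$ together with a quasi-inverse, giving the desired equivalence.

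I expect the main obstacle to be the careful verification of the cosimplicial-ring isomorphism $\overline \calO_{\Prism}[\frac{1}{p}](A_{\infty}^{(\bullet)}) \cong \rC(\Gamma^{\bullet},\hat R_{\infty}[\frac{1}{p}])$ --- in particular the continuity and $p$-adic completion bookkeeping in the self-products, and the precise dictionary between the cocycle identities and the associativity of the $\Gamma$-action. The key simplification is that Lemma \ref{Lem-compare perfection} (together with Corollary \ref{Cor-Equal perfect site}) removes all logarithmic complications, so this reduces to the established non-logarithmic computation rather than requiring a fresh analysis of the log \v Cech nerve.
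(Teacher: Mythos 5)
Your proposal is correct and takes essentially the same route as the paper: the paper also uses Lemma \ref{Lem-compare perfection} (resting on Corollary \ref{Cor-Equal perfect site}) to identify the logarithmic \v Cech nerve with the non-logarithmic one, and then cites \cite[Theorem 2.23]{MW-c}, whose content is exactly the descent-along-the-cover argument and the identification $\overline \calO_{\Prism}[\frac{1}{p}](A_{\infty}^{(\bullet)})\cong \rC(\Gamma^{\bullet},\hat R_{\infty}[\frac{1}{p}])$ that you spell out. The only difference is that you unpack that citation explicitly rather than quoting it.
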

 
 \begin{proof}
  This follows directly from the above lemma and \cite[Theorem 2.23]{MW-c}.
 \end{proof}
 
 %For simplicity, we write $(R)_{\Prism,\log}$ (resp.$(R)^{\perf}_{\Prism,\log}$) for the site $((\bN^{r+1}\to R)_{\Prism}$ (resp. $(\bN^{r+1}\to R)^{\perf}_{\Prism}$
 
 Now we can compose the following functors
 \[
 \Vect((R)_{\Prism,\log},\overline\calO_{\Prism})\to \Vect((R)^{\perf}_{\Prism,\log},\overline\calO_{\Prism})\to \Rep_{\Gamma}(\hat R_{\infty}[\frac{1}{p}]).
 \]
    For any Hodge--Tate crystal $\bM\in \Vect((R)_{\Prism,\log},\overline \calO_{\Prism})$, we denote the associated representation of $\Gamma$ by $V(\bM)$. Then we want to describe $V(\bM)$ in an explicit way.

 Let $A_{\inf}(R)$ be the lifting of $R_{C}$ over $A_{\inf}$ determined by the framing $\Box$. Then $(A_{\inf}(R),(\xi),M_{\log},\delta_{\log})$ with the log structure induced by the log structure on $\frakS(R)$ is a log prism in $(R)_{\Prism,\log}$ by setting $\delta(T_i) = 0$ for any $1\leq i\leq d$. A similar argument used in the proof of \cite[Lemma 2.11]{MW-a} shows that $(A_{\infty},(\xi),M_{\log},\delta_{\log})$ is exactly the perfection of $(A_{\inf}(R),(\xi)),M_{\log},\delta_{\log})$. So there is a morphism of cosimplicial rings
   \[\frakS(R)^{\bullet}\to A_{\infty}^{(\bullet)}.\]
   As a consequence, we get a natural morphism
   \begin{equation}\label{Equ-variable to function}
     \frakS(R)^{\bullet}/(E)\xrightarrow{\cong} R\{X_1,\dots,X_{\bullet},\underline Y_1,\dots,\underline Y_{\bullet}\}^{\wedge}_{\pd}\to\rC(\Gamma^{\bullet},\widehat R_{\infty}[\frac{1}{p}])
   \end{equation}
   of cosimplicial rings.
   \begin{lem}\label{variable as function}
     Regard $X_1, Y_{1,1}, \dots, Y_{d,1}$ as functions from $\Gamma$ to $\widehat R_{\infty}[\frac{1}{p}]$. For any $g\in G_K$ and $n_1,\dots,n_d\in\Zp$, if we put $\sigma = \gamma_1^{n_1}\cdots\gamma_d^{n_d}g$, then
     \begin{equation}\label{Equ-variable as function}
     \begin{split}
         & X_1(\sigma) = c(g)\lambda(1-\zeta_p),\\
         & Y_{i,1}(\sigma) = n_i\lambda(1-\zeta_p),~ \forall 1\leq i\leq d,
     \end{split}
     \end{equation}
     where $\lambda$ is the image of $\frac{\xi}{E([\pi^{\flat}])}$ in $C$ and $c(g)\in\Zp$ is determined by $g(\pi^{\flat}) = \epsilon^{c(g)}\pi^{\flat}$.
   \end{lem}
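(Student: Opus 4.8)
The plan is to reduce the statement to the $\theta$-computation already performed in the $\calO_K$-case (Lemma~\ref{Lem-function-abs}), after making the identification (\ref{Equ-variable to function}) completely explicit on the generators $u_0,u_1$ and $T_{j,0},T_{j,1}$. First I would unwind the isomorphism $\overline \calO_{\Prism}[\frac{1}{p}](A_{\infty}^{(\bullet)})\cong \rC(\Gamma^{\bullet},\widehat R_{\infty}[\frac{1}{p}])$ (which exists by Lemma~\ref{Lem-compare perfection} and the proof of Theorem~\ref{perfect site crystal}, exactly as in \cite[Proposition 3.22]{MW-b}) in cosimplicial degree $1$. Under this identification the two coface maps $p_0,p_1:\widehat R_{\infty}[\frac{1}{p}]\rightrightarrows \rC(\Gamma,\widehat R_{\infty}[\frac{1}{p}])$ are $p_0(a)(\sigma)=a$ and $p_1(a)(\sigma)=\sigma(a)$. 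Since $u_0=p_0([\pi^{\flat}])$, $u_1=p_1([\pi^{\flat}])$, $T_{j,0}=p_0([T_j^{\flat}])$ and $T_{j,1}=p_1([T_j^{\flat}])$, evaluating at $\sigma\in\Gamma$ gives $u_0(\sigma)=[\pi^{\flat}]$, $u_1(\sigma)=\sigma([\pi^{\flat}])$, $T_{j,0}(\sigma)=[T_j^{\flat}]$ and $T_{j,1}(\sigma)=\sigma([T_j^{\flat}])$.

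Second, I would compute the Galois action on these Teichm\"uller lifts for $\sigma=\gamma_1^{n_1}\cdots\gamma_d^{n_d}g$. For the chosen lift of $g\in G_K$ one has $g([T_j^{\flat}])=[T_j^{\flat}]$ for $1\leq j\leq d$ and $g([\pi^{\flat}])=[\epsilon]^{c(g)}[\pi^{\flat}]$, while each $\gamma_i\in\Gamma_{\geo}$ fixes both $[\epsilon]$ and $[\pi^{\flat}]$ and satisfies $\gamma_i([T_j^{\flat}])=[\epsilon]^{\delta_{ij}}[T_j^{\flat}]$; the compatibility with the relation $T_0^{1/p^n}\cdots T_r^{1/p^n}=\pi^{1/p^n}$ is precisely the bookkeeping recorded after (\ref{Equ-exact sequence for Gal grp}). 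Hence $\sigma([\pi^{\flat}])=[\epsilon]^{c(g)}[\pi^{\flat}]$ and $\sigma([T_j^{\flat}])=[\epsilon]^{n_j}[T_j^{\flat}]$, so that $\frac{u_1}{u_0}(\sigma)=[\epsilon]^{c(g)}$ and $\frac{T_{j,1}}{T_{j,0}}(\sigma)=[\epsilon]^{n_j}$.

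Finally, since $X_1$ (resp. $Y_{j,1}$) is the reduction modulo $\xi$ of $\frac{1-u_1/u_0}{E([\pi^{\flat}])}$ (resp. $\frac{1-T_{j,1}/T_{j,0}}{E([\pi^{\flat}])}$) and the structure map $A_{\infty}\to\widehat R_{\infty}$ is induced by $\theta$, both values are instances of $\theta\big(\frac{1-[\epsilon]^m}{E([\pi^{\flat}])}\big)$ for $m\in\Zp$. I would evaluate this using the factorization
\[\frac{1-[\epsilon]^m}{E([\pi^{\flat}])}=\frac{1-[\epsilon]^m}{\mu}\cdot\varphi^{-1}(\mu)\cdot\frac{\xi}{E([\pi^{\flat}])},\]
with $\mu=[\epsilon]-1$ and $\xi=\mu/\varphi^{-1}(\mu)$. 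Here $\frac{1-[\epsilon]^m}{\mu}=-\sum_{k=0}^{m-1}[\epsilon]^k$ for $m\in\bN$, so $\theta\big(\frac{1-[\epsilon]^m}{\mu}\big)=-m$ extends by continuity to all $m\in\Zp$; moreover $\theta(\varphi^{-1}(\mu))=\zeta_p-1$ and $\theta\big(\frac{\xi}{E([\pi^{\flat}])}\big)=\lambda$ by definition of $\lambda$. Multiplying yields $\theta\big(\frac{1-[\epsilon]^m}{E([\pi^{\flat}])}\big)=m\lambda(1-\zeta_p)$, and specializing to $m=c(g)$ and $m=n_j$ gives (\ref{Equ-variable as function}).

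The only genuinely delicate step is the second one: fixing the action of the set-theoretic lift $g$ and of the $\gamma_i$ on the $p$-power roots $T_j^{1/p^n}$ in a way consistent with the relation $T_0^{1/p^n}\cdots T_r^{1/p^n}=\pi^{1/p^n}$ and with the splitting $\Gamma\cong\Gamma_{\geo}\rtimes G_K$. Once these normalizations are pinned down, the remaining computation is formally identical to that of Lemma~\ref{Lem-function-abs}, so I expect no further difficulty.
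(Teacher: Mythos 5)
Your proposal is correct and follows essentially the same route as the paper's proof: identify $u_0,T_{j,0}$ as constant functions and $u_1,T_{j,1}$ as evaluation functions on $\Gamma$, compute $\sigma([\pi^\flat])=[\epsilon]^{c(g)}[\pi^\flat]$ and $\sigma([T_j^\flat])=[\epsilon]^{n_j}[T_j^\flat]$, and then evaluate $\theta\bigl(\frac{1-[\epsilon]^m}{E([\pi^\flat])}\bigr)=m\lambda(1-\zeta_p)$ via the relation $\mu=\xi\varphi^{-1}(\mu)$ (your factorization through $\frac{1-[\epsilon]^m}{\mu}\cdot\varphi^{-1}(\mu)\cdot\frac{\xi}{E([\pi^\flat])}$ is the same identity the paper writes as $\frac{1-[\epsilon]^m}{1-[\epsilon]}\cdot\frac{\xi}{E([\pi^\flat])}\cdot(1-[\epsilon]^{1/p})$). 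Your write-up merely makes explicit some steps the paper leaves implicit, such as the normalization of the lift of $g$ and of the $\gamma_i$ on the roots $T_j^{1/p^n}$.
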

   \begin{proof}
     Recall that for any $1\leq i\leq d$, $Y_{i,1}$ is the image of $\frac{1-\frac{T_{i,1}}{T_{i,0}}}{E(u_0)}$ modulo $E$ while $X_1$ is the image of $\frac{1-\frac{u_1}{u_0}}{E(u_0)}$. As elements in $\rC(\Gamma,\rW(\widehat R_{\infty}^{\flat}))$, $u_0$ and $T_{i,0}$ are constant functions while $u_1$ and $T_{i,1}$ are evaluation functions. So we see that 
     \begin{equation*}
         \begin{split}
             X_1(\sigma) & \equiv \frac{1-\frac{g([\pi^{\flat}])}{[\pi^{\flat}]}}{E([\pi^{\flat}])}\mod (E)\\
             & \equiv \frac{(1-[\epsilon]^{c(g)})}{1-[\epsilon]}\frac{\xi}{E(\pi^{\flat})}(1-[\epsilon]^{\frac{1}{p}})\mod (E)\\
             &=c(g)\lambda(1-\zeta_p).
         \end{split}
     \end{equation*}
     Similarly, we conclude that for any $1\leq i\leq d$,
     \begin{equation*}
         \begin{split}
             Y_{i,1}(\sigma) & \equiv \frac{[T_i^{\flat}]-\sigma([T_i^{\flat}])}{E([\pi^{\flat}])[T_i^{\flat}]}\mod (E)\\
             & \equiv \frac{(1-[\epsilon]^{n_i})}{1-[\epsilon]}\frac{\xi}{E(\pi^{\flat})}(1-[\epsilon]^{\frac{1}{p}})\mod (E)\\
             &=n_i\lambda(1-\zeta_p).
         \end{split}
     \end{equation*}
     These complete the proof.
   \end{proof}

 \begin{thm}\label{Thm-cocycle in explicit way}
     Let $\bM\in\Vect((R)_{\Prism,\log},\overline \calO_{\Prism})$ be a Hodge--Tate crystal with the associated enhanced Higgs module $(M,\theta_M,\phi_M)$. Let $\Theta_i$'s be the matrices of $\theta_M$ as defined in the proof of Theorem \ref{Thm-HT crystal as enhanced Higgs field}. Then the cocycle in $\rH^1(\Gamma,\GL(M[\frac{1}{p}]))$ corresponding to $V(\bM)$ is given by 
     \begin{equation}\label{Equ-cocycle}
         U(\sigma) = \exp(\lambda(1-\zeta_p)\sum_{i=1}^dn_i\Theta_i)(1-c(g)\lambda(1-\zeta_p)\pi E'(\pi))^{-\frac{\phi_M}{\pi E'(\pi)}},
     \end{equation}
     for any $\sigma = \gamma_1^{n_1}\cdots\gamma_d^{n_d}g\in\Gamma$, where $\lambda$ and $c(g)$ are defined in Lemma \ref{variable as function}.
   \end{thm}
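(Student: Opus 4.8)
The plan is to transport the explicit stratification attached to $\bM$ to the perfect logarithmic site and read off the resulting semi-linear $\Gamma$-action, following the strategy of \cite{MW-b} and \cite{MW-c}. By Theorem \ref{perfect site crystal}, the representation $V(\bM)$ is obtained by restricting $\bM$ to $(R)^{\perf}_{\Prism,\log}$ and evaluating on the \v Cech nerve $A_{\infty}^{(\bullet)}$ of the Fontaine log prism; under the identification $\overline \calO_{\Prism}[\frac{1}{p}](A_{\infty}^{(\bullet)})\cong \rC(\Gamma^{\bullet},\widehat R_{\infty}[\frac{1}{p}])$, the associated cocycle $U$ is exactly the image of the stratification $\varepsilon$ of $(M,\theta_M,\phi_M)$ under the morphism of cosimplicial rings (\ref{Equ-variable to function}). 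Hence $U(\sigma)$ is computed by substituting into the explicit matrix of $\varepsilon$ the values at $\sigma$ of the pd-variables $X_1, Y_{1,1},\dots, Y_{d,1}$ viewed as functions on $\Gamma$.

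Concretely, I would first recall from (\ref{Equ-form of stratification}) and Proposition \ref{matrix of stratification} that, in a chosen basis,
\[
\varepsilon(\underline e)=\underline e\sum_{n\geq 0;\,I\in\bN^d}A_{n,I}\,X_1^{[n]}\underline Y_1^{[I]},\qquad A_{n,I}=\Theta_1^{i_1}\cdots\Theta_d^{i_d}\prod_{i=0}^{n-1}(\phi_M+i\pi E'(\pi)),
\]
for $I=(i_1,\dots,i_d)$. Lemma \ref{variable as function} gives, for $\sigma=\gamma_1^{n_1}\cdots\gamma_d^{n_d}g$, the values $X_1(\sigma)=c(g)\lambda(1-\zeta_p)$ and $Y_{j,1}(\sigma)=n_j\lambda(1-\zeta_p)$. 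Since $p$ is inverted, $X_1^{[n]}(\sigma)=X_1(\sigma)^n/n!$ and $\underline Y_1^{[I]}(\sigma)=\prod_j Y_{j,1}(\sigma)^{i_j}/i_j!$, so substitution yields
\[
U(\sigma)=\sum_{n\geq 0;\,I\in\bN^d}\Theta_1^{i_1}\cdots\Theta_d^{i_d}\prod_{i=0}^{n-1}(\phi_M+i\pi E'(\pi))\,\frac{(c(g)\lambda(1-\zeta_p))^n}{n!}\prod_{j=1}^d\frac{(n_j\lambda(1-\zeta_p))^{i_j}}{i_j!}.
\]

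The crucial feature is that in $A_{n,I}$ all the $\Theta_j$ stand to the left of the $\phi_M$-product, so the double sum factors as a product of two pieces. The $I$-sum equals $\prod_{j=1}^d\sum_{i_j\geq 0}(n_j\lambda(1-\zeta_p)\Theta_j)^{i_j}/i_j!$; since the $\Theta_j$ commute and are nilpotent (Proposition \ref{matrix of stratification}), this is the finite sum $\exp(\lambda(1-\zeta_p)\sum_{j=1}^d n_j\Theta_j)$. Setting $t=c(g)\lambda(1-\zeta_p)$ and $\alpha=\pi E'(\pi)$, the $n$-sum is $\sum_{n\geq 0}\prod_{i=0}^{n-1}(\phi_M+i\alpha)\,t^n/n!$, which is precisely the binomial expansion of $(1-\alpha t)^{-\phi_M/\alpha}$; convergence here is guaranteed by the topological nilpotence $\lim_{n\to+\infty}\prod_{i=0}^{n}(\phi_M+i\alpha)=0$. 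Multiplying the two factors gives (\ref{Equ-cocycle}).

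I expect the genuine content to lie not in the power-series computation but in the first step: verifying that the cocycle extracted from $V(\bM)$ via Theorem \ref{perfect site crystal} is literally the image of $\varepsilon$ under (\ref{Equ-variable to function}), with the correct placement of the two factors (the $\Theta$-exponential on the left and the $\phi_M$-power on the right) dictated by the ordering in $A_{n,I}$, and in checking that both series converge $p$-adically in $\End(M[\frac{1}{p}])$. These convergence and ordering properties are forced by the enhanced log Higgs conditions, so once the comparison of cocycles is in place the identity follows exactly as in the good-reduction case treated in \cite{MW-b} and \cite{MW-c}.
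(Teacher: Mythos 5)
Your proposal is correct and follows essentially the same route as the paper: the paper's proof likewise writes the cocycle as $U(\sigma) = \sum_{n,I}\Theta_1^{i_1}\cdots\Theta_d^{i_d}\prod_{i=0}^{n-1}(i\pi E'(\pi)+\phi_M)\,Y_1(\sigma)^{[i_1]}\cdots Y_d(\sigma)^{[i_d]}X(\sigma)^{[n]}$ using the stratification formula of Proposition \ref{matrix of stratification}, and then concludes by Lemma \ref{variable as function}. Your write-up merely makes explicit the factorization into the $\Theta$-exponential and the binomial $\phi_M$-power, together with the ordering and convergence remarks that the paper leaves implicit.
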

   \begin{proof}
     By the formulae of stratification given in Proposition \ref{matrix of stratification}, the cocycle is given by
     \[U(\sigma) = \sum_{i_1,\dots,i_d,n\geq 0}\Theta_1^{i_1}\cdots\Theta_d^{i_d}\prod^{n-1}_{i=0}(i\pi E'(\pi)+A)Y_1(\sigma)^{[i_1]}\cdots Y_d(\sigma)^{[i_d]}X(\sigma)^{[n]}.\]
     Now the result follows from Lemma \ref{variable as function} directly.
   \end{proof}

    As a consequnece of the above theorem, we see that the functor $\bM\mapsto V(\bM)$ factors through 
 \[
 \Vect((R)_{\Prism,\log},\overline\calO_{\Prism})\to \Rep_{\Gamma}(R_{C})\to \Rep_{\Gamma}(R_{C}[\frac{1}{p}])\to \Rep_{\Gamma}(\hat R_{\infty}[\frac{1}{p}]).
 \]
 From now on, the story is essentially the same as that in \cite{MW-c}, only with minor changes.
 
 \begin{prop}\label{Prop-0}
The functor $\Vect((R)_{\Prism,\log},\overline \calO_{\Prism})\to \Rep_{\Gamma}(R_{C})$ is fully faithful.
 \end{prop}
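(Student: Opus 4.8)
The plan is to reduce the statement to a single cohomological identity and then verify it by a direct computation with the explicit cocycle of Theorem \ref{Thm-cocycle in explicit way}, exactly along the lines of the corresponding result in \cite{MW-c}. First I would note that both $\Vect((R)_{\Prism,\log},\overline \calO_{\Prism})$ and $\Rep_{\Gamma}(R_C)$ are tensor categories admitting internal $\Hom$'s and duals, and that the functor $\bM\mapsto V(\bM)$ is a tensor functor compatible with these operations. This follows from Theorem \ref{Thm-HT crystal as enhanced Higgs field} (under the equivalence with enhanced log Higgs modules, tensor products and duals are given by the usual Leibniz and transpose formulae) together with the multiplicativity of the cocycle (\ref{Equ-cocycle}). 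Granting this, for Hodge--Tate crystals $\bM_1,\bM_2$ one has $\underline{\Hom}(\bM_1,\bM_2)\in\Vect((R)_{\Prism,\log},\overline \calO_{\Prism})$ with $V(\underline{\Hom}(\bM_1,\bM_2))\cong\underline{\Hom}_{R_C}(V(\bM_1),V(\bM_2))$ as $\Gamma$-representations. Since $\Hom(\bM_1,\bM_2)=H^0((R)_{\Prism,\log},\underline{\Hom}(\bM_1,\bM_2))$ and $\Hom_{\Rep_{\Gamma}(R_C)}(V(\bM_1),V(\bM_2))=V(\underline{\Hom}(\bM_1,\bM_2))^{\Gamma}$, full faithfulness is equivalent to the identity
\[
H^0((R)_{\Prism,\log},\bN)=V(\bN)^{\Gamma}
\]
for every $\bN\in\Vect((R)_{\Prism,\log},\overline \calO_{\Prism})$.

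Next I would evaluate both sides for $\bN$ with associated enhanced log Higgs module $(M,\theta_M,\phi_M)$. By Theorem \ref{Thm-local Prismatic cohomology} the left-hand side is $H^0(\HIG(M,\theta_M,\phi_M))=\Ker\theta_M\cap\Ker\phi_M$. For the right-hand side I would compute $V(\bN)^{\Gamma}\subseteq M\otimes_R R_C$ using the cocycle of Theorem \ref{Thm-cocycle in explicit way} and the splitting $\Gamma\cong\Gamma_{\geo}\rtimes G_K$. The geometric generators $\gamma_i\in\Gamma_{\geo}$ act trivially on $R_C$, so $\Gamma_{\geo}$-invariance of $v$ reads $\exp(\lambda(1-\zeta_p)\Theta_i)v=v$ for all $i$; since each $\Theta_i$ is nilpotent (Proposition \ref{matrix of stratification}) and $\lambda(1-\zeta_p)$ is a nonzerodivisor on the finite projective module $M\otimes_R R_C$, the operator $\exp(\lambda(1-\zeta_p)\Theta_i)-\id$ differs from $\lambda(1-\zeta_p)\Theta_i$ by a unit, forcing $\Theta_i v=0$, i.e. $\theta_M v=0$. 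On the subspace $\Ker\theta_M$ the cocycle restricts to the arithmetic factor $U(g)=(1-c(g)\lambda(1-\zeta_p)\pi E'(\pi))^{-\phi_M/(\pi E'(\pi))}$ with $g\in G_K$ acting through its action on $\calO_C$, which is precisely the situation of the $\calO_K$-case; so by the argument of Theorem \ref{Thm-crys-vs-rep-abs} the $G_K$-invariants among such $v$ are exactly $\Ker\phi_M$. Combining the two steps yields $V(\bN)^{\Gamma}=\Ker\theta_M\cap\Ker\phi_M$, matching the left-hand side.

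I expect the arithmetic (i.e. $G_K$-)invariance step to be the main obstacle, as it is a Sen-type decompletion: one must show that imposing $U(g)g(v)=v$ for all $g\in G_K$ on $v\in M\otimes_R R_C$ with $\theta_M v=0$ cuts out precisely $\Ker\phi_M$, in accordance with the Sen operator $-\phi_M/(\pi E'(\pi))$ of Theorem \ref{Thm-crys-vs-rep-abs}. This requires controlling the convergence of the relevant $p$-adic series and, crucially, the fact that the invariants are already defined over $R_C$ rather than over $\hat R_{\infty}$ — which is exactly where the integral target $\Rep_{\Gamma}(R_C)$ (rather than $\Rep_{\Gamma}(\hat R_{\infty}[\frac{1}{p}])$) enters the argument. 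The remaining ingredients, namely the tensor-compatibility of $V$ and the elementary nilpotence argument for $\Gamma_{\geo}$, are routine and run in parallel with \cite{MW-c}, replacing $E'(\pi)$ by $\pi E'(\pi)$ throughout and using log differentials.
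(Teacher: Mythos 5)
Your overall strategy --- reduce, via compatibility of the functor with tensor products and internal Homs, to the single identity $H^0((R)_{\Prism,\log},\bN)=V(\bN)^{\Gamma}$, and then compute the invariants in two steps (geometric, then arithmetic) --- has the same shape as the proof the paper invokes (it simply cites \cite[Proposition 2.28]{MW-c}), and your geometric step is complete and correct: since each $\Theta_i$ is nilpotent and commutes with every term of $\exp(\lambda(1-\zeta_p)\Theta_i)$, one has $\exp(\lambda(1-\zeta_p)\Theta_i)-\id=\lambda(1-\zeta_p)\Theta_i u$ with $u$ unipotent, and $\lambda(1-\zeta_p)$ is a nonzerodivisor on $M\otimes_R R_{C}$, so $\Gamma_{\geo}$-invariance does force $(\theta_M\otimes\id)v=0$.

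The genuine gap is the arithmetic step, which is where essentially all the content of the proposition lies. Your justification is that on $\Ker(\theta_M\otimes\id)$ one is ``precisely in the situation of the $\calO_K$-case,'' so that the argument of Theorem \ref{Thm-crys-vs-rep-abs} applies. That is not accurate: in Theorem \ref{Thm-crys-vs-rep-abs} the module is finite over $\calO_K$ (in fact the statement there is rational, over $K$, whereas Proposition \ref{Prop-0} is integral) with coefficients extended to $C$, whereas here the module is finite over $R$ with coefficients extended to $R_{C}=R\widehat\otimes_{\calO_K}\calO_C$, and what is required is a \emph{relative} Sen-type decompletion: one must prove that $(R_{C})^{G_K}=R$ (this needs an actual argument, e.g.\ topological freeness of $R$ over $\calO_K$ so that Ax--Sen--Tate can be applied coordinatewise, or the normalized-trace formalism underlying Proposition \ref{Prop-arith decompletion}) and, more generally, that the $U(g)$-twisted $G_K$-invariants of $N\otimes_R R_{C}$ equal $\Ker\phi_N$ for $N$ finite projective over $R$. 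Neither statement follows formally from the absolute case. Worse, if ``the argument of Theorem \ref{Thm-crys-vs-rep-abs}'' is taken to mean the cohomological comparison with $\RGamma(G_K,V)$ coming from \cite[Proposition 3.31]{MW-b}, then its relative analogue $\RGamma(\Gamma,V(\bM))\simeq\HIG(M,\theta_M,\phi_M)$ is exactly Conjecture \ref{Conj-loc-relative Sen}, which the paper explicitly leaves open; so the argument must be carried out at the level of $H^0$ only, and cannot be borrowed wholesale. You yourself flag this step as ``the main obstacle'' --- correctly --- but flagging it does not fill it: as written, the proposal establishes only the geometric half of the required identity and defers the arithmetic half to a reference that does not cover it.
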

 
 \begin{proof}
   The proof is the same as that of \cite[Proposition 2.28]{MW-c}.
 \end{proof}
 
 Since $\Spa(R[\frac{1}{p}],R)$ is smooth over $K$, the similar arguments for the proof of \cite[Proposition 2.29]{MW-c} imply the following result.
\begin{prop}\label{Prop-fully faithful-loc-II}
     There is an equivalence from the category $\Rep_{\Gamma}(R_{C}[\frac{1}{p}])$ to the category $\Rep_{\Gamma}(\widehat R_{\infty}[\frac{1}{p}])$ induced by the base change
     \[V\mapsto V_{\infty}:= V\otimes_{R_{C}[\frac{1}{p}]}\widehat R_{\infty}[\frac{1}{p}],\]
     which induces a quasi-isomorphism
     \[R\Gamma(\Gamma,V)\cong R\Gamma(\Gamma,V_{\infty}).\]
  \end{prop}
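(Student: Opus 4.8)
The plan is to reduce the assertion to a decompletion statement for the geometric subgroup $\Gamma_{\geo}$, handled exactly as in the good reduction case, the only point to verify being that the relevant tower is the smooth toric tower. Recall from the exact sequence (\ref{Equ-exact sequence for Gal grp}) that $\Gamma$ is an extension of $G_K$ by $\Gamma_{\geo}\cong\Zp\gamma_1\oplus\cdots\oplus\Zp\gamma_d\cong\Zp^d$, and that $\widehat R_{\infty}[\frac{1}{p}]$ is the $p$-adic completion of the Kummer tower $\bigcup_m R_{C,m}^{\Box}[\frac{1}{p}]$ over $R_C[\frac{1}{p}]$, with $(\widehat R_{\infty}[\frac{1}{p}])^{\Gamma_{\geo}}=R_C[\frac{1}{p}]$. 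Although the special fibre of $\frakX$ is only semi-stable, the adic generic fibre $\Spa(R[\frac{1}{p}],R)$ is smooth over $K$, and the operators $\gamma_1,\dots,\gamma_d$ generate precisely the toric $\Zp^d$-tower (the variable $T_0$ being absorbed through the relation $T_0\cdots T_r=\pi$); hence the decompletion machinery behind \cite[Proposition 2.29]{MW-c} applies without change.

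First I would establish the decompletion equivalence for $\Gamma_{\geo}$ alone: the base change $V\mapsto V\otimes_{R_C[\frac{1}{p}]}\widehat R_{\infty}[\frac{1}{p}]$ is an equivalence from the category of finite projective $R_C[\frac{1}{p}]$-modules with continuous semi-linear $\Gamma_{\geo}$-action onto the analogous category over $\widehat R_{\infty}[\frac{1}{p}]$. The quasi-inverse sends a module $W$ to its submodule of $\Gamma_{\geo}$-finite (equivalently, $\Zp^d$-locally analytic and then descended) vectors, and the Tate--Sen/almost purity estimates for the smooth toric tower ensure that these vectors form a finite projective $R_C[\frac{1}{p}]$-module recovering $W$ after base change. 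This decompletion is the technical heart of the argument and I expect it to be the main obstacle: it amounts to controlling the continuous $\Gamma_{\geo}$-cohomology of the ``non-invariant'' complement of $R_C[\frac{1}{p}]$ inside $\widehat R_{\infty}[\frac{1}{p}]$, and it is exactly here that the smoothness of $\Spa(R[\frac{1}{p}],R)$ over $K$ intervenes, just as in \emph{loc.cit.}

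Next I would upgrade this to a $\Gamma$-equivariant equivalence. Since $\Gamma_{\geo}$ is normal in $\Gamma$ with quotient $G_K$ and the quasi-inverse, sending a module to its submodule of $\Gamma_{\geo}$-finite vectors, is canonical, the residual $G_K$-action governed by the relation $g\gamma_ig^{-1}=\gamma_i^{\chi(g)}$ is automatically carried along by the descent functor. Therefore the $\Gamma_{\geo}$-equivalence promotes to the desired equivalence $\Rep_{\Gamma}(R_C[\frac{1}{p}])\simeq\Rep_{\Gamma}(\widehat R_{\infty}[\frac{1}{p}])$.

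Finally, for the cohomological comparison I would invoke the Hochschild--Serre spectral sequence for $0\to\Gamma_{\geo}\to\Gamma\to G_K\to1$, which reduces the claim to a $G_K$-equivariant quasi-isomorphism $R\Gamma(\Gamma_{\geo},V)\xrightarrow{\sim}R\Gamma(\Gamma_{\geo},V_{\infty})$. Computing $\Gamma_{\geo}$-cohomology by the Koszul complex on $\gamma_1-1,\dots,\gamma_d-1$, the decompletion shows that the inclusion $R_C[\frac{1}{p}]\hookrightarrow\widehat R_{\infty}[\frac{1}{p}]$ is a quasi-isomorphism on continuous $\Gamma_{\geo}$-cohomology, the complement contributing an acyclic complex; tensoring with the finite projective module $V$ preserves this quasi-isomorphism, and applying $R\Gamma(G_K,-)$ then gives $R\Gamma(\Gamma,V)\cong R\Gamma(\Gamma,V_{\infty})$, as claimed.
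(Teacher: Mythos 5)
Your proposal is correct and follows essentially the same route as the paper: the paper's entire proof consists of the observation that $\Spa(R[\frac{1}{p}],R)$ is smooth over $K$ (so that on the generic fibre $\widehat R_{\infty}$ is a toric Kummer tower in the invertible functions $T_1,\dots,T_d$, with $T_0$ absorbed by $T_0\cdots T_r=\pi$) together with an appeal to the arguments of \cite[Proposition 2.29]{MW-c}. Your expanded outline --- decompletion for $\Gamma_{\geo}$, canonical transport of the $G_K$-action, and Hochschild--Serre plus the Koszul complex for the cohomological comparison --- is precisely the internal structure of that cited argument.
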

As in the good reduction case, we will give another equivalent description of this proposition. For any $n\geq 0$, let $K_n = K(\zeta_{p^n},\pi^{\frac{1}{p^n}})$. Put
\[R_n^{\Box}=\calO_{K_n}\za T_0^{\frac{1}{p^n}},\cdots,T_r^{\frac{1}{p^n}},T_{r+1}^{\pm\frac{1}{p^n}},\cdots,T_d^{\pm \frac{1}{p^n}}\ya/(T_0^{\frac{1}{p^n}}\cdots T_r^{\frac{1}{p^n}}-\pi^{\frac{1}{p^n}}).
\]
Then $R_n^{\Box}[\frac{1}{p}]=K_n\za T_1^{\pm\frac{1}{p^n}},\cdots,T_d^{\pm \frac{1}{p^n}}\ya$. 

Let $X_{K_n}=\Spa(R_{K_n}[\frac{1}{p}],R_{K_n})$ and $X_n=\Spa(R_n[\frac{1}{p}],R_n)$ denote the base changes of $X$ to  $\Spa(K_n,\calO_{K_n})$ and $\Spa(R_n^{\Box}[\frac{1}{p}],R_n^{\Box})$ respectively. 

Let $K_{\cyc,\infty}=\cup_n K_n$ and $\hat K_{\cyc,\infty}$ be its $p$-adic completion, $R_{\cyc,\infty}^\Box:=\cup_nR_n^\Box$ and $\hat R_{\cyc,\infty}^\Box$ be its $p$-adic completion. 
 
 Consider $X_{\hat K_{\cyc,\infty}} = \Spa(R_{\hat K_{\cyc,\infty}}[\frac{1}{p}],R_{\hat K_{\cyc,\infty}})$ and $X_{\cyc,\infty} = \Spa(\widehat R_{\cyc,\infty}[\frac{1}{p}],\widehat R_{\cyc,\infty})$ which are the base changes of $X$ to $\Spa(\hat K_{\cyc,\infty},\calO_{\hat K_{\cyc,\infty}})$ and $\Spa(\hat R_{\cyc,\infty}^\Box[\frac{1}{p}],\hat R_{\cyc,\infty}^\Box)$ respectively. Then we see that $X_{\cyc,\infty}$ is a Galois cover of $X$ with Galois group $\Gamma_{\cyc,\infty}$ and is also a Galois cover of $X_{\hat K_{\cyc,\infty}}$ whose Galois group can be identified with $\Gamma_{\geo}$. We still have a short exact sequence
  \[0\to\Gamma_{\geo}\to\Gamma_{\cyc,\infty}\to\widehat G_K\to 1,\]
  where $\widehat G_K=\Gal(K_{\cyc,\infty}/K)$. %$\cong \Zp(1)\rtimes\Gamma_K$ for $\Gamma_K=\Gal(K(\zeta_{p^{\infty}})/K)$.
  %Then we have 
  %\[\Gamma_{\cyc,\infty}\cong \Gamma_{\geo}\rtimes\widehat G_K\cong (\Gamma_{\geo}\times\Zp(1))\rtimes\Gamma_K.\]
  Let $\Rep_{\Gamma_{\cyc}}(R_{\hat K_{\cyc,\infty}}[\frac{1}{p}])$ and $\Rep_{\Gamma_{\cyc,\infty}}(\widehat R_{\cyc,\infty}[\frac{1}{p}])$ be the category of (semi-linear) representations of $\Gamma_{\cyc,\infty}$ over $R_{\hat K_{\cyc,\infty}}[\frac{1}{p}]$ and $\widehat R_{\cyc,\infty}[\frac{1}{p}]$, respectively. Then by Faltings' almost purity theorem, the following functors
  \[\Rep_{\Gamma_{\cyc,\infty}}(R_{\hat K_{\cyc,\infty}}[\frac{1}{p}])\to \Rep_{\Gamma}(R_{\Cp}[\frac{1}{p}])\]
  and 
  \[\Rep_{\Gamma_{\cyc,\infty}}(\hat R_{\cyc,\infty}[\frac{1}{p}])\to \Rep_{\Gamma}(\hat R_{\infty}[\frac{1}{p}])\]
  are both equivalences induced by the corresponding base changes. So Proposition \ref{Prop-fully faithful-loc-II} can be reformulated as follows.
  \begin{prop}\label{Prop-fully faithful-loc-III}
     There is an equivalence from the category $\Rep_{\Gamma_{\cyc,\infty}}(R_{\hat K_{\cyc,\infty}}[\frac{1}{p}])$ to the category $\Rep_{\Gamma_{\cyc,\infty}}(\widehat R_{\cyc,\infty}[\frac{1}{p}])$ induced by the base change
     \[V\mapsto V_{\cyc,\infty}:= V\otimes_{R_{\hat K_{\cyc,\infty}}}\widehat R_{\cyc,\infty}.\] 
     Moreover, the natural morphism $V\to V_{\cyc,\infty}$ induces a quasi-isomorphism 
     \[R\Gamma(\Gamma_{\cyc,\infty},V)\simeq R\Gamma(\Gamma_{\cyc,\infty},V_{\cyc,\infty}).\]
  \end{prop}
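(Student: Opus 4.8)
The plan is to deduce this reformulation directly from Proposition \ref{Prop-fully faithful-loc-II} together with the two equivalences furnished by Faltings' almost purity theorem recorded immediately above. Concretely, I would organise the four relevant base-change functors into a commutative square. The two vertical arrows
\[
\Rep_{\Gamma_{\cyc,\infty}}(R_{\hat K_{\cyc,\infty}}[\tfrac1p])\xrightarrow{\sim}\Rep_{\Gamma}(R_{C}[\tfrac1p]),\qquad
\Rep_{\Gamma_{\cyc,\infty}}(\widehat R_{\cyc,\infty}[\tfrac1p])\xrightarrow{\sim}\Rep_{\Gamma}(\widehat R_{\infty}[\tfrac1p])
\]
are the almost-purity equivalences; the top horizontal arrow is the base change $V\mapsto V\otimes_{R_C[\frac1p]}\widehat R_\infty[\frac1p]$ of Proposition \ref{Prop-fully faithful-loc-II}; and the bottom horizontal arrow is the base change $V\mapsto V_{\cyc,\infty}$ whose invertibility we must establish. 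The square commutes because both composites compute the same base change $R_{\hat K_{\cyc,\infty}}[\frac1p]\to\widehat R_\infty[\frac1p]$: one uses transitivity of completed tensor product together with the identifications $R_C=R_{\hat K_{\cyc,\infty}}\widehat\otimes_{\hat K_{\cyc,\infty}}C$ and $\widehat R_\infty=\widehat R_{\cyc,\infty}\widehat\otimes_{\hat K_{\cyc,\infty}}C$. Since the two verticals and the top arrow are equivalences, a two-out-of-three argument shows the bottom arrow is an equivalence as well, giving the first assertion.

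For the cohomological statement I would exploit that each vertical equivalence is compatible with passing from $\Gamma_{\cyc,\infty}$- to $\Gamma$-cohomology. The groups $\Gamma$ and $\Gamma_{\cyc,\infty}$ share the kernel $\Gamma_{\geo}$ and differ only in their quotients $G_K$ and $\widehat G_K=\Gal(K_{\cyc,\infty}/K)$; the (almost) acyclicity of $\Gal(\bar K/K_{\cyc,\infty})$ and its geometric counterpart encoded in almost purity provide natural quasi-isomorphisms $R\Gamma(\Gamma_{\cyc,\infty},W)\simeq R\Gamma(\Gamma,W\otimes_{R_{\hat K_{\cyc,\infty}}[\frac1p]}R_C[\frac1p])$ on both the source and the target side. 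Writing $V_C:=V\otimes_{R_{\hat K_{\cyc,\infty}}[\frac1p]}R_C[\frac1p]$ and letting $V_\infty$ be its further base change to $\widehat R_\infty[\frac1p]$, chaining these with Proposition \ref{Prop-fully faithful-loc-II} yields
\[
R\Gamma(\Gamma_{\cyc,\infty},V)\simeq R\Gamma(\Gamma,V_{C})\simeq R\Gamma(\Gamma,V_{\infty})\simeq R\Gamma(\Gamma_{\cyc,\infty},V_{\cyc,\infty}),
\]
where the middle isomorphism is the cohomology comparison of Proposition \ref{Prop-fully faithful-loc-II} and the two outer ones are the cohomological parts of the vertical equivalences (the last applied to $W=V_{\cyc,\infty}$).

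The genuinely substantive input here is external to this argument: it is the almost purity / decompletion statement already invoked, asserting both that base change along $\hat K_{\cyc,\infty}\to C$ — geometrically, along the pro-finite-\'etale tower $X_\infty\to X_{\cyc,\infty}$ — is an equivalence on categories of representations and that it leaves Galois cohomology unchanged. Granting this, the proof is purely formal. The only point I would treat with care, and the step I expect to be the main obstacle, is to check that the cohomological compatibilities of the two vertical equivalences are \emph{natural} and \emph{strictly} compatible with the horizontal base change, so that the displayed chain is induced by the single canonical map $V\to V_{\cyc,\infty}$ rather than merely existing abstractly. I would handle this by tracking all four objects through the Hochschild--Serre spectral sequence attached to $0\to\Gamma_{\geo}\to\Gamma_{\cyc,\infty}\to\widehat G_K\to1$ and its analogue for $0\to\Gamma_{\geo}\to\Gamma\to G_K\to1$, reducing the claimed compatibility to the functoriality of almost \'etale descent along $\Gal(\bar K/K_{\cyc,\infty})$.
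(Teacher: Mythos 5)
Your proposal is correct and is essentially the paper's own argument: the paper introduces the two Faltings almost-purity equivalences immediately before the statement precisely so that Proposition \ref{Prop-fully faithful-loc-III} becomes a ``reformulation'' of Proposition \ref{Prop-fully faithful-loc-II}, deferring the routine details to the analogous result in \cite{MW-c}. Your commutative square, the two-out-of-three argument, and the chain $R\Gamma(\Gamma_{\cyc,\infty},V)\simeq R\Gamma(\Gamma,V_C)\simeq R\Gamma(\Gamma,V_\infty)\simeq R\Gamma(\Gamma_{\cyc,\infty},V_{\cyc,\infty})$ simply make that transport explicit, with the same external input (almost purity along $\hat K_{\cyc,\infty}\to C$ and its compatibility with continuous cohomology) that the paper invokes.
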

 \begin{proof}
   The proof is similar to that of \cite[Proposition 2.31]{MW-b}. %So we only give a sketch here. First, a standard argument shows that the natural morphism 
   %\[\RGamma(\Gamma_{\cyc,\infty},R)\to \RGamma(\Gamma_{\cyc,\infty},\widehat R_{\cyc,\infty})\]
   %identifies the former as a direct summand of the 
 \end{proof}
 
  Note that Proposition \ref{Prop-0} also holds after inverting $p$, so combining this with Proposition \ref{Prop-fully faithful-loc-II}, we get
  \begin{thm}\label{Thm-fully faithful-loc}
     The natural functor 
     \[\Vect((R)_{\Prism,\log},\overline \calO_{\Prism}[\frac{1}{p}])\to\Rep_{\Gamma}(\widehat R_{\infty}[\frac{1}{p}])\]
     is fully faithful.
  \end{thm}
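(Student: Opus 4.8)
The plan is to exploit the factorisation of the functor $\bM\mapsto V(\bM)$ recorded right after Theorem \ref{Thm-cocycle in explicit way},
\[
\Vect((R)_{\Prism,\log},\overline \calO_{\Prism}[\tfrac{1}{p}])\xrightarrow{\ V_0\ }\Rep_{\Gamma}(R_{C}[\tfrac{1}{p}])\xrightarrow{\ \beta\ }\Rep_{\Gamma}(\widehat R_{\infty}[\tfrac{1}{p}]),
\]
where $\beta$ is the base change $V\mapsto V\otimes_{R_C[1/p]}\widehat R_{\infty}[\frac{1}{p}]$. The first arrow $V_0$ is well defined because the cocycle $U(\sigma)$ of Theorem \ref{Thm-cocycle in explicit way} has entries in $R_C[\frac{1}{p}]$ (its ingredients $\Theta_i,\phi_M$ live in $R[\frac{1}{p}]$ and $\lambda(1-\zeta_p)\in\calO_C$). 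By Proposition \ref{Prop-fully faithful-loc-II} the second arrow $\beta$ is an equivalence of categories. Since the composite of a fully faithful functor with an equivalence is again fully faithful, the theorem reduces to showing that $V_0$ is fully faithful.

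To prove full faithfulness of $V_0$, I would first invoke Proposition \ref{Prop-0}, which gives full faithfulness of the \emph{integral} functor $\Vect((R)_{\Prism,\log},\overline \calO_{\Prism})\to \Rep_{\Gamma}(R_{C})$, and then upgrade it after inverting $p$. The cleanest way to organise both steps is to reduce the comparison of Hom-groups to a comparison of $\rH^0$'s. Both source and target are tensor categories with internal Homs, and by the explicit cocycle of Theorem \ref{Thm-cocycle in explicit way} the assignment $\bM\mapsto V(\bM)$ is compatible with tensor products and duals, hence with internal Homs. Thus for Hodge--Tate crystals $\bM_1,\bM_2$ one has
\[
\Hom(\bM_1,\bM_2)=\rH^0\big((R)_{\Prism,\log},\underline{\Hom}(\bM_1,\bM_2)\big),\qquad
\Hom_{\Gamma}(V(\bM_1),V(\bM_2))=\rH^0\big(\Gamma,V(\underline{\Hom}(\bM_1,\bM_2))\big),
\]
so that full faithfulness amounts to the assertion that for every Hodge--Tate crystal $\bN$ the natural map $\rH^0((R)_{\Prism,\log},\bN)\to \rH^0(\Gamma,V(\bN))$ is an isomorphism; this is exactly Proposition \ref{Prop-0} applied to $\bN=\underline{\Hom}(\bM_1,\bM_2)$.

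To pass to the rational statement I would use that all modules in play are finite projective, so that internal Homs of Hodge--Tate crystals are again Hodge--Tate crystals and, by Theorem \ref{Thm-local Prismatic cohomology}, their cohomology is computed by a Higgs complex $\HIG(N,\theta_N,\phi_N)$ of finite projective modules. Since inverting $p$ is exact and commutes with the formation of this complex and with taking $\rH^0$, one gets $\rH^0((R)_{\Prism,\log},\bN[\frac{1}{p}])=\rH^0((R)_{\Prism,\log},\bN)[\frac{1}{p}]$, and likewise $\rH^0(\Gamma,V(\bN)[\frac{1}{p}])=\rH^0(\Gamma,V(\bN))[\frac{1}{p}]$ by finiteness of the Galois cohomology. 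Tensoring the integral isomorphism of Proposition \ref{Prop-0} with $\bQ_p$ then yields the rational isomorphism, proving that $V_0$ is fully faithful and hence the theorem.

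The main obstacle is the degree-zero comparison $\rH^0((R)_{\Prism,\log},\bN)\cong \rH^0(\Gamma,V(\bN))$ itself, namely matching logarithmic prismatic cohomology in degree $0$ with the Galois cohomology of the associated generalised representation. This is the technical heart and is precisely what Proposition \ref{Prop-0} supplies (via the explicit stratification and the identification of $X_1,Y_{i,1}$ with the functions of Lemma \ref{variable as function}). Once it is granted, the factorisation through the equivalence of Proposition \ref{Prop-fully faithful-loc-II} together with the exactness of inverting $p$ make all remaining steps formal.
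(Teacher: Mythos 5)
Your overall skeleton coincides with the paper's: factor $\bM\mapsto V(\bM)$ as
$\Vect((R)_{\Prism,\log},\overline\calO_{\Prism}[\frac{1}{p}])\to\Rep_{\Gamma}(R_{C}[\frac{1}{p}])\to\Rep_{\Gamma}(\widehat R_{\infty}[\frac{1}{p}])$,
invoke Proposition \ref{Prop-fully faithful-loc-II} to see that the second arrow is an equivalence, and reduce the theorem to full faithfulness of the first arrow. This is exactly what the paper does, where the first arrow is handled by the remark that ``Proposition \ref{Prop-0} also holds after inverting $p$''.

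The gap is in how you justify that rational version of Proposition \ref{Prop-0}. You deduce it from the integral statement by writing the relevant crystal as $\bN[\frac{1}{p}]$ for an integral Hodge--Tate crystal $\bN$ and then commuting $\rH^0$ with inverting $p$. But a rational Hodge--Tate crystal is, by Definition \ref{Dfn-HT crystal}, a sheaf of finite projective $\overline\calO_{\Prism}[\frac{1}{p}]$-modules (equivalently, by Theorem \ref{Thm-HT crystal as enhanced Higgs field}, an enhanced log Higgs module over $R[\frac{1}{p}]$), and nothing guarantees that it, or the internal Hom of two such, is of the form $\bN[\frac{1}{p}]$. Concretely, your reduction requires the existence of a finite projective $R$-lattice inside the $R[\frac{1}{p}]$-module underlying $\underline{\Hom}(\bM_1,\bM_2)$ which is stable under the Higgs field and under $\phi$, and on which the convergence condition $\lim_{n\to+\infty}\prod_{i=0}^{n}(\phi+i\pi E'(\pi))=0$ holds integrally; over the higher-dimensional ring $R$ this is a genuinely nontrivial assertion that you neither prove nor flag, and the paper never claims it. What the paper means by ``Proposition \ref{Prop-0} also holds after inverting $p$'' is that the \emph{proof} of Proposition \ref{Prop-0} (the explicit stratification/cocycle argument of \cite[Proposition 2.28]{MW-c}, via Theorem \ref{Thm-cocycle in explicit way} and Lemma \ref{variable as function}) runs verbatim with $R[\frac{1}{p}]$-coefficients, not that the rational case follows formally from the integral one. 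To close the gap you should either prove the existence of such integral models (hard, and unnecessary) or rerun the proof of Proposition \ref{Prop-0} directly in the rational setting. The remaining ingredients of your proposal --- the reduction of full faithfulness to an $\rH^0$-comparison via internal Homs, and the commutation of $\rH^0$ and $\rH^0(\Gamma,-)$ with inverting $p$ (which for $\rH^0$ is elementary and needs no finiteness) --- are correct but do not repair this step.
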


  \subsection{Local $p$-adic Simpson correspondence} 
 \subsubsection{Local Simpson functor for generalised representations}\label{sssec-local Simpson}
 In this subsection, we shall assign to every representation $V_{\infty}\in\Rep_{\Gamma}(\widehat R_{\infty}[\frac{1}{p}])$ a $G_K$-Higgs module over $R_{C}[\frac{1}{p}]$and an arithmetic Higgs module over $R_{K_{cyc}}[\frac{1}{p}]$ in the sense of \cite[Definition 3.4, 3.6]{MW-c}.
 
 As in the good reduction case, the main ingredient for constructing the desired local Simpson functor is a period ring $S_{\cyc,\infty}$ over $\widehat R_{\cyc,\infty}[\frac{1}{p}]$ together with a universal $\widehat R_{\cyc,\infty}[\frac{1}{p}]$-linear $\Gamma_{\cyc,\infty}$-equivariant Higgs field 
 \[\Theta: S_{\cyc,\infty}\to S_{\cyc,\infty}\otimes_R\widehat \Omega_{R,\log}^1(-1).\]
This ring can be viewed as sections on $X_{\cyc,\infty}$ of a certain period sheaf $\calO\bC$ on the pro-\'etale site $X_{\proet}$ (c.f. the ${\rm gr}^0\calO\bB_{\rm dR}$ in \cite[Corollary 6.15]{Sch-b} or \cite[Remark 2.1]{LZ}). The $S_{\cyc,\infty}$ and $\Theta$ can be described in a more explicit way; namely, there exists an isomorphism of $\widehat R_{\cyc,\infty}[\frac{1}{p}]$-algebras
 \begin{equation}\label{Equ-isom for period ring-loc}
     S_{\cyc,\infty}\cong \widehat R_{\cyc,\infty}[\frac{1}{p}][Y_1,\dots,Y_d]
 \end{equation}
 where $\widehat R_{\cyc,\infty}[\frac{1}{p}][Y_1,\dots,Y_d]$ is the free algebra over $\widehat R_{\cyc,\infty}[\frac{1}{p}]$ on the set $\{Y_1,\dots,Y_d\}$. Via the isomorphism (\ref{Equ-isom for period ring-loc}), the Higgs field $\theta$ can be written as 
 \begin{equation}\label{Equ-Theta-loc}
     \Theta = \sum_{i=1}^d\frac{\partial}{\partial Y_i}\otimes\frac{\dlog(T_i)}{t},
 \end{equation}
 where $t$ denotes the $p$-adic analogue of $2\pi i$ (and one identifies Tate twist $\Zp(n)$ with $\Zp t^n$ for any $n\in \bZ$), and for any $\sigma =\gamma_1^{n_1}\cdots\gamma_d^{n_d}\in\Gamma_{\geo}$, $g\in G_K$ and any $1\leq j\leq d$,
 \begin{equation}\label{Equ-action on Y}
 \begin{split}
       g(Y_j) = \chi(g)^{-1}Y_j,\\
       \sigma(Y_j) = Y_j+n_j.
 \end{split} 
 \end{equation}
 
 \begin{rmk}
The local description of the period sheaf $\calO\bC$ is studied in \cite{Sch-b} for affinoid spaces admitting a standard \'etale map to the torus. Our case is slightly different. Locally, we have an \'etale map $\Spa(R[\frac{1}{p}],R)\to \Spa(R^\Box[\frac{1}{p}],R^\Box)$. But one can check that the proofs in \cite{Sch-b} still work in our case and so one can get descriptions of $\calO\bC$ on $X_{\cyc,\infty}$ as above. 
 \end{rmk}
 
 Then we have the following theorem.
 \begin{thm}[{\cite[Theorem 2.39]{MW-c}}]\label{Thm-local Simpson}
   Keep notations as before. Then for any $V_{\cyc,\infty}\in\Rep_{\Gamma_{\cyc,\infty}}(\widehat R_{\cyc,\infty}[\frac{1}{p}])$, the restriction $\theta_{H(V_{\cyc,\infty})}$ of 
   \[\Theta_{V_{\cyc,\infty}}:=\id_{V_{\cyc,\infty}}\otimes\Theta:V_{\cyc,\infty}\otimes_{\widehat R_{\cyc,\infty}}S_{\cyc,\infty}\to V_{\cyc,\infty}\otimes_{\widehat R_{\cyc,\infty}}S_{\cyc,\infty}\otimes_R\widehat \Omega^1_{R,\log}(-1)\]
   to $H(V_{\cyc,\infty}):=(V_{\cyc,\infty}\otimes_{\widehat R_{\cyc,\infty}}S_{\cyc,\infty})^{\Gamma_{\geo}}$ defines a nilpotent Higgs module $(H(V_{\cyc,\infty}),\theta_{H(V_{\cyc,\infty})})$ over $R_{\hat K_{\cyc,\infty}}[\frac{1}{p}]$ such that the following assertions are true:

       $(1)$ Define $\Theta_{H(V_{\cyc,\infty})}=\theta_{H(V_{\cyc,\infty})}\otimes\id_{S_{\cyc,\infty}}+\id_{H(V_{\cyc,\infty})}\otimes\Theta$. Then the natural map 
       \[H(V_{\cyc,\infty})\otimes_{R_{\hat K_{\cyc,\infty}}}S_{\cyc,\infty}\to V_{\cyc,\infty}\otimes_{\widehat R_{\cyc,\infty}}S_{\cyc,\infty}\]
       is a $\Gamma_{\cyc,\infty}$-equivariant isomorphism and identifies $\Theta_{H(V_{\cyc,\infty})}$ with $\Theta_{V_{\cyc,\infty}}$.
       
       $(2)$ Let $\HIG(H(V_{\cyc,\infty}),\theta_{H(V_{\cyc,\infty})})$ be the associated Higgs complex of $(H(V_{\cyc,\infty}),\theta_{H(V_{\cyc,\infty})})$. Then there is a $\widehat G_K$-equivariant quasi-isomorphism
       \[R\Gamma(\Gamma_{\geo},V_{\cyc,\infty})\simeq \HIG(H(V_{\cyc,\infty}),\theta_{H(V_{\cyc,\infty})}).\]
       As a consequence, we get a quasi-isomorphism
       \[R\Gamma(\Gamma_{\cyc,\infty},V_{\cyc,\infty})\simeq R\Gamma(\widehat G_K,\HIG(H(V_{\cyc}),\theta_{H(V_{\cyc})})).\]
       
       $(3)$ Let $\HIG^{\nil}_{\widehat G_K}(R_{\hat K_{\cyc,\infty}}[\frac{1}{p}])$ be the category of nilpotent Higgs modules $(H,\theta_H)$ over $R_{\hat K_{\cyc,\infty}}[\frac{1}{p}]$, which are endowed with continuous $\widehat G_K$-actions such that $\theta_H$'s are $\widehat G_K$-equivariant. Then the functor
       \[H:\Rep_{\Gamma_{\cyc,\infty}}(\widehat R_{\cyc,\infty}[\frac{1}{p}])\to\HIG_{\widehat G_K}^{\nil}(R_{\hat K_{\cyc,\infty}}[\frac{1}{p}])\]
       defines an equivalence of categories and preserves tensor products and dualities. More precisely, for a Higgs module $(H,\theta_H)$ over $R_{\hat K_{\cyc,\infty}}[\frac{1}{p}]$ with a $\widehat G_K$-action, define $\Theta_H = \theta_H\otimes\id+\id\otimes\Theta$. Then the quasi-inverse $V_{\cyc,\infty}$ of $H$ is given by 
       \[V_{\cyc,\infty}(H,\theta_H) = (H\otimes_{R_{\hat K_{\cyc,\infty}}}S_{\cyc,\infty})^{\Theta_H=0}.\]
 \end{thm}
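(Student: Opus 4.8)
The plan is to exploit the explicit polynomial structure $S_{\cyc,\infty}\cong \widehat R_{\cyc,\infty}[\frac1p][Y_1,\dots,Y_d]$ together with the fact that, under (\ref{Equ-Theta-loc}) and (\ref{Equ-action on Y}), the Higgs field $\Theta$ is the family of partial derivatives $\partial/\partial Y_i$, while the geometric Galois group $\Gamma_{\geo}$ acts on the $Y_i$ by unit translations $\gamma_k(Y_j)=Y_j+\delta_{jk}$. Two formal inputs drive everything. First, a \emph{Poincaré lemma}: since $\Theta$ acts as $\sum_i\partial/\partial Y_i\otimes \dlog(T_i)/t$ on a polynomial ring over a $\bQ$-algebra, its Higgs (Koszul) complex $\HIG(S_{\cyc,\infty},\Theta)$ is a $\Gamma_{\cyc,\infty}$-equivariant resolution of $\widehat R_{\cyc,\infty}[\frac1p]$ placed in degree $0$. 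Second, a \emph{decompletion/trivialisation} of the $\Gamma_{\geo}$-action on $V_{\cyc,\infty}\otimes S_{\cyc,\infty}$, which is the technical heart.

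For the trivialisation I would introduce the commuting geometric Sen operators $N_i:=\log\gamma_i$ acting on $V_{\cyc,\infty}$ (well-defined by relative Sen theory over $\widehat R_{\cyc,\infty}[\frac1p]$, the $\gamma_i$ commuting because $\Gamma_{\geo}$ is abelian) and form $P:=\exp\bigl(-\sum_i Y_i N_i\bigr)\in\GL\bigl(V_{\cyc,\infty}\otimes S_{\cyc,\infty}\bigr)$, which is algebraic in the $Y_i$. Since $\gamma_k$ commutes with each $N_j$ and sends $Y_j\mapsto Y_j+\delta_{jk}$ while acting on $V_{\cyc,\infty}$ as $\exp(N_k)$, a direct check gives $\gamma_k(P(v))=P(v)$ for $v\in V_{\cyc,\infty}\otimes 1$, so $P$ carries $V_{\cyc,\infty}\otimes 1$ into $\Gamma_{\geo}$-invariants and conjugates the diagonal action into the $S_{\cyc,\infty}$-factor alone. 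This exhibits $H(V_{\cyc,\infty})=(V_{\cyc,\infty}\otimes S_{\cyc,\infty})^{\Gamma_{\geo}}$ as a finite projective $R_{\hat K_{\cyc,\infty}}[\frac1p]$-module of the same rank as $V_{\cyc,\infty}$ and yields the base-change isomorphism of assertion~$(1)$. The residual $\widehat G_K$-action commutes with $\Theta$; nilpotence of $\theta_{H(V_{\cyc,\infty})}$ follows from topological nilpotence of the $N_i$ after normalisation, and the Leibniz rule identifies $\Theta_{H(V_{\cyc,\infty})}$ with $\Theta_{V_{\cyc,\infty}}$.

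Granting the trivialisation, assertion~$(2)$ is a comparison of two Koszul complexes: continuous cohomology of $\Gamma_{\geo}\cong\Zp^d$ is computed by the Koszul complex on $\gamma_1-1,\dots,\gamma_d-1$, while $\HIG(H(V_{\cyc,\infty}),\theta_{H(V_{\cyc,\infty})})$ is the Koszul complex on $\theta_1,\dots,\theta_d$; after applying $P$ and the Poincaré lemma one identifies $\gamma_i-1$ with $\theta_i$ up to a $\widehat G_K$-equivariant homotopy, giving $\RGamma(\Gamma_{\geo},V_{\cyc,\infty})\simeq\HIG(H(V_{\cyc,\infty}),\theta_{H(V_{\cyc,\infty})})$, and Hochschild--Serre for $0\to\Gamma_{\geo}\to\Gamma_{\cyc,\infty}\to\widehat G_K\to 1$ yields the stated quasi-isomorphism. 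For assertion~$(3)$ I would verify that $(H,\theta_H)\mapsto(H\otimes_{R_{\hat K_{\cyc,\infty}}}S_{\cyc,\infty})^{\Theta_H=0}$ is a quasi-inverse: the Poincaré lemma gives $H(V(H,\theta_H))\cong H$ and the trivialisation gives $V(H(V_{\cyc,\infty}))\cong V_{\cyc,\infty}$, while compatibility with tensor products and duals is automatic because $S_{\cyc,\infty}$ is a commutative ring and $\Theta$ is a derivation, so both functors are monoidal.

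The main obstacle is the decompletion step: everything rests on showing that $H(V_{\cyc,\infty})$ is finite projective of full rank and that the base-change map is an isomorphism, which requires convergence and good behaviour of the geometric Sen operators $N_i$ over $\widehat R_{\cyc,\infty}[\frac1p]$ and control of the $\Gamma_{\geo}$-invariants of the non-$p$-adically-completed ring $S_{\cyc,\infty}$ (here Faltings' almost purity, already used for Proposition~\ref{Prop-fully faithful-loc-III}, is the key input). Once this is in place---as in the good-reduction argument of \cite[Theorem 2.39]{MW-c}, the only genuine change being the replacement of $\widehat\Omega^1_R$ by the log differentials $\widehat\Omega^1_{R,\log}$ and the bookkeeping of the extra variable forced by the relation $T_0\cdots T_r=\pi$---the remaining verifications are formal.
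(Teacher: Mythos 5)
The paper itself gives no independent proof of this theorem: it checks (in the preceding remark) that Scholze's local description of the period sheaf $\calO\bC$ carries over to the chart $\Spa(R[\frac{1}{p}],R)\to\Spa(R^{\Box}[\frac{1}{p}],R^{\Box})$, and then quotes \cite[Theorem 2.39]{MW-c}. Your skeleton (polynomial period ring with translation action, exponential twist, Koszul/Poincar\'e comparison, Hochschild--Serre, explicit quasi-inverse) is indeed the shape of that argument, but your proposal goes wrong at the two points where the actual content lies.

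First, nilpotence of the geometric Sen operators is not an afterthought; it is a prerequisite for your construction, and your justification for it is false. The element $P=\exp(-\sum_i Y_iN_i)$ only makes sense if the $N_i$ are jointly nilpotent: $S_{\cyc,\infty}$ is an honest polynomial ring in the $Y_i$, with no completion in those variables, so the exponential series must terminate. Nilpotence does \emph{not} follow from ``topological nilpotence after normalisation'', and it cannot be proved using $\Gamma_{\geo}$ alone: for the rank-one $\Gamma_{\geo}$-representation with $\gamma_1(e)=\exp(p)e$ and $\gamma_i(e)=e$ for $i\geq 2$, the Sen operator is the nonzero scalar $p$, and an eigenvalue computation on the Fourier components of $\widehat R_{\cyc,\infty}$ (where each $\gamma_i$ acts by roots of unity) shows $(V\otimes S_{\cyc,\infty})^{\Gamma_{\geo}}=0$, so the base-change map in $(1)$ fails. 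This is precisely why the theorem is stated for representations of the full group $\Gamma_{\cyc,\infty}$: the relation $g\gamma_ig^{-1}=\gamma_i^{\chi(g)}$ yields $gN_ig^{-1}=\chi(g)N_i$, hence the coefficients of the characteristic polynomial of $N_i$ are $\widehat G_K$-invariant sections of nontrivial Tate twists of $R_{\hat K_{\cyc,\infty}}[\frac{1}{p}]$ and therefore vanish. This Tate-twist argument, which your proposal never invokes (the arithmetic action plays no role in your trivialisation step), is the key lemma; without it the definition of $P$, the full-rank claim for $H(V_{\cyc,\infty})$, and essential surjectivity onto nilpotent Higgs modules in $(3)$ all collapse.

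Second, your trivialisation conflates semilinear and linear operators. The action of $\gamma_k$ on $V_{\cyc,\infty}$ is semilinear over the nontrivial action of $\gamma_k$ on $\widehat R_{\cyc,\infty}$, whereas $\exp(N_k)$ is a linear operator, so ``$\gamma_k$ acts on $V_{\cyc,\infty}$ as $\exp(N_k)$'' cannot hold. What relative Sen theory (Faltings' almost purity plus decompletion) actually provides is a finite-level model $D_n$ over the finite \'etale cover of $R_{\hat K_{\cyc,\infty}}[\frac{1}{p}]$ generated by the $T_i^{1/p^n}$, on which only the open subgroup $p^n\Zp^d$ (which fixes that cover) acts linearly and analytically, with the level $n$ depending on the vector. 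Consequently $P(v)$ is invariant only under $p^n\Zp^d$, and computing $H(V_{\cyc,\infty})=(V_{\cyc,\infty}\otimes S_{\cyc,\infty})^{\Gamma_{\geo}}$ requires a further finite Galois descent along that cover, with group $(\bZ/p^n\bZ)^d$, to get down to $R_{\hat K_{\cyc,\infty}}[\frac{1}{p}]$. You flag decompletion as the ``main obstacle'' and defer it to the good-reduction case, but as written your ``direct check'' of $\Gamma_{\geo}$-invariance is not an incomplete step of that argument --- it is an incorrect one, and repairing it is exactly where the two-step (open subgroup, then finite quotient) structure of the proof of \cite[Theorem 2.39]{MW-c} enters.
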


%\begin{rmk}\label{Rmk-gamma acts on theta}
%   For any Higgs module $(H,\theta_H)$ over $(R_{\hat K_{\cyc,\infty}}[\frac{1}{p}])$ with a $\Gamma_{\cyc,\infty}$-action, saying $\theta_H$ is $\Gamma_K$-equivariant amounts to saying for any $g\in \Gamma_K$, 
%   $g(\theta_i) = \chi(g)^{-1}\theta_i$ with $\theta=\sum_{i=1}^d\theta_i\otimes \frac{\dlog(T_i)}{t}$. 
   
%   In this case, the Higgs field $\theta_H$ is automatically nilpotent. Indeed, let $\phi$ be the ``arithmetic Sen operator'' defined in \cite[Proposition 3.2]{Pet}, then one gets $[\phi,\theta_i]=\theta_i$ for $1\leq i\leq d$. Now the argument in Remark \ref{Rmk-auto nil} applies.
% \end{rmk}

  \begin{rmk}
   A similar local result in the log smooth case was also obtained in \cite[Theorem 15.1, 15.2]{Tsu}.
 \end{rmk}
 
 \begin{rmk}\label{Rmk-indenpendent of chart}
   The functor $H$ depends on the choice of toric chart for the moment. However, by using the period sheaf $\OC$ on $X_{\proet}$, we can consider the functor 
   \[H:\Vect(\Spa(R[\frac{1}{p}],R)_{\proet},\widehat \calO_X)\to\HIG^{\nil}_{G_K}(R_{\Cp}[\frac{1}{p}]),\]
   which is independent of the choice of charts. Therefore, the local constructions glue. This leads to a global version of Theorem \ref{Thm-local Simpson}, which appears in Theorem \ref{Thm-HT crystal as Higgs bundles-G} and \cite[Theorem 3.13]{MW-c}.
 \end{rmk}
  
  Similar to $\HIG^{\nil}_{\widehat G_K}(R_{\hat K_{\cyc,\infty}}[\frac{1}{p}])$, we define the category $\HIG^{\nil}_{\Gamma_K}(R_{\hat K_{\cyc}}[\frac{1}{p}])$ (resp. $\HIG^{\nil}_{\Gamma_K}(R_{K_{\cyc}}[\frac{1}{p}])$) consisting of pairs $(H,\theta_H)$, where $H$ is a representation of $\Gamma_K$ over $R_{\hat K_{\cyc}}[\frac{1}{p}]$ (resp. $R_{K_{\cyc}}[\frac{1}{p}]$), i.e. a finite projective module over $R_{\hat K_{\cyc}}[\frac{1}{p}]$ (resp. $R_{K_{\cyc}}[\frac{1}{p}]$) endowed with a continuous semi-linear $\Gamma_K$-action, and $\theta_H$ is a Higgs field on $H$ which is $\Gamma_K$-equivariant. Here, $R_{K_{\cyc}}:=R\otimes_{\calO_K}\calO_{K_{\cyc}}$ and $R_{\hat K_{\cyc}}$ is the $p$-adic completion of $R_{K_{cyc}}$. For our convenience, we denote $\Rep_{\Gamma_K}(R_{K_{\cyc}}[\frac{1}{p}])$ (resp. $\Rep_{\Gamma_K}(R_{\hat K_{\cyc}}[\frac{1}{p}])$) the category of representations of $\Gamma_K$ over $R_{K_{\cyc}}[\frac{1}{p}]$ (resp. $R_{\hat K_{\cyc}}[\frac{1}{p}]$).
  
 \begin{cor}\label{Cor-upgrade}
   The functor $H:\Rep_{\Gamma_{\cyc,\infty}}(\widehat R_{\cyc,\infty}[\frac{1}{p}])\to\HIG^{\nil}_{\widehat G_K}(R_{\hat K_{\cyc,\infty}}[\frac{1}{p}])$ upgrades to an equivalence, which is still denoted by $H$, from $\Rep_{\Gamma_{\cyc,\infty}}(\widehat R_{\cyc}[\frac{1}{p}])$ to $\HIG^{\nil}_{\Gamma_K}(R_{K_{\cyc}}[\frac{1}{p}])$.
 \end{cor}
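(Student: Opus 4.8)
The plan is to deduce the statement from Theorem~\ref{Thm-local Simpson} by a Sen-style decompletion along the Kummer tower $K_{\cyc,\infty}/K_{\cyc}$; that is, by integrating out the normal subgroup $\Zp(1)=\Gal(K_{\cyc,\infty}/K_{\cyc})$ of $\widehat G_K$ and simultaneously descending the coefficient rings from $\widehat R_{\cyc,\infty}$ and $R_{\hat K_{\cyc,\infty}}$ down to $\widehat R_{\cyc}$ and $R_{K_{\cyc}}$. Concretely, I would organise the proof as a commutative square: place the equivalence $H$ of Theorem~\ref{Thm-local Simpson} on top, and install two vertical ``decompletion'' equivalences, $D_{\mathrm{rep}}\colon \Rep_{\Gamma_{\cyc,\infty}}(\widehat R_{\cyc}[\frac{1}{p}])\xrightarrow{\ \simeq\ }\Rep_{\Gamma_{\cyc,\infty}}(\widehat R_{\cyc,\infty}[\frac{1}{p}])$ on the representation side and $D_{\mathrm{Higgs}}\colon \HIG^{\nil}_{\Gamma_K}(R_{K_{\cyc}}[\frac{1}{p}])\xrightarrow{\ \simeq\ }\HIG^{\nil}_{\widehat G_K}(R_{\hat K_{\cyc,\infty}}[\frac{1}{p}])$ on the Higgs side. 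The desired functor is then $D_{\mathrm{Higgs}}^{-1}\circ H\circ D_{\mathrm{rep}}$, and the content of the corollary is that this composite is again $H$ and is an equivalence.

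First I would construct $D_{\mathrm{rep}}$ by the base-change argument already used for Proposition~\ref{Prop-fully faithful-loc-III}, now applied to the arithmetic (Kummer) direction rather than the geometric one: one checks that extending scalars along $\widehat R_{\cyc}[\frac{1}{p}]\hookrightarrow\widehat R_{\cyc,\infty}[\frac{1}{p}]$ induces an equivalence on $\Gamma_{\cyc,\infty}$-representations preserving continuous cohomology, the essential input being the vanishing of the relevant higher cohomology of $\Zp(1)$ acting on the quotient and the existence of normalised traces. For $D_{\mathrm{Higgs}}$ I would invoke that $K_{\cyc}/K$ satisfies the Tate--Sen axioms, so that every finite projective module with continuous semilinear $\widehat G_K$-action over $R_{\hat K_{\cyc,\infty}}[\frac{1}{p}]$ descends uniquely to a $\Gamma_K$-module over $R_{K_{\cyc}}[\frac{1}{p}]$, the $\Zp(1)$-part being reconstructed from Sen's operator. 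One must then verify that the nilpotent Higgs field descends with the module: this is forced by $\widehat G_K$-equivariance of $\theta_H$ together with the explicit formulas (\ref{Equ-Theta-loc}) and (\ref{Equ-action on Y}), which show that the Kummer generator acts on the period ring $S_{\cyc,\infty}$ compatibly with $\Theta$, so the descended module carries a canonical $\Gamma_K$-equivariant Higgs field landing in $\HIG^{\nil}_{\Gamma_K}(R_{K_{\cyc}}[\frac{1}{p}])$.

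With $D_{\mathrm{rep}}$ and $D_{\mathrm{Higgs}}$ in hand, the remaining step is to check that the square commutes, i.e. that $H$ intertwines the two decompletions. This can be read off from the construction $H(V)=(V\otimes_{\widehat R_{\cyc,\infty}}S_{\cyc,\infty})^{\Gamma_{\geo}}$ of Theorem~\ref{Thm-local Simpson} together with the naturality of the relevant base-change isomorphisms, since forming $\Gamma_{\geo}$-invariants commutes with the arithmetic descent along $\Zp(1)$. Hence $D_{\mathrm{Higgs}}^{-1}\circ H\circ D_{\mathrm{rep}}$ is an equivalence that agrees with $H$, and preservation of tensor products and dualities is inherited from Theorem~\ref{Thm-local Simpson}.

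The hard part will be the Kummer decompletion along $\Zp(1)$: unlike the purely cyclotomic descent available in the good-reduction case of \cite{MW-c}, one here has to integrate out the $\Gal(K_{\cyc,\infty}/K_{\cyc})$-direction and verify the Tate--Sen conditions for the semi-stable tower, then confirm that this descent is compatible with the arithmetic Sen/Higgs operator encoded by the residual $\Gamma_K$-action. This is precisely where the relative Kummer--Sen theory attributed to \cite{Gao} is needed, and securing that compatibility is the crux of the argument.
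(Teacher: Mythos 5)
Your plan correctly isolates the two directions that must be handled (the Kummer direction $\Zp(1)=\Gal(K_{\cyc,\infty}/K_{\cyc})$ and the cyclotomic decompletion), but it misidentifies the mechanism of the key step, and this is a genuine gap rather than a stylistic difference. The paper's proof leaves the source of $H$ untouched (the ``$\widehat R_{\cyc}$'' in the statement is $\widehat R_{\cyc,\infty}$, as both the paper's proof and the application in Theorem \ref{Thm-local Simpson for generealised repn} make clear, so nothing like your $D_{\mathrm{rep}}$ or a square-commutativity check is required) and upgrades only the target, in two short steps. First, Faltings' almost purity theorem gives a base-change equivalence $\Rep_{\widehat G_K}(R_{\hat K_{\cyc,\infty}}[\frac{1}{p}])\simeq \Rep_{\Gamma_K}(R_{\hat K_{\cyc}}[\frac{1}{p}])$, hence an equivalence $\HIG^{\nil}_{\widehat G_K}(R_{\hat K_{\cyc,\infty}}[\frac{1}{p}])\simeq\HIG^{\nil}_{\Gamma_K}(R_{\hat K_{\cyc}}[\frac{1}{p}])$, the Higgs field descending automatically since it is an equivariant morphism between objects of these representation categories. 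Second, the passage from $R_{\hat K_{\cyc}}$ to $R_{K_{\cyc}}$ is exactly Proposition \ref{Prop-arith decompletion} (1), already proved via the stably decompleting pairs of \cite{DLLZ}.

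The error sits in your $D_{\mathrm{Higgs}}$: you propose to integrate out $\Zp(1)$ by a Tate--Sen decompletion, with ``the $\Zp(1)$-part reconstructed from Sen's operator'', and you single this out as the crux, requiring the relative Kummer--Sen theory of \cite{Gao}. That is the wrong tool for this direction. Both $\hat K_{\cyc}$ and $\hat K_{\cyc,\infty}$ are perfectoid, so the $\Zp(1)$-cover $R_{\hat K_{\cyc}}[\frac{1}{p}]\to R_{\hat K_{\cyc,\infty}}[\frac{1}{p}]$ is almost \'etale and the descent is plain (almost) Galois descent: up to isomorphism, $\Zp(1)$ acts on an object of $\Rep_{\widehat G_K}(R_{\hat K_{\cyc,\infty}}[\frac{1}{p}])$ only through the coefficients, so no Sen-operator datum is needed to reconstruct it, and none could be accommodated --- if the $\Zp(1)$-part genuinely carried such an invariant, the asserted equivalence with $\HIG^{\nil}_{\Gamma_K}(R_{K_{\cyc}}[\frac{1}{p}])$, whose objects have no such datum, could not hold, so your framing is in tension with the very statement being proved. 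Sen-type decompletion enters only in the cyclotomic direction, where it is already packaged as Proposition \ref{Prop-arith decompletion} (1); Gao's Kummer--Sen theory is invoked in this paper only for Conjectures \ref{Conj-local Sen comparison} and \ref{Conj-HT vs Sen}, not here. As written, your plan would stall on verifying Tate--Sen axioms for the Kummer tower over a perfectoid base, axioms that are neither needed nor supplied.
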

 \begin{proof}
   By Faltings' almost purity theorem, we have a natural equivalence
   \[\Rep_{\widehat G_K}(R_{\hat K_{\cyc,\infty}}[\frac{1}{p}])\simeq \Rep_{\Gamma_K}(R_{\hat K_{\cyc}}[\frac{1}{p}]),\]
   which induces an equivalence between $\HIG^{\nil}_{\widehat G_K}(R_{\hat K_{\cyc,\infty}}[\frac{1}{p}])$ and $\HIG^{\nil}_{\Gamma_K}(R_{\hat K_{\cyc}}[\frac{1}{p}])$. Now the result follows from Proposition \ref{Prop-arith decompletion} (1).
 \end{proof}
 The other ingredient to construct the Simpson functor is the following decompletion result.
 \begin{prop}\label{Prop-arith decompletion}Keep notations as above.
 
       $(1)$ The base change induces an equivalence from $\Rep_{\Gamma_K}(R_{K_{\cyc}}[\frac{1}{p}])$ to $\Rep_{\Gamma_K}(R_{\hat K_{\cyc}}[\frac{1}{p}])$ such that that for any $V\in \Rep_{\Gamma_K}(R_{K_{\cyc}}[\frac{1}{p}])$, there is a canonical quasi-isomorphism
       \[R\Gamma(\Gamma_K,V)\simeq R\Gamma(\Gamma_K,V\otimes_{R_{K_{\cyc}}}R_{\hat K_{\cyc}}).\]
       
       $(2)$ For any $V\in\Rep_{\Gamma_K}(R_{K_{\cyc}}[\frac{1}{p}])$, there exists a unique element $\phi_V\in\End_{R_{K_{\cyc}}[\frac{1}{p}]}(V)$ such that for any $v\in V$, there exists some $n\gg 0$ satisfying that for any $g\in\Gamma_{K(\zeta_{p^n})}:=\Gal(K_{\cyc}/K(\zeta_{p^n}))$,
       \[g(v) = \exp(\log\chi(g)\phi_V)(v).\]
       Such a $\phi_V$ is called the {\bf arithmetic Sen operator} of $V$.
 \end{prop}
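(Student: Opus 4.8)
The plan is to establish this as a relative Sen theory in the arithmetic (cyclotomic) direction, bootstrapping from the classical case over $\calO_K$ treated in \cite{MW-b}. The key observation is that, in contrast with the geometric direction, the group $\Gamma_K = \Gal(K_{\cyc}/K)$ fixes $R$ inside $R_{K_{\cyc}} = R\otimes_{\calO_K}\calO_{K_{\cyc}}$ and acts only through the factor $\calO_{K_{\cyc}}$ --- note that $K_{\cyc}$ contains no $p$-power root of $\pi$, so the toric coordinates $T_i$ are $\Gamma_K$-fixed. Hence $R_{\hat K_{\cyc}}$ is, up to $p$-completion, the ``constant'' base change $R\widehat\otimes_{\calO_K}\calO_{\hat K_{\cyc}}$, and all Tate--Sen estimates in the relative setting should reduce to Tate's classical computation of the cohomology of $\hat K_{\cyc}$ over $K$.

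First I would set up the cyclotomic tower $\{R_{K(\zeta_{p^n})}[\frac{1}{p}]\}_n$ together with $\Gamma_K$-equivariant normalized trace maps, and verify the Tate--Sen axioms for the triple $(R_{K_{\cyc}}[\frac{1}{p}], R_{\hat K_{\cyc}}[\frac{1}{p}], \Gamma_K)$: namely a $\Gamma_K$-stable topological splitting $R_{\hat K_{\cyc}} = R_{K(\zeta_{p^n})}\oplus X_n$ on whose complement $X_n$ the operator $\gamma - 1$ is invertible with uniformly bounded inverse, for $\gamma$ a topological generator of $\Gamma_{K(\zeta_{p^n})}$ and $n$ large. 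Because the arithmetic action is orthogonal to $R$, these axioms follow from their counterparts over $\calO_K$ by applying the exact $p$-completed functor $R\widehat\otimes_{\calO_K}(-)$ and then inverting $p$; the smoothness of $\Spa(R[\frac{1}{p}],R)$ over $K$ guarantees the requisite flatness. The standard decompletion machinery then yields part $(1)$: every object of $\Rep_{\Gamma_K}(R_{\hat K_{\cyc}}[\frac{1}{p}])$ descends functorially, and compatibly with tensor structure, to a unique object of $\Rep_{\Gamma_K}(R_{K_{\cyc}}[\frac{1}{p}])$, while the inclusion induces a quasi-isomorphism on continuous cohomology since the complementary summands $X_n$ contribute acyclic complexes.

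For part $(2)$ I would exploit the finite-level model furnished by part $(1)$. Given a finite projective $V\in\Rep_{\Gamma_K}(R_{K_{\cyc}}[\frac{1}{p}])$, decompletion supplies, for $n$ large, a basis on which the open subgroup $\Gamma_{K(\zeta_{p^n})}\cong\Zp$ acts through matrices congruent to the identity modulo a high power of $p$; on such a basis $\log(g)$ converges $p$-adically for $g\in\Gamma_{K(\zeta_{p^n})}$, and since $\log\chi:\Gamma_{K(\zeta_{p^n})}\xrightarrow{\sim}p^n\Zp$ is an isomorphism for $n$ large, the quotient $\frac{\log g}{\log\chi(g)}$ is independent of the choice of $g$ and defines an $R_{K_{\cyc}}[\frac{1}{p}]$-linear endomorphism $\phi_V$. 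One then checks that $g(v) = \exp(\log\chi(g)\,\phi_V)(v)$ holds for every $v$ at the level $n$ at which $v$ becomes analytic, and uniqueness of $\phi_V$ follows from the density of the analytic vectors together with continuity of the action.

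The hard part will be the verification of the Tate--Sen axioms uniformly in the relative coefficients: one must ensure that the normalized traces and the bounds on $(\gamma-1)^{-1}$ survive the $p$-completed base change $R\widehat\otimes_{\calO_K}(-)$ and the subsequent inversion of $p$, so that the descent is exact and the cohomology comparison genuinely holds rather than holding only up to isogeny. This is precisely where the good-reduction arguments of \cite{MW-c} and the classical estimates of Sen are invoked; since the cyclotomic tower leaves the coordinates $T_i$ untouched, the constant base change keeps all constants uniform and the arguments transfer essentially verbatim. The remaining points --- functoriality of the descent, compatibility with tensor products and duals, and the cohomology comparison --- are then formal.
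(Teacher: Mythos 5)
Your proposal is correct and follows essentially the same route as the paper: the paper simply observes that $(\{R_{K(\zeta_{p^n})}[\frac{1}{p}]\}_{n\geq 0},\{\Gamma_{K(\zeta_{p^n})}\}_{n\geq 0})$ is a stably decompleting pair in the sense of \cite[Definition 4.4]{DLLZ} --- which is exactly the Tate--Sen-type axioms you verify via the constant base change $R\widehat\otimes_{\calO_K}(-)$ from the classical cyclotomic tower --- and then cites the argument of \cite[Proposition 2.44]{MW-c}, which yields the decompletion equivalence, the cohomology comparison, and the construction of the arithmetic Sen operator just as in your parts $(1)$ and $(2)$.
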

 \begin{proof}
   Note that that $(\{R_{K(\zeta_{p^n})}[\frac{1}{p}]\}_{n\geq 0},\{\Gamma_{K(\zeta_{p^n})}\}_{n\geq 0})$ forms a stably decompleting pair in the sense of \cite[Definition 4.4]{DLLZ}. So the result follows from the argument in the proof of \cite[Proposition 2.44]{MW-c}.
 \end{proof}
 
 Now we can state and prove the main result in this subsection.
 \begin{thm}\label{Thm-local Simpson for generealised repn}
   There exists a faithful functor 
   \[H:\Rep_{\Gamma}(\widehat R_{\infty}[\frac{1}{p}])\to\HIG^{\rm arith}(R_{K_{\cyc}}[\frac{1}{p}]),\]
   which preserves tensor products and dualities. Here, $\HIG^{\rm arith}(R_{K_{\cyc}}[\frac{1}{p}])$ is the category of arithmatic Higgs modules over $R_{K_{\cyc}}[\frac{1}{p}]$ in the sense of \cite[Definition 2.37]{MW-c}.
 \end{thm}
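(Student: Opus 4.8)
The plan is to realize $H$ as a composite of three functors, each of which is either an equivalence or a structure-forgetting functor that is manifestly faithful and monoidal. Concretely, I would factor $H$ as
\[
\Rep_{\Gamma}(\widehat R_{\infty}[\tfrac1p])
\xrightarrow{\ \sim\ }
\Rep_{\Gamma_{\cyc,\infty}}(\widehat R_{\cyc,\infty}[\tfrac1p])
\xrightarrow{\ H\ }
\HIG^{\nil}_{\Gamma_K}(R_{K_{\cyc}}[\tfrac1p])
\xrightarrow{\ (-)+\phi\ }
\HIG^{\rm arith}(R_{K_{\cyc}}[\tfrac1p]),
\]
where the first arrow is the quasi-inverse of the almost-purity equivalence recorded just before Proposition \ref{Prop-fully faithful-loc-III}, the second is the Simpson equivalence of Corollary \ref{Cor-upgrade}, and the third attaches the arithmetic Sen operator.

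First I would take $V_{\infty}\in\Rep_{\Gamma}(\widehat R_{\infty}[\tfrac1p])$ and descend it along Faltings' almost purity to the unique $V_{\cyc,\infty}\in\Rep_{\Gamma_{\cyc,\infty}}(\widehat R_{\cyc,\infty}[\tfrac1p])$ with $V_{\cyc,\infty}\otimes\widehat R_{\infty}[\tfrac1p]\cong V_{\infty}$. Applying Corollary \ref{Cor-upgrade} produces a nilpotent Higgs module $(H,\theta_H)$ over $R_{K_{\cyc}}[\tfrac1p]$ carrying a continuous $\Gamma_K$-action for which $\theta_H$ is $\Gamma_K$-equivariant; thus $H\in\Rep_{\Gamma_K}(R_{K_{\cyc}}[\tfrac1p])$ and Proposition \ref{Prop-arith decompletion}(2) assigns to it a well-defined arithmetic Sen operator $\phi_H\in\End_{R_{K_{\cyc}}[\tfrac1p]}(H)$. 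The output of the functor is the triple $(H,\theta_H,\phi_H)$.

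The main point to verify is that this triple is an \emph{arithmetic} Higgs module in the sense of \cite[Definition 2.37]{MW-c}, i.e. that $\phi_H$ and $\theta_H$ satisfy the required commutation relation. Here I expect the bulk of the work: the relation $g\gamma_i g^{-1}=\gamma_i^{\chi(g)}$ in $\Gamma$, together with the explicit action formulas (\ref{Equ-action on Y}) $g(Y_j)=\chi(g)^{-1}Y_j$ and $\sigma(Y_j)=Y_j+n_j$ on the period ring $S_{\cyc,\infty}$, forces a Leibniz-type identity between the arithmetic direction (the $\log\chi$-part giving $\phi_H$) and the geometric Higgs direction (the $\partial/\partial Y_i$ giving $\theta_H$). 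Passing $g\gamma_i g^{-1}=\gamma_i^{\chi(g)}$ to the infinitesimal level should yield an identity of the form $[\phi_H,\theta_H]=\theta_H$ (up to the normalization built into the Tate twist of $\theta_H$), equivalently that $\{\phi_H+n\,\id\}_n$ induces an endomorphism of the Higgs complex compatible with its $\Theta$-grading; this is exactly the compatibility making $(H,\theta_H,\phi_H)$ an object of $\HIG^{\rm arith}(R_{K_{\cyc}}[\tfrac1p])$. I anticipate this as the main obstacle, since it requires tracking how decompletion interacts with the Higgs field rather than a purely formal manipulation.

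Finally, faithfulness and monoidality follow formally. The first two arrows are equivalences (almost purity and Corollary \ref{Cor-upgrade}), and by Theorem \ref{Thm-local Simpson} the functor $H$ already preserves tensor products and dualities; since the arithmetic Sen operator of a tensor product (resp. dual) is computed by the Leibniz rule from the defining property in Proposition \ref{Prop-arith decompletion}(2), the assignment $\phi$ is compatible with $\otimes$ and duals, so the composite preserves tensor products and dualities. The third arrow is faithful because it only forgets the $\Gamma_K$-action down to its infinitesimal operator $\phi_H$ while retaining the underlying morphism of $(H,\theta_H)$: a morphism of $\Gamma_K$-equivariant Higgs modules is determined by its underlying $R_{K_{\cyc}}[\tfrac1p]$-linear map, so no morphisms are killed and the composite $H$ is faithful. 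I would not expect fullness, precisely because replacing the full $\Gamma_K$-action by $\phi_H$ discards information, which is why the statement asserts only faithfulness.
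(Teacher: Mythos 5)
Your proposal is correct and follows essentially the same route as the paper: the paper's proof simply invokes the argument of \cite[Theorem 2.45]{MW-c} with Corollary \ref{Cor-upgrade} in place of \cite[Corollary 2.43]{MW-c}, which is exactly your composite of the almost-purity equivalence, the upgraded Simpson equivalence of Corollary \ref{Cor-upgrade}, and the arithmetic Sen operator of Proposition \ref{Prop-arith decompletion}(2), together with the check that the Tate-twisted $\Gamma_K$-equivariance of $\theta_H$ forces the commutation relation defining an arithmetic Higgs module. Your remarks on faithfulness (the last step retains the underlying linear map) and on monoidality of the Sen operator likewise match the intended argument.
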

 \begin{proof}
   The result follows from a similar argument in the proof of \cite[Theorem 2.45]{MW-c}. The only difference is that we have to use Corollary \ref{Cor-upgrade} instead of \cite[Corollary 2.43]{MW-c} in loc.cit..
 \end{proof}
 \begin{rmk}\label{Rmk-Local Simpson for generalized repn}
  Replacing $K_{\cyc}$ be $C$, we obtain from Theorem \ref{Thm-local Simpson for generealised repn} a faithful functor
   \[H:\Rep_{\Gamma}(\widehat R_{\infty}[\frac{1}{p}])\to\HIG^{\rm arith}(R_{C}[\frac{1}{p}]),\]
   which factors through an equivalence
   \[\Rep_{\Gamma}(\widehat R_{\infty}[\frac{1}{p}])\to\HIG^{\rm nil}_{G_K}(R_{C}[\frac{1}{p}])\]
   where the categories $\HIG^{\rm nil}_{G_K}(R_{C}[\frac{1}{p}])$ and $\HIG^{\rm arith}(R_{C}[\frac{1}{p}])$ are defined in the obvious way. We also use this ``$C$-version'' of above theorem from now on.
 \end{rmk}
    We can make the following conjecture as in the good reduction case.
   \begin{conj}\label{Conj-loc-Sen-cohomology}
     Let $(H,\theta_H,\phi_H)$ be an arithmetic Higgs module over $R_{{C}}[\frac{1}{p}]$ coming from a representation $V\in\Rep_{\Gamma}(\widehat R_{\infty}[\frac{1}{p}])$. Then there is a quasi-isomorphism
     \[R\Gamma(\Gamma,V)\otimes_{R[\frac{1}{p}]}R_{C}[\frac{1}{p}]\simeq \HIG(H,\theta_H,\phi_H).\]
   \end{conj}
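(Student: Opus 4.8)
The plan is to separate the geometric and arithmetic directions of $\Gamma$ via the extension $0\to\Gamma_{\geo}\to\Gamma\to G_K\to 1$ and to treat each with machinery already in place: the geometric direction through the local Simpson correspondence of Theorem \ref{Thm-local Simpson}, the arithmetic direction through Sen theory. First I would pass to the cyclotomic tower. Combining the geometric decompletion of Proposition \ref{Prop-fully faithful-loc-II} (reformulated in Proposition \ref{Prop-fully faithful-loc-III}) with Faltings' almost purity, the base change $V\mapsto V_{\cyc,\infty}$ yields a quasi-isomorphism
\[R\Gamma(\Gamma,V)\simeq R\Gamma(\Gamma_{\cyc,\infty},V_{\cyc,\infty}),\]
reducing the problem to the group $\Gamma_{\cyc,\infty}$, which sits in the extension $0\to\Gamma_{\geo}\to\Gamma_{\cyc,\infty}\to\widehat G_K\to 1$.

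Next I would eliminate the geometric factor. Part $(2)$ of Theorem \ref{Thm-local Simpson} provides a $\widehat G_K$-equivariant quasi-isomorphism $R\Gamma(\Gamma_{\geo},V_{\cyc,\infty})\simeq\HIG(H(V_{\cyc,\infty}),\theta_{H(V_{\cyc,\infty})})$; feeding this into the Hochschild--Serre spectral sequence for the above extension gives
\[R\Gamma(\Gamma_{\cyc,\infty},V_{\cyc,\infty})\simeq R\Gamma(\widehat G_K,\HIG(H(V_{\cyc,\infty}),\theta_{H(V_{\cyc,\infty})})).\]
The arithmetic decompletion of Corollary \ref{Cor-upgrade} together with Proposition \ref{Prop-arith decompletion}$(1)$ then replaces the $\widehat G_K$-action over $R_{\hat K_{\cyc,\infty}}[\frac{1}{p}]$ by the $\Gamma_K$-action over $R_{K_{\cyc}}[\frac{1}{p}]$ without altering cohomology, so that
\[R\Gamma(\Gamma,V)\simeq R\Gamma(\Gamma_K,\HIG(H,\theta_H)),\]
where $(H,\theta_H)$ is the underlying $\Gamma_K$-equivariant Higgs module of the arithmetic Higgs module attached to $V$ in Theorem \ref{Thm-local Simpson for generealised repn}.

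The final step is to evaluate $R\Gamma(\Gamma_K,-)$ termwise along the Higgs complex. The crucial input is a relative Sen cohomology statement: for any $W\in\Rep_{\Gamma_K}(R_{K_{\cyc}}[\frac{1}{p}])$ with arithmetic Sen operator $\phi_W$ (Proposition \ref{Prop-arith decompletion}$(2)$), there should be a functorial quasi-isomorphism $R\Gamma(\Gamma_K,W)\simeq[W\xrightarrow{\phi_W}W]$, extending the absolute computation of Theorem \ref{Thm-crys-vs-rep-abs} and \cite[Proposition 3.31]{MW-b}. Granting this and applying it to each term $H\otimes_R\widehat\Omega^i_{R,\log}\{-i\}$ of $\HIG(H,\theta_H)$, the Breuil--Kisin twist $\{-i\}$ forces the Sen operator on the $i$-th term to be $\phi_H+i\pi E'(\pi)\id$. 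The two commuting families of maps---the Higgs differentials $\theta_H$ and the operators $\phi_H+i\pi E'(\pi)\id$---assemble into exactly the bicomplex (\ref{Equ-Higgs field with operator}), whose total complex is $\HIG(H,\theta_H,\phi_H)$. Base changing to $R_C[\frac{1}{p}]$ through the $C$-version of Remark \ref{Rmk-Local Simpson for generalized repn} produces the desired quasi-isomorphism $R\Gamma(\Gamma,V)\otimes_{R[\frac{1}{p}]}R_C[\frac{1}{p}]\simeq\HIG(H,\theta_H,\phi_H)$.

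The principal obstacle lies in this last step. Proving $R\Gamma(\Gamma_K,W)\simeq[W\xrightarrow{\phi_W}W]$ for an arbitrary relative representation $W$---rather than only for $C$-representations of $G_K$ as in the $\Spf(\calO_K)$ case---demands a genuinely relative form of (Kummer) Sen theory, which is the analytic core of the statement and precisely the ingredient attributed to Gao in the footnote to Conjecture \ref{Intro-Conjecture}. In particular one must establish that the arithmetic Sen operator of the twist $\{-1\}$ is exactly $\pi E'(\pi)$---equivalently the identity $\phi_{\calL}=-\frac{\phi_{\calH}}{\pi E'(\pi)}$ of Conjecture \ref{Intro-Conjecture}---so that the shifts appearing in the bicomplex agree with (\ref{Equ-Higgs field with operator}). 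The remaining points, namely compatibility of the geometric and arithmetic decompletions with the formation of total complexes and the functoriality of the Hochschild--Serre identification with respect to $\theta_H$, are more routine but still need to be verified; their nonformal nature is what keeps the result at the level of a conjecture.
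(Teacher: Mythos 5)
You should first be clear that the paper itself does not prove this statement: it is stated as Conjecture \ref{Conj-loc-Sen-cohomology}, left open, and is afterwards used only as a hypothesis (the lemma closing Section 3 derives Conjecture \ref{Conj-loc-relative Sen} from it together with Conjecture \ref{Conj-local Sen comparison}). So the only question is whether your argument actually closes it, and it does not. Your reductions are genuine and are backed by results the paper does prove: almost purity together with Propositions \ref{Prop-fully faithful-loc-II}/\ref{Prop-fully faithful-loc-III} reduces $R\Gamma(\Gamma,V)$ to $R\Gamma(\Gamma_{\cyc,\infty},V_{\cyc,\infty})$; Theorem \ref{Thm-local Simpson}(2) eliminates the geometric direction; Corollary \ref{Cor-upgrade} and Proposition \ref{Prop-arith decompletion}(1) descend to $\Gamma_K$ acting over $R_{K_{\cyc}}[\frac{1}{p}]$. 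But the final step---that $R\Gamma(\Gamma_K,-)$ of a relative representation is computed by its arithmetic Sen operator---is exactly the analytic content of the conjecture, is established nowhere in the paper (Proposition \ref{Prop-arith decompletion}(2) only constructs $\phi_V$ and computes no cohomology; Theorem \ref{Thm-crys-vs-rep-abs} treats only $\frakX=\Spf(\calO_K)$), and you explicitly grant it. What you have is therefore a conditional reduction of one open conjecture to another open statement, not a proof; the obstacle you flag at the end is the whole theorem.

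Even as a conditional argument, the last step needs repair on two points. First, the asserted statement $R\Gamma(\Gamma_K,W)\simeq[W\xrightarrow{\phi_W}W]$ is false as literally written, for ring-theoretic reasons: the left side is a complex of $R[\frac{1}{p}]$-modules (e.g.\ $H^0(\Gamma_K,W)=W^{\Gamma_K}$), while the right side consists of $R_{K_{\cyc}}[\frac{1}{p}]$-modules; already the unit object $W=R_{K_{\cyc}}[\frac{1}{p}]$ (Galois action on coefficients, $\phi_W=0$) gives $W^{\Gamma_K}=R[\frac{1}{p}]\neq W=\Ker(\phi_W)$. The correct formulation must either use a descended $R[\frac{1}{p}]$-model of $W$ or carry the base change $-\otimes_{R[\frac{1}{p}]}R_{C}[\frac{1}{p}]$, which is precisely why that base change appears on the left-hand side of the conjecture. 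Second, your bookkeeping of twists imports the crystal-side normalization where it does not belong: for the arithmetic Higgs module $(H,\theta_H,\phi_H)$ attached to $V$, the bicomplex whose total complex is $\HIG(H,\theta_H,\phi_H)$ has vertical maps $\phi_H-i\cdot\id$ on the Tate-twisted terms $H\otimes\Omega^i_X(-i)$ (Definition \ref{Dfn-arithmetic Higgs module-G}); the Breuil--Kisin twists $\{-i\}$, the shifts $\phi_H+i\pi E'(\pi)\id$, and the identity $\phi_{\calL}=-\frac{\phi_{\calH}}{\pi E'(\pi)}$ of Conjecture \ref{Intro-Conjecture} all pertain to Hodge--Tate crystals and to the separate Conjecture \ref{Conj-local Sen comparison}, which is not needed here, since the present statement involves only a representation $V$ and its arithmetic Sen operator. (The two normalizations differ by the unit rescaling $\phi\mapsto-\frac{\phi}{\pi E'(\pi)}$ of the vertical differentials, which is harmless for the total complex, but this should be said rather than silently mixed, as it changes which auxiliary conjecture your argument actually depends on.)
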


 \subsubsection{Local inverse Simpson functor for enhanced Higgs modules}
 Now we go ahead to construct an inverse Simpson functor defined on the category of enhanced log Higgs modules. The first result is the following theorem.
 \begin{thm}\label{Thm-local inverse simpson-integral}
   There is a fully faithful functor 
   \[V:\HIG^{\log}_*(R)\to \Rep_{\Gamma}(R_{C})\]
   from the category of enhance Higgs modules over $R$ to the category of representations of $\Gamma$ over $R_{\Cp}$.
 \end{thm}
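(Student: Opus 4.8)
The plan is to realize $V$ as a composite of functors already constructed, rather than building it by hand. The quasi-inverse of the equivalence in Theorem \ref{Thm-HT crystal as enhanced Higgs field} attaches to every enhanced log Higgs module $(M,\theta_M,\phi_M)$ a Hodge--Tate crystal $\bM\in\Vect((R)_{\Prism,\log},\overline \calO_{\Prism})$. Restricting $\bM$ to the perfect site and using the equivalence of Theorem \ref{perfect site crystal} yields a $\Gamma$-representation $V(\bM)$, and, as recorded just after Theorem \ref{Thm-cocycle in explicit way}, this assignment factors through $\Rep_\Gamma(R_C)$. I would define $V$ to be this composite; explicitly, $V(M,\theta_M,\phi_M)=M\otimes_R R_C$, with the $\Gamma$-action read off from the cocycle (\ref{Equ-cocycle}): writing $\theta_M=\sum_{i}\Theta_i\otimes\dlog T_i\{-1\}$ and $\sigma=\gamma_1^{n_1}\cdots\gamma_d^{n_d}g$,
\[
U(\sigma)=\exp\Big(\lambda(1-\zeta_p)\sum_{i=1}^d n_i\Theta_i\Big)\big(1-c(g)\lambda(1-\zeta_p)\pi E'(\pi)\big)^{-\frac{\phi_M}{\pi E'(\pi)}}.
\]

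The one point that genuinely requires care is \emph{integrality}: one must check that $U(\sigma)$ has entries in $R_C$, so that the target is $\Rep_\Gamma(R_C)$ and not merely its rational version $\Rep_\Gamma(R_C[\tfrac1p])$. This is where the divided-power structure underlying Lemma \ref{pd-poly absolute} enters. The exponential factor is a finite sum since each $\Theta_i$ is nilpotent, and its $k$-th term carries the coefficient $\lambda^k(1-\zeta_p)^k/k!$, which is integral because $v_p\big((1-\zeta_p)^k/k!\big)=s_p(k)/(p-1)\ge 0$, where $s_p(k)$ is the sum of the base-$p$ digits of $k$. The binomial factor expands as $\sum_{n\ge0}\frac{(c(g)\lambda(1-\zeta_p))^n}{n!}\prod_{i=0}^{n-1}(\phi_M+i\pi E'(\pi))$, whose terms are integral for the same reason and tend to $0$ by the topological nilpotence $\lim_{n}\prod_{i=0}^n(\phi_M+i\pi E'(\pi))=0$ built into the definition of $\HIG^{\log}_*(R)$. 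As a sanity check I would also verify directly the cocycle relation $U(\sigma\tau)=U(\sigma)\,\sigma(U(\tau))$; it is guaranteed by the provenance of $U$ from a genuine stratification (Proposition \ref{matrix of stratification}), but is worth confirming from the closed formula above.

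Finally, full faithfulness is formal: the first leg of the composite is an equivalence by Theorem \ref{Thm-HT crystal as enhanced Higgs field}, and the second leg $\Vect((R)_{\Prism,\log},\overline\calO_{\Prism})\to\Rep_\Gamma(R_C)$ is fully faithful by Proposition \ref{Prop-0}, so the composite is fully faithful. Preservation of tensor products and dualities, which will be convenient downstream, likewise follows from the corresponding properties of the two constituent functors. I expect the only real obstacle to be the integrality bookkeeping of the previous paragraph; once that is in place, the statement reduces to composing results already established.
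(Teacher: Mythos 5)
Your proposal is correct and follows essentially the same route as the paper: the functor is defined as the composite through $\Vect((R)_{\Prism,\log},\overline\calO_{\Prism})$ and the perfect-site equivalence, factoring through $\Rep_\Gamma(R_C)$ via the explicit cocycle of Theorem \ref{Thm-cocycle in explicit way}, with full faithfulness deduced from Theorem \ref{Thm-HT crystal as enhanced Higgs field} and Proposition \ref{Prop-0}. Your integrality bookkeeping (the estimate $v_p((1-\zeta_p)^k/k!)=s_p(k)/(p-1)\ge 0$ together with nilpotence of the $\Theta_i$ and topological nilpotence of $\phi_M$) is a correct spelling-out of a point the paper treats as immediate from the divided-power form of the stratification.
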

 \begin{proof}
   Consider the composition of functors
   \[\HIG_*^{\log}(R)\xrightarrow{\simeq} \Vect((R)_{\Prism},\overline \calO_{\Prism})\to \Vect((R)_{\Prism}^{\perf},\overline \calO_{\Prism}[\frac{1}{p}])\xrightarrow{\simeq}\Rep_{\Gamma}(\widehat R_{\infty}[\frac{1}{p}]).\]
   Then by the explicit description of $\Gamma$-action given in Theorem \ref{Thm-cocycle in explicit way}, the above composition factors over $\Rep_{\Gamma}(R_{C})\to\Rep_{\Gamma}(\widehat R_{\infty}[\frac{1}{p}])$ and hence induces a functor
   \[V:\HIG_*^{\log}(R)\to\Rep_{\Gamma}(R_{C}).\]
   Then the result is a consequence of Theorem \ref{Thm-HT crystal as enhanced Higgs field} combined with Proposition \ref{Prop-0}.
 \end{proof}
 We also have the rational version of the above theorem.
 \begin{thm}\label{Thm-local inverse Simpson-rational}
   There is a fully faithful functor 
   \[V:\HIG^{\log}_*(R[\frac{1}{p}])\to \Rep_{\Gamma}(\widehat R_{\infty}[\frac{1}{p}])\]
   from the category of enhance log Higgs modules over $R[\frac{1}{p}]$ to the category of representations of $\Gamma$ over $\widehat R_{\infty}[\frac{1}{p}]$. Moreover, let $V(M):=V(M,\theta_M,\phi_M)$ be the representation associated with the enhance log Higgs module $(M,\theta_M,\phi_M)$. Then we have a quasi-isomorphism
   \[R\Gamma(\Gamma_{\geo},V(M))\simeq \HIG(M\otimes_{R}R_{C},\theta_M)\]
   which is compatible with $G_K$-actions.
 \end{thm}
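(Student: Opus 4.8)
The plan is to obtain $V$ as the rational analogue of the integral functor of Theorem \ref{Thm-local inverse simpson-integral}, and then to read off the cohomology from the local Simpson correspondence. First I would define $V$ as the composite
\[
\HIG^{\log}_*(R[\tfrac1p])\xrightarrow{\ \simeq\ }\Vect((R)_{\Prism,\log},\overline\calO_{\Prism}[\tfrac1p])\xrightarrow{\ R\ }\Vect((R)_{\Prism,\log}^{\perf},\overline\calO_{\Prism}[\tfrac1p])\xrightarrow{\ \simeq\ }\Rep_{\Gamma}(\widehat R_{\infty}[\tfrac1p]),
\]
where the first arrow is the rational equivalence of Theorem \ref{Thm-HT crystal as enhanced Higgs field} and the last is Theorem \ref{perfect site crystal}. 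Full faithfulness is then immediate: the first arrow is an equivalence, while the composite of the last two arrows is exactly the fully faithful functor of Theorem \ref{Thm-fully faithful-loc}. By Theorem \ref{Thm-cocycle in explicit way}, the representation $V(M)=V(M,\theta_M,\phi_M)$ is carried on $M\otimes_R\widehat R_{\infty}[\frac1p]$, and for $\sigma=\gamma_1^{n_1}\cdots\gamma_d^{n_d}g\in\Gamma$ the $\Gamma$-action is given by the cocycle $U(\sigma)$ of (\ref{Equ-cocycle}).

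For the cohomological comparison I would invoke the ``$C$-version'' of the local Simpson correspondence recorded in Remark \ref{Rmk-Local Simpson for generalized repn}, namely the equivalence $H\colon\Rep_{\Gamma}(\widehat R_{\infty}[\frac1p])\xrightarrow{\simeq}\HIG^{\nil}_{G_K}(R_C[\frac1p])$ together with the $G_K$-equivariant quasi-isomorphism $\RGamma(\Gamma_{\geo},V(M))\simeq\HIG(H(V(M)),\theta_{H(V(M))})$ supplied by Theorem \ref{Thm-local Simpson}(2). The whole statement then reduces to identifying $(H(V(M)),\theta_{H(V(M))})$ with $(M\otimes_R R_C,\theta_M)$ as an object of $\HIG^{\nil}_{G_K}(R_C[\frac1p])$; granting this, the displayed quasi-isomorphism and its $G_K$-compatibility follow at once from Theorem \ref{Thm-local Simpson}(2).

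To carry out this identification I would restrict the cocycle $U(\sigma)$ to $\Gamma_{\geo}$, i.e. set $g=e$ so that $c(g)=0$, which collapses $U$ to the purely geometric operator $\exp\!\big(\lambda(1-\zeta_p)\sum_{i=1}^d n_i\Theta_i\big)$, and compare this with the explicit quasi-inverse $V_{\cyc,\infty}(H,\theta_H)=(H\otimes_{R_{\hat K_{\cyc,\infty}}}S_{\cyc,\infty})^{\Theta_H=0}$ of Theorem \ref{Thm-local Simpson}(3). On the period ring $S_{\cyc,\infty}$ the generator $\gamma_i$ shifts $Y_i\mapsto Y_i+1$ while $\Theta=\sum_i\frac{\partial}{\partial Y_i}\otimes\frac{\dlog T_i}{t}$, so solving $\Theta_H=0$ expresses the $\Gamma_{\geo}$-action on $(M\otimes_R R_C\otimes S_{\cyc,\infty})^{\Theta_H=0}$ through $\exp$ of the matrices $\Theta_i$ of $\theta_M$. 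Matching the two $\exp$-of-shift actions, and thereby identifying $\theta_{H(V(M))}$ with $\theta_M$, is the same computation as in the good reduction case, which I would import from the proof of the corresponding statement in \cite{MW-c}; the residual action recorded by $\phi_M$ drops out of the $\Gamma_{\geo}$-direction and reappears as the $G_K$-structure on $R_C$, which is precisely the equivariance built into Theorem \ref{Thm-local Simpson}(2).

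The main obstacle is the normalization bookkeeping in this last step: one must check that the Simpson Higgs field is exactly $\theta_M$ rather than a nonzero scalar multiple of it. This hinges on reconciling the factor $\lambda(1-\zeta_p)$ appearing in the cocycle with the Tate-twist normalization $t$ of $\Theta$ and with the Breuil--Kisin twist $\{-1\}$ in the target of $\theta_M$; the relation $\lambda=\theta(\xi/E([\pi^{\flat}]))$ is exactly what makes the identification $\widehat\Omega^1_{R,\log}\{-1\}\cong\widehat\Omega^1_{R,\log}(-1)$ compatible, so that the two twists cancel and no spurious constant survives.
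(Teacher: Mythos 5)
Your proposal is correct and follows essentially the same route as the paper: the paper defines $V$ as exactly this composite and then refers the remaining verifications (full faithfulness via Theorem \ref{Thm-fully faithful-loc}, and the $G_K$-equivariant cohomological comparison via the local Simpson correspondence of Theorem \ref{Thm-local Simpson} applied to the cocycle of Theorem \ref{Thm-cocycle in explicit way}) to the proof of the good-reduction analogue in \cite{MW-c}. Your unfolding of that citation, including the identification of $(H(V(M)),\theta_{H(V(M))})$ with $(M\otimes_R R_C,\theta_M)$ up to the unit $\lambda(\zeta_p-1)$ absorbed by the twist identification, is precisely the content of that argument.
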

 \begin{proof}
   The functor $V$ is given by the composition
   \[\HIG_*^{\nil}(R[\frac{1}{p}])\xrightarrow{\simeq} \Vect((R)_{\Prism},\overline \calO_{\Prism}[\frac{1}{p}])\to \Vect((R)_{\Prism}^{\perf},\overline \calO_{\Prism}[\frac{1}{p}])\xrightarrow{\simeq}\Rep_{\Gamma}(\widehat R_{\infty}[\frac{1}{p}]).\]
   The proof then is the same as that of \cite[Theorem 2.51]{MW-c}.
  
 \end{proof}
 \begin{rmk}
   Theorem \ref{Thm-local inverse simpson-integral} and Theorem \ref{Thm-local inverse Simpson-rational} all depend on the choice of the framing $\Box$ on $R$. In the rational case, we know that $\Rep_{\Gamma}(\widehat R_{\infty}[\frac{1}{p}])$ is equivalent to the category $\Vect(X,\OX)$ of generalised representations on $X$. We will see later that this point of view will provide us with the possibility of getting a global functor.
    \end{rmk}

 In Theorem \ref{Thm-local inverse Simpson-rational}, the full faithfulness of the functor $V$ amounts to the isomorphism
 \[\rH^0(\HIG(M,\theta_M,\phi_M))\simeq \rH^0(\Gamma,V(M)).\]
 More generally, we can also ask whether the following conjecture is true.
 \begin{conj}\label{Conj-loc-relative Sen}
   There is a quasi-isomorphism 
   \[R\Gamma(\Gamma,V(M,\theta_M,\phi_M))\simeq \HIG(M,\theta_M,\phi_M).\]
 \end{conj}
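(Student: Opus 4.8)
The plan is to compute $R\Gamma(\Gamma, V(M))$ via the Hochschild--Serre spectral sequence attached to the extension $0 \to \Gamma_{\geo} \to \Gamma \to G_K \to 1$, thereby separating the geometric (Higgs) direction from the arithmetic (Sen) direction. First I would invoke Theorem \ref{Thm-local inverse Simpson-rational}, which already identifies the geometric cohomology by a $G_K$-equivariant quasi-isomorphism
\[R\Gamma(\Gamma_{\geo}, V(M)) \simeq \HIG(M \otimes_R R_C, \theta_M),\]
the right-hand side being the geometric Higgs complex $[M\otimes_R R_C \xrightarrow{\theta_M} M\otimes_R R_C \otimes_R \widehat\Omega^1_{R,\log}\{-1\} \xrightarrow{\theta_M} \cdots]$. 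Feeding this into Hochschild--Serre yields
\[R\Gamma(\Gamma, V(M)) \simeq R\Gamma\bigl(G_K,\, \HIG(M \otimes_R R_C, \theta_M)\bigr),\]
so the whole statement reduces to computing the $G_K$-cohomology of each term $M \otimes_R \widehat\Omega^i_{R,\log}\{-i\} \otimes_R R_C$, equipped with the horizontal maps induced by $\theta_M$.

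For the arithmetic step I would use relative Sen theory as packaged in Proposition \ref{Prop-arith decompletion}: after decompletion, the $\Gamma_K$-cohomology of a generalised representation over $R_C$ is computed by a two-term complex governed by its arithmetic Sen operator. The crucial point is to identify this operator on each graded piece. Granting Conjecture \ref{Intro-Conjecture}, i.e. that the arithmetic Sen operator on $M \otimes_R R_C$ equals $-\tfrac{\phi_M}{\pi E'(\pi)}$, the Breuil--Kisin twist by $\{-i\}$ shifts it to $-\tfrac{\phi_M + i\pi E'(\pi)}{\pi E'(\pi)}$, so that the $G_K$-cohomology of the $i$-th column becomes the two-term complex $[\bullet \xrightarrow{\phi_M + i\pi E'(\pi)} \bullet]$ (the unit $\pi E'(\pi)$ being harmless rationally). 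Assembling these into a double complex with horizontal differential $\theta_M$ and vertical differentials $\phi_M + i\pi E'(\pi)$ and passing to the total complex reproduces exactly $\HIG(M, \theta_M, \phi_M)$. As a consistency check, combining the resulting identity with Theorem \ref{Thm-local Prismatic cohomology} would recover the expected comparison $R\Gamma((R)_{\Prism,\log}, \bM) \simeq R\Gamma(\Gamma, V(\bM))$ between logarithmic prismatic cohomology and the Galois cohomology of the associated generalised representation, in perfect analogy with Theorem \ref{Thm-crys-vs-rep-abs}.

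The main obstacle is precisely this arithmetic input, namely Conjecture \ref{Intro-Conjecture}: one must know not only that the arithmetic Sen operator of $V(M)$ is $-\tfrac{\phi_M}{\pi E'(\pi)}$, but also that its cohomological incarnation is compatible with the Higgs grading in the relative setting. For $\frakX = \Spf(\calO_K)$ this is exactly Theorem \ref{Thm-crys-vs-rep-abs}, which rests on Gao's identification of the Sen operator in \cite[Theorem 4.3.3]{Gao}; the relative case requires a relative version of Kummer--Sen theory, which is the content the authors attribute to Gao in the footnote to Conjecture \ref{Intro-Conjecture} but do not develop here. A route that might partially circumvent a full relative Sen theory would be to prove the comparison with prismatic cohomology directly on the perfect site, using the identification of cosimplicial rings $\overline\calO_{\Prism}[\tfrac{1}{p}](A_{\infty}^{(\bullet)}) \cong \rC(\Gamma^{\bullet}, \widehat R_{\infty}[\tfrac{1}{p}])$ together with the explicit cocycle of Theorem \ref{Thm-cocycle in explicit way} and Theorem \ref{Thm-local Prismatic cohomology}; but pinning down the interaction between the two differentials still ultimately demands the same arithmetic identification, which is why the statement remains conjectural.
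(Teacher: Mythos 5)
The statement you were given is labelled a conjecture in the paper, and the paper contains no proof of it: the closest it comes is the unnumbered lemma at the end of Section 3, which records that Conjecture \ref{Conj-loc-relative Sen} follows from the conjunction of Conjecture \ref{Conj-loc-Sen-cohomology} and Conjecture \ref{Conj-local Sen comparison}. The paper's (conditional) derivation runs through the prismatic side: by Theorem \ref{Thm-local Prismatic cohomology} and Proposition \ref{Prop-local comparison}(1) one has $\RGamma_{\Prism}(\bM)\otimes_R R_{C}[\frac{1}{p}]\simeq \HIG(M,\theta_M,\phi_M)\otimes_R R_{C}[\frac{1}{p}]\simeq \HIG(M\otimes_R R_{C}[\frac{1}{p}],(\zeta_p-1)\lambda\theta_M,-\frac{1}{\pi E'(\pi)}\phi_M)$, the two cited conjectures identify the last complex with $\RGamma(\Gamma,V_{\infty}(\bM))\otimes_R R_{C}$, and one concludes by taking $G_K$-equivariants. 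Your proposal is likewise, and correctly, a conditional reduction rather than a proof, but it takes a genuinely different route: you run Hochschild--Serre for $0\to\Gamma_{\geo}\to\Gamma\to G_K\to 1$, settle the geometric direction unconditionally by the $G_K$-equivariant quasi-isomorphism $\RGamma(\Gamma_{\geo},V(M))\simeq\HIG(M\otimes_R R_C,\theta_M)$ of Theorem \ref{Thm-local inverse Simpson-rational}, and concentrate all the conjectural content in the column-wise arithmetic computation. This is arguably a sharper organisation than the paper's: it makes visible that the geometric half of Conjecture \ref{Conj-loc-Sen-cohomology} is already a theorem, and that the only genuinely open input is relative Kummer--Sen theory, whereas the paper's packaging bundles both halves into a single unproven statement. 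What the paper's route buys instead is a direct link to prismatic cohomology, so that the comparison $\RGamma_{\Prism}(\bM)\simeq\RGamma(\Gamma,V_{\infty}(\bM))$ comes out simultaneously rather than as an afterthought.

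One inaccuracy to flag: you describe Proposition \ref{Prop-arith decompletion} as saying that, after decompletion, the $\Gamma_K$-cohomology is computed by a two-term complex governed by the arithmetic Sen operator. The proposition asserts less: part (1) is the decompletion equivalence with its cohomology comparison, and part (2) is the existence and uniqueness of the arithmetic Sen operator. The assertion that this operator actually computes $\Gamma_K$-cohomology -- and does so compatibly with the horizontal differentials $\theta_M$, so that your double complex totalises to $\HIG(M,\theta_M,\phi_M)$ -- is not proved in the paper in the relative setting; in the $\frakX=\Spf(\calO_K)$ case it is Theorem \ref{Thm-crys-vs-rep-abs}, which rests on Gao's Sen-theoretic input. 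Since your final paragraph explicitly lists this compatibility as part of the obstacle, your self-assessment of where the argument stops being a proof is accurate; just be aware that the gap includes the cohomological statement itself, not only the identification of the operator as $-\frac{\phi_M}{\pi E'(\pi)}$.
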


 \subsection{A typical example}\label{local functors}
  As in the case of good reduction, we end this section by constructing two faithful functors from the category $\HIG^{\log}_*(R[\frac{1}{p}])$ to the category of $\HIG^{\rm arith}(R_{C}[\frac{1}{p}])$.
 
   \begin{prop}\label{Prop-local comparison}
    $(1)$ The functor $\HIG^{\log}_*(R[\frac{1}{p}])\to\HIG^{\rm arith}(R_{C}[\frac{1}{p}])$
     sending $(M,\theta_M,\phi_M)$ to $(M\otimes_RR_{C}[\frac{1}{p}],(\zeta_p-1)\lambda\theta_M,-\frac{1}{\pi E'(\pi)}\phi_M)$ is faithful and induces quasi-isomorphisms
     \[\HIG(M,\theta_M)\otimes_RR_{C}[\frac{1}{p}]\simeq \HIG(M\otimes_RR_{\Cp}[\frac{1}{p}],(\zeta_p-1)\lambda\theta_M)\]
   and 
   \[\HIG(M,\theta_M,\phi_M)\otimes_RR_{C}[\frac{1}{p}]\simeq \HIG(M\otimes_RR_{C}[\frac{1}{p}],(\zeta_p-1)\lambda\theta_M,-\frac{1}{\pi E'(\pi)}\phi_M).\]
   
   $(2)$ The functor $\HIG^{\log}_*(R[\frac{1}{p}])\to \HIG^{\nil}_{G_K}(R_{C}[\frac{1}{p}])$ sending $(M,\theta_M,\phi_M)$ to $(M\otimes_RR_{C}[\frac{1}{p}],(\zeta_p-1)\lambda\theta_M)$ together with the $G_K$-action on $H(M)$ by 
   \[g\mapsto (1-c(g)\lambda(1-\zeta_p)\pi E'(\pi))^{-\frac{\phi_M}{\pi E'(\pi)}}\]
   is fully faithful. Its composition with the faithful functor $\HIG^{\nil}_{G_K}(R_{C}[\frac{1}{p}])\to\HIG^{\rm arith}(R_{C}[\frac{1}{p}])$ then gives the second faithful functor desired.
   \end{prop}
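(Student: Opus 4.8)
The plan is to build both functors from machinery already in place and to reduce the real content to reading off the Simpson data from the explicit cocycle of Theorem \ref{Thm-cocycle in explicit way}. I would treat part $(2)$ first, since it carries the analytic input. Factor the $G_K$-Higgs functor as
\[\HIG^{\log}_*(R[\tfrac1p])\xrightarrow{\ V\ }\Rep_{\Gamma}(\widehat R_{\infty}[\tfrac1p])\xrightarrow{\ H\ }\HIG^{\nil}_{G_K}(R_C[\tfrac1p]),\]
where $V$ is the fully faithful functor of Theorem \ref{Thm-local inverse Simpson-rational} and $H$ is the equivalence furnished by the $C$-version of the local Simpson correspondence (Remark \ref{Rmk-Local Simpson for generalized repn}, Theorem \ref{Thm-local Simpson}). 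By Theorem \ref{Thm-cocycle in explicit way} the representation $V(M)$ is governed by the cocycle (\ref{Equ-cocycle}); feeding this into Theorem \ref{Thm-local Simpson}, the geometric factor $\exp(\lambda(1-\zeta_p)\sum_i n_i\Theta_i)$ corresponds under the Simpson dictionary to a Higgs field which, after converting the Breuil--Kisin twist $\{-1\}$ to the Tate twist $(-1)$ via the unit $\lambda$, is exactly $(\zeta_p-1)\lambda\theta_M$, while the arithmetic factor $(1-c(g)\lambda(1-\zeta_p)\pi E'(\pi))^{-\phi_M/\pi E'(\pi)}$ is precisely the claimed semilinear $G_K$-action. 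Full faithfulness of this composite is then immediate, since $V$ is fully faithful and $H$ is an equivalence; postcomposing with the faithful functor $\HIG^{\nil}_{G_K}(R_C[\tfrac1p])\to\HIG^{\rm arith}(R_C[\tfrac1p])$ yields the second faithful functor asserted.

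For part $(1)$ I would argue that the stated algebraic assignment is well defined and faithful by purely formal means. Writing $u=(\zeta_p-1)\lambda$ and using the defining relations of an enhanced log Higgs module recorded in Proposition \ref{matrix of stratification}, namely $[\theta_M,\phi_M]=\pi E'(\pi)\theta_M$ together with $\lim_{n}\prod_{i=0}^n(\phi_M+i\pi E'(\pi))=0$, a direct substitution gives $[u\theta_M,-\tfrac{1}{\pi E'(\pi)}\phi_M]=-u\theta_M$, and, since $\pi E'(\pi)$ is a unit after inverting $p$, the corresponding topological nilpotence for $-\tfrac{1}{\pi E'(\pi)}\phi_M$. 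Hence the target triple genuinely lies in $\HIG^{\rm arith}(R_C[\tfrac1p])$. Faithfulness follows because $R[\tfrac1p]\hookrightarrow R_C[\tfrac1p]$ is injective and $M$ is projective, so base change is injective on Hom-spaces and the rescaling of $\theta_M,\phi_M$ does not affect morphisms.

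The two quasi-isomorphisms of part $(1)$ I would realize as honest isomorphisms of complexes obtained by rescaling with units. Multiplying the term in Higgs degree $n$ by $u^{n}$ intertwines the base change of $\HIG(M,\theta_M)$ with $\HIG(M\otimes_RR_C[\tfrac1p],u\theta_M)$. For the enhanced complexes I would use the identity $-\tfrac{1}{\pi E'(\pi)}(\phi_M+i\pi E'(\pi))=-\tfrac{\phi_M}{\pi E'(\pi)}-i$, which shows that scaling the bottom row of the bicomplex (\ref{Equ-Higgs field with operator}) by the extra unit $-\tfrac{1}{\pi E'(\pi)}$ simultaneously matches the horizontal Higgs differentials and the vertical operators; as every scalar involved is a unit, this is an isomorphism of total complexes.

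The step I expect to be the main obstacle is the twist bookkeeping in part $(2)$: matching the normalization of the universal Higgs field $\Theta$ in (\ref{Equ-Theta-loc}) against the Breuil--Kisin-twisted field $\theta_M$, so that the constant surfacing from the Simpson correspondence is exactly $\lambda(\zeta_p-1)$ and not a unit-twisted variant, and likewise confirming that the arithmetic factor produces the stated $G_K$-action verbatim. Once these identifications are pinned down the remaining assertions are, as indicated above, formal consequences of Theorems \ref{Thm-local inverse Simpson-rational}, \ref{Thm-local Simpson} and Proposition \ref{matrix of stratification}.
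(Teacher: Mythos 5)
Your proposal is correct. One thing you could not have known: the paper states Proposition \ref{Prop-local comparison} with no proof at all — it is offered as the direct analogue of the corresponding good-reduction statements in \cite{MW-c} — so your write-up supplies an argument the paper leaves implicit, and it assembles it from exactly the ingredients the paper intends: Theorem \ref{Thm-cocycle in explicit way}, Theorem \ref{Thm-local Simpson} (in its $C$-version, Remark \ref{Rmk-Local Simpson for generalized repn}) and Theorem \ref{Thm-local inverse Simpson-rational} for part $(2)$, and elementary unit-rescaling for part $(1)$. Your algebra in part $(1)$ checks out: the relation $[\theta_M,\phi_M]=\pi E'(\pi)\theta_M$ becomes $[u\theta_M,-\frac{1}{\pi E'(\pi)}\phi_M]=-u\theta_M$ with $u=(\zeta_p-1)\lambda$, which is precisely the compatibility demanded in $\HIG^{\rm arith}$, and multiplying Higgs degree $n$ by $u^n$ (with the extra unit $-\frac{1}{\pi E'(\pi)}$ on the second row of the bicomplex) commutes with all differentials, giving honest isomorphisms of total complexes. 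The step you flag as the main obstacle is indeed where the content sits, and it goes through: the $\Gamma_{\geo}$-invariants of $V(M)\otimes_{\widehat R_{\infty}}S_{\cyc,\infty}$ are spanned by $\underline{e}\,\exp(-\lambda(1-\zeta_p)\sum_{i=1}^d Y_i\Theta_i)$, and applying the universal field (\ref{Equ-Theta-loc}) to this basis produces exactly the factor $(\zeta_p-1)\lambda$ on $\theta_M$, while the residual action of $g\in G_K$ on these invariants is the arithmetic factor of (\ref{Equ-cocycle}); this is the same computation as in the good-reduction case, and full faithfulness then follows formally since $V$ is fully faithful and $H$ is an equivalence.
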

 
 We guess these two faithful functors are actually the same. More precisely, we make the following conjecture.
  \begin{conj}\label{Conj-local Sen comparison}
     For a Hodge--Tate crystal $\bM$ with associated enhanced log Higgs module $(M,\theta_M,\phi_M)$ and representation $V_{\infty}(\bM)$ of $\Gamma$ over $\widehat R_{\infty}[\frac{1}{p}]$, the arithmetic Sen operator of $V_{\infty}(\bM)$ is $-\frac{\phi_M}{\pi E'(\pi)}$.
  \end{conj}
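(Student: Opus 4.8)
The plan is to reduce the computation of the arithmetic Sen operator to the purely arithmetic ($G_K$-)part of the explicit cocycle, and then to invoke a relative version of Gao's Kummer--Sen theory, exactly as in the proof of Theorem \ref{Thm-crys-vs-rep-abs} in the $\Spf(\calO_K)$ case. First I would record, via Theorem \ref{Thm-HT crystal as enhanced Higgs field} and Theorem \ref{Thm-cocycle in explicit way}, that the representation $V_{\infty}(\bM)$ attached to $\bM$ is given by the cocycle
\[
U(\sigma) = \exp\Bigl(\lambda(1-\zeta_p)\sum_{i=1}^d n_i\Theta_i\Bigr)\bigl(1-c(g)\lambda(1-\zeta_p)\pi E'(\pi)\bigr)^{-\frac{\phi_M}{\pi E'(\pi)}},\qquad \sigma=\gamma_1^{n_1}\cdots\gamma_d^{n_d}g.
\]
Passing through the local Simpson functor of Theorem \ref{Thm-local Simpson for generealised repn} (in its $C$-version, Remark \ref{Rmk-Local Simpson for generalized repn}) and the decompletion of Proposition \ref{Prop-arith decompletion}, the arithmetic Sen operator $\phi_{V_{\infty}(\bM)}$ is by definition the Sen operator of the residual $\Gamma_K$-action on the arithmetic Higgs module $H(V_{\infty}(\bM))$ over $R_{K_{\cyc}}[\frac{1}{p}]$.

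The key structural observation I would make is that the geometric Higgs field decouples from the arithmetic Sen operator. Indeed, the factor $\exp(\lambda(1-\zeta_p)\sum_i n_i\Theta_i)$ depends only on the geometric coordinates $(n_1,\dots,n_d)$ and is absorbed, together with the rescaling $g(Y_j)=\chi(g)^{-1}Y_j$ on the period ring $S_{\cyc,\infty}$, into the Higgs field $\theta_H$ and the Breuil--Kisin grading when one forms $H=(V_{\infty}(\bM)\otimes S_{\cyc,\infty})^{\Gamma_{\geo}}$. Consequently the $\Gamma_K$-action on $H$, modulo this grading, is governed entirely by the arithmetic factor $U(g)=(1-c(g)\lambda(1-\zeta_p)\pi E'(\pi))^{-\frac{\phi_M}{\pi E'(\pi)}}$, which has literally the same shape as the $G_K$-cocycle computed in Theorem \ref{Thm-crys-vs-rep-abs} for $\Spf(\calO_K)$, now with $\phi_M\in\End_R(M)$ in place of the $\calO_K$-linear endomorphism there. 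Thus the problem is reduced to extracting the cyclotomic Sen operator from a cocycle of this exact form, relatively over $R_{K_{\cyc}}[\frac{1}{p}]$.

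To finish I would import a relative Kummer--Sen comparison. The subtlety is that $U(g)$ is expressed through the Kummer cocycle $c(g)$ (normalised by $g(\pi^{\flat})=\epsilon^{c(g)}\pi^{\flat}$), whereas the arithmetic Sen operator is defined through the cyclotomic character $\chi$ via $g=\exp(\log\chi(g)\,\phi_{V_{\infty}(\bM)})$ on $\Gamma_{K(\zeta_{p^n})}$ for $n\gg 0$ (Proposition \ref{Prop-arith decompletion}). Differentiating $U(g)$ along the Kummer direction produces the infinitesimal operator $\lambda(1-\zeta_p)\phi_M$; the content of relative Kummer--Sen theory is that, on $\widehat G_K$-representations over $R_{\widehat K_{\cyc,\infty}}[\frac{1}{p}]$, the Kummer and cyclotomic directions differ by a universal scalar depending only on the arithmetic of $K$ (the periods $t$, $\lambda$, and $E'(\pi)$), compatibly with the decompletion of Proposition \ref{Prop-arith decompletion}. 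Since this scalar is already pinned down by the $\Spf(\calO_K)$ computation of Theorem \ref{Thm-crys-vs-rep-abs}, which is recovered by restriction along $\calO_K\to R$, dividing out yields $\phi_{V_{\infty}(\bM)}=-\frac{\phi_M}{\pi E'(\pi)}$.

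The main obstacle is exactly this last input: establishing the relative Kummer--Sen comparison relating $c$ and $\chi$ for $\widehat G_K$-representations over $R_{\widehat K_{\cyc,\infty}}[\frac{1}{p}]$ and verifying its compatibility with the decompletion to $R_{K_{\cyc}}[\frac{1}{p}]$. In the $\Spf(\calO_K)$ case this is Gao's Theorem 4.3.3, and the footnote to Conjecture \ref{Intro-Conjecture} indicates that the required relative generalisation is available; carrying it out in the semi-stable setting --- where $H$ is a genuine bundle over $R_{K_{\cyc}}[\frac{1}{p}]$ rather than a finite-dimensional $K$-vector space, and where one must control the interaction of the Kummer descent with the relative coordinates $T_i$ --- is the crux of a complete proof.
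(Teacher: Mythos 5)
First, a point of calibration: the paper contains no proof of this statement. Conjecture \ref{Conj-local Sen comparison} is left open; the authors prove it only in the case $\frakX=\Spf(\calO_K)$ (Theorem \ref{Thm-crys-vs-rep-abs}, via the proof of \cite[Theorem 4.3.3]{Gao}), and for the general semi-stable case they explicitly decline to attack it, recording only in a footnote that Hui Gao could confirm it using a relative version of Kummer--Sen theory. So your proposal cannot be measured against an in-paper argument; it can only be assessed on its own terms. On those terms, the reductions you perform are all genuinely backed by the paper: the cocycle formula is Theorem \ref{Thm-cocycle in explicit way}; the claim that after applying the local Simpson functor the residual $G_K$-action on $H(V_{\infty}(\bM))=M\otimes_R R_{C}$ is exactly the arithmetic factor $(1-c(g)\lambda(1-\zeta_p)\pi E'(\pi))^{-\phi_M/(\pi E'(\pi))}$ is the content of the commutative diagram in Theorem \ref{Thm-HT crystal as Higgs bundles-G} (i.e.\ $H\circ R=F_1\circ M$, with $F_1$ defined by precisely this action, cf.\ Proposition \ref{Prop-local comparison}(2)); and your infinitesimal computation that differentiating $U(g)$ in the Kummer direction yields $\lambda(1-\zeta_p)\phi_M$ is correct. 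You have thus correctly reduced the conjecture to: extract the cyclotomic Sen operator, in the sense of Proposition \ref{Prop-arith decompletion}(2), from a $G_K$-action expressed through the Kummer cocycle $c(g)$.

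That last step is a genuine gap, and your proposed patch for it does not work as stated. You assert that on $\widehat G_K$-representations over $R_{\hat K_{\cyc,\infty}}[\frac{1}{p}]$ the Kummer and cyclotomic directions differ by a universal scalar ``already pinned down by the $\Spf(\calO_K)$ computation \dots recovered by restriction along $\calO_K\to R$.'' Restriction along $\calO_K\to R$ only controls representations pulled back from the base; it does not by itself yield the comparison for an arbitrary $V_{\infty}(\bM)$, where $\phi_M$ is a general $R$-linear endomorphism, $c(g)$ does not factor through $\Gamma_K$, and the untwisting along the Kummer direction must be carried out over $R_{\hat K_{\cyc,\infty}}[\frac{1}{p}]$ compatibly with the $\Gamma_{\geo}$-invariants, the twisted $\Gamma_K$-structure on the period ring $S_{\cyc,\infty}$ (the rule $g(Y_j)=\chi(g)^{-1}Y_j$), and the decompletion of Proposition \ref{Prop-arith decompletion}. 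That is exactly the relative Kummer--Sen theorem whose absence is the reason the statement is a conjecture in the paper; invoking it is not proving it. To your credit, you name this obstacle explicitly, so your proposal should be read as a correct and well-sourced reduction of Conjecture \ref{Conj-local Sen comparison} to the relative Kummer--Sen comparison --- matching the route the authors themselves indicate --- but not as a proof.
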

  
  The above conjecture together with Conjecture \ref{Conj-loc-Sen-cohomology} implies the following quasi-isomorphisms
  \begin{equation*}
      \begin{split}
          R\Gamma_{\Prism}(\bM)\otimes_RR_{C}[\frac{1}{p}]&\simeq \HIG(M,\theta_M,\phi_M)\otimes_RR_{C}[\frac{1}{p}]\\
          &\simeq \HIG(M\otimes_RR_{C}[\frac{1}{p}],(\zeta_p-1)\lambda\theta_M,-\frac{1}{\pi E'(\pi)}\phi_M)\\
          &\simeq R\Gamma(\Gamma,V_{\infty}(\bM))\otimes_RR_{C},
      \end{split}
  \end{equation*}
  where the first two quasi-isomorphisms follow from Theorem \ref{Thm-local Prismatic cohomology} and Proposition \ref{Prop-local comparison} (1). In particular, after taking $G_K$-equivariants, we get 
  \[R\Gamma_{\Prism}(\bM)\simeq\HIG(M,\theta_M,\phi_M)\simeq R\Gamma(\Gamma,V_{\infty}(\bM))\]
  which is exactly what we guess in Conjecture \ref{Conj-loc-relative Sen}. We state the result as a lemma.
  \begin{lem}
   Conjecture \ref{Conj-loc-relative Sen} is a consequence of Conjecture \ref{Conj-loc-Sen-cohomology} and Conjecture \ref{Conj-local Sen comparison}.
  \end{lem}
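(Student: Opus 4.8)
The plan is to formalise the chain of quasi-isomorphisms sketched just above the lemma, the only real work being to keep track of the $G_K$-equivariance so that the final descent step is legitimate. Fix a rational Hodge--Tate crystal $\bM$ with associated enhanced log Higgs module $(M,\theta_M,\phi_M)$ under the equivalence of Theorem \ref{Thm-HT crystal as enhanced Higgs field}, and let $V_\infty(\bM)=V(M,\theta_M,\phi_M)\in\Rep_{\Gamma}(\widehat R_{\infty}[\frac{1}{p}])$ be the attached representation from Theorem \ref{Thm-local inverse Simpson-rational}; the goal is the quasi-isomorphism $\RGamma(\Gamma,V_\infty(\bM))\simeq\HIG(M,\theta_M,\phi_M)$ of Conjecture \ref{Conj-loc-relative Sen}.

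First I would record the two unconditional inputs. By Theorem \ref{Thm-local Prismatic cohomology} we have $\RGamma_{\Prism}(\bM)\simeq\HIG(M,\theta_M,\phi_M)$, and by Proposition \ref{Prop-local comparison}(1) the base change to $R_{C}[\frac{1}{p}]$ is computed by $\HIG(M,\theta_M,\phi_M)\otimes_RR_{C}[\frac{1}{p}]\simeq\HIG(M\otimes_RR_{C}[\frac{1}{p}],(\zeta_p-1)\lambda\theta_M,-\frac{1}{\pi E'(\pi)}\phi_M)$. Next I would identify the right-hand side with the arithmetic Higgs module attached to $V_\infty(\bM)$: by Proposition \ref{Prop-local comparison}(2) (equivalently Remark \ref{Rmk-Local Simpson for generalized repn}) the underlying nilpotent $G_K$-Higgs module of $V_\infty(\bM)$ is $(M\otimes_RR_{C}[\frac{1}{p}],(\zeta_p-1)\lambda\theta_M)$, while, \emph{assuming} Conjecture \ref{Conj-local Sen comparison}, its arithmetic Sen operator is exactly $-\frac{\phi_M}{\pi E'(\pi)}$. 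Hence the arithmetic Higgs module $(H,\theta_H,\phi_H)$ produced from $V_\infty(\bM)$ by the local Simpson functor of Theorem \ref{Thm-local Simpson for generealised repn} is precisely $(M\otimes_RR_{C}[\frac{1}{p}],(\zeta_p-1)\lambda\theta_M,-\frac{1}{\pi E'(\pi)}\phi_M)$, and feeding this into Conjecture \ref{Conj-loc-Sen-cohomology} gives $\RGamma(\Gamma,V_\infty(\bM))\otimes_{R[\frac{1}{p}]}R_{C}[\frac{1}{p}]\simeq\HIG(M\otimes_RR_{C}[\frac{1}{p}],(\zeta_p-1)\lambda\theta_M,-\frac{1}{\pi E'(\pi)}\phi_M)$.

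Concatenating these identifications yields a quasi-isomorphism $\HIG(M,\theta_M,\phi_M)\otimes_RR_{C}[\frac{1}{p}]\simeq\RGamma(\Gamma,V_\infty(\bM))\otimes_RR_{C}[\frac{1}{p}]$, and it remains to remove the base change along $R[\frac{1}{p}]\to R_{C}[\frac{1}{p}]$. I would do this by the $G_K$-descent built into these statements: since $C/K$ is a faithfully flat $G_K$-Galois extension, taking (derived) $G_K$-invariants recovers the source from its $R_{C}[\frac{1}{p}]$-base change, so after applying $\RGamma(G_K,-)$ the displayed quasi-isomorphism descends to $\HIG(M,\theta_M,\phi_M)\simeq\RGamma(\Gamma,V_\infty(\bM))$, which is Conjecture \ref{Conj-loc-relative Sen}. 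The step I expect to be the genuine obstacle is precisely this last one: one must verify that the quasi-isomorphism of Conjecture \ref{Conj-loc-Sen-cohomology} and the comparison of Proposition \ref{Prop-local comparison}(1) are $G_K$-equivariant in a mutually compatible fashion, so that $\RGamma(G_K,-)$ may be applied uniformly and commutes with every term in the chain. Granting this $G_K$-equivariance, everything else is bookkeeping with the explicit cocycle formulae of Theorem \ref{Thm-cocycle in explicit way} and the comparison maps already established.
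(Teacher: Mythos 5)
Your proposal is correct and follows essentially the same route as the paper: the same chain of quasi-isomorphisms $\RGamma_{\Prism}(\bM)\otimes_R R_{C}[\frac{1}{p}]\simeq \HIG(M,\theta_M,\phi_M)\otimes_R R_{C}[\frac{1}{p}]\simeq \HIG(M\otimes_R R_{C}[\frac{1}{p}],(\zeta_p-1)\lambda\theta_M,-\frac{1}{\pi E'(\pi)}\phi_M)\simeq \RGamma(\Gamma,V_{\infty}(\bM))\otimes_R R_{C}[\frac{1}{p}]$, obtained from Theorem \ref{Thm-local Prismatic cohomology}, Proposition \ref{Prop-local comparison}(1), and the two assumed conjectures, followed by taking $G_K$-equivariants to descend from $R_{C}[\frac{1}{p}]$ to $R[\frac{1}{p}]$. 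Your explicit flagging of the $G_K$-equivariance compatibility needed for the final descent step is a point the paper passes over silently, but it does not change the argument.
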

 
  \section{Globalizations}
 \subsection{Definitions and Preliminaries}
 In this section, we globalise local constructions in the previous section. From now on, we always assume $\frakX$ is a semi-stable $p$-adic formal scheme over $\calO_K$ of relative dimension $d$ with rigid analytic generic fibre $X$. For any $p$-adic complete subfield $L$ of $C$, we denote $\frakX_L$ and $X_L$ the base-changes of $\frakX$ and $X$ along natural morphisms $\Spf(\calO_L)\to\Spf(\calO_K)$ and $\Spa(L,\calO_L)\to\Spa(K,\calO_K)$, respectively.

  \begin{dfn}\label{Dfn-enhanced Higgs module-G}
   By an {\bf enhanced log Higgs bundle} on $\frakX_{\et}$ with coefficients in $\calO_{\frakX}$, we mean a triple $(\calM,\theta_{\calM},\phi_{\calM})$ satisfying the following properties:
   
       $(1)$ $\calM$ is a finite locally free $\calO_{\frakX}$-module and \[\theta_{\calM}:\calM\to\calM\otimes_{\calO_{\frakX}}\widehat \Omega^1_{\frakX,\log}\{-1\}\]
       defines a nilpotent Higgs field on $\calM$, i.e. $\theta_M$ is a section of $\underline{\End}(\calM)\otimes_{\calO_{\frakX}}\widehat \Omega^1_{\frakX,\log}\{-1\}$ which is nilpotent and satisfies $\theta_{\calM}\wedge\theta_{\calM}=0$. Here ``$\bullet\{-1\}$'' denotes the Breuil--Kisin twist of $\bullet$. Denote by $\HIG(\calM,\theta_{\calM})$ the induced Higgs complex.
       
       $(2)$ $\phi_M\in\End(\calM)$ is ``topologically nilpotent'' in the following sense:
       \[\lim_{n\to+\infty}\prod_{i=0}^n(\phi_M+i\pi E'(\pi))=0\]
       and induces an endomorphism of $\HIG(\calM,\theta_M)$; that is, the following diagram
       \begin{equation*}
           \xymatrix@C=0.45cm{
           \calM \ar[rr]^{\theta_{\calM}\quad}\ar[d]^{\phi_{\calM}}&&\calM\otimes\widehat \Omega^1_{\frakX,\log}\{-1\} \ar[rr]^{\quad\theta_{\calM}}\ar[d]^{\phi_{\calM}+\pi E'(\pi)\id}&&\cdots\ar[rr]^{\theta_{\calM}\quad}&&\calM\otimes\widehat \Omega^d_{\frakX,\log}\{-d\}\ar[d]^{\phi_{\calM}+d\pi E'(\pi)\id}\\
           \calM \ar[rr]^{\theta_{\calM}\quad}&&\calM\otimes\widehat \Omega^1_{\frakX,\log}\{-1\} \ar[rr]^{\quad\theta_{\calM}}&&\cdots\ar[rr]^{\theta_{\calM}\quad}&&\calM\otimes\widehat \Omega^d_{\frakX,\log}\{-d\}
           }
       \end{equation*}
       commutes. Denote $\HIG(\calM,\theta_{\calM},\phi_{\calM})$ the total complex of this bicomplex.

   Denote by $\HIG^{\log}_*(\frakX,\calO_{\frakX})$ the category of enhanced log Higgs bundles over $\frakX$. Similarly, we define enhanced log Higgs bundles on $\frakX_{\et}$ with coefficients in $\calO_{\frakX}[\frac{1}{p}]$ and denote the corresponding category by $\HIG^{\log}_*(\frakX,\calO_{\frakX}[\frac{1}{p}]).$
 \end{dfn}
 \begin{dfn}[\emph{\cite[Definition 3.4]{MW-c}}]\label{Dfn-Higgs module with Galois-G}
   By an {\bf $G_K$-Higgs bundle} on $X_{C,\et}$, we mean a pair $(\calH,\theta_{\calH})$ such that 
   
        $(1)$ $\calH$ is a locally finite free $\calO_{X_{C}}$-module and 
       \[\theta_{\calH}:\calH\to\calH\otimes_{\calO_X}\Omega^1_{X}(-1)\]
       defines a nilpotent Higgs field on $\calH$, where $\bullet(-1)$ denotes the Tate twist of $\bullet$. Denote by $\HIG(\calH,\theta_{\calH})$ the induced Higgs complex.
       
       $(2)$ $\calH$ is equipped with a semi-linear continuous $G_K$-action such that $\theta_{\calH}$ is $G_K$-equivariant.
  
   Denote by $\HIG^{\nil}_{G_K}(X_{C})$ the category of $G_K$-Higgs bundles on $X_{C,\et}$. Similarly, one can define $\Gamma_K$-Higgs bundles on $X_{\hat K_{\cyc},\et}$ and denote the corresponding category by $\HIG^{\nil}_{\Gamma_K}(X_{\hat K_{\cyc}})$.
 \end{dfn}
 \begin{rmk}\label{Rmk-Higgs with Galois}
   By Faltings' almost purity theorem, there is a natural equivalence of categories
   \[\HIG^{\nil}_{\Gamma_K}(X_{\hat K_{\cyc}})\to\HIG^{\nil}_{G_K}(X_{C})\]
   induced by base-change. We shall use this equivalence freely and identify these two categories in the rest of this paper.
 \end{rmk}

 Recall the following definition appearing essentially in \cite[Section 3]{Pet}.
 \begin{dfn}[\emph{\cite[Definition 3.6]{MW-c}}]\label{Dfn-arithmetic Higgs module-G}
   Let $X_{K_{cyc}}$ be the ringed space whose underlying space coincides with $X$'s and the structure sheaf is $\calO_{X_{K_{\cyc}}}:=\calO_X\otimes_KK_{\cyc}$ (note that this notation does not contradict our convention at the beginning of this section as $K_{\cyc}$ is not $p$-complete).
   
   By an {\bf arithmetic Higgs bundle} on $X_{K_{\cyc},\et}$, we mean a triple $(\calH,\theta_{\calH},\phi_{\calH})$ such that 

      $(1)$ $\calH$ is a finite locally free $\calO_{X_{K_{\cyc}}}$-module and 
      \[\theta_{\calH}:\calH\to\calH\otimes_{\calO_{X}}\Omega^1_{X}(-1)\]
       defines a nilpotent Higgs field on $\calH$. Denote by $\HIG(\calH,\theta_{\calH})$ the induced Higgs complex.
       
      $(2)$ $\phi_{\calH}\in\End(\calH)$ induces an endomorphism of $\HIG(\calH,\theta_{\calH})$; that is, it makes the following diagram
       \begin{equation*}
           \xymatrix@C=0.45cm{
           \calH \ar[rr]^{\theta_{\calH}\quad}\ar[d]^{\phi_{\calH}}&&\calH\otimes_{\calO_X} \Omega^1_{X}(-1) \ar[rr]^{\quad\theta_{\calH}}\ar[d]^{\phi_{\calH}-\id}&&\cdots\ar[rr]^{\theta_{\calH}\quad}&&\calH\otimes_{\calO_X} \Omega^d_{X}(-d)\ar[d]^{\phi_{\calH}-d\cdot\id}\\
           \calH \ar[rr]^{\theta_{\calH}\quad}&&\calH\otimes_{\calO_X} \Omega^1_{X}(-1) \ar[rr]^{\quad\theta_{\calH}}&&\cdots\ar[rr]^{\theta_{\calH}\quad}&&\calH\otimes_{\calO_X} \Omega^d_{X}(-d)
           }
       \end{equation*}
       commute. Denote $\HIG(\calH,\theta_{\calH},\phi_{\calH})$ the total complex of this bicomplex.

   Denote $\HIG^{\rm arith}(X_{K_{\cyc}})$ the category of arithmetic Higgs bundles on $X_{K_{\cyc}}$.
 \end{dfn}

 Now we have the following ({\bf NOT necessarily commutative}) diagram  which is the global version of functors in Subsection \ref{local functors}
 
 \begin{equation}\label{Diag-relation among Higgss}
     \xymatrix@C=0.5cm{
     \HIG^{\log}_*(\frakX,\calO_{\frakX}[\frac{1}{p}])\ar[rrd]_{F_3}\ar[rr]^{F_1}&&\HIG^{\nil}_{G_K}(X_{C})\ar[d]^{F_2}\\
     &&\HIG^{\rm arith}(X_{K_{\cyc}}).
     }
 \end{equation}
 We specify the meaning of functors $F_i$'s as follows:
 
 The functor $F_3$ sends a Hodge--Tate crystal $(\calM,\theta_{\calM},\phi_M)$ in $\HIG^{\log}_*(\frakX,\calO_{\frakX}[\frac{1}{p}])$ to the arithmetic Higgs bundle
 \begin{equation}\label{F3}
     (\calH=\calM\otimes_{\calO_{\frakX}}\calO_{X_{K_{\cyc}}},\theta_{\calH}=\theta_{\calM}\otimes\id,\phi_{\calH} = -\frac{\phi_{\calM}}{\pi E'(\pi)}\otimes\id).
 \end{equation}
 
 The definition of $F_1$ is suggested by the local case (c.f. Proposition \ref{Prop-local comparison} (2)): It sends an enhanced log Higgs bundle $(\calM,\theta_{\calM},\phi_M)$ in $\HIG^{\log}_*(\frakX,\calO_{\frakX}[\frac{1}{p}])$ to the $G_K$-Higgs bundle 
 \begin{equation}\label{F1}
     (\calH=\calM\otimes_{\calO_{\frakX}}\calO_{X_{C}},\theta_{\calH}=\theta_{\calM}\otimes\lambda(\zeta_p-1)\id)
 \end{equation}
 with the $G_K$-action on $\calH$ being induced by the formulae
 \begin{equation}\label{F1-Galois action}
     g\mapsto (1-c(g)\lambda(1-\zeta_p)\pi E'(\pi))^{-\frac{\phi_M}{\pi E'(\pi)}}.
 \end{equation}
 \begin{lem}\label{Lem-F1 is ff}
   The $F_1$ given above is a well-defined fully faithful functor.
 \end{lem}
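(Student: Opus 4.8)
The plan is to reduce to the small affine case and there identify $F_1$ with a composition of functors already shown to be well-defined and fully faithful. Assume first that $\frakX = \Spf(R)$ is small affine with a fixed framing $\Box$. By Theorem \ref{Thm-local inverse Simpson-rational} the inverse Simpson functor $V\colon \HIG^{\log}_*(R[\frac{1}{p}]) \to \Rep_\Gamma(\widehat R_{\infty}[\frac{1}{p}])$ is fully faithful, and by Remark \ref{Rmk-Local Simpson for generalized repn} the local Simpson functor factors through an equivalence $H\colon \Rep_\Gamma(\widehat R_{\infty}[\frac{1}{p}]) \xrightarrow{\simeq} \HIG^{\nil}_{G_K}(R_C[\frac{1}{p}])$. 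I would therefore show that the composite $H\circ V$ agrees with the explicit assignment defining $F_1$. Granting this, well-definedness is automatic, since $H\circ V$ lands in $\HIG^{\nil}_{G_K}$, and full faithfulness follows at once, being the composition of a fully faithful functor with an equivalence.

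The core of the argument is thus the matching computation $H\circ V = F_1$. I would start from the explicit $\Gamma$-cocycle attached to $V(\calM,\theta_{\calM},\phi_M)$ computed in Theorem \ref{Thm-cocycle in explicit way}, namely
\[
U(\sigma) = \exp\Bigl(\lambda(1-\zeta_p)\sum_{i=1}^{d} n_i\Theta_i\Bigr)\bigl(1-c(g)\lambda(1-\zeta_p)\pi E'(\pi)\bigr)^{-\frac{\phi_M}{\pi E'(\pi)}}
\]
for $\sigma = \gamma_1^{n_1}\cdots\gamma_d^{n_d}g$, and feed it through $H$. The splitting $\Gamma\cong\Gamma_{\geo}\rtimes G_K$ separates the cocycle into a geometric and an arithmetic factor. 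The geometric factor $\exp(\lambda(1-\zeta_p)\sum_i n_i\Theta_i)$, i.e.\ the $\Gamma_{\geo}$-action $\gamma_i\mapsto\exp(\lambda(1-\zeta_p)\Theta_i)$, is matched by the Simpson functor of Theorem \ref{Thm-local Simpson} to the Higgs field $\theta_{\calM}\otimes\lambda(\zeta_p-1)\id$, after converting the Breuil--Kisin twist $\{-1\}$ into the Tate twist $(-1)$ and absorbing the sign $\lambda(1-\zeta_p)=-\lambda(\zeta_p-1)$; this reproduces $\theta_{\calH}$. The arithmetic factor, obtained by setting all $n_i=0$, is exactly the prescribed action $g\mapsto(1-c(g)\lambda(1-\zeta_p)\pi E'(\pi))^{-\phi_M/\pi E'(\pi)}$. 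This bookkeeping of twists and signs is the step I expect to be the most delicate, and it proceeds in parallel with the corresponding verification in the good-reduction case of \cite{MW-c}.

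To make well-definedness manifest independently of the equivalences, I would also check the three defining properties of a $G_K$-Higgs bundle directly. The series $(1-c(g)\lambda(1-\zeta_p)\pi E'(\pi))^{-\phi_M/\pi E'(\pi)}$ converges $p$-adically precisely because of the topological nilpotence $\lim_{n}\prod_{i=0}^n(\phi_M+i\pi E'(\pi))=0$ imposed in Definition \ref{Dfn-enhanced Higgs module-G}; nilpotence of $\theta_{\calH}$ is inherited from $\theta_{\calM}$; and $G_K$-equivariance of $\theta_{\calH}$ follows from the commutation relation $[\theta_i,\phi_M]=\pi E'(\pi)\theta_i$ encoded in the commuting diagram of Definition \ref{Dfn-enhanced Higgs module-G} (equivalently Proposition \ref{matrix of stratification}(b)), together with the Galois actions on $\lambda(\zeta_p-1)$ and on the Tate twist. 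The cocycle identity for $U$ is cleanest to inherit from the fact that $U$ arises from the genuine representation $V(\bM)$, and functoriality on morphisms is clear, since a morphism of enhanced log Higgs bundles commutes with $\theta_{\calM}$ and $\phi_M$ and hence with both $\theta_{\calH}$ and $U(g)$.

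Finally I would globalize. Both well-definedness and full faithfulness are local conditions: the construction of $F_1$ is compatible with \'etale localization on $\frakX$, and by Remark \ref{Rmk-indenpendent of chart} the resulting $G_K$-Higgs bundle is independent of the chosen chart, so the local data glue to the global $F_1$. Computing $\Hom$'s as global sections of $\Hom$-sheaves and using the isomorphism on $\Hom$-sheaves established over a small affine cover then yields full faithfulness on global sections, completing the proof.
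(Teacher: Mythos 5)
Your proposal is correct and follows essentially the same route as the paper: the paper's proof is a one-line reduction to \cite[Lemma 3.8]{MW-c}, which rests on Proposition \ref{Prop-local comparison}(2) --- precisely the local statement you establish by identifying $F_1$ on a small affine with $H\circ V$ via the explicit cocycle of Theorem \ref{Thm-cocycle in explicit way}, and then gluing. Your version simply spells out the argument the citation encapsulates (and the delicate step you flag does work out: using $U(g)\Theta_j U(g)^{-1}=(1-c(g)\lambda(1-\zeta_p)\pi E'(\pi))\Theta_j$ and $g(\lambda(1-\zeta_p))=\chi(g)\lambda(1-\zeta_p)(1-c(g)\lambda(1-\zeta_p)\pi E'(\pi))^{-1}$, one checks $g\bigl(\exp(-\lambda(1-\zeta_p)\sum_j Y_j\Theta_j)m\bigr)=\exp(-\lambda(1-\zeta_p)\sum_j Y_j\Theta_j)\,U(g)m$, so the $G_K$-action and Higgs field on $H(V(\calM))$ are exactly those prescribed by $F_1$).
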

 \begin{proof}
   The proof is the same as that of \cite[Lemma 3.8]{MW-c}, which relies on Proposition \ref{Prop-local comparison}.
 \end{proof}
 
 The functor $F_2$ is already defined in \cite{MW-c} (in the paragraph below \cite[Remark 3.10]{MW-c}), which again appears essentially in \cite[Proposition 3.2]{Pet}. 
 
\begin{thm}\label{Thm-enhanced to Galois to arith}
The functors $F_2$ and $F_3$ are faithful and $F_1$ is fully faithful.
 \end{thm}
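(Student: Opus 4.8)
The plan is to reduce the whole statement to the local results of Proposition~\ref{Prop-local comparison} together with the already-established Lemma~\ref{Lem-F1 is ff}, exploiting the fact that (full) faithfulness of all three functors can be tested locally on $\frakX_{\et}$.

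Full faithfulness of $F_1$ is exactly Lemma~\ref{Lem-F1 is ff}, so only the faithfulness of $F_2$ and $F_3$ remains to be shown. A morphism in each of the three categories is a morphism of the underlying locally free modules compatible with the Higgs field and with the $G_K$-action (resp. with the operator $\phi$); such a morphism vanishes if and only if it vanishes after restriction to an \'etale cover, and I will use a cover of $\frakX$ by small affine pieces $\Spf(R)$ as in Example~\ref{Exam-rel case}. On each such piece the three functors restrict to their local avatars: $F_1$ and $F_3$ act on objects by the base changes $\calM\mapsto\calM\otimes_{\calO_{\frakX}}\calO_{X_{C}}$ (with the explicit action~(\ref{F1-Galois action})) and $\calM\mapsto\calM\otimes_{\calO_{\frakX}}\calO_{X_{K_{\cyc}}}$, which are manifestly compatible with localization, while $F_2$ is compatible because the arithmetic Sen operator it produces is pinned down by the pointwise condition of Proposition~\ref{Prop-arith decompletion}~(2) and the underlying decompletion commutes with restriction.

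Granting this compatibility, faithfulness of $F_2$ and $F_3$ follows from the local case. The local form of $F_3$ is the functor $(M,\theta_M,\phi_M)\mapsto(M\otimes_RR_{C}[\frac{1}{p}],(\zeta_p-1)\lambda\theta_M,-\frac{1}{\pi E'(\pi)}\phi_M)$, which is faithful by Proposition~\ref{Prop-local comparison}~(1); the local form of $F_2$ is the faithful functor $\HIG^{\nil}_{G_K}(R_{C}[\frac{1}{p}])\to\HIG^{\rm arith}(R_{C}[\frac{1}{p}])$ of Proposition~\ref{Prop-local comparison}~(2), where one uses Remark~\ref{Rmk-Higgs with Galois} to match the target $\HIG^{\rm arith}(X_{K_{\cyc}})$ with its $C$-version. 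Thus a morphism annihilated by the global $F_2$ (resp. $F_3$) restricts, on each small affine, to a morphism annihilated by a faithful functor, hence is zero on the cover and therefore zero.

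The main obstacle is not any single computation but the bookkeeping required to check that the globally-defined functors $F_2$ and $F_3$ really do restrict, chart by chart, to the local functors of Proposition~\ref{Prop-local comparison}: concretely, that the period-sheaf construction underlying $F_2$ glues over charts (Remark~\ref{Rmk-indenpendent of chart}) and that the decompletion equivalence identifying $\HIG^{\rm arith}(X_{K_{\cyc}})$ with its $C$-version is compatible with these restrictions. Once this local-to-global compatibility is in place, faithfulness descends immediately.
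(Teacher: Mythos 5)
Your proposal is correct and follows essentially the same route as the paper: the paper's own proof is a one-line reduction to the local statements (Proposition \ref{Prop-local comparison}, together with Theorem \ref{Thm-fully faithful-loc}), with full faithfulness of $F_1$ already recorded in Lemma \ref{Lem-F1 is ff}, exactly as you argue. Your additional discussion of why faithfulness localizes and why $F_2$, $F_3$ are compatible with restriction to small affine charts just makes explicit the bookkeeping the paper leaves implicit.
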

 \begin{proof}
   This reduces to Theorem \ref{Thm-fully faithful-loc} and Proposition \ref{Prop-local comparison}.
 \end{proof}
 
 Now, there are two faithful functors 
 \[F_2\circ F_1,F_3:\HIG^{\log}_*(\frakX,\calO_{\frakX}[\frac{1}{p}])\to\HIG^{\rm arith}(X_{K_{\cyc}}).\] 
 We have the following conjecture which is true in the case of $\Spf(\calO_K)$:
 \begin{conj}\label{Conj-F3=F2F1}
    Keep notations as above. Then $F_3\simeq F_2\circ F_1$.
 \end{conj}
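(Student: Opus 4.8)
The plan is to reduce the global statement to a purely local assertion about an arithmetic Sen operator, and then to identify that assertion with the local comparison Conjecture \ref{Conj-local Sen comparison}. Since all three functors $F_1,F_2,F_3$ are assembled by gluing the local constructions of Subsection \ref{local functors} (the independence of the chart being guaranteed by Remark \ref{Rmk-indenpendent of chart}), a natural isomorphism $F_3\simeq F_2\circ F_1$ may be tested \'etale locally. So I would assume $\frakX=\Spf(R)$ is small affine with a fixed framing and work with a single enhanced log Higgs module $(\calM,\theta_{\calM},\phi_{\calM})=(M,\theta_M,\phi_M)$ over $R[\frac{1}{p}]$.

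First I would unwind both composites on the underlying Higgs bundles. By (\ref{F3}), $F_3$ outputs the triple $(M\otimes_R R_{K_{\cyc}}[\frac{1}{p}],\,\theta_M\otimes\id,\,-\frac{\phi_M}{\pi E'(\pi)}\otimes\id)$, while by Proposition \ref{Prop-local comparison} (2) the functor $F_1$ outputs the $G_K$-Higgs bundle $(M\otimes_R R_C[\frac{1}{p}],\,\lambda(\zeta_p-1)\theta_M)$ equipped with the explicit action (\ref{F1-Galois action}). The discrepancy between the two Higgs fields, namely the scalar $\lambda(\zeta_p-1)$, is exactly the comparison between the Breuil--Kisin twist $\widehat\Omega^1_{\frakX,\log}\{-1\}$ used in $\HIG^{\log}_*$ and the Tate twist $\Omega^1_X(-1)$ used in $\HIG^{\mathrm{arith}}$; once this twist-bookkeeping is inserted (and one uses that on the generic fibre the log differentials agree with the ordinary ones, the relation $T_0\cdots T_r=\pi$ being invertible there), the Higgs fields of $F_3$ and of $F_2\circ F_1$ coincide. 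Thus the entire content of the conjecture is the equality of the two ``Sen'' operators.

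Next I would recognize $F_2\circ F_1$ as the arithmetic Sen operator of the associated generalised representation. By Theorem \ref{Thm-cocycle in explicit way}, Theorem \ref{Thm-local inverse Simpson-rational} and Theorem \ref{Thm-local Simpson} (in the $C$-version of Remark \ref{Rmk-Local Simpson for generalized repn}), the local $G_K$-Higgs bundle $F_1(M,\theta_M,\phi_M)$ is, up to the evident twist identifications, precisely the image $H(V_{\infty}(\bM))$ of the generalised representation $V_{\infty}(\bM)\in\Rep_{\Gamma}(\widehat R_{\infty}[\frac{1}{p}])$: in the cocycle of Theorem \ref{Thm-cocycle in explicit way} the geometric directions $\gamma_1,\dots,\gamma_d$ feed the Higgs field $\lambda(\zeta_p-1)\theta_M$, while the arithmetic direction $g\in G_K$ feeds the action (\ref{F1-Galois action}). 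Applying $F_2$ replaces that $G_K$-action by its arithmetic Sen operator in the sense of Proposition \ref{Prop-arith decompletion} (2). Hence $F_2\circ F_1(\bM)=(\calH,\theta_{\calH},\phi)$ with $\phi$ the arithmetic Sen operator of $V_{\infty}(\bM)$, and comparing with $F_3(\bM)$ reduces the sought natural isomorphism to the single identity $\phi=-\frac{\phi_M}{\pi E'(\pi)}$, which is exactly Conjecture \ref{Conj-local Sen comparison}.

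Finally I would attack that identity by a relative Kummer Sen computation. The action (\ref{F1-Galois action}) depends on $g$ only through the Kummer cocycle $c(g)$ and the fixed endomorphism $\phi_M$, so the calculation is formally identical to the base case $\frakX=\Spf(\calO_K)$, where Theorem \ref{Thm-crys-vs-rep-abs} (building on \cite[Theorem 4.3.3]{Gao}) already gives $\Theta_V=-\frac{\phi_M}{\pi E'(\pi)}$. The remaining task is to descend the $\widehat G_K$-action along the Kummer--cyclotomic tower to a $\Gamma_K$-action along the cyclotomic tower and to read off the infinitesimal $\chi$-action, thereby converting the dependence on $c$ into the dependence on $\log\chi$ via Proposition \ref{Prop-arith decompletion} (2). \textbf{This descent is the main obstacle}: it requires a relative analogue over $R_{K_{\cyc}}[\frac{1}{p}]$ of the Kummer Sen theory of \cite{Gao}, showing that the operator governing the $\Gamma_{K(\zeta_{p^n})}$-action is independent of the geometric variables and equals $-\frac{\phi_M}{\pi E'(\pi)}$. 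Granting such a theory (which, as recorded in the footnote to Conjecture \ref{Intro-Conjecture}, Hui Gao can supply), the identity $\phi=-\frac{\phi_M}{\pi E'(\pi)}$ follows, hence $F_3\simeq F_2\circ F_1$; naturality together with compatibility with tensor products and dualities is then inherited from the corresponding properties of $F_1,F_2,F_3$ recorded in Theorem \ref{Thm-enhanced to Galois to arith}.
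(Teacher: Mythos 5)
There is a genuine gap, and it is worth being precise about where it sits. The statement you were asked to prove is an \emph{open conjecture} in the paper: the authors never prove it, they only (i) restate it as Conjecture \ref{Conj-HT vs Sen} (the arithmetic Sen operator of $V(\bM)$ equals $-\frac{\phi_{\calM}}{\pi E'(\pi)}$), (ii) observe that it is a global version of Conjecture \ref{Conj-local Sen comparison} checkable \'etale locally, (iii) confirm it only in the case $\frakX = \Spf(\calO_K)$ via Theorem \ref{Thm-crys-vs-rep-abs} (which rests on \cite[Theorem 4.3.3]{Gao}), and (iv) record in a footnote that Hui Gao could settle the general case by a \emph{relative} Kummer Sen theory not developed in the paper. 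Your first two steps — \'etale localization, unwinding $F_1,F_2,F_3$, matching the Higgs fields up to the twist factor $\lambda(\zeta_p-1)$, and reducing everything to the identity $\phi = -\frac{\phi_M}{\pi E'(\pi)}$ for the arithmetic Sen operator — faithfully reproduce exactly this reduction, and they are correct. But they carry no content beyond what the paper already asserts: the conjecture \emph{is}, by the paper's own restatement, that Sen-operator identity.

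The gap is your final step, and you name it yourself: ``granting such a theory \dots the identity follows'' is not a proof, it is an appeal to the very result whose absence makes this a conjecture. The claim that ``the calculation is formally identical to the base case $\frakX=\Spf(\calO_K)$'' is precisely what cannot be taken for granted. In the relative setting the cocycle of Theorem \ref{Thm-cocycle in explicit way} entangles the geometric directions $\gamma_1,\dots,\gamma_d$ (which feed $\theta_M$) with the arithmetic direction through the relation $g\gamma_i g^{-1} = \gamma_i^{\chi(g)}$, and the arithmetic Sen operator of Proposition \ref{Prop-arith decompletion} (2) is read off from the action of $\Gamma_{K(\zeta_{p^n})}$ along the \emph{cyclotomic} tower over $R_{K_{\cyc}}[\frac{1}{p}]$, whereas the action produced by $F_1$ is expressed through the \emph{Kummer} cocycle $c(g)$ over $R_C$. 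Converting one into the other — showing the Sen operator receives no contribution from the geometric part and that the $c$-dependence descends to a $\log\chi$-dependence — is the relative Kummer Sen theory itself; it is the entire mathematical content of Conjectures \ref{Conj-local Sen comparison} and \ref{Conj-HT vs Sen}, and neither you nor the paper supplies it. So your proposal is a correct and well-organized reduction, but as a proof it establishes nothing beyond the $\Spf(\calO_K)$ case already contained in Theorem \ref{Thm-crys-vs-rep-abs}.
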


 \subsection{The inverse Simpson functor for enhanced Higgs bundles}
 In this subsection, we construct an inverse Simpson functor from the category $\HIG^{\log}_*(\frakX,\calO_{\frakX}[\frac{1}{p}])$ to $\Vect(X,\OX)$ by using prismatic methods. The first main result is the following theorem, which establishes a bridge between Hodge--Tate crystals and generalised representations.
 \begin{thm}\label{Thm-perfect HT is generalised repn}
   There is a canonical equivalence between the category  $\Vect((\frakX)^{\perf}_{\Prism,\log},\overline \calO_{\Prism}[\frac{1}{p}])$ of Hodge--Tate crystals on $(\frakX)_{\Prism,\log}^{\perf}$ and the category $\Vect(X,\OX)$ of generalised representations on $X_{\proet}$.
 \end{thm}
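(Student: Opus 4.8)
The plan is to reduce to the affine case of Theorem~\ref{perfect site crystal} and glue, after first stripping off the log structure. First I would discard the log structure using Corollary~\ref{Cor-Equal perfect site}: the forgetful functor $(A,I,M,\delta_{\log})\mapsto(A,I)$ induces an equivalence of topoi $\Sh((\frakX)^{\perf}_{\Prism,\log})\simeq\Sh((\frakX)^{\perf}_{\Prism})$, and since $\overline\calO_{\Prism}$ is computed on either side by the same rule $(A,I)\mapsto A/I$, the two notions of rational Hodge--Tate crystal coincide:
\[
\Vect((\frakX)^{\perf}_{\Prism,\log},\overline\calO_{\Prism}[\frac{1}{p}])\simeq\Vect((\frakX)^{\perf}_{\Prism},\overline\calO_{\Prism}[\frac{1}{p}]).
\]
It then suffices to identify the right-hand side with $\Vect(X_{\proet},\OX)$, so that the log structure plays no role on the perfect site.

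Next I would treat the affine case. When $\frakX=\Spf(R)$ is small affine with a fixed framing, Theorem~\ref{perfect site crystal} identifies $\Vect((\frakX)^{\perf}_{\Prism,\log},\overline\calO_{\Prism}[\frac{1}{p}])$ with $\Rep_{\Gamma}(\widehat R_{\infty}[\frac{1}{p}])$ via evaluation on the perfect log prism $(\Ainf(\widehat R_{\infty}),(\xi),M_{\log},\delta_{\log})$. On the other hand, the perfectoid Galois cover $X_{\infty}\to X$ with group $\Gamma$ realises $\Rep_{\Gamma}(\widehat R_{\infty}[\frac{1}{p}])$ as the category of generalised representations $\Vect(X_{\proet},\OX)$ over the generic fibre $X$, by pro-\'etale descent (c.f. \cite{Sch-b} and \cite[Theorem~2.23]{MW-c}). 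Composing the two gives the local equivalence.

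To pass from local to global I would use \'etale descent. Both $\frakX\mapsto\Vect((\frakX)^{\perf}_{\Prism},\overline\calO_{\Prism}[\frac{1}{p}])$ and $\frakX\mapsto\Vect(X_{\proet},\OX)$ are stacks for the \'etale topology on $\frakX$: the former because the perfect prismatic site is functorial in $\frakX$ and restricts compatibly along \'etale maps, so its crystals satisfy descent, and the latter by (pro-)\'etale descent of $\OX$-bundles. It would then remain to check that the local equivalence of the previous step is independent of the framing and natural in \'etale maps $\Spf(R')\to\Spf(R)$; granting this, the local equivalences glue to the asserted global equivalence.

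The hard part will be exactly this last compatibility. The equivalence furnished by Theorem~\ref{perfect site crystal} is constructed from the perfect log prism attached to a chart, whereas $\Vect(X_{\proet},\OX)$ refers to no chart. I would resolve this by reformulating both sides, through Corollary~\ref{Cor-Equal perfect site}, as sheaves of module-categories on the common perfect prismatic topos, so that the framing intervenes only in the choice of a cover trivialising the descent and not in the resulting object; naturality in \'etale maps would then follow from the functoriality of the perfectoid cover $X_{\infty}\to X$ and of evaluation on perfect prisms.
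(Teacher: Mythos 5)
Your proposal is correct and takes essentially the same route as the paper: the paper's proof consists of citing Theorem \ref{perfect site crystal} (the affine case, which itself rests on Corollary \ref{Cor-Equal perfect site} via Lemma \ref{Lem-compare perfection}) together with \cite[Theorem 3.17]{MW-c}, which supplies exactly the chart-independent globalization you sketch. The only difference is expository — you spell out the \'etale descent and framing-independence argument that the paper outsources to \cite{MW-c}.
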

 \begin{proof}
   This follows from Lemma \ref{perfect site crystal} and \cite[Theorem 3.17]{MW-c}. \end{proof}
  \begin{rmk}
    By Remark \ref{rigidity} and \cite[Proposition 2.7]{BS-b}, the presheaf $\calO_{\Prism}[\frac{1}{\calI_{\Prism}}]^{\wedge}_p$ on $(\frakX,\calM_{\frakX})_{\Prism}$ sending $(A,I,M,\delta_{\log})$ to $A[\frac{1}{I}]^{\wedge}_p$
    is indeed a sheaf. Then one may define the category $\Vect^F((\frakX,\calM_{\frakX})_{\Prism},\calO_{\Prism}[\frac{1}{\calI_{\Prism}}]^{\wedge}_p)$ of Laurent $F$-crystals $(\bM,\varphi_{\bM})$ on $(\frakX,\calM_{\frakX})_{\Prism}$ by requiring that
    $\bM$ is a sheaf of $\calO_{\Prism}[\frac{1}{\calI_{\Prism}}]^{\wedge}_p$ such that the following conditions hold:
    
    $(1)$ For any $\frakA = (A,I,M,\delta_{\log})\in (\frakX,\calM_{\frakX})_{\Prism}$, $\bM(\frakA)$ is a finite projective $A[\frac{1}{I}]^{\wedge}_p$-module together with an $A[\frac{1}{I}]^{\wedge}_p$-linear isomorphism \[\varphi_{\bM}(\frakA):\bM(\frakA)\otimes_{A[\frac{1}{I}]^{\wedge}_p,\varphi}A[\frac{1}{I}]^{\wedge}_p\to\bM(\frakA).\]
          
    $(2)$ For any morphism $\frakA = (A,I,M,\delta_{\log})\to \frakB = (B,IB,N,\delta_{\log})$ in $(\frakX,\calM_{\frakX})_{\Prism}$, there is a canonical isomorphism
    \[\bM(\frakA)\otimes_{A[\frac{1}{I}]^{\wedge}_p}B[\frac{1}{I}]^{\wedge}_p\to \bM(\frakB),\]
    which is compatible with $\varphi_{\bM}$.
    
    Now, Lemma \ref{perfect site crystal} implies that there is a canonical equivalence of categories
    \[\Vect^{F}((\frakX)_{\Prism}^{\perf},\calO_{\Prism}[\frac{1}{\calI_{\Prism}}]^{\wedge}_p)\to \Vect^F((\frakX,\calM_{\frakX})^{\perf}_{\Prism},\calO_{\Prism}[\frac{1}{\calI_{\Prism}}]^{\wedge}_p),\]
    where both categories can be defined in the obvious way. Then \cite[Corollary 3.7]{BS-b} provides natural equivalences
    \[\Vect^{F}((\frakX)_{\Prism},\calO_{\Prism}[\frac{1}{\calI_{\Prism}}]^{\wedge}_p)\to\Vect^{F}((\frakX)_{\Prism}^{\perf},\calO_{\Prism}[\frac{1}{\calI_{\Prism}}]^{\wedge}_p)\to \rL\rS_{\Zp}(X),\]
    where $\rL\rS_{\Zp}(X)$ denotes the category of \'etale $\Zp$-local systems on $X$, the generic fibre of $\frakX$. In particular, we get an equivalence
    \[\Vect^F((\frakX,\calM_{\frakX})^{\perf}_{\Prism},\calO_{\Prism}[\frac{1}{\calI_{\Prism}}]^{\wedge}_p)\to\rL\rS_{\Zp}(X).\]
    
    Without checking details, we still believe that arguments for the proof of \cite[Theorem 3.1]{MW-a} give an equivalence\footnote{The authors know from Heng Du that he has established this equivalence.} 
    \[\Vect^F((\frakX,\calM_{\frakX})_{\Prism},\calO_{\Prism}[\frac{1}{\calI_{\Prism}}]^{\wedge}_p)\to\Vect^F((\frakX,\calM_{\frakX})^{\perf}_{\Prism},\calO_{\Prism}[\frac{1}{\calI_{\Prism}}]^{\wedge}_p).\] 
    In particular, this suggests a commutative diagram of equivalent categories
    \[
    \xymatrix@C=0.45cm{
    \Vect^{F}((\frakX)_{\Prism},\calO_{\Prism}[\frac{1}{\calI_{\Prism}}]^{\wedge}_p)\ar[dr]\ar[dd]\ar[rr]&&\Vect^{F}((\frakX)_{\Prism}^{\perf},\calO_{\Prism}[\frac{1}{\calI_{\Prism}}]^{\wedge}_p)\ar[dd]\ar[dl]\\
    & \rL\rS_{\Zp}(X)&\\
    \Vect^{F}((\frakX,\calM_{\frakX})_{\Prism},\calO_{\Prism}[\frac{1}{\calI_{\Prism}}]^{\wedge}_p)\ar[ur]\ar[rr]&&\Vect^{F}((\frakX,\calM_{\frakX})_{\Prism}^{\perf},\calO_{\Prism}[\frac{1}{\calI_{\Prism}}]^{\wedge}_p).\ar[ul]
    }
    \]
    We also believe that the approach to showing \cite[Theorem 4.11]{MW-a} also works in the logarithmic case; that is, for a Laurent $F$-crystal $\bM$ on $(\frakX,\calM_{\frakX})_{\Prism}$ with associated $\Zp$-local system $\calL$ on $X_{\et}$, there is a quasi-isomorphism
    \[\RGamma((\frakX,\calM_{\frakX})_{\Prism},\bM)^{\varphi=1}\simeq\RGamma(X_{\et},\calL).\]
    We will not deal with these problems in this paper.
  \end{rmk}
 
 Now we can get the following important theorem as in the case of good reduction, whose proof is the same as that of \cite[Theorem 3.19]{MW-c} after we have established all the needed results in the previous sections.
 
\begin{thm}\label{Thm-HT crystal as Higgs bundles-G}
   Let $\frakX$ be a semistable $p$-adic formal scheme over $\calO_K$. Then there is a canonical equivalence of the categories
   \[M:\Vect((\frakX)_{\Prism,\log},\overline \calO_{\Prism}[\frac{1}{p}])\to\HIG^{\log}_*(\frakX,\overline \calO_{\frakX}[\frac{1}{p}]),\]
   which makes the following diagram commute
   \begin{equation}\label{Diag-Main}
       \xymatrix@C=0.5cm{
         \Vect((\frakX)_{\Prism,\log},\overline \calO_{\Prism}[\frac{1}{p}])\ar[r]^R\ar[d]^M&\Vect((\frakX)_{\Prism,\log}^{\perf},\overline \calO_{\Prism}[\frac{1}{p}])\ar[r]^{\quad\cong}&\Vect(X,\OX)\ar[d]^{H}\\
         \HIG^{\log}_*(\frakX, \calO_{\frakX}[\frac{1}{p}])\ar[rr]^{F_1}&&\HIG^{\nil}_{G_K}(X_{C}).
       }
   \end{equation}
 \end{thm}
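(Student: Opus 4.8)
The plan is to reduce to the small affine case, invoke the local equivalence of Theorem \ref{Thm-HT crystal as enhanced Higgs field}, and then glue the local constructions by routing everything through the framing-independent Simpson functor $H$ while exploiting that $F_1$ is fully faithful. The guiding observation is that the prescribed functor $M$ is forced on us: once the top row and right column of (\ref{Diag-Main}) are assembled into a single canonical functor, $M$ must be its (unique) factorization through $F_1$.

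First I would make this composite precise and check it is canonical. The restriction $R$ to the perfect site is defined globally; the middle arrow is the equivalence of Theorem \ref{Thm-perfect HT is generalised repn}; and by Remark \ref{Rmk-indenpendent of chart} the Simpson functor $H\colon\Vect(X,\OX)\to\HIG^{\nil}_{G_K}(X_C)$ is independent of the choice of chart. Hence the composite $\Phi$ of these three arrows is a canonical, globally defined functor from rational Hodge--Tate crystals to $G_K$-Higgs bundles, involving no framing. Next, working \'etale-locally where $\frakX=\Spf(R)$ is small affine with a framing $\Box$, I would compare $\Phi$ with $F_1\circ M_{\mathrm{loc}}$, where $M_{\mathrm{loc}}$ is the local equivalence of Theorem \ref{Thm-HT crystal as enhanced Higgs field} producing an enhanced log Higgs module $(M,\theta_M,\phi_M)$. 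The explicit cocycle of Theorem \ref{Thm-cocycle in explicit way}, together with Proposition \ref{Prop-local comparison}(2), shows exactly that the generalised representation underlying $\Phi(\bM)$ carries the $G_K$-Higgs structure $(M\otimes\calO_{X_C},\ \theta_M\otimes\lambda(\zeta_p-1)\id)$ with $g\in G_K$ acting through $(1-c(g)\lambda(1-\zeta_p)\pi E'(\pi))^{-\phi_M/\pi E'(\pi)}$; that is, $\Phi(\bM)\cong F_1(M,\theta_M,\phi_M)$ locally.

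I would then deduce canonicity and gluing. Since $F_1$ is fully faithful (Lemma \ref{Lem-F1 is ff}), the triple $(M,\theta_M,\phi_M)$ is the unique preimage under $F_1$ of the canonical object $\Phi(\bM)|_{\Spf(R)}$, so it is independent of $\Box$ up to unique isomorphism. On overlaps the gluing isomorphisms of $\Phi(\bM)$ therefore lift uniquely to isomorphisms of the local enhanced log Higgs modules, and the cocycle condition holds because it holds after applying the faithful $F_1$. By \'etale descent for finite projective modules and their (log) Higgs and endomorphism data, the local triples glue to a global enhanced log Higgs bundle $(\calM,\theta_{\calM},\phi_{\calM})\in\HIG^{\log}_*(\frakX,\calO_{\frakX}[\tfrac1p])$; setting $M(\bM):=(\calM,\theta_{\calM},\phi_{\calM})$ defines the functor and makes (\ref{Diag-Main}) commute by construction. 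That $M$ is an equivalence is then immediate: it is one locally by Theorem \ref{Thm-HT crystal as enhanced Higgs field}, and both source and target satisfy \'etale descent over $\frakX_{\et}$, so the glued functor is a global equivalence.

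I expect the main obstacle to be precisely the framing-independence of the Higgs field $\theta_{\calM}$ (and, to a lesser extent, of $\phi_{\calM}$): the local functor $M_{\mathrm{loc}}$ genuinely depends on $\Box$, so one cannot glue the $\theta_M$'s naively. The only available leverage is that their images under $F_1$ coincide with the manifestly canonical $\Phi(\bM)$, and converting this into honest descent data requires the full faithfulness of $F_1$. For this reason Lemma \ref{Lem-F1 is ff}---and behind it Proposition \ref{Prop-local comparison} and the full faithfulness of Theorem \ref{Thm-fully faithful-loc}---is the technical heart of the argument, and I would treat its compatibility with base change and gluing as the step needing the most care.
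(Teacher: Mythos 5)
Your proposal is correct and follows essentially the same route as the paper: the paper's proof is precisely the argument of \cite[Theorem 3.19]{MW-c} transported to the semi-stable setting, namely gluing the chart-dependent local equivalences of Theorem \ref{Thm-HT crystal as enhanced Higgs field} by comparing them, via the explicit cocycle of Theorem \ref{Thm-cocycle in explicit way} and Proposition \ref{Prop-local comparison}, with the canonical composite $H\circ(\text{Theorem \ref{Thm-perfect HT is generalised repn}})\circ R$, and then using the full faithfulness of $F_1$ (Lemma \ref{Lem-F1 is ff}) together with \'etale descent to produce the global functor $M$. Your identification of the framing-independence of $\theta_{\calM}$ and $\phi_{\calM}$ as the crux, resolved exactly by full faithfulness of $F_1$, is the same technical heart as in the cited proof.
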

 
 \begin{rmk}
   The functor $H$ is already constructed in \cite[Theorem 3.13]{MW-c}, which is only concerned about the generic fiber. But in order to be compatible with all our local calculations, we can not use the toric charts in the good reduction case. The local results we needed are provided by Theorem \ref{Thm-local Simpson}. 
 \end{rmk}
 
 \begin{rmk}
   At first glance, it seems that we can get the inverse Simpson functor without passing through the prismatic world. Namely, we can compose the functors $F_1$ and $H^{-1}$. But we want to emphasize that the local version of the functor $F_1$ can not be detected without diving into the categories of Hodge--Tate crystals, which reflect the shape of the $G_K$-actions.
 \end{rmk}
 As a corollary of Thereom \ref{Thm-HT crystal as Higgs bundles-G}, the full faithfulness of $R$ is straightforward, as all other arrows in diagram \ref{Diag-Main} are fully faithful.
 \begin{cor}\label{Cor-R is ff}
   The restriction $\Vect((\frakX)_{\Prism,\log},\overline \calO_{\Prism}[\frac{1}{p}])\xrightarrow{R}\Vect((\frakX)_{\Prism,\log}^{\perf},\overline \calO_{\Prism}[\frac{1}{p}])$ is fully faithful.
 \end{cor}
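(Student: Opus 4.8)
The plan is to deduce the full faithfulness of $R$ formally from the commutative square established in Theorem \ref{Thm-HT crystal as Higgs bundles-G}, treating it as a diagram chase on Hom-sets. The essential observation is that in diagram (\ref{Diag-Main}) every arrow other than $R$ itself is already known to be fully faithful: the left vertical functor $M$ is an equivalence by Theorem \ref{Thm-HT crystal as Higgs bundles-G}; the upper-right horizontal functor $\Vect((\frakX)_{\Prism,\log}^{\perf},\overline \calO_{\Prism}[\frac{1}{p}])\xrightarrow{\cong}\Vect(X,\OX)$ is an equivalence by Theorem \ref{Thm-perfect HT is generalised repn}; the functor $H$ is fully faithful (indeed an equivalence, being the global Simpson functor of Theorem \ref{Thm-fully faithful-loc} and Remark \ref{Rmk-Local Simpson for generalized repn}); and $F_1$ is fully faithful by Lemma \ref{Lem-F1 is ff}. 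So nothing beyond these inputs should be needed.

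First I would set $R' := (\cong)\circ R$, the composite along the top row; since the second factor is an equivalence, $R$ is fully faithful if and only if $R'$ is. Next I would invoke commutativity of (\ref{Diag-Main}), that is $F_1\circ M\simeq H\circ R'$, and for any two rational Hodge--Tate crystals $\bM_1,\bM_2$ consider the chain of natural maps
\[
\Hom(\bM_1,\bM_2)\xrightarrow{M}\Hom(M\bM_1,M\bM_2)\xrightarrow{F_1}\Hom(F_1M\bM_1,F_1M\bM_2)=\Hom(HR'\bM_1,HR'\bM_2).
\]
The first map is a bijection because $M$ is an equivalence, and the second because $F_1$ is fully faithful; since $H$ is fully faithful, the right-hand group is canonically identified with $\Hom(R'\bM_1,R'\bM_2)$. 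Tracing a morphism $\phi\in\Hom(\bM_1,\bM_2)$ through these identifications, I would check that the resulting bijection $\Hom(\bM_1,\bM_2)\cong\Hom(R'\bM_1,R'\bM_2)$ is exactly the map induced by $R'$, which yields the full faithfulness of $R'$ and hence of $R$.

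Since all the genuine content has already been packaged into Theorem \ref{Thm-HT crystal as Higgs bundles-G} and its inputs, I do not expect any serious obstacle; the corollary is formal. The only point requiring a little care is the bookkeeping of naturality in the displayed chain, namely confirming that the composite of bijections really is the action of $R'$ on morphisms rather than some unrelated natural transformation. This is immediate once one records that each functor acts on morphisms via its own structure map and that the square of Theorem \ref{Thm-HT crystal as Higgs bundles-G} commutes compatibly with these actions, so that the outer identification intertwines the $M$- and $H$-structure maps through the $R'$-structure map.
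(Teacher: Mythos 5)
Your proof is correct and is exactly the paper's argument: the paper deduces Corollary \ref{Cor-R is ff} in a single sentence from the commutative diagram (\ref{Diag-Main}) of Theorem \ref{Thm-HT crystal as Higgs bundles-G}, observing that all arrows there other than $R$ are fully faithful (indeed $M$, the top-right equivalence, and $H$ are equivalences, and $F_1$ is fully faithful by Lemma \ref{Lem-F1 is ff}). Your explicit diagram chase --- reducing to $R'$ and using that $F$ is fully faithful whenever $G\circ F$ is fully faithful and $G$ is faithful --- merely spells out what the paper leaves as ``straightforward''.
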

 
 Since full faithfulness is a local property, by Corollary \ref{Cor-fully f-rel-local}, when $\frakX$ is smooth (so the log structure $\calM_{\frakX}$ is induced by the composition $\bN\xrightarrow{1\mapsto \pi}\calO_K\to\calO_{\frakX}$), we have
 \begin{cor}
   Assume $\frakX$ is smooth. There is a fully faithful functor
   \[\Vect((\frakX)_{\Prism},\overline \calO_{\Prism}[\frac{1}{p}]) \to \Vect((\frakX,\calM_{\frakX})_{\Prism},\overline \calO_{\Prism}[\frac{1}{p}])\]
   which is induced by restriction and fits into the following commutative diagram:
   \[
   \xymatrix@C=0.45cm{
     \Vect((\frakX)_{\Prism},\overline \calO_{\Prism}[\frac{1}{p}]) \ar[d]\ar[r]& \Vect((\frakX,\calM_{\frakX})_{\Prism},\overline \calO_{\Prism}[\frac{1}{p}])\ar[d]\\
     \HIG^{\nil}_*(\frakX,\calO_{\frakX}[\frac{1}{p}])\ar[r]&\HIG^{\log}_*(\frakX,\calO_{\frakX}[\frac{1}{p}]),
   }
   \]
   where the bottom functor sends an enhanced Higgs bundle $(\calH,\theta_{\calH},\phi_{\calH})$ to the enhanced log Higgs bundle $(\calH,\theta_{\calH},\pi\phi_{\calH})$.
 \end{cor}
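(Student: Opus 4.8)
The plan is to deduce the statement from the two global equivalences already established together with the local comparison of Corollary~\ref{Cor-fully f-rel-local}. First I would construct the top horizontal arrow: the forgetful rule $(A,I,M,\delta_{\log})\mapsto(A,I)$ defines a morphism of sites $(\frakX,\calM_{\frakX})_{\Prism}\to(\frakX)_{\Prism}$, and restriction along it sends a rational Hodge--Tate crystal $\bM$ on $(\frakX)_{\Prism}$ to the rule $(A,I,M,\delta_{\log})\mapsto\bM(A,I)$. Since the transition isomorphisms in Definition~\ref{Intro-Dfn-HTC}(2) only involve the underlying map of prisms, this rule again satisfies the crystal conditions, so it defines an object of $\Vect((\frakX,\calM_{\frakX})_{\Prism},\overline\calO_{\Prism}[\frac{1}{p}])$; this is the functor $R$. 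By Theorem~\ref{MW-c} the left vertical arrow is a canonical equivalence onto $\HIG^{\nil}_*(\frakX,\calO_{\frakX}[\frac{1}{p}])$, and by Theorem~\ref{Thm-HT crystal as Higgs bundles-G} the right vertical arrow is a canonical (framing-independent) equivalence onto $\HIG^{\log}_*(\frakX,\calO_{\frakX}[\frac{1}{p}])$. Writing $G$ for the bottom functor $(\calH,\theta_{\calH},\phi_{\calH})\mapsto(\calH,\theta_{\calH},\pi\phi_{\calH})$, it therefore suffices to show that the square commutes and that $G$ is fully faithful; full faithfulness of $R$ then follows formally, as $R$ becomes the composite of $G$ with two equivalences.

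For $G$ I would first check it is well defined: when $\frakX$ is smooth the log structure is pulled back from $(\calO_K,\bN)$, so $\widehat\Omega^1_{\frakX,\log}=\widehat\Omega^1_{\frakX}$ and $\theta_{\calH}$ is unchanged, while $[\theta_{\calH},\pi\phi_{\calH}]=\pi E'(\pi)\theta_{\calH}$ gives the required commutation and $\pi^{n+1}\prod_{i=0}^{n}(\phi_{\calH}+iE'(\pi))\to0$ gives the required topological nilpotence for $\pi\phi_{\calH}$. Full faithfulness is then immediate: a morphism in $\HIG^{\nil}_*(\frakX,\calO_{\frakX}[\frac{1}{p}])$ is an $\calO_{\frakX}[\frac{1}{p}]$-linear map commuting with $\theta_{\calH}$ and $\phi_{\calH}$, and since $\pi$ is a unit after inverting $p$, such a map commutes with $\phi_{\calH}$ exactly when it commutes with $\pi\phi_{\calH}$, so $G$ is bijective on Hom-sets.

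For the commutativity I would reduce to the local situation. Because the two vertical equivalences are canonical and $G$ is manifestly framing-independent, the asserted isomorphism $M_{\log}\circ R\simeq G\circ M_{\mathrm{nonlog}}$ may be tested on an \'etale cover of $\frakX$ by small affine opens $\Spf(R)$ smooth over $\calO_K$, i.e. the $r=0$ case of Example~\ref{Exam-rel case}, where $\calM_{\frakX}$ is induced by $\bN\xrightarrow{1\mapsto\pi}\calO_K\to R$. On such a chart the content is exactly Corollary~\ref{Cor-fully f-rel-local}: comparing the structure morphisms of Lemma~\ref{pd-poly absolute} (which carry the factor $\pi E'(\pi)$) with those of \cite[Lemma 2.2]{MW-c} (with $E'(\pi)$) shows that the local restriction functor carries an enhanced Higgs module $(M,\theta_M,\phi_M)$ to $(M,\theta_M,\pi\phi_M)$, matching $G$. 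The local natural isomorphisms then glue by the uniqueness built into the global functor $M$ of Theorem~\ref{Thm-HT crystal as Higgs bundles-G}.

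The main obstacle I anticipate is precisely this gluing step. The local equivalences a priori depend on the chosen framing, so I must argue that the chart-by-chart identification $\phi_{\log}=\pi\phi$ descends to a single global natural isomorphism of functors rather than merely holding on each open. This will be handled by invoking the framing-independence of the global equivalences $M_{\mathrm{nonlog}}$ and $M_{\log}$ and the (framing-independent) definition of $G$, so that both composites $M_{\log}\circ R$ and $G\circ M_{\mathrm{nonlog}}$ are canonical functors that agree on a cover, hence agree globally. Once commutativity and the full faithfulness of $G$ are in hand, the fully faithfulness of $R$ and the last sentence of the statement follow at once.
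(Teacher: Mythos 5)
Your proposal is correct and follows essentially the same route as the paper: the paper's own proof is a one-liner observing that full faithfulness and the commutativity of the diagram can be checked \'etale locally, where they reduce exactly to Corollary \ref{Cor-fully f-rel-local} (the small affine case, $r=0$ in Example \ref{Exam-rel case}). Your additional details — the explicit well-definedness and full faithfulness of the bottom functor $(\calH,\theta_{\calH},\phi_{\calH})\mapsto(\calH,\theta_{\calH},\pi\phi_{\calH})$ using invertibility of $\pi$ in $\calO_{\frakX}[\frac{1}{p}]$, and the gluing of the local identifications via canonicity of the global equivalences of Theorems \ref{MW-c} and \ref{Thm-HT crystal as Higgs bundles-G} — simply flesh out what the paper leaves implicit.
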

 \begin{rmk}\label{Rmk-Big comm diagram}
   By checking definitions of $F_1$ in both good and semi-stable reduction cases, we can easily see that we indeed have the following  commutative diagram:
   \[
     \xymatrix@C=0.45cm{
     \Vect((\frakX)_{\Prism},\overline \calO_{\Prism}[\frac{1}{p}]) \ar[r]^{\simeq\quad}& \Vect((\frakX,\calM_{\frakX})_{\Prism},\overline \calO_{\Prism}[\frac{1}{p}])\ar[r]^{\qquad\simeq}&\Vect(X,\widehat \calO_X)\ar[dd]^{\simeq}\\
     \Vect((\frakX)_{\Prism},\overline \calO_{\Prism}[\frac{1}{p}]) \ar[d]^{\simeq}\ar[u]\ar[r]& \Vect((\frakX,\calM_{\frakX})_{\Prism},\overline \calO_{\Prism}[\frac{1}{p}])\ar[d]^{\simeq}\ar[u]\\
     \HIG^{\nil}_*(\frakX,\calO_{\frakX}[\frac{1}{p}])\ar[r]&\HIG^{\log}_*(\frakX,\calO_{\frakX}[\frac{1}{p}])\ar[r]^{F_1}&\HIG^{\nil}_{G_K}(X_C).
   }
   \]
   Each arrow above is actually a full faithful functor and is an equivalence if it is labelled by ``$\simeq$'' .
 \end{rmk}
 
 Now, there are two ways to assign to a Hodge--Tate crystal an arithmetic Higgs bundles:
 \[F_3\circ M, F_2\circ H\circ R=F_2\circ F_1\circ M: \Vect((\frakX)_{\Prism,\log},\overline \calO_{\Prism}[\frac{1}{p}]) \to \HIG^{\rm arith}(X_{C}).\]
 Then the Conjecture \ref{Conj-F3=F2F1} can be restated as follows:
 \begin{conj}\label{Conj-HT vs Sen}
   For any rational Hodge-Tate crystal $\bM\in\Vect((\frakX)_{\Prism,\log},\overline \calO_{\Prism}[\frac{1}{p}])$, denote by $(\calM,\theta_{\calM},\phi_{\calM})$ and $V(\bM)$ the corresponding enhanced log Higgs bundle and generalised representation. Then the arithmetic Sen operator of $V(\bM)$ is $-\frac{\phi_{\calM}}{\pi E'(\pi)}$.
 \end{conj}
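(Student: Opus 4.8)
The plan is to reduce the global statement to an explicit local computation and then to invoke a relative form of Kummer--Sen theory. Since the arithmetic Sen operator of Proposition \ref{Prop-arith decompletion} is produced by a decompletion procedure along the cyclotomic tower that is compatible with \'etale localization on $\frakX$, and since both $\phi_{\calM}$ and the equivalence of Theorem \ref{Thm-HT crystal as Higgs bundles-G} are defined locally, it suffices to treat the case $\frakX = \Spf(R)$ small affine. There the enhanced log Higgs module $(M,\theta_M,\phi_M)$ attached to $\bM$ and the associated representation $V(\bM)\in\Rep_{\Gamma}(\widehat R_{\infty}[\frac{1}{p}])$ are linked by the explicit cocycle of Theorem \ref{Thm-cocycle in explicit way},
\[
U(\sigma) = \exp\big(\lambda(1-\zeta_p)\textstyle\sum_{i=1}^d n_i\Theta_i\big)\,(1-c(g)\lambda(1-\zeta_p)\pi E'(\pi))^{-\frac{\phi_M}{\pi E'(\pi)}},
\]
for $\sigma = \gamma_1^{n_1}\cdots\gamma_d^{n_d}g$, so the first task is to isolate the arithmetic part of this action.

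First I would record that the factor $\exp(\lambda(1-\zeta_p)\sum_i n_i\Theta_i)$ is exactly the $\Gamma_{\geo}$-contribution producing the Higgs field $\theta_{\calM}$ under $F_1$ and $H$; after forming the geometric descent $H(V(\bM)) = (V(\bM)\otimes S_{\cyc,\infty})^{\Gamma_{\geo}}$ as in Theorem \ref{Thm-local Simpson} and passing to a $\Gamma_K$-module over $R_{K_{\cyc}}[\frac{1}{p}]$ via Corollary \ref{Cor-upgrade} and Proposition \ref{Prop-arith decompletion}, the residual arithmetic action on $H(V(\bM))$ should be governed solely by the second factor $g\mapsto (1-c(g)\lambda(1-\zeta_p)\pi E'(\pi))^{-\frac{\phi_M}{\pi E'(\pi)}}$. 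Here one must use the semidirect relation $g\gamma_i g^{-1} = \gamma_i^{\chi(g)}$ to verify that this residual $G_K$-action descends well to the $\Gamma_{\geo}$-invariants and is independent of the geometric coordinates, so that the arithmetic Sen operator $\phi_{V(\bM)}$ is computed purely from this arithmetic cocycle. The decisive observation is that this cocycle has exactly the same shape as the one appearing in the base case $\Spf(\calO_K)$ treated in Theorem \ref{Thm-crys-vs-rep-abs}.

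The final and hardest step is the identification itself. By definition (Proposition \ref{Prop-arith decompletion}) the arithmetic Sen operator satisfies $g(v) = \exp(\log\chi(g)\,\phi_{V(\bM)})(v)$ for $g$ sufficiently deep in the cyclotomic tower, whereas the arithmetic cocycle $U(g)$ is written through the Kummer coordinate $c(g)$, determined by $g(\pi^{\flat}) = \epsilon^{c(g)}\pi^{\flat}$. Matching the two therefore hinges on the precise comparison between the Kummer and cyclotomic parametrizations near the identity, i.e. on expressing the asymptotic effect of $c(g)$ in terms of $\log\chi(g)$ on the relevant inertia subgroup; granting this comparison, a logarithmic differentiation of $(1-c(g)\lambda(1-\zeta_p)\pi E'(\pi))^{-\frac{\phi_M}{\pi E'(\pi)}}$ yields the operator $-\frac{\phi_M}{\pi E'(\pi)}$, which then globalizes to $-\frac{\phi_{\calM}}{\pi E'(\pi)}$. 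In the $\Spf(\calO_K)$ case this comparison is exactly \cite[Theorem 4.3.3]{Gao}, which underlies Theorem \ref{Thm-crys-vs-rep-abs}; for general semi-stable $\frakX$ I expect it to follow from the relative Kummer--Sen theory alluded to in the footnote to Conjecture \ref{Intro-Conjecture}. The main obstacle is precisely to make this Kummer-versus-cyclotomic comparison work uniformly in the relative (positive-dimensional) setting, where the arithmetic and geometric directions interact through $g\gamma_i g^{-1} = \gamma_i^{\chi(g)}$, so that the geometric coordinates $\gamma_i$ genuinely do not contaminate the arithmetic Sen operator.
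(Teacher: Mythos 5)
You should be aware that the statement you are proving is Conjecture \ref{Conj-HT vs Sen}: the paper itself offers no proof of it. It proves only the zero-dimensional case $\frakX=\Spf(\calO_K)$ (Theorem \ref{Thm-crys-vs-rep-abs}, resting on \cite[Theorem 4.3.3]{Gao}), and in the remark following the conjecture explicitly declines to attack the general case, recording only that Hui Gao could confirm it via a relative version of Kummer--Sen theory. Your proposal therefore cannot be judged against a paper proof; it must stand alone, and it does not: its decisive step --- the ``precise comparison between the Kummer and cyclotomic parametrizations'' in the relative setting --- is exactly the content of the conjecture, and you defer it to a relative Kummer--Sen theory that you ``expect'' but do not supply. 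Your reduction to the small affine case is legitimate (the paper itself notes the conjecture can be checked \'etale locally, and the Sen operator of Proposition \ref{Prop-arith decompletion} is compatible with localization), and the identification of the relevant cocycle from Theorem \ref{Thm-cocycle in explicit way} is correct; but naming the obstacle in the final paragraph is not overcoming it.

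Two points make the gap concrete rather than cosmetic. First, the arithmetic Sen operator is defined (Proposition \ref{Prop-arith decompletion}) only after descending to a $\Gamma_K$-representation over $R_{K_{\cyc}}[\frac{1}{p}]$, i.e.\ after descending along the Kummer direction of $\widehat G_K=\Gal(K_{\cyc,\infty}/K)$, on whose subgroup $\Zp(1)$ one has $\chi$ trivial but $c$ an isomorphism onto $\Zp$; moreover $c$ is only a $\chi$-twisted cocycle ($c(gh)=c(g)+\chi(g)c(h)$), not a homomorphism. So the ``logarithmic differentiation'' of $(1-c(g)\lambda(1-\zeta_p)\pi E'(\pi))^{-\phi_M/(\pi E'(\pi))}$ along the cyclotomic direction is not a formal manipulation: one must first produce the Kummer descent and show the descended $\Gamma_K$-action has infinitesimal generator $-\phi_M/(\pi E'(\pi))$, which even for $\frakX=\Spf(\calO_K)$ required the full argument of \cite[Theorem 4.3.3]{Gao}. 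Second, in positive relative dimension the geometric factor $\exp(\lambda(1-\zeta_p)\sum_{i=1}^d n_i\Theta_i)$ does not commute with the arithmetic factor, since $[\Theta_i,\phi_M]=\pi E'(\pi)\Theta_i$, and $\widehat G_K$ acts nontrivially on the period coordinates ($g(Y_j)=\chi(g)^{-1}Y_j$ in (\ref{Equ-action on Y})); hence after forming $H(V(\bM))=(V(\bM)\otimes S_{\cyc,\infty})^{\Gamma_{\geo}}$ the residual arithmetic action is \emph{not} literally the second factor of $U(\sigma)$, and verifying that the Sen operator on the invariants is unchanged requires an explicit computation in $S_{\cyc,\infty}$ that your proposal asserts ``should'' hold but never performs. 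Until those two steps are carried out, what you have is a correct strategy outline for the conjecture, not a proof.
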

 \begin{rmk}
   The conjecture is a global version of Conjecture \ref{Conj-local Sen comparison} and can be checked \'etale locally on $\frakX$. The authors knew from Hui Gao that he could confirm Conjecture \ref{Conj-HT vs Sen} by using a relative version of Kummer Sen theory appearing in \cite{Gao}. So we will not try to attack the conjecture in this paper.
 \end{rmk}
 Finally, we make the following conjecture on the global prismatic cohomology:
 \begin{conj}\label{Conj-global prism coho}
   Keep notations as above. Then there exist quasi-isomorphisms
   \[R\Gamma_{\Prism}(\bM)\simeq R\Gamma(\frakX_{\et},\HIG(\calM,\theta_{\calM},\phi_{\calM}))\simeq R\Gamma(X_{\proet},V(\bM)).\]
 \end{conj}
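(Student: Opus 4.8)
The plan is to prove the two asserted quasi-isomorphisms separately, reducing each to a local statement on a small affine cover $\{\Spf(R_i)\}$ of $\frakX$ and then glueing. The first quasi-isomorphism, $\RGamma_{\Prism}(\bM)\simeq\RGamma(\frakX_{\et},\HIG(\calM,\theta_{\calM},\phi_{\calM}))$, is essentially a sheafified and globalised form of Theorem \ref{Thm-local Prismatic cohomology}, and I expect it to be accessible with the results already in hand; the second, $\RGamma(\frakX_{\et},\HIG(\calM,\theta_{\calM},\phi_{\calM}))\simeq\RGamma(X_{\proet},V(\bM))$, is the genuinely conjectural heart, and I will explain precisely where it reduces to the open Conjectures \ref{Conj-HT vs Sen} and \ref{Conj-loc-Sen-cohomology}.

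For the first quasi-isomorphism I would work with the projection $\nu\colon\Sh((\frakX,\calM_{\frakX})_{\Prism})\to\Sh(\frakX_{\et})$. The key claim is the sheaf-level identification $\rR\nu_*\bM\simeq\HIG(\calM,\theta_{\calM},\phi_{\calM})$ in the derived category of $\calO_{\frakX}[\tfrac1p]$-modules on $\frakX_{\et}$. This can be checked \'etale-locally, where Theorem \ref{Thm-local Prismatic cohomology} computes the value on a small affine $\Spf(R)$ as the enhanced log Higgs complex $\HIG(M,\theta_M,\phi_M)$; since on an affine formal scheme a perfect complex (perfectness being guaranteed by the finiteness in Theorem \ref{Thm-relative coho dim} and its absolute analogue) is recovered from its global sections, one obtains $\rR\nu_*\bM|_{\Spf(R)}\simeq\HIG(\calM,\theta_{\calM},\phi_{\calM})|_{\Spf(R)}$. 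The main care is that the local identifications, a priori framing-dependent through Theorem \ref{Thm-HT crystal as Higgs bundles-G}, glue: because the target $\HIG(\calM,\theta_{\calM},\phi_{\calM})$ and the source $\rR\nu_*\bM$ are both intrinsically global and the comparison morphism is induced by the crystal structure of $\bM$ (hence functorial in restriction to opens), the local quasi-isomorphisms agree on overlaps and descend. Applying $\RGamma(\frakX_{\et},-)$ together with the composite-functor isomorphism $\RGamma_{\Prism}(\bM)\simeq\RGamma(\frakX_{\et},\rR\nu_*\bM)$ then yields the first quasi-isomorphism.

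For the second quasi-isomorphism I would descend $\RGamma(X_{\proet},V(\bM))$ along the pro-\'etale Galois cover $X_{\infty}\to X$, so that locally it is computed by continuous group cohomology $\RGamma(\Gamma,V_{\infty})$ with $V_{\infty}\in\Rep_{\Gamma}(\widehat R_{\infty}[\tfrac1p])$ the representation attached to $\bM$ by Theorem \ref{Thm-perfect HT is generalised repn}. Using the extension \eqref{Equ-exact sequence for Gal grp}, $0\to\Gamma_{\geo}\to\Gamma\to G_K\to1$, I would split the computation: the local Simpson correspondence (Theorem \ref{Thm-local Simpson}(2)) gives a $G_K$-equivariant quasi-isomorphism $\RGamma(\Gamma_{\geo},V_{\infty})\simeq\HIG(\calH,\theta_{\calH})$ onto the geometric Higgs complex, and Proposition \ref{Prop-local comparison}(1) identifies this, after the twist $\theta_{\calM}\mapsto(\zeta_p-1)\lambda\theta_{\calM}$, with $\HIG(\calM,\theta_{\calM})\otimes_R R_C[\tfrac1p]$. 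Feeding this into the Hochschild--Serre spectral sequence for the extension reduces the claim to showing that $\RGamma(G_K,\HIG(\calH,\theta_{\calH}))$ is computed by the Koszul (mapping-cone) complex of the arithmetic Sen operator, and that this arithmetic Koszul complex matches the $\phi_{\calM}$-direction of the enhanced bicomplex $\HIG(\calM,\theta_{\calM},\phi_{\calM})$ after base change, in accordance with Proposition \ref{Prop-local comparison}.

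The main obstacle is exactly this last matching, which is where the argument becomes conditional. It requires two inputs that remain open in this paper: the local cohomological comparison $\RGamma(\Gamma,V_{\infty})\otimes_{R[\frac1p]}R_C[\tfrac1p]\simeq\HIG(\calH,\theta_{\calH},\phi_{\calH})$ (Conjecture \ref{Conj-loc-Sen-cohomology}), and the identification of the arithmetic Sen operator $\phi_{\calL}$ of $V(\bM)$ with $-\tfrac{\phi_{\calM}}{\pi E'(\pi)}$ (Conjectures \ref{Conj-local Sen comparison} and \ref{Conj-HT vs Sen}). Granting these, the Sen operator on $\HIG(\calH,\theta_{\calH})$ governing the $G_K$-cohomology becomes precisely $-\tfrac{\phi_{\calM}}{\pi E'(\pi)}$, so the two-step complex in the arithmetic direction coincides with the bottom row of the bicomplex defining $\HIG(\calM,\theta_{\calM},\phi_{\calM})$; assembling the geometric and arithmetic directions gives $\RGamma(\Gamma,V_{\infty})\simeq\HIG(M,\theta_M,\phi_M)$ locally (this is Conjecture \ref{Conj-loc-relative Sen}), and glueing as in the first step then produces the global second quasi-isomorphism. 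I would establish the Sen-operator identification via a relative Kummer Sen theory in the spirit of \cite{Gao}, as indicated in the footnote to Conjecture \ref{Intro-Conjecture}, and this is the step I expect to require genuinely new work rather than a formal consequence of the machinery assembled above.
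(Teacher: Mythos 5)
You are ``proving'' Conjecture \ref{Conj-global prism coho}: the paper offers no proof of this statement, so there is no argument of the authors to compare yours against. What the paper does supply is precisely the reduction you rediscover for the second quasi-isomorphism: the closing lemma of Subsection \ref{local functors} shows that Conjecture \ref{Conj-loc-relative Sen} follows from Conjectures \ref{Conj-loc-Sen-cohomology} and \ref{Conj-local Sen comparison}, and the remark after Conjecture \ref{Conj-HT vs Sen} records that the Sen-operator identification is expected to come from a relative Kummer--Sen theory in the style of \cite{Gao}. Your route through the extension \eqref{Equ-exact sequence for Gal grp}, Theorem \ref{Thm-local Simpson}(2) and Proposition \ref{Prop-local comparison} is the same chain of identifications the paper composes in Subsection \ref{local functors}, so on this half your proposal is an accurate map of the open territory rather than a proof, and you say so honestly.

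The genuine gap is in the half you declare accessible, namely the first quasi-isomorphism. Theorem \ref{Thm-local Prismatic cohomology} is proved by a \v Cech--Alexander computation relative to a chosen framing $\Box$, and what it yields is an abstract quasi-isomorphism, not a canonical comparison morphism $\rR\nu_*\bM\to\HIG(\calM,\theta_{\calM},\phi_{\calM})$ defined prior to the choice of chart. Your glueing argument asserts that ``the comparison morphism is induced by the crystal structure of $\bM$,'' but no such morphism is constructed anywhere in the paper: only the equivalence of categories (Theorem \ref{Thm-HT crystal as Higgs bundles-G}) is shown to be chart-independent, via the period sheaf $\OC$ (Remark \ref{Rmk-indenpendent of chart}), not the cohomological comparison. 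To descend the local quasi-isomorphisms you would need to check that for two framings on an overlap the two \v Cech--Alexander identifications agree up to coherent homotopy, or else construct the comparison as a map of complexes of sheaves on the site; neither is done, and this is visibly why the authors keep even the first quasi-isomorphism inside the conjecture instead of stating it as a corollary of Theorem \ref{Thm-local Prismatic cohomology}. Note that the finiteness theorem following Theorem \ref{Thm-local Prismatic cohomology} extracts from the local computation only perfectness and tor-amplitude of $\rR\nu_*\bM$ --- exactly the chart-independent part --- which marks where the canonical content currently stops. The same canonicity issue resurfaces, unacknowledged, when you glue the local statement $\RGamma(\Gamma,V_{\infty})\simeq\HIG(M,\theta_M,\phi_M)$ of Conjecture \ref{Conj-loc-relative Sen} into the global second quasi-isomorphism.
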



\begin{thebibliography}{99}
 \bibitem[ALB19]{ALB} J. Ansch\"{u}tz, A.-C. Le Bras: {\it Prismatic Dieudonn\'e theory}, arxiv:1907.10525v3 (2019).
 
 \bibitem[Bha]{Bha} B. Bhatt: {\it Lecture 4: Perfect prism and perfectoid rings}, \href{http://www-personal.umich.edu/~bhattb/teaching/prismatic-columbia/lecture4-perfect-prisms-and-perfectoids.pdf}{http://www-personal.umich.edu/bhattb/teaching/prismatic-columbia/lecture4-perfect-prisms-and-perfectoids.pdf}.
 
 \bibitem[BJ]{BJ} B. Bhatt, A. J. de Jong: {\it Crystalline cohomology and de Rham cohomology},arxiv:1110.5001, (2011).
  
 \bibitem[BL22a]{BL-a} B. Bhatt, J. Lurie: {\it Absolute prismatic cohomology}, arXiv:2201.06120, (2022).
 
 \bibitem[BL22b]{BL-b} Bhatt, J. Lurie: {\it The prismatization of $p$-adic formal schemes}, arXiv:2201.06124, (2022).
 
 \bibitem[BMS18]{BMS-a} B. Bhatt, M. Morrow and P. Scholze: {\it Integral $p$-adic Hodge theory}, Publications math\'ematiques de l'IH\'ES volume 128, pages 219–397 (2018).
 
 \bibitem[BMS19]{BMS-b} B. Bhatt, M. Morrow and P. Scholze: {\it Topological Hochschild homology and integral $p$
-adic Hodge theory}, Publications math\'ematiques de l'IH\'ES volume 129, pages 199–310 (2019).
 
 \bibitem[BS19]{BS-a} B. Bhatt, P. Scholze: {\it Prisms and prismatic cohomology}, arXiv:1905.08229v3, (2019).
 
 \bibitem[BS21]{BS-b} B. Bhatt, P. Scholze: {\it Prismatic F-crystals and crystalline Galois representations}, arxiv:2106.14735v1, (2021).

% \bibitem[\v CS19]{CS} K. \v Cesnavi\v cius and P. Scholze: {Purity for flat cohomology}, arxiv:1912.10932v2, (2019).	
 
% \bibitem[DH21]{DH} G. Dorfsman-Hopkins: {Untilting line bundles on perfectoid spaces}, arxiv:2102.04354v2, (2021).

 \bibitem[DLMS22]{DLMS} H. Du, T. Liu, Y.-S. Moon, K. Shimizu: {\it Completed prismatic $F$-crystals and crystalline $\Zp$-local systems}, arxiv:2203.03444, (2022).
 
 \bibitem[DL21]{DL} H. Du, T. Liu: {\it A prismatic approach to $(\varphi,\hat G)$-modules and $F$-crystals}, arXiv:2107.12240v2, (2021).
 
 \bibitem[DLLZ18]{DLLZ} H. Diao, K.-W. Lan, R. Liu, X. Zhu: {\it Logarithmic Riemman-Hilbert correspondences for rigid varieties}, arXiv:1803.05786v2, (2018).
  
 \bibitem[Dri20]{Dri} V. Drinfeld: {\it Prismatization}, arxiv:2005.04746, (2020).
 
% \bibitem[EG19]{EG} M. Emerton and T. Gee:{\it Moduli stack of \'etale $(\varphi,\Gamma)$-modules and the existence of crystalline lifts}, arXiv:1908.07185v2, (2020).

 \bibitem[Gao22]{Gao} H. Gao: {\it Hodge--Tate prismatic crystals and Sen theory}, arXiv:2201.10136, (2022).
 
 \bibitem[GR22]{GR} H. Guo, E. Reinecke {\it Prismatic $F$-crystals and crystalline local systems}, arxiv:2203.09490, (2022).
% \bibitem[Hu93]{Hu-a} R. Huber: {\it Continuous valuations}, Math. Z., 212, 455–477, (1993). %https://doi.org/10.1007/BF02571668.

 
% \bibitem[Hu94]{Hu-b} R. Huber: {\it A generalization of formal schemes and rigid analytic varieties}, Math. Z., 217, 513–551, (1994). %https://doi.org/10.1007/BF02571959.
 
% \bibitem[GLS13]{GLS} T. Gee, T. Liu and D. Savitt: {\it The Buzzard-Diamond-Jarvis conjecture for unitray groups}, JAMS. Vol 27, Number 2, pp. 389-435, (2013).
 \bibitem[Har03]{Har} Urs T. Hartl: {\it Semi-stable models for rigid analytic spaces}, Manuscripta math. 110, 365–380, (2003).
 
 \bibitem[Liu08]{Liu} T. Liu: {\it On lattices in semi-stable representations: a proof of a conjecture of Breuil}, Compositio Mathematica, Vol 144, Issue 1, pp. 61-88, (2008). 
  
 \bibitem[LZ17]{LZ}R. Liu, X. Zhu: {\it Rigidty and a Riemman-Hilbert correspondence for $p$-adic local systems}, Inv. Math. 207, pp. 291-343 (2017).

 \bibitem[Kos20]{Kos} T. Koshikawa: {Logarithmic prismatic cohomology I}, arXiv:2007.14037, (2020).
 
 \bibitem[MT20]{MT} M. Morrow, T. Tsuji: {\it Generalised representations as $q$-connections in integral $p$-adic Hodge theory}, arXiv:2010.04059v2, (2020).
			
 \bibitem[MW21a]{MW-a} Y. Min, Y. Wang: {\it Relative $(\varphi,\Gamma)$-modules and prismatic $F$-crystals},	arXiv:2110.06076, (2021).
 
 \bibitem[MW21b]{MW-b} Y. Min, Y. Wang: {\it On the Hodge--Tate crystals over $\calO_K$}, arXiv:2112.10140, (2021). 
 
 \bibitem[MW22]{MW-c} Y. Min, Y. Wang: {\it $P$-adic Simpson correspondence via prismatic crystals}, arXiv:2202.08030, (2022).
 
% \bibitem[Ogu18]{Ogu} A. Ogus: {\it Lectures on logarithmic algebraic geometry}, Studies in Advanced Mathematics, Cambridge University Press, (2018).
 
 \bibitem[Ols05]{Ols} M. C. Olsson: {\it The logarithmic cotangent complex}, Math. Ann. 333, 859–931, (2005). %https://doi.org/10.1007/s00208-005-0707-6.
 
% \bibitem[Sch17]{Sch-a} P. Scholze: {\it \'Etale cohomology of diamonds}, arXiv:1709.07343v2, (2017).
 \bibitem[Pet20]{Pet} A. Petrov {\it: Geometrically irreducible $p$-adic local systems are de Rham up to a twist} arxiv:2012.23372v3 (2020).
 
 \bibitem[Sen80]{Sen} S. Sen {\it: Continuous cohomology and $p$-adic Galois representations} Inventiones math. 62, 89-116, (1980).

% \bibitem[SW20]{SW} P. Scholze and J. Weinstein: {\it Berkeley lectures on $p$-adic geometry}, Princeton University Press, Princeton, NJ, (2020).

 \bibitem[Sch13]{Sch-b}P. Scholze: {\it $p$-adic Hodge theory for rigid-analytic varieties},  Forum of Mathematics, Pi, 1, e1, 77 pages, (2013).
 
%  \bibitem[Shi18]{Shi} K. Shimizu {\it: Constancy of generalized Hodge-Tate weights of a local system} Compositio Math. 154, 2606-2642 (2018).
			
 \bibitem[Tian21]{Tian} Y. Tian: {\it Finiteness and duality for the cohomology of prismatic crystals}, arXiv:2109.00801v1, (2021).
			
 \bibitem[Tsu18]{Tsu} T. Tsuji: {\it Notes on the local p-adic Simpson correspondence}, Math. Ann. 371, (2018).
 
 \bibitem[Wang17]{Wang} X. Wang: {\it Weight elimination in two dimensions when $p = 2$}, arXiv:1711.09035, (2017).
 
 \bibitem[Wu21]{Wu} Z. Wu: {\it Galois representations, $(\varphi,\Gamma)$-modules and prismatic $F$-crystals}, arXiv:2104.12105v3, (2021).
\end{thebibliography}
\end{document}